\tikzstyle{block} = [rectangle,draw,text width=10em,text centered,rounded corners,minimum height=4em]
\tikzstyle{line} = [draw, -latex']
\newtheorem{defn}{Definition}
\newtheorem{lem}{Lemma}
\newtheorem{thm}{Theorem}
\newtheorem{cor}{Corollary}
\newtheorem{rmk}{Remark}
\newtheorem{prop}{Proposition}
\newtheorem{ex}{Example}
\DeclareFontFamily{OMX}{MnSymbolE}{}
\DeclareSymbolFont{MnLargeSymbols}{OMX}{MnSymbolE}{m}{n}
\DeclareFontShape{OMX}{MnSymbolE}{m}{n}{
    <-6>  MnSymbolE5
   <6-7>  MnSymbolE6
   <7-8>  MnSymbolE7
   <8-9>  MnSymbolE8
   <9-10> MnSymbolE9
  <10-12> MnSymbolE10
  <12->   MnSymbolE12
}{}
\DeclareFontShape{OMX}{MnSymbolE}{b}{n}{
    <-6>  MnSymbolE-Bold5
   <6-7>  MnSymbolE-Bold6
   <7-8>  MnSymbolE-Bold7
   <8-9>  MnSymbolE-Bold8
   <9-10> MnSymbolE-Bold9
  <10-12> MnSymbolE-Bold10
  <12->   MnSymbolE-Bold12
}{}
\let\llangle\@undefined
\let\rrangle\@undefined
\DeclareMathDelimiter{\llangle}{\mathopen}%
                     {MnLargeSymbols}{'164}{MnLargeSymbols}{'164}
\DeclareMathDelimiter{\rrangle}{\mathclose}%
                     {MnLargeSymbols}{'171}{MnLargeSymbols}{'171}
\newcommand*\pFq[6][8]{%
  \begingroup 
  \pFqmuskip=#1mu\relax
  \mathcode`\,=\string"8000
  \begingroup\lccode`\~=`\,
  \lowercase{\endgroup\let~}\pFqcomma
  {}_{#2}\phi_{#3}{\left[\genfrac..{0pt}{}{#4}{#5};{#6}\right]}%
  \endgroup
}
\newcommand{\pFqcomma}{\mskip\pFqmuskip}
\def\b{\bar}
\renewcommand{\leq}{\leqslant}
\renewcommand{\geq}{\geqslant}
\def\a{\alpha}
\def\b{\beta}
\def\k{\kappa}
\def\l{\lambda}
\def\m{\mu}
\def\n{\nu}
\def\s{\sigma}
\def\r{\rho}
\def\e{\epsilon}
\def\t{\tau}
\def\th{\theta}
\def\r{\rho}
\def\LL{\Lambda}
\def\MI{\mathcal{I}}
\def\MJ{\mathcal{J}}
\def\ML{\mathcal{L}}
\def\MM{\mathcal{M}}
\def\MP{\mathcal{P}}
\def\MB{\mathcal{B}}
\def\MQ{\mathcal{Q}}
\def\MZ{\mathcal{Z}}
\def\nt{\tilde{\n}}
\def\ellt{\tilde{\ell}}
\def\st{\tilde{\s}}
\def\nt{\tilde{\n}}
\def\tt{\tilde{\t}}
\def\rt{\tilde{\r}}
\def\MF{\mathcal{F}}
\def\MC{\mathcal{C}}
\def\MTL{\tilde{\mathcal{L}}}
\def\Phit{\tilde{\Phi}}
\def\MN{\mathcal{N}}
\def\PP{\mathcal{P}}
\def\Uq{U_{q}(\widehat{sl_{n+1})}}
\def\cm{{\sf m}}
\def\Lt{\tilde{L}}
\def\Kt{\tilde{K}}
\def\Jt{\tilde{J}}
\def\It{\tilde{I}}
\def\cf{{\it cf.}\/\ }
\def\dg{\color{green!65!black}}
\colorlet{lgray}{white!85!black}
\colorlet{lred}{white!85!red}
\newcommand{\bra}[1]{\left\langle #1\right|}
\newcommand{\ket}[1]{\left|#1\right\rangle}
\DeclareMathOperator{\Tr}{Tr}
\begin{document}

\title{Modified Macdonald polynomials and integrability}

\author{Alexandr Garbali and Michael Wheeler}

\maketitle

\begin{abstract}
We derive combinatorial formulae for the modified Macdonald polynomial $H_{\l}(x;q,t)$ using coloured paths on a square lattice with quasi-cylindrical boundary conditions. The derivation is based on an integrable model associated to the quantum group of $\Uq$.
\end{abstract}

\section{Introduction}

\subsection{Hall--Littlewood polynomials and expansions}

In symmetric function theory \cite{MacdBook}, the Kostka--Foulkes polynomials $K_{\nu,\lambda}(t)$ are the transition coefficients from the Schur polynomials $s_{\nu}(x)$ to the Hall--Littlewood polynomials $P_{\lambda}(x;t)$:
\begin{align}
\label{schur-to-HL}
s_{\nu}(x)
=
\sum_{\lambda} K_{\nu,\lambda}(t) P_{\lambda}(x;t),
\end{align}
where the sum is taken over all partitions $\lambda$. These coefficients are classic objects of representation theory and combinatorics; we refer the reader to \cite{Kirillov2,DLT,Haiman2} and to references therein for a survey of what is known about them. One of their fundamental properties is that they are polynomials in $t$ with nonnegative integer coefficients; that is, $K_{\nu,\lambda}(t) \in \mathbb{N}[t]$. There is an extensive number of combinatorial formulae for computing the Kostka--Foulkes polynomials; the two most famous expressions are due to Lascoux--Sch\"utzenberger \cite{LascouxS} and Kirillov--Reshetikhin \cite{KirillovR2}. 

When $t=1$, the Hall--Littlewood polynomial in (\ref{schur-to-HL}) reduces to a monomial symmetric function $m_{\lambda}(x)$, and we read the expansion of a Schur polynomial over the monomial basis; accordingly $K_{\nu,\lambda}(1)$ recovers the Kostka numbers $K_{\nu,\lambda}$ \cite{Kostka}. 

By duality arguments, another way of introducing the Kostka--Foulkes polynomials is via the {\it modified Hall--Littlewood polynomials} \cite{DLT,LLT,HKKOTY,Kirillov}, in this work denoted $H_{\lambda}(x;t)$. The latter objects have the expansion
\begin{align}
\label{mHL-to-schur}
H_{\lambda}(x;t)
=
\sum_{\nu} K_{\nu,\lambda}(t) s_{\nu}(x).
\end{align}
Because $K_{\nu,\lambda}(t) \in \mathbb{N}[t]$, as an immediate consequence of (\ref{mHL-to-schur}) one sees that the modified Hall--Littlewood polynomials are manifestly positive over the monomial basis:
\begin{align}
\label{mHL-to-monom}
H_{\lambda}(x;t)
=
\sum_{\nu} 
\sum_{\mu}
K_{\nu,\lambda}(t)
K_{\nu,\mu} 
m_{\mu}(x)
\equiv
\sum_{\mu}
\mathcal{P}_{\lambda,\mu}(t)
m_{\mu}(x),
\qquad
\mathcal{P}_{\lambda,\mu}(t)
\in
\mathbb{N}[t].
\end{align}
The problem of explicitly computing the coefficients 
$\mathcal{P}_{\lambda,\mu}(t)$, improving upon the brute-force expansion $\mathcal{P}_{\lambda,\mu}(t) = \sum_{\nu} K_{\nu,\lambda}(t) K_{\nu,\mu}$, was considered in \cite{HKKOTY} (see also \cite{Kirillov}). Letting $x = (x_1,\dots,x_N)$ denote an alphabet of cardinality $N$, it was found that
\begin{align}
\label{eq:HLmon-intro}
\mathcal{P}_{\lambda,\mu}(t)= 
\sum_{\{\nu\}} t^{c(\nu)}
\prod_{k=1}^{N-1} \prod_{i \geq 1}
\binom{\nu_{i}^{k+1}-\nu_{i+1}^{k}}{\nu_{i}^{k}-\nu_{i+1}^{k}}_t,
\qquad
c(\nu)
=
\frac{1}{2}
\sum_{k=1}^{N}
\sum_{i \geq 1}(\nu_{i}^{k}-\nu_{i}^{k-1})(\nu_{i}^{k}-\nu_{i}^{k-1}-1),
\end{align}
with Gaussian binomial coefficients given by
\begin{align}
\label{gauss}
\binom{a}{b}_t
=
\frac{(1-t^a)(1-t^{a-1}) \cdots (1-t^{a-b+1})}{(1-t)(1-t^2) \cdots (1-t^b)}
\cdot
\theta(a \geq b \geq 0),
\end{align}
and where the summation in (\ref{eq:HLmon-intro}) runs over flags of partitions 
$\{\nu\}=\{\emptyset \equiv \nu^{0} \subseteq \nu^{1}\subseteq \dots \subseteq \nu^{N} \equiv \lambda'\}$ such that $|\nu^{k}|=\mu_1+\dots +\mu_k$ for all $1\leq k \leq N$. Here we have used the standard definitions \cite{MacdBook} of partition conjugate $\lambda'$ and weight $|\nu|$.

One of the preliminary results of the present work is an explanation of the formula (\ref{eq:HLmon-intro}) at the level of exactly solvable lattice models; this is given in Section \ref{sec:HL}. It is natural to expect such an interaction with integrability:  connections of the (ordinary) Hall--Littlewood polynomials $P_{\lambda}(x;t)$ with the integrable $t$-boson model have been known for some time \cite{Tsilevich,WheelerZJ,Korff}, and this theory was extended to finite-spin lattice models by Borodin in \cite{Borodin}, yielding a rational one-parameter deformation of the Hall--Littlewood family.

\subsection{Macdonald polynomials and expansions}

Both of the coefficients $K_{\nu,\lambda}(t)$ and $\mathcal{P}_{\lambda,\mu}(t)$ can be naturally generalized to the Macdonald level. The two-parameter Kostka--Foulkes polynomials $K_{\nu,\lambda}(q,t)$ are defined by the expansion
\begin{align}
\label{Hqt-to-schur}
H_{\lambda}(x;q,t)
=
\sum_{\nu}
K_{\nu,\lambda}(q,t)
s_{\nu}(x),
\end{align}
where $H_{\lambda}(x;q,t)$ denotes a {\it modified Macdonald polynomial}; the latter will be defined in Section \ref{sec:W}. In \cite{Macd88,MacdBook}, Macdonald conjectured that $K_{\nu,\lambda}(q,t) \in \mathbb{N}[q,t]$; this problem, the {\it positivity conjecture}, developed great notoriety and was only fully resolved more than ten years later by Haiman \cite{Haiman}. The equation (\ref{Hqt-to-schur}) has an interpretation in terms of the Hilbert scheme of $N$ points \cite{GarsiaH} and indeed the proof of \cite{Haiman} relied heavily on geometric techniques, which did not yield a direct formula for $K_{\nu,\lambda}(q,t)$. It is a major outstanding problem to find a manifestly positive combinatorial expression for these coefficients, although this has been solved in some partial cases \cite{HaglundHL1,Stembridge,Assaf1,Assaf2,Blas}.

Similarly to above, one can convert (\ref{Hqt-to-schur}) to an expansion over the monomial symmetric functions, leading to two-parameter coefficients 
$\mathcal{P}_{\lambda,\mu}(q,t)$:
\begin{align}
\label{Hqt-to-monom}
H_{\lambda}(x;q,t)
=
\sum_{\nu} 
\sum_{\mu}
K_{\nu,\lambda}(q,t)
K_{\nu,\mu} 
m_{\mu}(x)
\equiv
\sum_{\mu}
\mathcal{P}_{\lambda,\mu}(q,t)
m_{\mu}(x),
\qquad
\mathcal{P}_{\lambda,\mu}(q,t)
\in
\mathbb{N}[q,t].
\end{align}
Combinatorially speaking, more is known about $\mathcal{P}_{\lambda,\mu}(q,t)$ than $K_{\nu,\lambda}(q,t)$. In particular, a manifestly positive expression for 
$\mathcal{P}_{\lambda,\mu}(q,t)$, in terms of fillings of tableaux with certain associated statistics, was obtained by Haglund--Haiman--Loehr in \cite{HaglundHL1}.

The current paper will also be concerned with the study of 
$\mathcal{P}_{\lambda,\mu}(q,t)$. We offer a new, positive formula for these coefficients, which is of a very different nature to the formula obtained in \cite{HaglundHL1}; rather, it is in the same spirit as the expansion (\ref{eq:HLmon-intro}), and can be seen to degenerate to the latter at $q=0$. Let us now start to describe it.

\subsection{Basic hypergeometric series and positivity}

Let $\{a_1,\dots,a_N\}$ and $\{b_1,\dots,b_{N-1}\}$ be two sets of complex parameters. Recall the definition of the basic hypergeometric series $_N\phi_{N-1}$ \cite{GasperR}:
\begin{align}
\label{hyp-def}
\pFq[5]{N}{N-1}{a_1\text{,}\dots \text{,}a_{N-1}\text{,}a_{N}}{b_1\text{,}\dots\text{,}b_{N-1}}{z}
=
\sum_{s=0}^{\infty}
z^s
\prod_{j=1}^{N-1}\frac{(a_j;t)_s}{(b_j;t)_s}
\cdot
\frac{(a_{N};t)_s}{(t;t)_s},
\qquad
(a;t)_s
=
\prod_{i=0}^{s-1} (1-a t^i).
\end{align}
In what follows we will tacitly assume that $|z| < 1$, so that the sum in (\ref{hyp-def}) converges. Given two increasing sequences of nonnegative integers $\nu = (\nu^{1} \leq \cdots \leq \nu^N)$ and $\tilde\nu = (\tilde\nu^1 \leq \cdots \leq \tilde\nu^N)$ such that $\nu^N = \tilde\nu^N$,
we define the following evaluation of the above hypergeometric series:
\begin{align}
\label{phi-def}
\phi_{\nu|\tilde\nu}(z;t)
:=
\sum_{s=0}^{\infty}
z^s
\prod_{j=1}^{N-1}\frac{(t^{\alpha_j};t)_s}{(t^{\beta_j};t)_s}
\cdot
\frac{(t^{\alpha_N};t)_s}{(t;t)_s},
\qquad
\alpha_j = \tilde\nu^j-\nu^{j-1}+1,
\qquad
\beta_j = \tilde\nu^j-\nu^j+1,
\end{align}
which is non-singular only under the assumption that $\nu^i \leq \tilde{\nu}^i$ for all $1 \leq i \leq N-1$. By adjusting the normalization of (\ref{phi-def}), we are able to forget about this constraint. In particular, we define
\begin{align}
\label{Phi-def}
\Phi_{\n|\nt}(z;t)
:=
(z;t)_{\nt^N+1} \prod_{k=1}^{N-1}\binom{\nt^{k+1}-\n^k}{\nt^k-\n^k}_t \phi_{\n|\nt}(z;t)
=
(z;t)_{\nt^{N}+1}
\sum_{s=0}^{\infty}
z^s
\prod_{k=0}^{N-1}
\binom{\nt^{k+1}-\n^{k}+s}{\nt^{k}-\n^{k}+s}_t,
\end{align}
where each term in the right hand side of (\ref{Phi-def}) is non-singular for arbitrary increasing sequences $\nu$, $\tilde\nu$.

The function $\Phi_{\n|\nt}(z;t)$ plays a key role in this work. One of our results will be to show that in spite of its definition as an infinite series, 
$\Phi_{\n|\nt}(z;t)$ is in fact a {\it positive polynomial} in $z$. This truncation of the summation is non-trivial and arises because of the inclusion of the Pochhammer function $(z;t)_{\nt^N+1}$ in (\ref{Phi-def}), while allows term-by-term cancellations: 
\begin{thm}[Theorem \ref{th:Fpos} below]
For any sequences of nonnegative integers 
$\nu = (\nu^{1} \leq \cdots \leq \nu^N)$ and $\tilde\nu = (\tilde\nu^1 \leq \cdots \leq \tilde\nu^N)$ such that $\nu^N = \tilde\nu^N$, one has
\begin{align}
\label{Phi-pos}
\Phi_{\n|\nt}(z;t) \in \mathbb{N}[z,t].
\end{align}
\end{thm}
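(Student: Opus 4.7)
The plan splits naturally into two parts: (i) proving that $\Phi_{\n|\nt}(z;t)$ is a polynomial in $z$ (a priori it is only a formal power series in $z$), and (ii) establishing nonnegativity of its coefficients.

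For (i), the key observation is that the difference between the top and bottom indices of the Gaussian binomial in \eqref{Phi-def}, namely $(\nt^{k+1}-\n^k+s)-(\nt^k-\n^k+s)=\nt^{k+1}-\nt^k$, does not depend on $s$. Hence each binomial can be rewritten as
\begin{align*}
\binom{\nt^{k+1}-\n^k+s}{\nt^k-\n^k+s}_t
=
\binom{\nt^{k+1}-\n^k+s}{\nt^{k+1}-\nt^k}_t
=
\frac{(t^{\nt^k-\n^k+s+1};t)_{\nt^{k+1}-\nt^k}}{(t;t)_{\nt^{k+1}-\nt^k}}.
\end{align*}
Setting $u=t^s$ and multiplying over $k$, the product $\prod_{k=0}^{N-1}(t^{\nt^k-\n^k+s+1};t)_{\nt^{k+1}-\nt^k}$ becomes a polynomial $P(u;t)=\sum_{j=0}^{\nt^N}p_j(t)u^j$ of $u$-degree $\sum_k(\nt^{k+1}-\nt^k)=\nt^N$. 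Summing the resulting geometric series yields
\begin{align*}
\sum_{s=0}^\infty z^s\prod_{k=0}^{N-1}\binom{\nt^{k+1}-\n^k+s}{\nt^k-\n^k+s}_t
=
\frac{1}{\prod_k (t;t)_{\nt^{k+1}-\nt^k}}\sum_{j=0}^{\nt^N}\frac{p_j(t)}{1-zt^j}.
\end{align*}
Multiplying by $(z;t)_{\nt^N+1}=\prod_{i=0}^{\nt^N}(1-zt^i)$ cancels every factor $1-zt^j$ in the denominator and produces a polynomial in $z$ of degree at most $\nt^N$. Since each Gaussian binomial in \eqref{Phi-def} is itself a polynomial in $t$, term-by-term this shows $\Phi_{\n|\nt}(z;t)\in\mathbb{Z}[z,t]$.

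For (ii), the natural approach, consistent with the rest of the paper, is to realize $\Phi_{\n|\nt}(z;t)$ as a partition function of the coloured $\Uq$ lattice model. Concretely, the sequences $\n$ and $\nt$ would encode coloured-path occupation data along the bottom and top boundaries of a single vertical column, with $z$ playing the role of that column's spectral parameter. Each vertex weight in this model has an expansion with coefficients in $\mathbb{N}[z,t]$, so the resulting column partition function automatically lies in $\mathbb{N}[z,t]$. Identifying this partition function with \eqref{Phi-def} should amount to performing the column sum row-by-row, where each row contributes one Gaussian binomial via the $q$-binomial theorem and the overall number of through-column strands plays the role of the summation index $s$.

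The main obstacle is carrying out step (ii) rigorously: specifying the correct vertex weights and boundary conventions so that the resulting column partition function matches \eqref{Phi-def} exactly. A purely algebraic alternative would be to iterate a Heine-type transformation, or to telescope the series using $q$-Pfaff-Saalschütz-like identities, so as to rewrite $\Phi_{\n|\nt}(z;t)$ directly as a manifestly positive finite sum. Such an approach would give a self-contained proof, but it tends to obscure the combinatorics and, more importantly, would not transfer as cleanly to the genuinely two-parameter modified Macdonald setting that is the ultimate target of the paper; for that reason I would prioritize the lattice-model proof.
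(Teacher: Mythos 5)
Your part (i) is fine: rewriting each Gaussian binomial as $(t^{\nt^k-\n^k+s+1};t)_{\nt^{k+1}-\nt^k}/(t;t)_{\nt^{k+1}-\nt^k}$, treating $u=t^s$, and observing that $(z;t)_{\nt^N+1}$ cancels every pole $1-zt^j$ with $0\le j\le \nt^N$ is a legitimate (and arguably slicker) way to see that $\Phi_{\n|\nt}(z;t)\in\mathbb{Z}[z,t]$; the paper reaches the same conclusion differently, by iterating the recurrence (\ref{eq:Frec}) to obtain the finite form (\ref{eq:FFpf}) of Theorem \ref{th:Ft}. But polynomiality is the easy half of the statement. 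The content of the theorem is positivity, and your part (ii) does not prove it: both routes are left as proposals, and the one you prioritize would not go through as described. The claim that positive local vertex weights make the column partition function ``automatically'' lie in $\mathbb{N}[z,t]$ misses where the difficulty actually sits. In the cylinder model of Sections \ref{sec:modM}--\ref{sec:proof}, the sum over winding numbers in a single column is the \emph{infinite} series $\sum_{s\ge 0} z^s\prod_{k}\binom{\nt^{k+1}-\n^{k}+s}{\nt^{k}-\n^{k}+s}_t$, which indeed has coefficients in $\mathbb{N}[t]$ but is not a polynomial; $\Phi_{\n|\nt}$ is obtained from it only after multiplying by $(z;t)_{\nt^N+1}$, a sign-alternating factor inserted through the normalization $\MN_{\l}(q,t)$ of (\ref{eq:NZ}) precisely to clear denominators. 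Positivity of $\Phi$ is therefore a statement about cancellations between a positive infinite series and an alternating prefactor, and no choice of positive local weights makes that automatic.

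The paper's actual proof is the algebraic route you deprioritize. After the finite resummation (\ref{eq:FFpf}) --- which still contains sign-alternating Pochhammer factors $(zt^{\n_{j-1}-\l_{1,j-1}};t)_{\s_j-\l_{j}}$ and so is not yet manifestly positive --- the key input is Lemma \ref{lem:g}, which shows constructively that
\begin{align*}
\sum_{k=0}^{m} v^k\,(v;t)_{m-k}\binom{m}{k}_t\prod_{i=1}^{n}\binom{k+a_i}{b_i}_t \in \mathbb{N}[v,t],
\end{align*}
generalizing the identity $\sum_{k} v^k(v;t)_{m-k}\binom{m}{k}_t=1$. Iterating this lemma absorbs all the alternating factors and yields the explicit positive formula (\ref{eq:Fpos}). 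To complete your argument you would need to supply this lemma (or an equivalent telescoping identity), or else exhibit a genuinely \emph{finite} combinatorial model whose partition function equals $\Phi_{\n|\nt}(z;t)$ on the nose --- which is in effect what formula (\ref{eq:Fpos}) provides after the fact, but which is not available a priori from the cylinder construction.
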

\noindent
Our proof of the statement (\ref{Phi-pos}) will be constructive; it yields an explicit positive formula for $\Phi_{\n|\nt}(z;t)$:
\begin{align}\label{eq:PhiIntro}
\Phi_{\n|\nt}(z;t)=
\sum_{s}
 t^{\eta(s,\nt)}
 \prod_{k=1}^{N-1}  
 z^{s_{N}^k-s_{k}^{k}}  
 \binom{ \nt^{k+1}-s_{k+1}^{1,k}}
 {\nt^{k}-s_k^{1,k}}_t
  \prod_{i\leq k}
\binom{s_{k+1}^{i}}{s_k^{i}}_t,
\end{align} 
where the sum runs over $\{s_k^i\}_{1\leq i\leq k\leq N-1}$, such that $s_{N}^k = \n^k-\n^{k-1}$, $k=1,\dots,N$, the double superscript is defined by $s_{i}^{j,k}=\sum_{a=j}^k s_i^a$, and the $\eta$ exponent is given by $\eta(s,\nt)=\sum_{1\leq i\leq k \leq N-1}(s_{k+1}^i-s_k^i)(\nt^k-s_k^{i,k})$  (see Theorem \ref{th:Fpos}). For fixed $\n$ and $\nt$ this sum consists of a finite number of terms due to the binomial coefficients.

\subsection{Positive expansion of modified Macdonald polynomials}

Now we state the main result of the paper:
\begin{thm}[\bf Theorem \ref{thm:H} below]
Fix two partitions $\lambda = (\lambda_1,\dots,\lambda_N)$ and $\mu = (\mu_1,\dots,\mu_N)$, and let $\lambda'$ denote the conjugate of $\lambda$. We also write $n = \lambda_1$ for the largest part of $\lambda$. Then the coefficient $\mathcal{P}_{\lambda,\mu}(q,t)$, as defined in (\ref{Hqt-to-monom}), is given by
\begin{align}
\label{eq:HmonP-intro}
\mathcal{P}_{\l,\m}(q,t)= 
\sum_{\{\n\}}
t^{\chi(\n)}
\prod_{1 \leq i\leq j \leq n}
\Phi_{\n_{i+1,j}|\n_{i,j}}(q^{j-i} 
t^{ \l'_{i}-\l'_{j}};t),
\end{align}
with the sum running over $n(n+1)/2$ sequences $\n_{i,j} \equiv (\n_{i,j}^{1} \leq \cdots \leq \n_{i,j}^N)$, $1\leq i\leq j\leq n$, satisfying the constraints
\begin{align*}
\n_{i,j}^{N}
=\l'_j-\l'_{j+1},
&
\qquad
\forall\ 1\leq i \leq j \leq n,
\\
\n_{i+1,i}^{k}
=
0,
&
\qquad
\forall\ 1 \leq i \leq n,\ \ 1 \leq k \leq N,
\\
\sum_{1 \leq i \leq j \leq n} \n_{i,j}^{k}
=
\m_1+\dots+\m_k,
&
\qquad
\forall\ 1\leq k \leq N,
\end{align*}
and where the exponent appearing in (\ref{eq:HmonP-intro}) is given by
\begin{align*}
\chi(\n)=
\sum_{k=1}^{N}  
\sum_{1\leq i\leq j\leq n}
\Big{(}
\frac{1}{2}
(\n_{i,j}^k-\n_{i,j}^{k-1})(\n_{i,j}^k-\n_{i,j}^{k-1} -1) + 
\sum_{\ell>j} (\n_{i,j}^k-\n_{i,j}^{k-1})(\n_{i,\ell}^{k}-\n_{i+1,\ell}^{k-1})
\Big{)}.
\end{align*}
\end{thm}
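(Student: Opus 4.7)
The plan is to realize $H_\lambda(x;q,t)$ as the partition function of an integrable coloured lattice model on a quasi-cylindrical domain associated to $\Uq$, then extract the monomial coefficient $\mathcal{P}_{\lambda,\mu}(q,t)$ by summing over internal edge states of that lattice. This is the analogue, one step up the Macdonald hierarchy, of Kirillov's derivation \eqref{eq:HLmon-intro} for the modified Hall--Littlewood polynomials, and at $q=0$ the construction should reduce to the $t$-boson model that produced \eqref{eq:HLmon-intro} in Section \ref{sec:HL}.

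First I would set up the lattice so that the horizontal lines, indexed $1,\dots,N$, carry the formal variables $x_1,\dots,x_N$, while the vertical lines, indexed $1,\dots,n$ with $n=\lambda_1$, correspond to the columns of the diagram of $\lambda$; the quasi-cylindrical boundary conditions encode the partition $\lambda$ (equivalently, its conjugate $\lambda'$). Extracting the coefficient of $x_1^{\mu_1}\cdots x_N^{\mu_N}$ in the partition function amounts to fixing, on each horizontal row $k$, the total number of paths of each colour that exit, which is precisely what produces the constraint $\sum_{i\le j}\n_{i,j}^k = \m_1+\dots+\m_k$ after a telescoping. The remaining boundary constraints $\n_{i,j}^{N} = \l'_j-\l'_{j+1}$ and $\n_{i+1,i}^k = 0$ are imposed by the shape of the quasi-cylindrical domain: the former records the colour content at the outer boundary, determined by $\lambda'$, while the latter reflects the absence of certain diagonal regions in the decomposition.

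Next I would triangulate the lattice into $n(n+1)/2$ "blocks'' labelled by the pairs $(i,j)$ with $1\le i\le j\le n$, where each block is a column of vertices whose internal summation variable is an infinite series. The Boltzmann weight of one such block is, up to an overall $t$-monomial, a specialization of the $_N\phi_{N-1}$ basic hypergeometric series \eqref{phi-def}, whose spectral argument is dictated by the row/column indices of the block and by the local data of $\lambda'$. The identification of this argument as $q^{j-i}t^{\l'_i-\l'_j}$ is forced by the way spectral parameters propagate across the $R$-matrices of $\Uq$ in the quasi-cylindrical geometry, once the bulk inhomogeneities are chosen so that the partition function returns $H_\lambda$. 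After pulling out the Pochhammer normalization $(z;t)_{\nt^N+1}$ and the binomial prefactors, each block weight is precisely $\Phi_{\n_{i+1,j}|\n_{i,j}}(q^{j-i}t^{\l'_i-\l'_j};t)$, and the remaining $t$-powers accumulated from local vertex weights — both the self-interaction of each sequence $\n_{i,j}$ and the coupling between adjacent blocks through path continuation — assemble into the exponent $\chi(\n)$ displayed in the statement.

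The main obstacle is the second step: verifying that the local block partition function is indeed a hypergeometric series with the claimed arguments $\alpha_j,\beta_j$, and that the off-diagonal coupling between blocks produces exactly the bilinear term $\sum_{\ell>j}(\n_{i,j}^k-\n_{i,j}^{k-1})(\n_{i,\ell}^{k}-\n_{i+1,\ell}^{k-1})$ in $\chi$, with no leftover cross-terms. This is essentially a bookkeeping of $\Uq$ vertex weights across a two-dimensional domain, and the cleanest route is to proceed inductively on the column index $n$, peeling off one vertical strip at a time and using the Yang--Baxter equation to commute the associated row-operators until each hypergeometric sum is localized in a single block. Positivity of the resulting expression is then automatic from Theorem \ref{th:Fpos}, since each factor $\Phi_{\n_{i+1,j}|\n_{i,j}}$ lies in $\mathbb{N}[q,t]$ under the specialization $z \mapsto q^{j-i}t^{\l'_i-\l'_j}$, and the remaining weights $t^{\chi(\n)}$ are manifestly nonnegative in $t$.
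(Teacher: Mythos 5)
Your outline reproduces the combinatorial bookkeeping of the paper's derivation quite faithfully: the paper also decomposes the cylinder partition function into $n$ columns, identifies the infinite sums over winding numbers of each colour $j>i$ in column $i$ as the series $\Phi_{\n_{i+1,j}|\n_{i,j}}$, telescopes the local $t$-exponents into $\chi(\n)$, and uses symmetry in $(x_1,\dots,x_N)$ to convert the coefficient of $x^{\mu}$ into the coefficient of $m_{\mu}(x)$. However, there is one genuine gap and one misattribution.

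The gap is that you assume the very thing that carries all the weight of the proof: that the bulk weights and inhomogeneities can be "chosen so that the partition function returns $H_{\lambda}$." Nothing in your proposal establishes the identity $\MZ_{\lambda}(x;q,t)=H_{\lambda}(x;q,t)$ (equation \eqref{eq:ZW} specialized at $z=0$), and without it the extraction of $\mathcal{P}_{\lambda,\mu}(q,t)$ is circular. The paper's route is to start from the already-proven matrix product formula for the \emph{ordinary} Macdonald polynomials $P_{\lambda}$ of Cantini--de Gier--Wheeler, and to observe that the plethystic substitution $p_k\mapsto p_k/(1-t^k)$ defining $H_{\lambda}$ coincides with the principal specialization $x_i\mapsto (x_i,tx_i,\dots,t^{\mathcal{J}_i-1}x_i)$ followed by the analytic continuation $t^{\mathcal{J}_i}\mapsto -z_i/x_i$ (see \eqref{eq:evW} and the proof of Proposition \ref{prop:WZ}). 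At the lattice level this specialization is exactly Kulish--Reshetikhin--Sklyanin fusion, and the doubly bosonic weights \eqref{eq:Lone} are computed by the fusion recurrence \eqref{eq:Lrec}; the Pochhammer prefactors you propose to "pull out" are then seen to cancel exactly against $c_{\lambda}(q,t)/\Omega'_{\lambda^{+}}(q,t)$ as in \eqref{eq:prefactor}. Your suggested induction on the number of columns via Yang--Baxter commutation of row operators does not supply a substitute for this step: Yang--Baxter only yields the symmetry of $\MZ_{\lambda}$ in the $x_i$, not its identification with $H_{\lambda}$.

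The misattribution concerns the spectral argument $q^{j-i}t^{\lambda'_i-\lambda'_j}$ of each $\Phi$ factor. This does not arise from "the way spectral parameters propagate across the $R$-matrices"; the factor $q^{j-i}$ is the quasi-periodic winding weight $q^{\omega}$ with $\omega=\sum_{i<j}(j-i)s_{i,j}$ of \eqref{eq:qomega} (equivalently the twist operator $S$ in the trace), and the factor $t^{\lambda'_i-\lambda'_j}$ emerges from telescoping the exponent $\sum_{k}\sum_{l<j}\st_l^k\,s_j$ using $\nt_l^{N}=\lambda'_l-\lambda'_{l+1}$. Getting this right matters, because it is precisely this combination that places the argument of $\Phi$ in the regime where Theorem \ref{th:Fpos} applies and positivity follows.
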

The formula (\ref{eq:HmonP-intro}) bears clear resemblance to the expression (\ref{eq:HLmon-intro}) for the coefficients $\mathcal{P}_{\lambda,\mu}(t)$. In fact, one can show that (\ref{eq:HmonP-intro}) reduces to (\ref{eq:HLmon-intro}) at $q=0$. What underlies this reduction is the fact that, when using the polynomial form of $\Phi_{\nu|\tilde\nu}(z;t)$, both $\Phi_{\nu|\tilde\nu}(0;t)$ and $\Phi_{\nu|\tilde\nu}(1;t)$ can be expressed as a product of binomial coefficients (\ref{gauss}) (see (\ref{eq:Phi01})). This leads to considerable simplification of (\ref{eq:HmonP-intro}) at $q=0$; however, one still needs to use certain summation identities for Gaussian binomial coefficients in order to complete the reduction to (\ref{eq:HLmon-intro}), \cf\ the discussion in Section \ref{ssect:mHLlattice}.

\subsection{Yang--Baxter integrability}

Another key results of this paper is the method that we use to derive (\ref{eq:HmonP-intro}). In particular, we will show how $\mathcal{P}_{\lambda,\mu}(q,t)$ can be computed using an {\it integrable lattice model construction} of the modified Macdonald polynomials $H_{\lambda}(x;q,t)$. 

To our knowledge, the appearance of Yang--Baxter integrable structures \cite{baxterbook} is a novel development in the theory of the modified Macdonald polynomials. On the one hand we find our approach expedient, since it leads directly to the positivity expansion (\ref{Hqt-to-monom}), and on the other hand it is natural, since it allows the symmetry of $H_{\lambda}(x;q,t)$ in $(x_1,\dots,x_N)$ to be seen as a simple corollary of the Yang--Baxter equation. Furthermore, we expect that the methodology adopted here offers new avenues for attacking the more difficult problem of the $(q,t)$ Kostka--Foulkes polynomials $K_{\nu,\lambda}(q,t)$.

Let us explain in more detail how the expression (\ref{eq:HmonP-intro}) arises:
\begin{itemize}[label={$\bullet$}]
\item Our starting point is a known expression for the (ordinary) Macdonald polynomials $P_{\lambda}(x;q,t)$, obtained in \cite{CantinidGW}. This formula, called a {\it matrix product} in \cite{CantinidGW}, expresses $P_{\lambda}(x;q,t)$ as a trace of products of certain infinite dimensional matrices. These matrices arise naturally in the context of integrable $U_{t^{1/2}}(\widehat{sl_{n+1}})$ lattice models; they are nothing but the {\it monodromy matrix} elements in a certain rank-$n$ analogue of the $U_{t^{1/2}}(\widehat{sl_{2}})$ $t$-boson model, and each comes with an associated {\it spectral parameter} $x_i$.

The matrix product formula of \cite{CantinidGW} has an interesting combinatorial interpretation; it can be viewed as the partition function for an ensemble of coloured lattice paths (the colours take values in $\{1,\dots,n\}$) on a cylinder. Any configuration of the paths receives a weight, which is a function of $(x_1,\dots,x_N;t)$, and that weight can in turn be decomposed into a product of local factors, coming from the $\mathcal{L}$-matrix of the model. The parameter $q$ enters when lattice paths {\it wrap around} the cylinder; when $q=0$, no wrapping is allowed, and one recovers known lattice model expressions for the Hall--Littlewood polynomials $P_{\lambda}(x;t)$ \cite{Tsilevich,WheelerZJ,Korff,Borodin}. Sections \ref{ssec:MPACdGW}-\ref{ssec:columnsCdGW} will present a review of these facts.

\item As described in Section \ref{ssec:ReductionMacH} the modified polynomials $H_{\lambda}(x;q,t)$ are obtained from the ordinary ones $P_{\lambda}(x;q,t)$ via two operations. The first is to multiply $P_{\lambda}(x;q,t)$ by a certain normalizing factor $c_{\lambda}(q,t)$, converting them to their {\it integral form} $J_{\lambda}(x;q,t)$ \cite{MacdBook}, when they become polynomials in $(q,t)$. The second is the plethystic substitution $p_{k}(x) \mapsto p_{k}(x) / (1-t^k)$, where $p_k(x) = \sum_{i=1}^{N} x_i^k$ denotes the $k$-th power-sum.

One then needs to investigate what these operations do to the matrix product expression of \cite{CantinidGW}. Both turn out to be non-trivial. In particular, we will show that the above plethystic substitution has a known meaning in the integrable setting; it induces {\it fusion} (in the sense of Kulish--Reshetikhin--Sklyanin \cite{KulishRS}) of the lattice model originally used in \cite{CantinidGW}. The resulting fused model that one obtains has at least two interesting features: its $\mathcal{L}$-matrix is doubly bosonic (meaning that both its horizontal and vertical lattice edges can be occupied by an arbitrary number of paths), and the Boltzmann weights of the model are manifestly elements of $\mathbb{N}[x,t]$. It can be recovered as a certain special case of an $\mathcal{L}$-matrix found by Bosnjak--Mangazeev in \cite{BosM}, and was previously written down by Korff \cite{Korff} in the $U_t(\widehat{sl_2})$ case, but as far as we know this model (and the observation that its weights are positive) has not explicitly appeared in the literature.

\item In the final stage, one needs to understand the combinatorics behind the fused matrix product described in the previous step. This is explained in Section \ref{sec:modM} and leads directly to the formula (\ref{eq:HmonP-intro}), with the integer sequences $\nu_{i,j}$ encoding lattice configurations of coloured paths on a cylinder. At this point, the function $\Phi_{\nu|\tilde\nu}(z;t)$ remains defined as an infinite series, \cf equation (\ref{Phi-def}). It is a separate, purely technical endeavour to then demonstrate that $\Phi_{\nu|\tilde\nu}(z;t)$ is a positive polynomial (\ref{eq:PhiIntro}); this we do in Section \ref{sec:phi}.
\end{itemize}

\subsection{Polynomials with two alphabets}

Throughout much of the paper we will work with a natural {\it double alphabet} extension of the modified Macdonald polynomials, which we denote 
$W_{\lambda}(x;q,t;z)$. These polynomials appeared in \cite{HaglundHL1}. They are defined via the following plethystic substitution:
\begin{align}
\label{W-def}
W_{\lambda}(x;q,t;z)
:=
J_{\lambda}(x;q,t)
\Big| p_k(x) \mapsto \frac{p_k(x)-p_k(-z)}{1-t^k},
\end{align}
where $J_{\lambda}(x;q,t)$ is the integral form of a Macdonald polynomial \cite{MacdBook}, and $p_k(x) = \sum_{i=1}^{N} x_i^k$, $p_k(-z) = \sum_{i=1}^{N} (-z_i)^k$ are power sums in the alphabets $(x_1,\dots,x_N)$ and $(-z_1,\dots,-z_N)$. 

These polynomials cast the Macdonald theory in a very symmetric light. By their definition, they are clearly symmetric in $(x_1,\dots,x_N)$ and $(z_1,\dots,z_N)$ separately. It also turns out that
\begin{align*}
W_{\lambda}(x;q,t;z)
\in \mathbb{N}[x_1,\dots,x_N,q,t,z_1,\dots,z_N];
\end{align*} 
or in other words, they are positive polynomials in all parameters, which does not appear to be obvious from the definition (\ref{W-def}) and the fact that $H_{\lambda}(x;q,t)$ is positive. In addition to this, $W_{\lambda}(x;q,t;z)$ has at least four interesting reductions:
\begin{center}
\begin{tikzpicture}[node distance = 3.3cm,auto]
\node (W) {$W_{\lambda}(x_1,\dots,x_N;q,t;z_1,\dots,z_N)$};
\node [below right of=W] (Hp) 
{$H_{\lambda'}(z_1,\dots,z_N;t;q)$};
\node [below left of=W] (H) 
{$H_{\lambda}(x_1,\dots,x_N;q,t)$};
\node[left of=H] (P)
{$J_{\lambda}(x_1,\dots,x_N;q,t)$};
\node[right of=Hp] (Pp)
{$J_{\lambda'}(z_1,\dots,z_N;t,q)$};
\path [line] (W) -- node[left] {$z_i=-tx_i$\ \ \ \ }  (P);
\path [line] (W) -- node[left] {$z_i=0$} (H);
\path[line] (W) -- node[right] {$x_i=0$} (Hp); 
\path[line] (W) -- node[right] {\ \ $x_i=-qz_i$} (Pp);
\end{tikzpicture}
\end{center}
with the indicated substitutions taken over all $1 \leq i \leq N$, and where 
$\lambda'$ denotes the dual of the partition $\lambda$. We outline these facts in Section \ref{sec:W}, together with the Cauchy summation identities of the polynomials $W_{\lambda}(x;q,t;z)$.

A further result that we present is the meaning of the parameters $(z_1,\dots,z_N)$ in the integrable setting. They have the following interpretation: {\bf 1.} In performing fusion of the lattice model used in \cite{CantinidGW}, one needs to replace the auxiliary space of the $\mathcal{L}$-matrix -- which starts off in the defining, or {\it fundamental} representation of $U_{t^{1/2}}(\widehat{sl_{n+1}})$ -- with a symmetric tensor representation of weight $\mathcal{J}$; {\bf 2.} The resulting fused $\mathcal{L}$-matrix depends analytically on $t^{\mathcal{J}}$, but is otherwise independent of the value of the integer $\mathcal{J}$. One can then analytically continue $\mathcal{J}$ to all complex values, performing the substitution $t^{\mathcal{J}} \mapsto -z/x$, where $x$ is the spectral parameter of the $\mathcal{L}$-matrix and $z$ is a new parameter that we call a {\it spin}. Doing this on a row-by-row basis, with a different $(x_i,z_i)$ for each row, gives rise to the full alphabet 
$(z_1,\dots,z_N)$. Such ideas of fusion combined with analytic continuation have already surfaced in a number of works on lattice models and symmetric functions; see for example \cite{CorwinP,Borodin,BorodinP,BorodinW}.

As a result of these considerations, we obtain a matrix product construction of the polynomials $W_{\lambda}(x;q,t;z)$; this is given in Section \ref{sec:modM}. This would allow one, in principle, to write down monomial expansions of the form (\ref{Hqt-to-monom}) with all parameters $(x_1,\dots,x_N)$ and $(z_1,\dots,z_N)$ kept generic. However we will not do this in the present work, as the resulting formulae become rather long. Instead we will focus on the $z_i = 0$ reduction, which yields equation (\ref{eq:HmonP-intro}), as well as the $x_i = 0$ reduction, which leads to a {\it dual} analogue of (\ref{eq:HmonP-intro}) (see Theorem \ref{thm:H}).

\subsection{Outline}
In Section \ref{sec:W} we introduce the polynomials $W_\l(x;q,t;z)$, examine their symmetries, reductions and summation identities. In Section \ref{sec:HL} we present a lattice construction for  the modified Hall--Littlewood polynomials $H_\l(x;t)$ and show that it recovers the combinatorial formula of \cite{HKKOTY}. Next, in Section \ref{sec:modM}, we turn to the lattice constructions of $W_\l(x;q,t;z)$ and the modified Macdonald polynomials $H_\l(x;q,t)$ and derive two combinatorial formulae for the latter. In Section \ref{sec:proof} we prove that our lattice partition functions indeed match with $H_\l(x;q,t)$. In doing so we present full details of the computation of the fused $\ML$-matrix used in the construction of $W_\l(x;q,t;z)$. Finally, in Section \ref{sec:phi} we present the details of the resummation of the normalized hypergeometric function $\Phi_{\n|\nt}(z;t)$ into a finite polynomial with positive coefficients.


\section{The polynomials $W_{\l}(x;q,t;z)$}\label{sec:W}
In this section we define the $W$-polynomials $W_{\l}(x;q,t;z)$ which generalize Macdonald polynomials $P_{\l}(x;q,t)$, where $x=(x_1,\dots,x_N)$ and $z=(z_1,\dots,z_N)$ are two alphabets and $q$ and $t$ are the standard parameters of  Macdonald theory. 
We show that $W$-polynomials reduce to Macdonald polynomials and modified Macdonald polynomials, and also to their dual counterparts. We also discuss the symmetries of $W_{\l}(x;q,t;z)$ and their Cauchy identities. 
For basic facts of Macdonald theory we refer to the book \cite{MacdBook}.

\subsection{Notation}
Define the ring of symmetric functions $\LL_N:=\mathbb{Z}[x_1,\dots,x_N]^{\mathfrak{S}_N}$, where $\mathfrak{S}_N$ is the permutation group of $N$ letters, the field $\mathbb{F}=\mathbb{Q}(q,t)$ and the ring  $\LL_{N,\mathbb{F}}:=\LL_N\otimes\mathbb{F}$. 
Let $\PP$ be the set of all partitions. Let the length $\ell(\l)$ of the partition $\l\in \PP$ be the number of non-zero parts in $\l$. The weight of the partition $\l$ is denoted by $|\l|$ and equals the sum of its parts. If $\l$ is a permutation of a partition (an integer composition) then $|\l|$ also denotes the sum of parts of $\l$. 
To each partition we assign a Young diagram in the standard way. The partition $\l'$ is called dual to $\l$ and its parts are given by $\l'_i=\text{Card}(j:\l_j\geq i)$. The Young diagram of the dual partition is the reflection along the main diagonal of the Young diagram of the partition $\l$. For $\l,\m \in \PP$ let $\l\subseteq\m$ be the partial order on $\PP$ defined by the inclusion of the corresponding Young diagram $\l$ in $\m$. Let $\l\preceq\m$ be the partial order with the same inclusion condition as before, but with the additional constraint that the skew diagram $\l-\m$ contains no more than one square in each column, i.e. it is a horizontal strip. Assuming $|q|<1$ and $|t|<1$ we define the $(q,t)$-shifted factorials
\begin{align}
&(w;q)_{\infty}:=\prod_{i=0}^{\infty} (1-w q^i), \qquad (w;q)_{k}:=\prod_{i=0}^{k-1} (1-w q^i),  
\qquad 
(w;q)_{\l}=\prod_{i=1}^N(w;q)_{\l_{i}},\label{weq:qpoc}\\
&(w;q,t)_{\infty} := \prod_{k=0}^{\infty} \prod_{\ell=0}^{\infty} (1-w q^k t^\ell), \qquad (w;q,t)_{\l}:=\prod_{i=1}^{N}(w t^{1-i};q)_{\l_i}\label{weq:qtpoc},
\end{align}
where $\l=(\l_1,\dots,\l_N)$ is a partition. Let $\bullet=(i,j)\in{\mathbb{Z}^2}$ be a coordinate of the Young diagram of $\n=(\n_1,\dots,\n_N)$; $i\in\{1,\dots,N\}$ and $j\in\{1,\dots,\n_i\}$. Define its arm length $a(\bullet)=\n_i-j$ and leg length $l(\bullet)=\n'_j-i$. 
Define the coefficients $b_\n(q,t)$, $c_\n(q,t)$ and $c'_\n(q,t)$:
\begin{align}
\label{eq:bcc}
b_{\nu}(q,t)
=
\frac{c_{\nu}(q,t)}{c'_{\nu}(q,t)},
\quad
c_{\nu}(q,t)
=
\prod_{\bullet \in \nu}
(1-q^{a(\bullet)} t^{l(\bullet)+1}),
\quad
c'_{\nu}(q,t)
=
\prod_{\bullet \in \nu}
(1-q^{a(\bullet)+1} t^{l(\bullet)}).
\end{align}
These coefficients satisfy 
\begin{align}
&c'_{\n}(t,q)=c_{\n'}(q,t), 		\label{eq:ccprime}	\\
&c_{\n}(q,t)=(-t)^{|\n|}t^{n(\n)}q^{n(\n')}c_{\n}(1/q,1/t), 		\label{eq:ccinv}	\\
&b_\l(q,t) ={b_{\l'}(t,q)}^{-1}	\label{eq:bbprime},
\end{align}
where $n(\n)=\sum_{i=1}^N \n_i (i-1)$. Define also the indicator function $\theta(\text{True})=1$, $\theta(\text{False})=0$. 
\subsection{Branching formula}
The Macdonald polynomials $P_{\lambda}(x;q,t)$ form a basis of the ring $\LL_{N,\mathbb{F}}$ \cite[ch IV, (4.7)]{MacdBook}. With $P_\l(\varnothing;q,t)=\theta(\ell(\l)=0)$, Macdonald polynomials can be constructed recursively via the branching formula
\begin{align}\label{eq:branchP}
P_{\l}(x_1,\dots,x_N;q,t)= \sum_{\m\subseteq \l} P_{\l/\m}(x_N;q,t) P_{\m}(x_1,\dots,x_{N-1};q,t),
\end{align}
where the branching coefficients are given by the skew Macdonald polynomial in one variable:
\begin{align}
 &P_{\l/\m}(x;q,t) = \theta(\m\preceq \l) x^{|\l-\m|} \psi_{\l/\m}(q,t), \label{eq:branchcoefP}\\
 &\psi_{\l/\m}(q,t)=\prod_{1\leq i\leq j\leq \ell(\m)} \frac{f(q^{\m_i-\m_j}t^{j-i})f(q^{\l_i-\l_{j+1}}t^{j-i})}{f(q^{\l_i-\m_j}t^{j-i})f(q^{\m_i-\l_{j+1}}t^{j-i})},	\label{eq:psicoef}
\end{align} 
where $f(a)=(a t;q)_{\infty}/(a q;q)_{\infty}$, \cite[ch VI, sec 6, Ex. 2(b)]{MacdBook}. Note that the sum in (\ref{eq:branchP}) can be restricted to $\m\preceq \l$ due to the presence of the indicator function in (\ref{eq:branchcoefP}). 

Branching rules, under certain constraints (see e.g. \cite{LW}), allow one to define symmetric polynomials in a recursive manner. Using this idea one can generalize Macdonald polynomials \cite{LW}; we pursue one such generalization here. 

First, let us define an evaluation homomorphism. Let $p_r$ be the $r$-th power sum symmetric function 
\begin{align*}
p_r(x)=p_r(x_1,\dots,x_N)=\sum_{i=1}^N x_i^r.
\end{align*}
The functions $p_\l=\prod_{i=1}^N p_{\l_i}$ form a basis of $\LL_N$. 
Define the evaluation homomorphism $\pi^{x}_{y,z}$ (cf. \cite[p. 338]{MacdBook})
\begin{align}\label{eq:evhom}
\pi^{x}_{y,z}: p_r(x) \mapsto \frac{y^r - (-1)^r z^r}{1-t^r}.
\end{align}
Applying this evaluation homomorphism to the Macdonald polynomials gives \cite[(6.17)]{MacdBook}
\begin{align*}
\pi^{x}_{y,z}(P_\l(x;q,t))= t^{n(\l)}y^{|\l|}\frac{(-z/y;q,t)_\l}{c_\l(q,t)}.
\end{align*}
The skew Macdonald polynomials and their non-skew counterparts are related via \cite[p. 344]{MacdBook}
\begin{align*}
P_{\l/\m}(x;q,t)= \sum_{\n} \frac{b_\m(q,t) b_\n(q,t)}{b_\l(q,t)} f^{\l}_{\m,\n}(q,t) P_{\n}(x;q,t),
\end{align*}
where the sum runs over $\n$ such that $|\l|=|\m|+|\n|$ and $f^{\l}_{\m,\n}(q,t)$ is defined as the expansion coefficient in the product of two Macdonald polynomials:
\begin{align*}
P_{\m}(x;q,t)
P_{\n}(x;q,t)= \sum_{\l} f^{\l}_{\m,\n}(q,t) P_{\l}(x;q,t).
\end{align*}
Therefore the evaluation homomorphism on $P_{\l/\m}$ is given by 
\begin{align*}
\pi^{x}_{y,z}(P_{\l/\m}(x;q,t))=
y^{|\l-\m|} \frac{b_\m(q,t)}{b_\l(q,t)}\sum_{\n} t^{n(\n)}\frac{(-z/y;q,t)_\n}{c'_\n(q,t)} f^{\l}_{\m,\n}(q,t).
\end{align*}
We use this as our definition of the skew $W$-polynomial in one variable $y$ and one variable $z$:
\begin{align}\label{eq:skewW}
W_{\l/\m}(y;q,t;z)=
y^{|\l-\m|} \frac{c'_\l(q,t)}{c'_\m(q,t)}\sum_{\n} t^{n(\n)}\frac{(-z/y;q,t)_\n}{c'_\n(q,t)} f^{\l}_{\m,\n}(q,t).
\end{align}
Note that $W_{\lambda/\m}$ is generically non-vanishing for any $\m \subset \lambda$, not just in the case when $\lambda/\m$ forms a horizontal strip. For $\m = \varnothing$, the above sum over $\nu$ trivializes (since $f^{\lambda}_{\varnothing, \nu} = \delta_{\lambda \nu}$), and we obtain an explicit formula for one variable $W$-polynomials:
\begin{align*}
W_{\lambda}(y;q,t;z)
=
y^{|\l|} t^{n(\l)}(-z/y;q,t)_\l.
\end{align*}

Now we build up two alphabets $(x_1,\dots,x_N)$ and $(z_1,\dots,z_N)$ by the following branching rule:
\begin{align}\label{eq:branchW}
W_{\lambda}(x_1,\dots,x_N;q,t;z_1,\dots,z_N)
=
\sum_{\m}
W_{\lambda/\m}(x_N;q,t;z_N)
W_{\m}(x_1,\dots,x_{N-1};q,t;z_1,\dots,z_{N-1}),
\end{align}
with the initial condition $W_\l(\varnothing;q,t;\varnothing)=1$.


\subsection{Symmetries}
Our construction of $W_{\l}(x;q,t;z)$ using the branching rule does not guarantee that the resulting polynomials are symmetric in the alphabets $(x_1,\dots,x_N)$ and $(z_1,\dots,z_N)$.
\begin{prop}
The polynomials $W_{\l}(x;q,t;z)$ are symmetric in the alphabet $x=(x_1,\dots,x_N)$ and separately in the alphabet $z=(z_1,\dots,z_N)$. 
\end{prop}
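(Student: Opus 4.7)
The plan is to identify $W_{\l}(x;q,t;z)$ with the integral-form Macdonald polynomial $J_{\l}(x;q,t)=c_\l(q,t)P_\l(x;q,t)$ evaluated under the plethystic substitution $p_r(x)\mapsto (p_r(x)-p_r(-z))/(1-t^r)$. Since this substitution depends on the two alphabets only through the separately $\mathfrak{S}_N$-symmetric combinations $p_r(x)$ and $p_r(-z)$, the proposition is immediate from such an identification.

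First I would iterate the branching rule \eqref{eq:branchW} $N$ times, obtaining
\begin{align*}
W_\l(x_1,\dots,x_N;q,t;z_1,\dots,z_N)=\sum_{\varnothing=\m^0\subseteq \m^1\subseteq\cdots\subseteq\m^N=\l}\prod_{i=1}^{N}W_{\m^i/\m^{i-1}}(x_i;q,t;z_i).
\end{align*}
Next, comparing the defining formula \eqref{eq:skewW} with the expression for $\pi^{x}_{y,z}(P_{\l/\m}(x;q,t))$ displayed just above it, and using $b_\n=c_\n/c'_\n$, I would extract the normalization relation
\begin{align*}
W_{\l/\m}(y;q,t;z)=\frac{c_\l(q,t)}{c_\m(q,t)}\,\pi^{x}_{y,z}\bigl(P_{\l/\m}(x;q,t)\bigr).
\end{align*}
Along any flag $\{\m^i\}$ with $\m^0=\varnothing$ and $\m^N=\l$, the product of ratios $c_{\m^i}/c_{\m^{i-1}}$ telescopes to $c_\l/c_\varnothing=c_\l$, which is independent of the interior of the flag. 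Hence
\begin{align*}
W_\l(x;q,t;z)=c_\l(q,t)\sum_{\{\m^i\}}\prod_{i=1}^{N}\pi^{x^{(i)}}_{x_i,z_i}\bigl(P_{\m^i/\m^{i-1}}(x^{(i)};q,t)\bigr),
\end{align*}
where each factor is evaluated on its own formal single-variable alphabet $x^{(i)}$.

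To finish I would invoke two classical facts: (i) the homomorphisms $\pi^{x^{(i)}}_{x_i,z_i}$ act on disjoint systems of power-sum generators and therefore commute with the product and with the partition sum; (ii) iteration of the Macdonald branching rule \eqref{eq:branchP} on the concatenated alphabet $x^{(1)}\cup\cdots\cup x^{(N)}$ yields $\sum_{\{\m^i\}}\prod_i P_{\m^i/\m^{i-1}}(x^{(i)};q,t)=P_\l(x^{(1)},\dots,x^{(N)};q,t)$. Combining these, the entire flag-sum collapses to a single Macdonald polynomial to which the product of evaluation homomorphisms is applied, and under the latter the concatenated power sums transform as
\begin{align*}
\sum_{i=1}^{N}\pi^{x^{(i)}}_{x_i,z_i}\bigl(p_r(x^{(i)})\bigr)=\sum_{i=1}^{N}\frac{x_i^r-(-z_i)^r}{1-t^r}=\frac{p_r(x)-p_r(-z)}{1-t^r}.
\end{align*}
This gives $W_\l(x;q,t;z)=J_\l(x;q,t)\bigl|_{p_r(x)\mapsto (p_r(x)-p_r(-z))/(1-t^r)}$, from which the two symmetries are manifest.

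The main technical obstacle I anticipate is the normalization/telescoping step: one must cleanly account for the factor $c_\l/c_\m$ relating $W_{\l/\m}$ to $\pi^{x}_{y,z}(P_{\l/\m})$ and verify that, once this factor is present, the product over any flag depends only on the endpoints. Once the $c_\l$ prefactor is pulled out and the iterated skew branching is recognized, the rest is routine bookkeeping with the evaluation homomorphism $\pi^{x}_{y,z}$ defined in \eqref{eq:evhom}.
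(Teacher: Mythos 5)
Your proposal is correct and follows essentially the same route as the paper: both identify $W_{\l}(x;q,t;z)$ with $c_{\l}(q,t)\,\pi^{X}_{Y,Z}\bigl(P_{\l}(x^{(1)}+\cdots+x^{(N)};q,t)\bigr)$ by matching the two branching recursions, and then read off the symmetry from the fact that the evaluation homomorphism factors through the power sums $p_r(X)$. Your explicit telescoping of the ratios $c_{\m^i}/c_{\m^{i-1}}$ is the unrolled form of the paper's induction, and you correctly record the normalization as $W_{\l/\m}=\tfrac{c_{\l}}{c_{\m}}\,\pi^{x}_{y,z}(P_{\l/\m})$ where the paper's equation (\ref{eq:piskew}) omits the denominator $c_{\m}(q,t)$.
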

\begin{proof}
Our approach is based on the evaluation homomorphism (plethystic substitution). Take a set $X=(x^{(1)},\dots,x^{(N)})$ to be a collection of alphabets $x^{(i)}=(x_1^{(i)},\dots,x_{m_i}^{(i)})$ such that each $m_i$ is arbitrary. By the definition of the power sums we have:
\begin{align}\label{eq:psumadd}
p_r(X)=p_r(x^{(1)})+\dots+p_r(x^{(N)}).
\end{align}
Let $Y=(y_1,\dots,y_N)$ and $Z=(z_1,\dots,z_N)$ be two more alphabets. 
The evaluation homomorphism on a collection of alphabets is defined by the composition of evaluation homomorphisms on individual alphabets
\begin{align*}
&\pi^{X}_{Y,Z}:=
\pi^{x^{(N)}}_{y_N,z_N}\dots\pi^{x^{(1)}}_{y_1,z_1}.
\end{align*}
We find that
\begin{align*}
\pi^{X}_{Y,Z}: 
p_r(x^{(1)}+\dots+x^{(N)}) \mapsto \frac{y_1^r+\dots+y_N^r - (-1)^r\left(z_1^r+\dots +z_N^r\right)}{1-t^r},
\end{align*}
or more compactly,
\begin{align}\label{eq:ev}
\pi^{X}_{Y,Z}: 
p_r(X) \mapsto \frac{p_r(Y)-(-1)^rp_r(Z)}{1-t^r}.
\end{align}
Next, take (\ref{eq:skewW}) rewritten as 
\begin{align}\label{eq:piskew}
W_{\l/\m}(y;q,t;z)=
c_\l(q,t)\pi^{x}_{y,z}(P_{\l/\m}(x;q,t)),
\end{align}
and apply $\pi^{X}_{Y,Z}$ to (\ref{eq:branchP}):
\begin{align}
\pi^{X}_{Y,Z}\left(P_{\l}(x^{(1)}+\dots+x^{(N)};q,t)\right)&= 
\sum_{\m\subseteq \l} \pi^{x^{(N)}}_{y_N,z_N}\left(P_{\l/\m}(x^{(N)};q,t)\right)  \\
&\times
\pi^{x^{(N-1)}}_{y_{N-1},z_{N-1}}\dots\pi^{x^{(1)}}_{y_{1},z_{1}}
\left(P_{\m}(x^{(1)}+\dots+x^{(N-1)};q,t)\right). \nonumber
\end{align}
Since the initial conditions of the branching recurrences for $P_\l$ and $W_\l$ coincide, we conclude that 
\begin{align}\label{eq:evW}
W_{\l}(y_1,\dots,y_N;q,t;z_1,\dots,z_N)=
c_\l(q,t)\pi^{X}_{Y,Z}\left(P_{\l}(x^{(1)}+\dots+x^{(N)};q,t)\right).
\end{align}
The Macdonald polynomial on the right hand side can be expanded in $p_r(X)$ on which the evaluation homomorphism $\pi^{X}_{Y,Z}$ acts. Due to (\ref{eq:ev}) this means that $W_\l$ are symmetric polynomials in both alphabets $Y$ and $Z$ separately. Alternatively, one can think of (\ref{eq:evW}) as the definition of $W_\l$.
\end{proof}
This proposition can also be proven by applying two times the branching rule (\ref{eq:branchW}) to the polynomials $W_{\l}(x_1,\dots,x_{N-1},x_N;q,t;z_1,\dots,z_{N-1},z_N)$ and $W_{\l}(x_1,\dots,x_N,x_{N-1};q,t;z_1,\dots,z_{N-1},z_N)$ and demanding that the results are equal in both cases. This leads to an identity on the branching coefficients $W_{\l/\m}$ similar to \cite[Equation  (3.3)]{LW}. 

The $W$-polynomials also have interesting symmetries involving the parameters $q$ and $t$. 
First we examine the symmetries of the skew polynomial (\ref{eq:skewW}) in one variable. 
Under the inversion of $q$ and $t$ we have the following relations:
\begin{align*}
&f^{\l}_{\m,\n}(q^{-1},t^{-1})=f^{\l}_{\m,\n}(q,t),
\qquad c'_{\l}(q^{-1},t^{-1})=(-q)^{-|\l|}q^{-n(\l')}t^{-n(\l)} c'_{\l}(q,t), 
\end{align*}
\begin{align*}
& \frac{t^{-n(\n)}(-z/y;q^{-1},t^{-1})_{\n}}{c'_{\n}(q^{-1},t^{-1})}=
(-z/y)^{|\n|}q^{|\n|} \frac{t^{n(\n)}(-y/z;q,t)_{\n}}{c'_{\n}(q,t)},
\end{align*}
where the relation involving $f^{\l}_{\m,\n}$ can be found in \cite[p. 343]{MacdBook}. These relations lead to 
\begin{align*}
W_{\l/\m}(y;q^{-1},t^{-1};z)
&=
y^{|\l-\m|} \frac{c'_\l(q^{-1},t^{-1})}{c'_\m(q^{-1},t^{-1})}\sum_{\n} t^{-n(\n)}\frac{(-z/y;q^{-1},t^{-1})_\n}{c'_\n(q^{-1},t^{-1})} f^{\l}_{\m,\n}(q^{-1},t^{-1})\\
&=
z^{|\l-\m|} \frac{q^{-n(\l')}t^{-n(\l)} c'_\l(q,t)}{q^{-n(\m')}t^{-n(\m)} c'_\m(q,t)}
\sum_{\n}  \frac{t^{n(\n)}(-y/z;q,t)_{\n}}{c'_{\n}(q,t)} f^{\l}_{\m,\n}(q,t).
\end{align*}
Therefore we find the following symmetry:
\begin{align}\label{eq:syminv}
W_{\l/\m}(y;q^{-1},t^{-1};z)=
t^{n(\m)-n(\l)}q^{n(\m')-n(\l')}W_{\l/\m}(z;q,t;y).
\end{align}
In a similar vein, under the exchange of $q$ and $t$ we have \cite[(7.3)]{MacdBook}
\begin{align*}
&f^{\l}_{\m,\n}(t,q)=f^{\l'}_{\m',\n'}(q,t) \frac{b_{\l}(t,q)}{b_{\m}(t,q)b_{\n}(t,q)}, 
\end{align*}
\begin{align*}
&q^{n(\n)}(-z/y;t,q)_\n=(z/y)^{|\n|}t^{n(\n')}(-y/z;q,t)_{\n'}.
\end{align*}
Together with (\ref{eq:ccprime}) this leads to 
\begin{align*}
W_{\l/\m}(y;t,q;z)=&
y^{|\l-\m|} \frac{c'_\l(t,q)}{c'_\m(t,q)}\sum_{\n} q^{n(\n)}\frac{(-z/y;t,q)_\n}{c'_\n(t,q)} f^{\l}_{\m,\n}(t,q)= \\
&z^{|\l'-\m'|} \frac{c'_{\l'}(q,t)}{c'_{\m'}(q,t)}\sum_{\n}t^{n(\n')}\frac{(-y/z;q,t)_{\n'}}{c'_{\n'}(q,t)} f^{\l'}_{\m',\n'}(q,t).
\end{align*}
Therefore we find another symmetry:
\begin{align}\label{eq:symqt}
W_{\l/\m}(y;t,q;z)=W_{\l'/\m'}(z;q,t;y).
\end{align}
\begin{prop}
The polynomials $W_{\l}(x;q,t;z)=W_{\l}(x_1,\dots,x_N;q,t;z_1,\dots,z_N)$ satisfy
\begin{itemize}
\item Quasi-symmetry under the inversion of $q$ and $t$:
\begin{align}\label{eq:Wsyminv}
W_{\l}(x;q^{-1},t^{-1};z)=
t^{-n(\l)}q^{-n(\l')}W_{\l}(z;q,t;x).
\end{align}
\item Quasi-symmetry under the exchange of $q$ and $t$:
\begin{align}\label{eq:Wsymexch}
W_{\l}(x;t,q;z)=
W_{\l'}(z;q,t;x).
\end{align}
\end{itemize}
\end{prop}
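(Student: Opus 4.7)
The plan is to prove both symmetries by induction on the number of variables $N$, using the branching rule (\ref{eq:branchW}) together with the already-established skew symmetries (\ref{eq:syminv}) and (\ref{eq:symqt}).

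\textbf{Base case.} For $N=1$, the branching rule with the initial condition $W_\l(\varnothing;q,t;\varnothing)=1$ gives $W_\l(x_1;q,t;z_1) = W_{\l/\varnothing}(x_1;q,t;z_1)$, since the only $\m\subseteq\l$ contributing is $\m=\varnothing$. Specializing (\ref{eq:syminv}) to $\m=\varnothing$ (so that $n(\varnothing)=n(\varnothing')=0$) yields exactly (\ref{eq:Wsyminv}) in one variable, and specializing (\ref{eq:symqt}) to $\m=\varnothing$ yields exactly (\ref{eq:Wsymexch}) in one variable.

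\textbf{Inductive step for (\ref{eq:Wsyminv}).} Assuming the identity for $N-1$ variables, expand $W_\l(x;q^{-1},t^{-1};z)$ via (\ref{eq:branchW}):
\begin{align*}
W_\l(x_1,\dots,x_N;q^{-1},t^{-1};z_1,\dots,z_N) = \sum_\m W_{\l/\m}(x_N;q^{-1},t^{-1};z_N)\, W_\m(x_1,\dots,x_{N-1};q^{-1},t^{-1};z_1,\dots,z_{N-1}).
\end{align*}
Applying (\ref{eq:syminv}) to the first factor produces the prefactor $t^{n(\m)-n(\l)}q^{n(\m')-n(\l')}$ and swaps $x_N \leftrightarrow z_N$, while the inductive hypothesis on the second factor produces $t^{-n(\m)}q^{-n(\m')}$ and swaps the remaining alphabets. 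The $\m$-dependent prefactors cancel, leaving an overall $t^{-n(\l)}q^{-n(\l')}$ multiplying precisely the branching expansion of $W_\l(z_1,\dots,z_N;q,t;x_1,\dots,x_N)$.

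\textbf{Inductive step for (\ref{eq:Wsymexch}).} Similarly, expanding $W_\l(x;t,q;z)$ and applying (\ref{eq:symqt}) to the skew factor together with the inductive hypothesis to the rest gives
\begin{align*}
W_\l(x;t,q;z) = \sum_\m W_{\l'/\m'}(z_N;q,t;x_N)\, W_{\m'}(z_1,\dots,z_{N-1};q,t;x_1,\dots,x_{N-1}).
\end{align*}
Reindexing by $\n=\m'$ (a bijection on partitions that preserves $\m\subseteq\l \Leftrightarrow \n\subseteq\l'$), the right-hand side is exactly the branching expansion of $W_{\l'}(z_1,\dots,z_N;q,t;x_1,\dots,x_N)$.

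\textbf{Main obstacle.} The calculations themselves are routine bookkeeping; the only substantive issue is ensuring the well-definedness of the induction, namely that the branching sums involved are the same range-wise on both sides. For (\ref{eq:Wsymexch}) this requires the conjugation bijection on partitions to intertwine the two branching rules correctly; for (\ref{eq:Wsyminv}) it requires that the $n(\m)$, $n(\m')$ factors cancel exactly as advertised. Both points are transparent once the telescoping structure of the prefactors is isolated, which makes the inductive argument essentially automatic given the skew identities already proved.
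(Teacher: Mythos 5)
Your proof is correct and follows essentially the same route as the paper, which establishes both identities by repeated application of the branching rule (\ref{eq:branchW}) combined with the skew identities (\ref{eq:syminv}) and (\ref{eq:symqt}); you have simply made the induction on $N$ and the telescoping cancellation of the $t^{n(\m)}q^{n(\m')}$ prefactors explicit. No gaps.
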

\begin{proof}
Property (\ref{eq:Wsyminv}) follows from repetitive use of the branching rule together with (\ref{eq:syminv}), while property (\ref{eq:Wsymexch}) follows similarly but using (\ref{eq:symqt}).
\end{proof}
%


\subsection{Reductions to Macdonald polynomials and their duals}
\label{ssec:ReductionMac}
Based on (\ref{eq:branchW}) we find reductions to ordinary Macdonald polynomials at a specialization of $z$ variables, and at a specialization of $x$ variables. 
\begin{prop}
The polynomials $W_{\l}(x;q,t;z)=W_{\l}(x_1,\dots,x_N;q,t;z_1,\dots,z_N)$ satisfy
\begin{itemize}
\item The following reduction at $z_i=-t x_i$, for all $1\leq i\leq N$
\begin{align}\label{eq:Wred1}
W_{\l}(x_1,\dots,x_N;q,t;-t x_1,\dots,-t x_N)=
c_\l(q,t)
P_{\l}(x_1,\dots,x_N;q,t).
\end{align}
\item The following reduction at $x_i=-q z_i$, for all $1\leq i\leq N$
\begin{align}\label{eq:Wred2}
W_{\l}(-q z_1,\dots,-q z_N;q,t;z_1,\dots, z_N)=
c_{\l'}(t,q)P_{\l'}(z_1,\dots,z_N;t,q).
\end{align}
\end{itemize}
\end{prop}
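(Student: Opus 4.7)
The plan is to reduce both statements to the master formula (\ref{eq:evW}),
\[
W_{\l}(Y;q,t;Z) = c_\l(q,t)\, \pi^{X}_{Y,Z}\bigl(P_\l(x^{(1)}+\dots+x^{(N)};q,t)\bigr),
\]
combined with the explicit action (\ref{eq:ev}) of $\pi^{X}_{Y,Z}$ on power sums. Since $\{p_r\}_{r\geq 1}$ generates $\LL_{N,\mathbb{F}}$ and $\pi^X_{Y,Z}$ is a ring homomorphism, I only need to identify what each of the two specialisations does to $p_r(X)$ and then reinterpret the resulting symmetric function of $Y$ or $Z$.

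For (\ref{eq:Wred1}), I would substitute $z_i=-t x_i$, i.e.\ $Z=-tY$, directly into (\ref{eq:ev}) and observe the cancellation
\[
\frac{p_r(Y) - (-1)^r(-t)^r p_r(Y)}{1-t^r} = \frac{(1-t^r)\,p_r(Y)}{1-t^r} = p_r(Y),
\]
which shows that $\pi^{X}_{Y,-tY}$ collapses to the tautological substitution $x^{(1)}+\dots+x^{(N)}\mapsto Y$. Inserting this into (\ref{eq:evW}) yields $W_\l(Y;q,t;-tY)=c_\l(q,t)P_\l(Y;q,t)$ immediately.

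For (\ref{eq:Wred2}), I would set $Y=-qZ$ (i.e.\ $x_i=-q z_i$) and compute
\[
\pi^{X}_{-qZ,Z}:\; p_r(X)\mapsto\frac{(-q)^r p_r(Z) - (-1)^r p_r(Z)}{1-t^r} = (-1)^{r-1}\,\frac{1-q^r}{1-t^r}\, p_r(Z).
\]
The key recognition is that this is exactly the action on power sums of Macdonald's involution $\omega_{q,t}$. I would then invoke the classical identity of \cite{MacdBook}, $\omega_{q,t}(P_\l(x;q,t)) = Q_{\l'}(x;t,q) = b_{\l'}(t,q)\, P_{\l'}(x;t,q)$, to rewrite (\ref{eq:evW}) under this specialisation as
\[
W_\l(-qZ;q,t;Z) = c_\l(q,t)\, b_{\l'}(t,q)\, P_{\l'}(Z;t,q).
\]
To finish, I would match the prefactors: by (\ref{eq:ccprime}) we have $c_\l(q,t) = c'_{\l'}(t,q)$, and the definitions in (\ref{eq:bcc}) give $c_\l(q,t)\,b_{\l'}(t,q) = c'_{\l'}(t,q)\cdot c_{\l'}(t,q)/c'_{\l'}(t,q) = c_{\l'}(t,q)$, as desired.

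The only conceptually non-trivial step is the recognition, in the second reduction, that the plethystic substitution induced by $Y=-qZ$ coincides with the Macdonald involution $\omega_{q,t}$; after that, part (\ref{eq:Wred2}) follows from a well-known duality in \cite{MacdBook} plus bookkeeping with the relations (\ref{eq:bcc})--(\ref{eq:bbprime}). Part (\ref{eq:Wred1}) is a completely elementary power-sum cancellation.
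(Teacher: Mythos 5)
Your proof is correct, but it follows a genuinely different route from the paper. The paper proves both reductions by working at the level of the one-variable skew branching coefficients: it specializes $z=-ty$ (resp.\ $y=-qz$) in the explicit formula (\ref{eq:skewW}), observes that the Pochhammer factor $(t;q,t)_\nu$ (resp.\ $(q^{-1};q,t)_\nu$) kills all terms except those where $\nu$ is a single row (resp.\ a single column), and then uses Macdonald's Pieri-type formulas for $f^{\l}_{\m,\nu}$ to identify the surviving term with $\psi_{\l/\m}(q,t)$ (resp.\ $\psi_{\l'/\m'}(t,q)$), after which the branching recursion (\ref{eq:branchW}) closes onto that of $P_\l$ (resp.\ $P_{\l'}$). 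You instead work globally through the plethystic characterization (\ref{eq:evW}): the first reduction is the elementary cancellation you note (which the paper itself remarks ``readily follows from a specialization in (\ref{eq:ev})'' before deliberately choosing the branching route), and the second is the recognition that $Y=-qZ$ turns $\pi^{X}_{Y,Z}$ into the Macdonald involution $\omega_{q,t}$, followed by the prefactor bookkeeping via (\ref{eq:bcc}) and (\ref{eq:ccprime}), which you carry out correctly. Your argument is shorter and more conceptual, and there is no circularity since (\ref{eq:evW}) is established independently of this proposition; what the paper's longer computation buys is the explicit closed forms of the specialized skew coefficients $W_{\l/\m}(y;q,t;-ty)$ and $W_{\l/\m}(-qz;q,t;z)$, which identify them termwise with the Macdonald branching coefficients and are of independent interest. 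Note also that the paper's subsequent Remark recovers the involution as a \emph{consequence} of (\ref{eq:Wred2}), whereas you use it as an \emph{input}; both directions are legitimate since the involution is classical.
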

In fact, due to the symmetry in $x$ and $z$ variables we can relax the two reductions to $z_i=-t x_{\s(i)}$ and $x_i=-q z_{\s(i)}$, where $\s$ is any permutation in $\mathfrak{S}_N$. Let us also remark that the reduction (\ref{eq:Wred1}) readily follows from a specialization in (\ref{eq:ev}). We will, however, use the explicit branching rules to prove both equations (\ref{eq:Wred1}) and (\ref{eq:Wred2}).

\begin{proof}
Set $z=- t y$ in (\ref{eq:skewW}). The function $(t y/y;q,t)_\n=(t;q,t)_\n$ vanishes unless $\nu$ consists of a single row, $\nu=(\nu_1)$, and $\nu_1=|\l-\m|$. This selects one term in the summation in  (\ref{eq:skewW}), and we have
\begin{align*}
W_{\l/\m}(y;q,t;-t y)=
y^{|\l-\m|} \frac{c'_\l(q,t)}{c'_\m(q,t)}\frac{(t;q,t)_{|\l-\m|}}{c'_{|\l-\m|}(q,t)} f^{\l}_{\m,|\l-\m|}(q,t).
\end{align*}
This simplifies due to
\begin{align*}
c'_{|\l-\m|}(q,t)=(q;q,t)_{|\l-\m|}, \qquad f^{\l}_{\m,|\l-\m|}(q,t)=\theta(\m\preceq \l)\frac{(q;q,t)_{|\l-\m|}}{(t;q,t)_{|\l-\m|}}\frac{b_\l(q,t)}{b_\m(q,t)}\psi_{\l/\m}(q,t),
\end{align*}
where the formula for the coefficients $f^{\l}_{\m,|\l-\m|}(q,t)$ can be found in \cite[(6.24)]{MacdBook}.
Thus the branching coefficient becomes 
\begin{align*}
W_{\l/\m}(y;q,t;-t y)=
\theta(\m\preceq \l)
y^{|\l-\m|}\frac{c_\l(q,t)}{c_\m(q,t)}
\psi_{\l/\m}(q,t).
\end{align*}
If we substitute this branching coefficient into (\ref{eq:branchW}) we recover the branching rule for Macdonald polynomials (\ref{eq:branchP}) times $c_\l(q,t)$, which proves the statement (\ref{eq:Wred1}). 

The second statement (\ref{eq:Wred2}) is proved analogously. Set $y=-q z$ in (\ref{eq:skewW}). The function $(z/qz;q,t)_\n=(q^{-1};q,t)_\n$ vanishes unless $\nu$ consists of a single column, $\nu=1^p$, where $p=|\l-\m|$. This selects one term in the summation in  (\ref{eq:skewW}):
\begin{align*}
W_{\l/\m}(-q z;q,t;z)=
(-q z)^{|\l-\m|} \frac{c'_\l(q,t)}{c'_\m(q,t)}t^{n(1^p)}\frac{(q^{-1};q,t)_{1^p}}{c'_{1^p}(q,t)} f^{\l}_{\m,1^p}(q,t).
\end{align*}
Due to simplifications
\begin{align*}
& c'_{1^p}(q,t)=\prod_{i=1}^{p}(1-q t^{i-1}), 
 \qquad t^{n(1^p)}(q^{-1};q,t)_{1^p}=(-q)^{-p} \prod_{i=1}^{p}(1-q t^{i-1}),  \\
& f^{\l}_{\m,1^p}(q,t)=\theta(\m'\preceq \l')\psi_{\l'/\m'}(t,q),
\end{align*}
we can write the branching coefficient as
\begin{align*}
W_{\l/\m}(-q z;q,t;z)=
\theta(\m'\preceq \l')
z^{|\l-\m|} \frac{c'_\l(q,t)}{c'_\m(q,t)} \psi_{\l'/\m'}(t,q).
\end{align*}
Now since
\begin{align*}
P_{\l'/\m'}(z;t,q) = \theta(\m'\preceq \l') z^{|\l-\m|} \psi_{\l'/\m'}(t,q),
\end{align*}
and by virtue of (\ref{eq:ccprime}),  we find that
\begin{align*}
W_{\l/\m}(-q z;q,t;z)=
\frac{c_{\l'}(t,q)}{c_{\m'}(t,q)} P_{\l'/\m'}(z;t,q).
\end{align*}
Substituting this into the branching rule (\ref{eq:branchW}), we recover the branching rule of the form (\ref{eq:branchP}) which is expressed in terms of the polynomial $c_{\l'}(t,q)P_{\l'}(z_1,\dots,z_N;t,q)$. This proves (\ref{eq:Wred2}).
\end{proof}
In \cite[ch VI, Ex (5,1)]{MacdBook} we find the involution $\omega_{q,t}$ which relates $P_{\l}$ to its dual $Q_{\l}(x;q,t)=b_\l(q,t) P_{\l}(x;q,t)$:
\begin{align*}
&\omega_{q,t}P_{\l}(x;q,t)=
Q_{\l'}(x;t,q),\\
&\omega_{q,t}Q_{\l}(x;q,t)=
P_{\l'}(x;t,q).
\end{align*}

\begin{rmk}
Combining (\ref{eq:Wred2}) with the homomorphism (\ref{eq:ev}), we recover the Macdonald involution:
\begin{align*}
\pi^{x}_{q x,x}\left(Q_{\l}(x;q,t)\right)=
P_{\l'}(x;t,q),
\end{align*}
while $\pi^{x}_{x,t x}$ acts as the identity map.
\end{rmk}
%


\subsection{Modified Macdonald polynomials and their duals}
\label{ssec:ReductionMacH}
Let us recall the definition of the modified Macdonald polynomials \cite{Macd88}. First we need the Macdonald polynomials in the integral form
\begin{align}\label{eq:MacJ}
J_{\l}(x;q,t):=
c_{\l}(q,t) P_{\l}(x;q,t).
\end{align}
The function $J_{\l}(x;q,t)$ considered as a polynomial in $x_1,\dots,x_N$ has coefficients in $\mathbb{Z}[q,t]$. We define the modified Macdonald polynomials $H_\l$ by\footnote{Our definition differs from the definition of \cite{Haiman2} by the inversion of $t$ and the factor $t^{n(\l)}$.} 
\begin{align}\label{eq:MacH}
H_{\l}(y;q,t):=
\pi_{y,0}^{x}(J_{\l}(x;q,t)).
\end{align}
The $W$-polynomials  reduce to modified Macdonald polynomials. Set for convenience 
\begin{align}\label{eq:Wx}
W_{\l}(x_1,\dots,x_N;q,t)
&
:=
W_{\l}(x_1,\dots,x_N;q,t;0,\dots,0), \\
\label{eq:Wz}
W_{\l}(q,t;z_1,\dots, z_N)
&
:=
W_{\l}(0,\dots,0;q,t;z_1,\dots, z_N).
\end{align}
\begin{prop}\label{prop:WH}
The $W$-polynomials satisfy
\begin{itemize}
\item The following reduction at $z_i=0$, for all $1\leq i\leq N$:
\begin{align}\label{eq:WredH1}
W_{\l}(x_1,\dots,x_N;q,t)=
H_{\l}(x_1,\dots,x_N;q,t).
\end{align}
\item The following reduction at $x_i=0$, for all $1\leq i\leq N$:
\begin{align}\label{eq:WredH2}
W_{\l}(q,t;z_1,\dots, z_N)=
H_{\l'}(z_1,\dots,z_N;t,q).
\end{align}
\end{itemize}
\end{prop}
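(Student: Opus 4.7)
The plan is to derive both reductions from the plethystic representation (\ref{eq:evW}),
\[
W_{\l}(y_1,\dots,y_N;q,t;z_1,\dots,z_N) \;=\; c_\l(q,t)\, \pi^{X}_{Y,Z}\!\left(P_{\l}(x^{(1)}+\dots+x^{(N)};q,t)\right),
\]
together with the explicit action (\ref{eq:ev}) of $\pi^{X}_{Y,Z}$ on power sums; I will avoid re-running the branching argument used for (\ref{eq:Wred1})--(\ref{eq:Wred2}), since the plethystic formulation already does all the work.

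For the first statement (\ref{eq:WredH1}), I would set $z_i=0$ for all $i$ in (\ref{eq:evW}). Then (\ref{eq:ev}) specializes to the homomorphism $p_r(X) \mapsto p_r(Y)/(1-t^r)$, which is precisely the plethystic substitution that defines the modified Macdonald polynomial $H_\l$ from the integral form $J_\l$ via (\ref{eq:MacJ})--(\ref{eq:MacH}). Combining this with $J_\l(x;q,t)=c_\l(q,t)P_\l(x;q,t)$ gives
\[
W_\l(y_1,\dots,y_N;q,t;0,\dots,0) = \pi^{X}_{Y,0}\!\left(J_\l(x^{(1)}+\dots+x^{(N)};q,t)\right) = H_\l(y_1,\dots,y_N;q,t).
\]

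For the second statement (\ref{eq:WredH2}), rather than re-running the same computation with the alphabets swapped, I would exploit the quasi-symmetry (\ref{eq:Wsymexch}) already established. Interchanging $(q,t)$ globally in that identity yields the equivalent form
\[
W_\l(x;q,t;z) \;=\; W_{\l'}(z;t,q;x).
\]
Setting $x_i=0$ for all $i$ on both sides reduces the right-hand side to $W_{\l'}(z_1,\dots,z_N;t,q;0,\dots,0)$, and applying the already-proven (\ref{eq:WredH1}) with $(q,t)$ replaced by $(t,q)$ and $\l$ replaced by $\l'$ identifies this with $H_{\l'}(z_1,\dots,z_N;t,q)$, as required.

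The only subtlety worth flagging is notational: the definition (\ref{eq:MacH}) writes $\pi^{x}_{y,0}$ as a single-alphabet operation, whereas (\ref{eq:evW}) uses the composite $\pi^{X}_{Y,0}$ acting on a collection of alphabets $X=(x^{(1)},\dots,x^{(N)})$. By the additivity of power sums (\ref{eq:psumadd}) and the fact that $J_\l$ depends on $X$ only through the $p_r(X)$, the two prescriptions agree when applied to $J_\l$, so this is a matter of unpacking definitions rather than a genuine obstacle. Beyond this compatibility check, the proof is an essentially immediate corollary of (\ref{eq:evW}) and the quasi-symmetry (\ref{eq:Wsymexch}); I do not anticipate any substantive technical difficulty.
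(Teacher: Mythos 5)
Your proof is correct and follows essentially the same route as the paper: the first reduction is obtained by setting $z_i=0$ in the plethystic formula (\ref{eq:evW}) and comparing with the definitions (\ref{eq:MacJ})--(\ref{eq:MacH}), and the second is deduced from the first via the quasi-symmetry (\ref{eq:Wsymexch}). The compatibility check you flag between $\pi^{x}_{y,0}$ and the composite $\pi^{X}_{Y,0}$ is handled correctly by the additivity of power sums.
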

\begin{proof}
The proof of (\ref{eq:WredH1}) is a straightforward consequence of (\ref{eq:ev}) and (\ref{eq:evW}). The proof of (\ref{eq:WredH2}) follows from (\ref{eq:WredH1}) and the property (\ref{eq:Wsymexch}).
\end{proof}
\begin{rmk}
The modified Macdonald polynomials $H_{\l}(x_1,\dots,x_N;q,t)$ satisfy
\begin{align}\label{eq:HinvH}
H_{\l}(x_1,\dots,x_N;q,t)=
t^{n(\l)}q^{n(\l')}
H_{\l'}(x_1,\dots,x_N;t^{-1},q^{-1}).
\end{align}
\end{rmk}
\begin{proof}
This relation is a consequence of the property (\ref{eq:Wsyminv}) and (\ref{eq:WredH1})-(\ref{eq:WredH2}).
\end{proof}
A combinatorial formula for $H_\l$ was obtained in \cite{HaglundHL1,HaglundHL2}. This formula, in particular, implies that the coefficient of each monomial of $H_\l$ is in $\mathbb{N}[q,t]$. Our construction in Section $\ref{sec:modM}$ gives combinatorial formulae for $H_\l$ and separately for $H_{\l'}$ which are manifestly positive. 
The construction of Section $\ref{sec:modM}$ leads to positive combinatorial formulae for $W_\l$, as well, but we omit these from the present work as the formulae for $H_\l$ are less technically involved.

\begin{rmk}
The modified Macdonald polynomials $H_{\l}(x_1,\dots,x_N;q,t)$ reduce to the modified Hall--Littlewood polynomials  $H_{\l}(x_1,\dots,x_N;t)$ when $q=0$
\begin{align}\label{eq:HtoHL}
H_{\l}(x_1,\dots,x_N;t)=
H_{\l}(x_1,\dots,x_N;0,t).
\end{align}
\end{rmk}


\subsection{Cauchy identities}\label{ssec:CI}
Macdonald polynomials have the following Cauchy and dual Cauchy identities
\begin{align}
\sum_{\lambda}
P_{\lambda}(x;q,t)
Q_{\lambda}(y;q,t)
&=
\prod_{i,j}
\frac{(t x_i y_j;q)_{\infty}}{(x_i y_j;q)_{\infty}}, \label{eq:CIPQ} \\
\sum_{\lambda}
P_{\lambda}(x;q,t)
P_{\lambda'}(y;t,q)
&=
\prod_{i,j}
(1+x_i y_j). \label{eq:CIPQdual}
\end{align}
A similar identity holds for $W_{\l}$.
\begin{prop}
The Cauchy identity for $W_{\l}(x;q,t;z)$ takes the form
\begin{align}\label{eq:CIW}
\sum_{\lambda}
\frac{1}{c'_{\l}(q,t)c_{\l}(q,t)}
W_{\lambda}(x;q,t;z)
W_{\lambda}(y;q,t;w)
=
\prod_{i,j}
\frac{(-z_i y_j;q,t)_{\infty}(-x_i w_j;q,t)_{\infty}}{(x_i y_j;q,t)_{\infty}(z_i w_j;q,t)_{\infty}}.
\end{align}
\end{prop}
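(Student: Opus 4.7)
My plan is to pull back the ordinary Macdonald Cauchy identity \eqref{eq:CIPQ} through the evaluation homomorphism already used to define the $W$-polynomials. Recall from \eqref{eq:evW} that $W_{\l}(x;q,t;z) = c_{\l}(q,t)\,\pi^{X}_{x,z}(P_{\l}(X;q,t))$, where $X=(x^{(1)},\dots,x^{(N)})$ is an abstract collection of alphabets. Introduce a second copy $X'=({x'}^{(1)},\dots,{x'}^{(N)})$ and the corresponding homomorphism $\pi^{X'}_{y,w}$. Using $Q_{\l}(X;q,t)=b_{\l}(q,t)P_{\l}(X;q,t)$ together with $b_{\l}=c_{\l}/c'_{\l}$, one immediately gets
\begin{align*}
\frac{W_{\l}(x;q,t;z)\,W_{\l}(y;q,t;w)}{c_{\l}(q,t)\,c'_{\l}(q,t)}
=\pi^{X}_{x,z}\pi^{X'}_{y,w}\bigl(P_{\l}(X;q,t)\,Q_{\l}(X';q,t)\bigr).
\end{align*}
Summing over $\l$ and commuting the two homomorphisms past the sum reduces everything to applying $\pi^{X}_{x,z}\pi^{X'}_{y,w}$ to the right-hand side of \eqref{eq:CIPQ}.

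Next I would expand that Cauchy kernel in power sums via the standard Macdonald identity
\begin{align*}
\prod_{i,j}\frac{(tX_iX'_j;q)_{\infty}}{(X_iX'_j;q)_{\infty}}
=
\exp\left(\sum_{r\geq 1}\frac{1}{r}\,\frac{1-t^r}{1-q^r}\,p_r(X)\,p_r(X')\right).
\end{align*}
Applying $\pi^{X}_{x,z}$ and $\pi^{X'}_{y,w}$ through \eqref{eq:ev} replaces $p_r(X)p_r(X')$ by $(1-t^r)^{-2}(p_r(x)-(-1)^rp_r(z))(p_r(y)-(-1)^rp_r(w))$, so after the prefactor $(1-t^r)/(1-q^r)$ is folded in, the exponent splits into four pieces
\begin{align*}
\sum_{r\geq 1}\frac{1}{r(1-q^r)(1-t^r)}\Bigl(p_r(x)p_r(y)+p_r(z)p_r(w)-(-1)^r\bigl(p_r(x)p_r(w)+p_r(z)p_r(y)\bigr)\Bigr).
\end{align*}
Each piece can then be exponentiated separately using the elementary power-sum identity $\sum_{r\geq 1}u^r/(r(1-q^r)(1-t^r))=-\log(u;q,t)_{\infty}$, which follows from expanding the double product defining $(u;q,t)_\infty$ as a geometric series and taking logarithms.

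The first two pieces produce the denominator factors $\prod_{i,j}(x_iy_j;q,t)_{\infty}^{-1}$ and $\prod_{i,j}(z_iw_j;q,t)_{\infty}^{-1}$, while the sign twist $-(-1)^r$ in the last two pieces turns $u^r$ into $-(-u)^r$ and so produces the numerator factors $\prod_{i,j}(-x_iw_j;q,t)_{\infty}$ and $\prod_{i,j}(-z_iy_j;q,t)_{\infty}$, which is precisely the right-hand side of \eqref{eq:CIW}. The only non-routine aspect of the argument is the careful bookkeeping of these $(-1)^r$ signs through the passage from additive power-sum form to multiplicative $(q,t)$-infinite-product form; everything else is a direct consequence of \eqref{eq:CIPQ}, \eqref{eq:ev} and \eqref{eq:evW}, with convergence ensured by the standing assumption $|q|,|t|<1$ (or interpreted formally otherwise).
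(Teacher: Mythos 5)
Your proposal is correct and follows essentially the same route as the paper: apply the evaluation homomorphisms to both sides of the Macdonald Cauchy identity \eqref{eq:CIPQ}, expand the kernel in power sums, and re-exponentiate the four resulting pieces into $(q,t)$-infinite products (the paper phrases the last step as a geometric-series expansion of $(1-t^r)^{-1}$ rather than via $\sum_r u^r/(r(1-q^r)(1-t^r))=-\log(u;q,t)_\infty$, but these are identical). Your explicit identification $W_\lambda(x;q,t;z)W_\lambda(y;q,t;w)/(c_\lambda c'_\lambda)=\pi^{X}_{x,z}\pi^{X'}_{y,w}(P_\lambda Q_\lambda)$ is a clean way of accounting for the normalization that the paper leaves implicit.
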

\begin{proof}
Choose the alphabets of $P_\l$ and $Q_\l$ in (\ref{eq:CIPQ}) to be $X=(x^{(1)},\dots,x^{(N)})$ and $Y=(y^{(1)},\dots,y^{(N)})$ respectively, and expand both sides in power sums. The right hand side is known to be equal to
\begin{align}\label{eq:expp}
\exp\left( \sum_{r=1}^{\infty} \frac{1}{r}\frac{1-t^r}{1-q^r}p_r(X)p_r(Y)\right).
\end{align}
We apply the ring homomorphism $\pi^{X}_{X,Z}\pi^{Y}_{Y,W}$ to both sides of (\ref{eq:CIPQ}), writing the right hand side in the above form. This yields
\begin{align}\label{eq:cexp}
&\sum_{\lambda}
\pi^{X}_{X,Z}\left(P_{\lambda}(x;q,t) \right)
\pi^{Y}_{Y,W}\left(Q_{\lambda}(y;q,t)\right) 	\nonumber \\
&=
\exp\left( \sum_{r=1}^{\infty} \frac{1}{r}\frac{1-t^r}{1-q^r}\left(\frac{p_r(X)-(-1)^rp_r(Z)}{1-t^r}\right)
\left(\frac{p_r(Y)-(-1)^rp_r(W)}{1-t^r}\right) \right) 	\nonumber \\
&=
\exp\left( \sum_{r=1}^{\infty} \frac{1}{r}\frac{p_r(X)p_r(Y)-(-1)^rp_r(X)p_r(W)-(-1)^rp_r(Z)p_r(Y)+p_r(Z)p_r(W)}{(1-q^r)(1-t^r)}\right).
\end{align}
In the last expression the denominator $(1-t^r)^{-1}$ (or $(1-q^r)^{-1}$) can be expressed as a geometric series $1+t^r+t^{2 r}+\dots$. Hence we can convert the exponential in (\ref{eq:cexp}) into a product of exponentials each of which is of the form (\ref{eq:expp}). Thus we recover (\ref{eq:CIW}).
\end{proof}
\begin{cor}
We have the ``mixed'' Cauchy identities:
\begin{align}
\sum_{\lambda}
\frac{1}{c'_{\l}(q,t)}W_{\lambda}(x;q,t;z)
P_{\lambda}(y;q,t)
&=
\prod_{i,j}
\frac{(-z_i y_j;q)_{\infty}}{(x_i y_j;q)_{\infty}}, \label{eq:CIWQ} \\
\sum_{\lambda}
\frac{1}{c_{\l}(q,t)} 
W_{\lambda}(x;q,t;z)
P_{\lambda'}(w;t,q)
&=
\prod_{i,j}
\frac{(-x_i w_j;t)_{\infty}}{(z_i w_j;t)_{\infty}}. \label{eq:CIWP}
\end{align}
\end{cor}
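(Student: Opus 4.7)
The plan is to obtain both identities \eqref{eq:CIWQ} and \eqref{eq:CIWP} as direct specializations of the master Cauchy identity \eqref{eq:CIW}, exploiting the two reductions \eqref{eq:Wred1} and \eqref{eq:Wred2} of the $W$-polynomial to a Macdonald polynomial. No independent combinatorics is needed; the only non-trivial arithmetic is a telescoping of $(q,t)$-Pochhammer symbols on the product side.

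For \eqref{eq:CIWQ}, I would set $w_j = -t y_j$ for all $j$ in \eqref{eq:CIW}. By \eqref{eq:Wred1}, on the summand side one has $W_{\l}(y;q,t;-ty) = c_\l(q,t) P_\l(y;q,t)$, and the prefactor $\frac{1}{c'_\l(q,t) c_\l(q,t)}$ collapses to $\frac{1}{c'_\l(q,t)}$. On the product side, the substitution yields
\begin{align*}
\prod_{i,j}\frac{(-z_i y_j;q,t)_\infty\,(t x_i y_j;q,t)_\infty}{(x_i y_j;q,t)_\infty\,(-t z_i y_j;q,t)_\infty}.
\end{align*}
The key simplification is the telescoping identity $(w;q,t)_\infty / (tw;q,t)_\infty = (w;q)_\infty$, which follows directly from the definition \eqref{weq:qtpoc}: the factor $t^{\ell+1}$ in the numerator cancels all $\ell\geq 1$ factors of the denominator, leaving only $\ell=0$. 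Applying this to both the $x$-product and the $z$-product delivers exactly $\prod_{i,j}\frac{(-z_i y_j;q)_\infty}{(x_i y_j;q)_\infty}$.

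For \eqref{eq:CIWP}, I would instead set $y_j = -q w_j$ in \eqref{eq:CIW}. By \eqref{eq:Wred2}, $W_\l(-qw;q,t;w) = c_{\l'}(t,q) P_{\l'}(w;t,q)$, so the prefactor becomes $c_{\l'}(t,q)/(c'_\l(q,t) c_\l(q,t))$. Here I would invoke the symmetry \eqref{eq:ccprime}, namely $c'_\l(q,t) = c_{\l'}(t,q)$ (obtained by swapping $q\leftrightarrow t$ in \eqref{eq:ccprime}), so the prefactor simplifies to $1/c_\l(q,t)$. The product side gives
\begin{align*}
\prod_{i,j}\frac{(qz_i w_j;q,t)_\infty\,(-x_i w_j;q,t)_\infty}{(-q x_i w_j;q,t)_\infty\,(z_i w_j;q,t)_\infty},
\end{align*}
and the analogous telescoping $(w;q,t)_\infty/(qw;q,t)_\infty = (w;t)_\infty$ collapses this to $\prod_{i,j}\frac{(-x_i w_j;t)_\infty}{(z_i w_j;t)_\infty}$, matching \eqref{eq:CIWP}.

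I do not expect a serious obstacle here; the whole argument is a bookkeeping exercise in substituting into \eqref{eq:CIW}. The only place that requires care is matching the $c$-coefficients in \eqref{eq:CIWP}: one must remember that $c'_\l(q,t)$ and $c_{\l'}(t,q)$ coincide, which is the $(q,t)$-swapped form of \eqref{eq:ccprime} rather than \eqref{eq:ccprime} itself. Once this identification is noted, both identities drop out mechanically.
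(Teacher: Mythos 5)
Your proposal is correct and follows essentially the same route as the paper: specialize $w=-ty$ (resp.\ $y=-qw$) in the master identity \eqref{eq:CIW}, apply the reductions \eqref{eq:Wred1} and \eqref{eq:Wred2} on the sum side, and use the telescoping relations \eqref{eq:qtcancel} on the product side. Your remark that the prefactor in \eqref{eq:CIWP} simplifies via the $(q,t)$-swapped form of \eqref{eq:ccprime} is exactly the bookkeeping point the paper relies on (implicitly, through \eqref{eq:bbprime}), so nothing is missing.
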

\begin{proof}
In order to check the validity of (\ref{eq:CIWQ}) and (\ref{eq:CIWP}) we notice that:
\begin{align}\label{eq:qtcancel}
\frac{(x;q,t)_{\infty}}{(t x;q,t)_{\infty}}= (x;q)_{\infty},\qquad \frac{(x;q,t)_{\infty}}{(q x;q,t)_{\infty}}= (x;t)_{\infty},
\end{align}
which follow from the definition (\ref{weq:qtpoc}). 
The first identity (\ref{eq:CIWQ}) follows from (\ref{eq:CIW}) after setting $w=-t y$, in which case on the left hand side we substitute (\ref{eq:Wred1}),
while on the right hand side we use the first equation in (\ref{eq:qtcancel}):
\begin{align}
\prod_{i,j}
\frac{(-z_i y_j;q,t)_{\infty}(t x_i y_j;q,t)_{\infty}}{(x_i y_j;q,t)_{\infty}(-t z_i y_j;q,t)_{\infty}} 
=
\prod_{i,j} 
\frac{(-z_i y_j;q)_{\infty}}{(x_i y_j;q)_{\infty}}.
\end{align}
The second identity (\ref{eq:CIWP}) follows from (\ref{eq:CIW}) after setting $y=-q w$. In this case on the left hand side we use the reduction (\ref{eq:Wred2}), while on the right hand side we use the second equation in (\ref{eq:qtcancel}):
\begin{align}
\prod_{i,j}
\frac{(q z_i w_j;q,t)_{\infty}(-x_i w_j;q,t)_{\infty}}{(-q x_i w_j;q,t)_{\infty}(z_i w_j;q,t)_{\infty}}
=
\prod_{i,j} 
\frac{(-x_i w_j;t)_{\infty}}{(z_i w_j;t)_{\infty}}.
\end{align}
\end{proof}
If we further specialize $z=-t x$ in (\ref{eq:CIWQ}) (or $x=-q z$ in (\ref{eq:CIWP})) we will recover the standard Cauchy identity for Macdonald polynomials (\ref{eq:CIPQ}). On the other hand, if we set $x=-q z$ in (\ref{eq:CIWQ}) (or $z=-t x$ in (\ref{eq:CIWP})) and use (\ref{eq:bbprime}),  we will recover the dual Cauchy identity for Macdonald polynomials (\ref{eq:CIPQdual}).


\section{The modified Hall--Littlewood polynomials}\label{sec:HL}
In this section we give a lattice formulation of the modified Hall--Littlewood polynomials. Our purpose here is to lay the foundations of the approach that we subsequently extend to the study of modified Macdonald polynomials $H_{\l}(x;q,t)$. Thus we omit proofs and focus on the construction itself. 

Let us recall once again the combinatorial formula for $H_{\l}(x;t)$:
\begin{align}\label{eq:HLmonsum}
H_{\l}(x;t)= \sum_{\m} \MP_{\l,\m}(t) m_{\m}(x),
\end{align}
where the coefficients  $\MP_{\l,\m}(t)$ are given by the combinatorial expansion
\begin{align}\label{eq:HLmon}
\MP_{\l,\m}(t)= 
\sum_{\{\n\}} t^{c(\n)} 
\prod_{k=1}^{N-1}\prod_{i=1}^{n}
\binom{\n_{i}^{k+1}-\n_{i+1}^{k}}{\n_{i}^{k}-\n_{i+1}^{k}}_t,
\end{align}
where the summation runs over flags of partitions 
$\{\n\}=\{\emptyset =\n^{0} \subseteq \n^{1}\subseteq \dots \subseteq \n^{N}=\l'    \}$, such that $|\n^{k}|=\m_1+\dots +\m_k$, for all $1\leq k \leq N$, and
\begin{align}\label{eq:c}
c(\n)=\frac{1}{2} \sum_{k=1}^{N} \sum_{i=1}^n (\n_{i}^{k}-\n_{i}^{k-1})(\n_{i}^{k}-\n_{i}^{k-1}-1).
\end{align}
We now explain how formula (\ref{eq:HLmonsum}) can be viewed within the framework of our construction.


\subsection{The lattice path construction}\label{ssect:mHLsetup}
Consider a square lattice on a rectangular domain of length $n$ and height $N$. Splitting the lattice into $n$ columns of height $N$ we label the columns from $1$ to $n$ from right to left. Splitting the lattice into $N$ rows of length $n$ we label the rows from $1$ to $N$ from top to bottom. Next, consider $M$ up-right lattice paths which start at the lower horizontal boundary and finish at the right vertical boundary. In this setting no path touches the left and the top boundaries of the lattice. Fix the starting positions of the $M$ paths to be given by $M$ numbers corresponding to the label of the column where they start. These numbers can be ordered $\l=(\l_1,\l_2,\dots,\l_M)$ such that $\l_1=n$ and $\l_i\geq \l_{i+1}$; hence $\l$ is a partition. Let the paths end at any position on the right boundary. Hence we have the following setting:
\begin{center}
\vspace{0.3cm}
\begin{tikzpicture}[scale=0.5,baseline=(current bounding box.center)]
\foreach\x in {0,...,4}{
\draw (0,2*\x) -- (4,2*\x);
}
\foreach\x in {0,...,4}{
\draw[dashed] (4,2*\x) -- (6,2*\x);
}
\foreach\x in {0,...,4}{
\draw (6,2*\x) -- (8,2*\x);
}
\foreach\y in {0,...,4}{
\draw (2*\y,0) -- (2*\y,2);
}
\foreach\y in {0,...,4}{
\draw[dashed] (2*\y,2) -- (2*\y,4);
}
\foreach\y in {0,...,4}{
\draw (2*\y,4) -- (2*\y,8);
}
\node[left] at (-0.5,7) {\tiny $1$};
\node[left] at (-0.5,5) {\tiny$2$};
\node[left] at (-0.5,1) {\tiny $N$};
\fill (0.5,0)  circle[radius=3pt];
\fill (0.8,0)  circle[radius=3pt];
\fill (1.5,0)  circle[radius=3pt];
\fill (1.0,0.1)  circle[radius=1pt];
\fill (1.15,0.1)  circle[radius=1pt];
\fill (1.3,0.1)  circle[radius=1pt];
\fill (1.5+2,0)  circle[radius=3pt];
\fill (0.5+2,0)  circle[radius=3pt];
\fill (0.8+2,0)  circle[radius=3pt];
\fill (1.0+2,0.1)  circle[radius=1pt];
\fill (1.15+2,0.1)  circle[radius=1pt];
\fill (1.3+2,0.1)  circle[radius=1pt];
\fill (1.5+6,0)  circle[radius=3pt];
\fill (0.5+6,0)  circle[radius=3pt];
\fill (0.8+6,0)  circle[radius=3pt];
\fill (1.0+6,0.1)  circle[radius=1pt];
\fill (1.15+6,0.1)  circle[radius=1pt];
\fill (1.3+6,0.1)  circle[radius=1pt];
\draw[decoration={brace,mirror,raise=5pt},decorate]
  (0+0.1,0) -- node[below=6pt] {\tiny $\cm_n(\l)$} (2-0.1,0);
  \draw[decoration={brace,mirror,raise=5pt},decorate]
  (2+0.1,0) -- node[below=6pt] {\tiny $\cm_{n-1}(\l)$} (4-0.1,0);
    \draw[decoration={brace,mirror,raise=5pt},decorate]
  (6+0.1,0) -- node[below=6pt] {\tiny $\cm_1(\l)$} (8-0.1,0);
\end{tikzpicture}~,
\vspace{0.3cm}
\end{center}
where dots on the lower boundary edges denote the starting positions of the paths and $\cm_k(\l)$ is the part multiplicity of $\l$; $\cm_k(\l)=\text{Card}\{i:\l_i=k\}$. We can write $\cm_k(\l)$ in terms of the dual partition $\l$ as $\cm_k(\l)=\l'_k-\l'_{k+1}$. At each face of the lattice an up-right path can go straight vertically, straight horizontally, or make a turn from the lower horizontal edge to the right vertical edge of the face or move from the left vertical edge to the upper horizontal edge of the face. We also suppose that the paths do not intersect; hence straight horizontal and straight vertical moves occurring in a single face should be treated as two turns. We impose no restriction on the number of paths per edge.  

A local configuration, i.e. a configuration of a face, is given by the data $(\r,\rt,\s,\st)$ assigned to the edges as follows:
\begin{center}
\vspace{0.3cm}
\begin{tikzpicture}[scale=0.5,baseline=(current bounding box.center)]
\draw (0,0) -- (3,0) -- (3,3) -- (0,3) -- (0,0);
\node[left] at (0,1.5) {$\s$};
\node[right] at (3,1.5) {$\st$};
\node[below] at (1.5,0) {$\r$};
\node[above] at (1.5,3) {$\rt$};
\end{tikzpicture}~,
\vspace{0.3cm}
\end{center}
where $\r,\rt,\s,\st \in \mathbb{Z}_{\geq 0}$ count the numbers of paths touching the corresponding edge. 
\begin{ex}\label{ex:0}
The lattice path configuration with the edge data $\r=1,\rt=2, \s=3,\st=2$ is given by:
\begin{center}
\vspace{0.3cm}
\begin{tikzpicture}[scale=0.5,baseline=(current bounding box.center)]
\draw (0,0) -- (3,0) -- (3,3) -- (0,3) -- (0,0);
\node[left] at (0,1.5) {$\s$};
\node[right] at (3,1.5) {$\st$};
\node[below] at (1.5,0) {$\r$};
\node[above] at (1.5,3) {$\rt$};
 \begin{scope}[line width=1pt]
\draw (0,1.5+0.3) -- (1.5-0.1,1.5+0.3) -- (1.5-0.1,3);
\draw (0,1.5+0.1) -- (1.5+0.1,1.5+0.1) -- (1.5+0.1,3);;
\draw (0,1.5-0.1) -- (3,1.5-0.1);
\draw (1.5,0) -- (1.5,1.5-0.3) -- (3,1.5-0.3);
\end{scope}
\end{tikzpicture}~.
\vspace{0.3cm}
\end{center}
\end{ex}

Each local configuration is assigned the weight
\begin{align}\label{eq:LKir}
\ML^{\s,\st}_{\r,\rt}(x):=
t^{\frac{1}{2} \st (\st-1)}
\binom{\rt+\st}{\rt}_t
x^{\st},
\qquad 
\s+\r=\st+\rt,
\end{align}
where $x$ is called the spectral parameter. 
In the cases when $\s+\r\neq \st +\rt$, which breaks the continuity of the paths, we set $\ML^{\s,\st}_{\r,\rt}(x)=0$. The weights $\ML^{\s,\st}_{\r,\rt}(x)$ are integrable in the sense of the Yang--Baxter RLL equation \cite{BLZ} (see Section \ref{sec:proof}).

To each global configuration of paths with fixed boundary conditions given by $\l$ we assign the weight given by products of all local weights $\ML^{\s,\st}_{\r,\rt}(x_k)$, where $\s,\st,\r$ and $\rt$ are local edge occupation numbers and $k$ is the label of the row. The {\it{partition function}} $\MZ_\l(x;t)$ is defined as the sum of all global weights taken over all possible lattice path configurations with the boundary conditions given by $\l$.    


\subsection{Column partition functions}\label{ssect:mHLcolumns}
A configuration of a single column is given by the data $(\r,\rt,\s,\st)$, where $\s=(\s^{1},\dots,\s^N)$ and $\st=(\st^{1},\dots,\st^N)$ now are compositions of non-negative integers. The assignment of these numbers to the edges is given by 
\begin{center}
\vspace{0.3cm}
\begin{tikzpicture}[scale=0.5,baseline=(current bounding box.center)]
\draw (0,0) -- (2,0) -- (2,2) -- (0,2) -- (0,0);
\draw (0,2+4)  -- (0,0+4) -- (2,0+4) -- (2,2+4) ;
\draw[dashed] (0,2) -- (0,2+2);
\draw[dashed] (2,2) -- (2,2+2);
\draw (0,0+6) -- (2,0+6) -- (2,2+6) -- (0,2+6) -- (0,0+6);
\node[left] at (0,1+6) {$\s^1$};
\node[right] at (2,1+6) {$\st^1$};
\node[left] at (0,1+4) {$\s^2$};
\node[right] at (2,1+4) {$\st^2$};
\node[left] at (0,1) {$\s^N$};
\node[right] at (2,1) {$\st^N$};
\node[below] at (1,0) {$\r$};
\node[above] at (1,8) {$\rt$};
\end{tikzpicture}~.
\vspace{0.3cm}
\end{center}
The number $\r$ tells us how many paths touch the bottom of the column while $\rt$ tells us how many paths touch the top of the column. Due to our choice of boundary conditions (up-right paths start at the bottom boundary according to a partition $\l$ and end anywhere at the right  boundary) the $k$-th column will have $\r=\cm_{k}(\l)$ and $\rt=0$. The compositions $\s$ and $\st$ must satisfy $|\st-\s|=\cm_{k}(\l)$, where $\st-\s$ is a composition whose parts are given by the differences of the corresponding parts of $\st$ and $\s$. 

The values of the horizontal edges $\r^k$, where $k=1,\dots,N+1$ is counted from top to bottom with $\r^1=\rt=0$ and $\r^{N+1}=\r=\cm_{k}(\l)$, are fixed by $\s$ and $\st$ due to the continuity of the paths and given by 
\begin{align}\label{eq:rhosig}
\r^k=\st^1+\dots +\st^{k-1}-(\s^1+\dots +\s^{k-1}).
\end{align}
Letting $x=(x_1,\dots x_N)$, the weight $\MQ^{i}_{\s,\st}(x;t)$ of the column with the label $i$ is factorized into the product of the weights of its faces given in (\ref{eq:LKir}), where we take all spectral parameters to be different:
\begin{align}\label{eq:QKirdef}
\MQ^{i}_{\s,\st}(x;t):=
\prod_{k=1}^{N}
\ML_{\r^{k+1},\r^k}^{\s^k,\st^k}(x_k).
\end{align}
Substituting the elements (\ref{eq:LKir}) we have
\begin{align}\label{eq:QKir}
\MQ^{i}_{\s,\st}(x;t)=
t^{\frac{1}{2} \sum_{k=1}^N \st^k (\st^k-1)}
\prod_{k=1}^{N}
\binom{\r^{k}+\st^{k}}{\r^{k}}_t
\times x_1^{\st^1} \dots x_N^{\st^N}.
\end{align}
In the view of (\ref{eq:rhosig}) it is convenient to introduce two partitions $\n=(\n^1,\dots,\n^{N})$ and $\nt=(\nt^1,\dots,\nt^{N})$ which are defined by $\n^k=\s^1+\dots +\s^{k}$ and $\nt^k=\st^1+\dots +\st^{k}$. Note also that $\nt^k\geq \n^k$, due to the boundary conditions and since the paths are up-right. 
It is therefore natural to set $\n^0=0$ and $\nt^0=0$. 
With this notation we have $\s^{k}=\n^{k}-\n^{k-1}$, $\st^{k}=\nt^{k}-\nt^{k-1}$ and $\r^{k}=\nt^{k-1}-\n^{k-1}$. The weight of the $i$-th column becomes
\begin{align}\label{eq:QKir}
\MQ^{i}_{\s,\st}(x;t)=
t^{\frac{1}{2} \sum_{k=1}^{N} (\nt^{k}-\nt^{k-1}) (\nt^{k}-\nt^{k-1}-1)}
\prod_{k=1}^{N-1}
\binom{\nt^{k+1}-\n^{k} }{\nt^{k}-\n^{k}}_t
\times 
\prod_{k=1}^{N} 
x_k^{\nt^{k}-\nt^{k-1}}.
\end{align}
%


\subsection{Lattice partition functions}\label{ssect:mHLlattice}
Let now $\{\s\}$ be a collection of compositions $\{\s\}=(\s_1,\dots,\s_n)$ with $\s_i=(\s_i^1,\dots,\s_i^N)$, $i=1,\dots n$ and $\s_{n+1}=0$. 
In order to compute the partition function $\MZ_\l(x;t)$ with the positions of the paths at the lower edges given by the partition $\l$ we need to join together $n$ columns $\MQ^{i}_{\s_{i+1},\s_{i}}$, where $|\s_{i}-\s_{i+1}|=\cm_i(\l)$ for $i=1,\dots,n$, and sum over all possible arrangements of the paths on the vertical edges, i.e. sum over all compatible $\s_{1},\dots,\s_{n}$:
\begin{align}\label{eq:ZKir}
\MZ_\l(x;t)=\sum_{\{\s\}}\prod_{i=1}^n \MQ^{i}_{\s_{i+1},\s_i}(x;t).
\end{align}
Let $\{\n\}$ be a collection of partitions $\{\n\}=(\n_1,\dots,\n_n)$ with $\n_i=(\n_i^1,\dots,\n_i^N)$, $i=1,\dots n$ which are given by
 $\n_i^k=\s_i^1+\dots +\s_i^{k}$. As we noted above $\n_i^k\geq \n_{i+1}^k$, therefore we can denote by $\n^k$ the partitions $(\n_1^k,\dots,\n_n^k)$, $k=1,\dots N$, letting also $\n_i^k=0$ for $i=n+1$. With this change of the summation indices we can write the partition function as the following sum of products of the column weights:
\begin{align}\label{eq:ZKir2}
\MZ_\l(x;t)=
\sum_{\{\n\}}
t^{c(\n)}
\prod_{i=1}^{n}
\prod_{k=1}^{N-1}
\binom{\n_i^{k+1}-\n_{i+1}^{k} }{\n_i^{k}-\n_{i+1}^{k}}_t
\times
\prod_{k=1}^{N} 
x_k^{|\n^{k}-\n^{k-1}|},
\end{align}
where $c(\n)$ appears in (\ref{eq:c}) and the summation $\{\n\}$ runs over unrestricted flags $\{\n\}=\{\emptyset =\n^{0} \subseteq \n^{1}\subseteq \dots \subseteq \n^{N}=\l'    \}$. Let us explain how this summation set should be understood in terms of lattice paths. Start with $\n^{N}=\l'$. Since all paths must end up at the right vertical boundary of the column with the label $1$, this means that $\n_1^N$ is equal to $\l_1'$ which is the length of $\l$. At the same time all particles in the columns $i>1$ must pass through the right boundary of the column with the label $2$; this means that $\n_2^N=\l'_2$ which is the length of the partition obtained from $\l$ by removing all parts equal to $1$. Continuing this logic we find that $\n^N=\l'$. The counting of $\n^{k}_i$ with $k<N$ is analogous, the difference is that now we consider all paths passing through the right edges of the first $k$ faces of the column $i$. 
\begin{ex}\label{ex:1}
Here is an example in the case of the $3\times 3$ square lattice. For each element $\n$ of a flag the numbers $\n^k_i$ count how many paths hit the thick vertical gray lines:
\begin{center}
\vspace{0.3cm}
\begin{tikzpicture}[scale=0.5,baseline=(current bounding box.center)]
\def\dw{5}
\def\dh{5}
\def\ddw{14}
\foreach\a in {0,...,2}{
	\foreach\b in {0,...,2}{
		\foreach\x in {0,...,3}{
		\draw (0+\dw*\a,\x-\dh*\b) -- (3+\dw*\a,\x-\dh*\b);
		\draw (\x+\dw*\a,0-\dh*\b) -- (\x+\dw*\a,3-\dh*\b);
						}
					}
				}
 \begin{scope}[color=gray,line width=3pt]
\foreach\x in {0,...,2}{
      \draw (3-0.1+\x*\dw-\x,0) -- (3-0.1+\x*\dw-\x,3);
				}
\foreach\x in {0,...,2}{
      \draw (3-0.1+\x*\dw-\x,1-\dh) -- (3-0.1+\x*\dw-\x,3-\dh);
      				}
\foreach\x in {0,...,2}{
      \draw (3-0.1+\x*\dw-\x,2-2*\dh) -- (3-0.1+\x*\dw-\x,3-2*\dh);
      				}
   \end{scope}
\node[above] at (1.4,3) {\tiny{$\n_1^3=\l'_1$}};
\node[above] at (1.4+\dw,3) {\tiny{$\n_2^3=\l'_2$}};
\node[above] at (1.4+2*\dw,3) {\tiny{$\n_3^3=\l'_3$}};
\node[above] at (1.4,3-\dh) {\tiny{$\n_1^2$}};
\node[above] at (1.4+\dw,3-\dh) {\tiny{$\n_2^2$}};
\node[above] at (1.4+2*\dw,3-\dh) {\tiny{$\n_3^2$}};%
\node[above] at (1.4,3-2*\dh) {\tiny{$\n_1^1$}};
\node[above] at (1.4+\dw,3-2*\dh) {\tiny{$\n_2^1$}};
\node[above] at (1.4+2*\dw,3-2*\dh) {\tiny{$\n_3^1$}};
\end{tikzpicture}~.
\vspace{0.3cm}
\end{center}
Take for example the following lattice path configuration:
\begin{center}
\vspace{0.3cm}
\begin{tikzpicture}[scale=0.5,baseline=(current bounding box.center)]
		\foreach\x in {0,...,3}{
		\draw (0,\x) -- (3,\x);
		\draw (\x,0) -- (\x,3);
						}
 \begin{scope}[line width=1pt]
\draw (0.5,0) -- (0.5,0.5) -- (1.5-0.01,0.5) -- (1.5,2.5)  -- (3,2.5);
\draw (2.5,0) -- (2.5,1.5) -- (3,1.5);
\end{scope}
\end{tikzpicture}~.
\vspace{0.3cm}
\end{center}
By drawing gray lines on top of this configuration as shown above we can read the corresponding numbers $\n_i^k$ of the element $\n$
\begin{align*}
&\n_1^3=2, \qquad \n_2^3=1, \qquad \n_3^3=1,\\
&\n_1^2=2, \qquad \n_2^2=1, \qquad \n_3^2=0,\\
&\n_1^1=1, \qquad \n_2^1=1, \qquad \n_3^1=0.
\end{align*}
In this example the flag corresponds to the partition $\l=(3,1,0)$, since $\l'=\n^3$.
\end{ex}
Now we introduce a composition $\m=(\m_1,\dots,\m_N)$ and define the numbers $\m_k$ by $\m_k=|\n^{k}-\n^{k-1}|$. For a fixed ${\n}$ the value of  $\m_k$  tells us how many paths hit the right edges of every face in the $k$-th row. Therefore in (\ref{eq:ZKir2}) each $x_k$ appears as $x_k^{\m_k}$. 
We can now reverse the logic and fix a composition $\m=(\m_1,\dots,\m_N)$ and demand that the partitions $\n^k$ of the flag should be such that $|\n^{k}-\n^{k-1}|=\m_k$. This puts a restriction on the flags: $|\n^{k}|=\m_1+\dots +\m_k$. Therefore one sums over all possible $\m$ such that $|\m|=|\l|$ and finally over $\m$-restricted flags:
\begin{align}\label{eq:QKir3}
\MZ_\l(x;t)=
\sum_{\m: |\m|=|\l|}
x_1^{\m_1}\dots x_N^{\m_N}
\sum_{\{\n\}}
t^{c(\n)}
\prod_{i=1}^{n}
\prod_{k=1}^{N-1}
\binom{\n_i^{k+1}-\n_{i+1}^{k} }{\n_i^{k}-\n_{i+1}^{k}}_t.
\end{align}
The partition function (\ref{eq:QKir3}), in fact, equals the modified Hall--Littlewood polynomial $H_\l(x;t)$. We can rewrite it as a sum of monomial symmetric functions with coefficients equal to $\MP_{\l,\m}(t)$, thus recovering the combinatorial formula (\ref{eq:HLmonsum})-(\ref{eq:HLmon}). This follows from the fact that the coefficient
\begin{align}\label{eq:coefH}
\sum_{\{\n\}}
t^{c(\n)}
\prod_{i=1}^{n}
\prod_{k=1}^{N-1}
\binom{\n_i^{k+1}-\n_{i+1}^{k} }{\n_i^{k}-\n_{i+1}^{k}}_t,
\end{align}
which runs over $\m$-restricted flags is invariant under permutations of $\m$. This statement holds because $\MZ$ turns out to by symmetric in $x_1,\dots,x_N$\footnote{The symmetry in $x_1,\dots,x_N$ follows from the RLL equation satisfied by the weights (\ref{eq:LKir}) and the weights of the fused $R$-matrix (not given in this paper).}. 
Therefore the summation over $\m: |\m|=|\l|$ can be split into two summations: one over partitions $\m$ satisfying $\m: |\m|=|\l|$ and the second over permutations of $\m$. Since the coefficient (\ref{eq:coefH}) is invariant under the permutations of $\m$ we can compute the second of these two sums and find the monomial symmetric function $m_\m(x)$. The corresponding coefficients coincide with $\MP_{\l,\m}(t)$ in (\ref{eq:HLmon}).
\begin{ex}
Consider $\l=(3,1,0)$ and $\m=(2,1,1)$. The domain is given by a $3\times 3$ grid where we have two paths: one path starts from the lower edge of the first column and the other from the lower edge of the third column similarly as in Example \ref{ex:1}. The coefficient $\MP_{(3,1,0),(2,1,1)}(t)$ is a sum of three terms which come from three configurations
\begin{align}
\MP_{(3,1,0),(2,1,1)}(t)=
~
1\times
\begin{tikzpicture}[scale=0.5,baseline=(current bounding box.center)]
		\foreach\x in {0,...,3}{
		\draw (0,\x) -- (3,\x);
		\draw (\x,0) -- (\x,3);
						}
 \begin{scope}[line width=1pt]
\draw (0.5,0) -- (0.5,1.5) -- (1.5,1.5) -- (1.5,2.5)  -- (3,2.5);
\draw (2.5,0) -- (2.5,0.5) -- (3,0.5);
\end{scope}
\end{tikzpicture}~
+
1\times
\begin{tikzpicture}[scale=0.5,baseline=(current bounding box.center)]
		\foreach\x in {0,...,3}{
		\draw (0,\x) -- (3,\x);
		\draw (\x,0) -- (\x,3);
						}
 \begin{scope}[line width=1pt]
\draw (0.5,0) -- (0.5,0.5) -- (1.5,0.5) -- (1.5,2.5) -- (3,2.5);
\draw (2.5,0) -- (2.5,1.5) -- (3,1.5);
\end{scope}
\end{tikzpicture}~
+
t
\times
\begin{tikzpicture}[scale=0.5,baseline=(current bounding box.center)]
		\foreach\x in {0,...,3}{
		\draw (0,\x) -- (3,\x);
		\draw (\x,0) -- (\x,3);						}
 \begin{scope}[line width=1pt]
\draw (0.5,0) -- (0.5,0.5) -- (1.5,0.5) -- (1.5,1.5) -- (2.5-0.06,1.5) -- (2.5-0.06,2.5+0.06)  -- (3,2.5+0.06);
\draw (2.5+0.06,0) -- (2.5+0.06,2.5-0.06) -- (3,2.5-0.06);
\end{scope}
\end{tikzpicture}~.
\end{align}
We have $\MP_{(3,1,0),(2,1,1)}(t)=2+t$. One can verify that among all elements of the unrestricted flag of $\l$ we have only three $\m$-restricted elements, i.e. with 
$|\n^1|=2, |\n^2|=3 $  and $|\n^3|=4$.
As we mentioned the coefficient $\MP_{\l,\m}(t)$ is symmetric under the permutations of $\m$. Thus we can also choose $\m=(1,1,2)$ and consider configurations with the restriction $|\n^1|=1, |\n^2|=2 $  and $|\n^3|=3$. In this case the weights of the partitions $\n^k$ are smaller than before which leads to a formula with fewer elements in the flag. The coefficient is given by a sum of two configurations
\begin{align}
\MP_{(3,1,0),(2,1,1)}(t)=
~
1\times
\begin{tikzpicture}[scale=0.5,baseline=(current bounding box.center)]
		\foreach\x in {0,...,3}{
		\draw (0,\x) -- (3,\x);
		\draw (\x,0) -- (\x,3);
						}
 \begin{scope}[line width=1pt]
\draw (0.5,0) -- (0.5,0.5) -- (1.5,0.5) -- (1.5,1.5) -- (2.5,1.5) -- (2.5,2.5)  -- (3,2.5);
\draw (2.5,0) -- (2.5,0.5) -- (3,0.5);
\end{scope}
\end{tikzpicture}~
+
(1+t)
\times
\begin{tikzpicture}[scale=0.5,baseline=(current bounding box.center)]
		\foreach\x in {0,...,3}{
		\draw (0,\x) -- (3,\x);
		\draw (\x,0) -- (\x,3);						}
 \begin{scope}[line width=1pt]
\draw (0.5,0) -- (0.5,0.5) -- (2.5-0.06,0.5) -- (2.5-0.06,2.5)  -- (3,2.5);
\draw (2.5+0.06,0) -- (2.5+0.06,1.5) -- (3,1.5);
\end{scope}
\end{tikzpicture}~.
\end{align}
\end{ex}
For completeness the above discussion needs a proof (independent from matching it with (\ref{eq:HLmonsum})-(\ref{eq:HLmon})) that $\MZ_\l$ is equal to the modified Hall--Littlewood polynomial $H_\l(x;t)$. This can be done along the same lines as our construction of $H_\l(x;q,t)$ in Sections \ref{sec:modM} and \ref{sec:proof}. Alternatively, one can obtain the above construction of $H_\l(x;t)$ from our construction of $H_\l(x;q,t)$ in which $q$ is set to $0$. The formula for the coefficients $\MP_{\l,\m}(t)$ in (\ref{eq:HLmon}) can be recovered from $\MP_{\l,\m}(q,t)$ in (\ref{eq:HmonP-intro}) by setting $q=0$. From (\ref{Phi-def}) and (\ref{eq:Phiz1}) we have
\begin{align}\label{eq:Phi01}
\Phi_{\n|\nt}(1;t)=
\prod_{k=1}^{N-1}
\binom{\nt^{k+1}}{\nt^{k}}_t, \qquad
\Phi_{\n|\nt}(0;t)
=
\prod_{k=1}^{N-1}
\binom{\nt^{k+1}-\n^{k}}{\nt^{k}-\n^{k}}_t.
\end{align}
Using these expressions we can specialize $q=0$ in (\ref{eq:HmonP-intro}) and write the coefficient $\MP_{\l,\m}(0,t)$
\begin{align}\label{eq:HmonPq0}
\MP_{\l,\m}(0,t)=
\sum_{\{\n\}}
t^{\chi(\n)}
\prod_{k=1}^{N-1}
\prod_{i\leq j}
\binom{\n_{i,j}^{k+1}-\n_{i+1,j}^{k}}{\n_{i,j}^{k}-\n_{i+1,j}^{k}}_t.
\end{align}
Because $q=0$ many terms are absent from the summation set $\{\n\}$ and the non-vanishing terms correspond to $n$ flags. Define $\b_i^k$ by the formula
\begin{align}\label{eq:nubeta}
\n_{i,i}^k=\b_i^k-\sum_{j>i}\n_{i,j}^k,\qquad i=1,\dots,n,\quad k=1,\dots,N. 
\end{align}
The set $\{\b\}=\{\emptyset =\b^{0} \subseteq \b^{1}\subseteq \dots \subseteq \b^{N}=\l'    \}$ is a flag such that $|\b^{k}|=\m_1+\dots +\m_k$, $1\leq k \leq N$. This flag coincides with the flag in the summation in (\ref{eq:coefH}). 
Thus we split the summation set in (\ref{eq:HmonPq0}) into a sum over $\b$ and a sum over $\{\n\}_\b$ defined as the set of partitions $\n_{i,j}=(\n_{i,j}^{1},\dots,\n_{i,j}^N)$, $1\leq i< j\leq n$, satisfying 
\begin{align}
\label{eq:sumset1}
\n_{i,j}^{N}
&=\b_j^N-\b_{j+1}^N=\l'_j-\l'_{j+1},\\
\label{eq:sumset2}
\sum_{j>i}\n_{i,j}^k
&\leq 
\b_i^k.
\end{align}
Let $\chi'(\b,\n)$ be equal to $\chi(\n)$ in which $\n_{i,i}^k$ is expressed in terms of $\b_i^k$ according to (\ref{eq:nubeta}). Express (\ref{eq:HmonPq0}) in terms of $\b$
\begin{align}\label{eq:Psum0}
\MP_{\l,\m}(q,t)=
\sum_{\{\b\}}
\sum_{\{\n\}_\b}
\MB_\b(t),
\end{align}
where $\MB_\b(t)$ is defined by
\begin{align}
\label{eq:MB}
&\MB_\b(t):=
\prod_{k=1}^{N-1}
\binom{\b_{n}^{k+1}}{\b_{i}^k}_t
\sum_{\n\in D_\b}
t^{\chi'(\b,\n)}
\nonumber \\
\times
\prod_{k=1}^{N-1}
\prod_{i=1}^{n-1}
\binom{\b_{i}^{k+1}-\sum_{j=i+1}^n\n_{i,j}^{k+1}}{\b_{i}^k-\sum_{j=i+1}^n\n_{i,j}^{k}}_t
&
\binom{\n_{i,{i+1}}^{k+1}+\sum_{j=i+2}^n \n_{i+1,j}^k-\b_{i+1}^k}{\n_{i,i+1}^{k}+\sum_{j=i+2}^n \n_{i+1,j}^k-\b_{i+1}^k}_t
\prod_{j=i+2}^n
\binom{\n_{i,j}^{k+1}-\n_{i+1,j}^{k}}{\n_{i,j}^{k}-\n_{i+1,j}^{k}}_t.
\end{align}
Therefore the identification of $\MP_{\l,\m}(0,t)$ with $\MP_{\l,\m}(t)$ follows from the identity
\begin{align}\label{eq:sumnu}
\MB_\b(t)=
 t^{c(\b)} 
\prod_{k=1}^{N-1}\prod_{i=1}^{n}
\binom{\b_{i}^{k+1}-\b_{i+1}^{k}}{\b_{i}^{k}-\b_{i+1}^{k}}_t.
\end{align}
A brute force proof of this equation can be accomplished by an iterated application of the formula (\ref{eq:binom2}).

We would like to note that there are other ways of writing $H_{\l}(x;t)$ as a lattice path partition function. These other constructions would lead to different types of formulae (see for example Section \ref{sec:modM}, formula (\ref{eq:HZz}) with $q=0$).


\section{Construction of the modified Macdonald polynomials}\label{sec:modM}
In this section we give two lattice formulations of the modified Macodnald polynomials $H_{\l}(x;q,t)$. The logic is similar to the Hall--Littlewood case, however, the local weights and the boundary conditions are different. We present the construction in this section and postpone the proofs to Section \ref{sec:proof}.


\subsection{The lattice path construction}\label{ssect:setup}
Once again we consider a square lattice on a rectangular domain of length $n$ and height $N$, and the labelling of the columns and the rows is from right to left and from top to bottom respectively, as before. The $M=\ell(\l)$ up-right paths start at the lower boundary and end at the right boundary, however, this time they can reach the top boundary of a column and reappear at the bottom boundary in the same column and continue the up-right motion. The only restriction to this is that the paths are not allowed to wind in the columns where they started. 
This turns the lattice on a planar square domain into a lattice on a cylinder with a seam, where the seam represents the identified top and bottom boundaries. Each path is coloured and its colour is given by the label corresponding to the column of its starting position. Hence we have the following setting:
\begin{center}
\vspace{0.3cm}
\begin{tikzpicture}[scale=0.5,baseline=(current bounding box.center)]
\foreach\x in {0,...,4}{
\draw (0,2*\x) -- (4,2*\x);
}
\foreach\x in {0,...,4}{
\draw[dashed] (4,2*\x) -- (6,2*\x);
}
\foreach\x in {0,...,4}{
\draw (6,2*\x) -- (8,2*\x);
}
\foreach\y in {0,...,4}{
\draw (2*\y,0) -- (2*\y,2);
}
\foreach\y in {0,...,4}{
\draw[dashed] (2*\y,2) -- (2*\y,4);
}
\foreach\y in {0,...,4}{
\draw (2*\y,4) -- (2*\y,8);
}
\node[left] at (-0.5,7) {\tiny $1$};
\node[left] at (-0.5,5) {\tiny$2$};
\node[left] at (-0.5,1) {\tiny $N$};
\draw[very thick, gray] (0,0) -- (4,0);
\draw[very thick, dashed, gray] (4,0) -- (6,0);
\draw[very thick, gray] (6,0) -- (8,0);
\draw[very thick, gray] (0,8) -- (4,8);
\draw[very thick, dashed, gray] (4,8) -- (6,8);
\draw[very thick, gray] (6,8) -- (8,8);
\fill[green] (0.5,0)  circle[radius=3pt];
\fill[green] (0.8,0)  circle[radius=3pt];
\fill[green] (1.5,0)  circle[radius=3pt];
\fill (1.0,0.1)  circle[radius=1pt];
\fill (1.15,0.1)  circle[radius=1pt];
\fill (1.3,0.1)  circle[radius=1pt];
\fill[blue] (1.5+2,0)  circle[radius=3pt];
\fill[blue] (0.5+2,0)  circle[radius=3pt];
\fill[blue] (0.8+2,0)  circle[radius=3pt];
\fill (1.0+2,0.1)  circle[radius=1pt];
\fill (1.15+2,0.1)  circle[radius=1pt];
\fill (1.3+2,0.1)  circle[radius=1pt];
\fill[red] (1.5+6,0)  circle[radius=3pt];
\fill[red] (0.5+6,0)  circle[radius=3pt];
\fill[red] (0.8+6,0)  circle[radius=3pt];
\fill (1.0+6,0.1)  circle[radius=1pt];
\fill (1.15+6,0.1)  circle[radius=1pt];
\fill (1.3+6,0.1)  circle[radius=1pt];
\draw[decoration={brace,mirror,raise=5pt},decorate]
  (0+0.1,0) -- node[below=6pt] {\tiny $\cm_n(\l)$} (2-0.1,0);
  \draw[decoration={brace,mirror,raise=5pt},decorate]
  (2+0.1,0) -- node[below=6pt] {\tiny $\cm_{n-1}(\l)$} (4-0.1,0);
    \draw[decoration={brace,mirror,raise=5pt},decorate]
  (6+0.1,0) -- node[below=6pt] {\tiny $\cm_1(\l)$} (8-0.1,0);
\end{tikzpicture}~,
\vspace{0.3cm}
\end{center}
where the dots at the lower edges denote the starting positions of the paths, and the colours of the dots correspond to the colours of the associated paths. The gray lines indicate the identification of the boundaries which now permit the winding of the paths. 
The number of paths starting at a column $k$ is again given by $\cm_k(\l)$, the multiplicity of $k$ in $\l$. We impose no restriction on the number of paths per edge.  

A local configuration, i.e. a configuration of a face, is given by the data $(\r,\rt,\s,\st)$ assigned to the edges as follows:
\begin{center}
\vspace{0.3cm}
\begin{tikzpicture}[scale=0.5,baseline=(current bounding box.center)]
\draw (0,0) -- (3,0) -- (3,3) -- (0,3) -- (0,0);
\node[left] at (0,1.5) {$\s$};
\node[right] at (3,1.5) {$\st$};
\node[below] at (1.5,0) {$\r$};
\node[above] at (1.5,3) {$\rt$};
\end{tikzpicture}~,
\vspace{0.3cm}
\end{center}
where $\r=(\r_{1},\dots,\r_n)$, $\rt=(\rt_{1},\dots,\rt_n)$, $\s=(\s_{1},\dots,\s_n)$ and $\st=(\st_{1},\dots,\st_n)$ are now compositions where each number with the index $j$ counts how many paths of colour $j$ are touching the corresponding edge. 
\begin{ex}\label{ex:N}
The lattice path configuration with three colours $j=1$ (red), $j=2$ (blue) and $j=3$ (green)  with the edge data $\r=(1,1,0),\rt=(0,1,1), \s=(1,1,1),\st=(2,1,0)$ is given by:
\begin{center}
\vspace{0.3cm}
\begin{tikzpicture}[scale=0.5,baseline=(current bounding box.center)]
\draw (0,0) -- (3,0) -- (3,3) -- (0,3) -- (0,0);
\node[left] at (0,1.5) {$\s$};
\node[right] at (3,1.5) {$\st$};
\node[below] at (1.5,0) {$\r$};
\node[above] at (1.5,3) {$\rt$};
 \begin{scope}[line width=1pt]
\draw[green] (0,1.5+0.3) -- (1.5-0.1,1.5+0.3) -- (1.5-0.1,3);
\draw[blue] (0,1.5+0.1) -- (1.5+0.1,1.5+0.1) -- (1.5+0.1,3);;
\draw[red] (0,1.5-0.1) -- (3,1.5-0.1);
\draw[blue] (1.5-0.1,0) -- (1.5-0.1,1.5-0.3) -- (3,1.5-0.3);
\draw[red] (1.5+0.1,0) -- (1.5+0.1,1.5-0.5) -- (3,1.5-0.5);
\end{scope}
\end{tikzpicture}~.
\vspace{0.3cm}
\end{center}
\end{ex}
For two compositions of integers $a=(a_1,\dots,a_n)$ and $b=(b_1,\dots,b_n)$ we will use the following notations:
\begin{align*}
a.b
=\sum_{i=1}^n a_i b_i, \qquad
\binom{a}{b}_t
=\prod_{i=1}^n \binom{a_i}{b_i}_t, \qquad
\sum_{a=0}^{b}
=\sum_{a_1=0}^{b_1}\dots \sum_{a_n=0}^{b_n}.
\end{align*}
The relevant local weights for constructing partition functions corresponding to the polynomials $W(x;q,t;z)$ are given by the following formula:
\begin{align}
\label{eq:Lone}
&\ML^{\s,\st}_{\r,\rt}(x,z)= 
\sum_{\k=0}^{\st}
 t^{\frac{1}{2}(\k.\k-|\k|)+\sum_{l<j}\st_l(\rt_j+\k_j)-\sum_{l<j}(\st_l-\k_l,\s_j)}\binom{\rt+\k}{\k}_t\binom{\r}{\st-\k}_t
  x^{|\k|}z^{|\st-\k|}.
\end{align}
The continuity of the paths of each colour must be preserved, so if $\s_j+\r_j\neq \st_j +\rt_j$ for any $j$ then we set $\ML^{\s,\st}_{\r,\rt}(x,z)=0$. The matrix in (\ref{eq:Lone}) can be obtained from the $R$-matrix associated to the quantum affine algebra $\Uq$ for symmetric tensor representations with arbitrary weights \cite{BosM,KunibaMMO}. In Section \ref{sec:proof} we show how (\ref{eq:Lone}) can be calculated starting from the fundamental $L$-matrix and performing fusion.  

According to Proposition \ref{prop:WH}, the construction of the modified Macdonald polynomials $H_{\l}$ can be carried out using the two special cases of the above weights
\begin{align}
\label{eq:MLx}
&\ML^{\s,\st}_{\r,\rt}(x,0)=
t^{\frac{1}{2}(|\st|^2-|\st|)+\sum_{l<j}\st_l\rt_j}
 \binom{\st+\rt}{\st}_t
 x^{|\st|} , \\
\label{eq:MLz}
&\ML^{\s,\st}_{\r,\rt}(0,z)=
t^{\sum_{l<j}\st_l (\rt_j-\s_j)}
\binom{\r}{\st}_t
z^{|\st|}.
\end{align}
The matrix $\ML^{\s,\st}_{\r,\rt}(0,z)$ has a special property which is due to the  binomial coefficient. It says that whenever $\r_i<\st_i$, for any $1\leq i\leq n$, then the matrix element vanishes. This $L$-matrix is closely related to the $R$-matrix introduced by Povolotsky \cite{Povo13}. It also essentially appears in the explicit formulas of the $R$-matrix and the $Q$-operators in the work of Mangazeev \cite{Mang14a,Mang14b}. Let us also remark that the above weights $\ML^{\s,\st}_{\r,\rt}(x,0)$, whose indices are compositions, can be expressed through the weights of the modified Hall--Littlewood construction $\ML^{\s_j,\st_j}_{\r_j,\rt_j}(x)$ given in (\ref{eq:LKir}), whose indices are integers:
\begin{align}\label{eq:LLkir}
\ML^{\s,\st}_{\r,\rt}(x,0)=
t^{\sum_{j<l}(\rt_j +\st_j)\st_l}
\prod_{j=1}^n 
\ML^{\s_j,\st_j}_{\r_j,\rt_j}(x).
\end{align}
For the weights (\ref{eq:MLx}) and (\ref{eq:MLz}) we will use the following notation
\begin{align}
\label{eq:Lx}
&L^{\s,\st}_{\r,\rt}(x):=
\ML^{\s,\st}_{\r,\rt}(x,0),\\
\label{eq:Lz}
&L'{}^{\s,\st}_{\r,\rt}(z):=
\ML^{\s,\st}_{\r,\rt}(0,z).
\end{align}
These two weights give two constructions which we will write in parallel.  

To each global configuration of paths with fixed boundary conditions given by $\l$ we assign the global weight given by products of all local weights $\ML^{\s,\st}_{\r,\rt}(x_k,z_k)$ multiplied by $q^{\omega}$ (with $q\in \mathbb{C}$ and $|q|<1$), where $\s,\st,\r$ and $\rt$ are local edge occupation states, $k$ is the label of the row and $\omega$ is given by: 
\begin{align}\label{eq:qomega}
\omega=\sum_{1\leq i<j\leq n} (j-i)s_{i,j},
\end{align}
where the numbers $s_{i,j}$ count how many times a path of colour $j$ winds in a column with the label $i$. The {\it{partition function}} $\MZ_\l(x;q,t;z)$ is defined as the sum of all global weights taken over all possible lattice path configurations. In the same way we define the {\it{partition functions}}  $\MZ_\l(x;q,t)$ and $\MZ'_\l(x;q,t)$ whose local weights are instead given by $L^{\s,\st}_{\r,\rt}(x_k)$ and $L'^{\s,\st}_{\r,\rt}(x_k)$, respectively. In contrast to the Hall--Littlewood case, the paths now are allowed to wind which gives rise to an infinite number of possible global configurations. 


\subsection{Column partition functions}\label{ssect:columns} 
In this subsection our goal is to derive two expressions for the column partition functions. One expression is based on the face weights (\ref{eq:Lx}) and the other on (\ref{eq:Lz}). The partition functions $\MZ_\l(x;q,t)$ and $\MZ'_\l(x;q,t)$ are given by sums of products of the two column partition functions. This is possible since the weight $q^{\omega}$ is factorized over the column index.  

Fix $i: 1\leq i\leq n$. 
A configuration of a single column with the label $i$ is given by the data $(\r,\rt,\s,\st)$ where $\r=(\r_{1},\dots,\r_n)$ and $\rt=(\rt_{1},\dots,\rt_n)$ are compositions and $\s=(\s^{1},\dots,\s^N)$ and $\st=(\st^{1},\dots,\st^N)$ now are sets of compositions $\s^{k}=(\s^{k}_1,\dots,\s^k_n)$ and $\st^k=(\st^{k}_1,\dots,\st^k_n)$, $k=1,\dots,N$. We also need to consider compositions $\s_{j}=(\s^{1}_j,\dots,\s^N_j)$ and $\st_j=(\st^{1}_j,\dots,\st^N_j)$, $j=1,\dots,n$. 
The number $\r_j$ tells us how many paths of colour $j$ touch the bottom edge of the column while $\rt_j$ tells us how many paths of colour $j$ touch the top edge of the column. 

Next, we need to impose the boundary conditions which allow the paths to reach the top boundary of a column and reappear at the bottom boundary in the same column and continue the up-right motion\footnote{This boundary condition does not apply to the paths of colour $i$ in the column $i$}. More precisely, we have the following: the $i$-th column will have $\r_j=0$ for $j< i$, $\r_i=\cm_{i}(\l)$ which means that all $i$-colour paths start at this column, and $\r_j\geq 0$ for $j> i$. On the other hand $\rt_j=0$ for all $j\leq i$ since the path of colour $i$ is not allowed to wind in the column with the same label and the paths of colours $j< i$ appear in the columns to the right from the column $i$.  The compositions $\s_j$ and $\st_j$ must satisfy: $|\st_j-\s_j|=0$ for $j> i$, $|\st_i|=\cm_{i}(\l)$ and $\s_j=\emptyset$ for $j\leq i$ and $\st_j=\emptyset$ for $j<i$, where $\emptyset$ is the composition with all entries equal to $0$. The assignment of these numbers to the edges is the same as before:
\begin{center}
\vspace{0.3cm}
\begin{tikzpicture}[scale=0.5,baseline=(current bounding box.center)]
\draw (0,0) -- (2,0) -- (2,2) -- (0,2) -- (0,0);
\draw (0,2+4)  -- (0,0+4) -- (2,0+4) -- (2,2+4) ;
\draw[dashed] (0,2) -- (0,2+2);
\draw[dashed] (2,2) -- (2,2+2);
\draw (0,0+6) -- (2,0+6) -- (2,2+6) -- (0,2+6) -- (0,0+6);
\node[left] at (0,1+6) {$\s^1$};
\node[right] at (2,1+6) {$\st^1$};
\node[left] at (0,1+4) {$\s^2$};
\node[right] at (2,1+4) {$\st^2$};
\node[left] at (0,1) {$\s^N$};
\node[right] at (2,1) {$\st^N$};
\node[below] at (1,0) {$\r$};
\node[above] at (1,8) {$\rt$};
\end{tikzpicture}~.
\vspace{0.3cm}
\end{center}
The values of the horizontal edges $\r^k_j$, where $k=1,\dots,N+1$ is counted from top to bottom, are fixed by $\s_j$ and $\st_j$ due to the continuity of the paths and given by 
\begin{align}\label{eq:rhosigj}
\r^k_j=\st^1_j+\dots +\st^{k-1}_j-(\s^1_j+\dots +\s^{k-1}_j)+s_j,
\end{align}
where $s_j$ are the indices which count the winding of the paths, so $s_j\geq 0$ for $j>i$ and $s_i=0$. 
Note that $\r^1_j$ for $j> i$ can be non zero since the paths with these colour labels can touch the top boundary and hence contribute to the value $\r^1_j$. For the values  $j\leq i$ we have $\r^1_j=0$.
Analogously, $\r^{N+1}_j$ can be non zero for $j> i$ and $\r^{N+1}_j=\r^{1}_j$, while for $j=i$ we have $\r^{N+1}_i=\cm_i(\l)$ and for $j<i$ we have $\r^{N+1}_j=0$.  

In view of (\ref{eq:rhosigj}) we introduce two sets $\n_j=(\n_j^1,\dots,\n_j^{N})$ and $\nt_j=(\nt_j^1,\dots,\nt_j^{N})$ which are defined by $\n_j^k=\s_j^1+\dots +\s_j^{k}$ and $\nt_j^k=\st_j^1+\dots +\st_j^{k}$, $j=1,\dots,n$. The sets of integers $\n_j$ and $\nt_j$ written in such order can be regarded as partitions for each $j$, and we define $\n_j^k=0$ and $\nt{}_j^k=0$ for $k\leq 0$. Note also that $\nt_j^k\geq \n_j^k$ only for $j=i$, while for $j<i$ this condition is broken due to the possibility for the paths to wind. 
With this notation we have $\s_j^{k}=\n_j^{k}-\n_j^{k-1}$, $\st_j^{k}=\nt_j^{k}-\nt_j^{k-1}$ and $\r_j^{k}=\nt_j^{k-1}-\n_j^{k-1}+s_j$.

The weight of the column $\MQ^{i}_{\n,\nt}(x;q,t)$ is given by $n-i$ infinite sums of the product of the weights of its faces $L^{\s,\st}_{\r,\rt}(x_k)$ in (\ref{eq:Lx}) while the weight of the column $\MQ'{}^{i}_{\n,\nt}(x;q,t)$ is given by $n-i$ infinite sums of the product of the weights of its faces $L'{}^{\s,\st}_{\r,\rt}(x_k)$ in (\ref{eq:Lz})
\begin{align}\label{eq:Qx0}
&\MQ^{i}_{\n,\nt}(x;q,t)
=
\sum_{s_{i+1},\dots,s_n}
q^{\sum_{j>i} (j-i) s_j}  
\prod_{k=1}^{N}
L_{\r^{k+1},\r^k}^{\s^k,\st^k}(x_k), \\
&
\label{eq:Qz0}
\MQ'{}^{i}_{\n,\nt}(x;q,t)=
\sum_{s_{i+1},\dots,s_n}
q^{\sum_{j>i} (j-i) s_j}
\prod_{k=1}^{N}
L'{}_{\r^{k+1},\r^k}^{\s^k,\st^k}(x_k).
\end{align}
The infinite summations occur due to windings of the paths. A path of colour $j$ appearing in column $i$, such that $i<j$, can wind infinitely many times which gives rise to the infinite number of possible configurations. Each infinite summation in $\MQ$ and $\MQ'$ can be expressed through the functions 
\begin{align*}
&\Phi_{\n_j|\nt_j}(z;t)
=
(z;t)_{\nt{}_j^{N}+1}
\sum_{s=0}^{\infty}
z^s
\prod_{k=1}^{N}
\binom{\nt_j^{k}-\n_j^{k-1}+s}{\nt_j^{k-1}-\n_j^{k-1}+s}_t, \\
&\Phi'_{\n_j|\nt_j}(z;t)
=
(z;t)_{\nt{}_j^{N}+1}
\sum_{s=0}^{\infty}
z^s
\prod_{k=1}^{N}
\binom{\nt_j^{k}-\n_j^{k}+s}{\nt_j^{k-1}-\n_j^{k}+s}_t,
\end{align*}
respectively. The function $\Phi'_{\n_j|\nt_j}(z;t)$ can be expressed through the function $\Phi_{\n_j|\nt_j}(z;t)$. To see this we note that the first $\n_j^1$ summands in $\Phi'$ are $0$ because the binomial factor with $k=0$ has a negative argument and thus equals to $0$. We introduce a new summation variable $s'=s+\n_j^1$. Defining the partition $r(\n_j)$, whose components are given by $r(\n_j)^k=\n_j^{k+1}-\n_j^1$  for $k<N$ and $r(\n_j)^N=\n_j^{N}$, we can write 
\begin{align}\label{eq:PhipPhi}
&\Phi'_{\n_j|\nt_j}(z;t)
=
z^{\n_j^1}\Phi_{r(\n_j)|\nt_j}(z;t).
\end{align}
The function $\Phi_{\n_j|\nt_j}(z;t)$ is in fact a finite polynomial in $z$ and $t$. It enjoys several interesting properties and can be written as a combinatorial sum (\ref{th:Fpos}) which reflects the positivity of its coefficients. We devote Section \ref{sec:phi} to the study of the polynomial $\Phi_{\n_j|\nt_j}(z;t)$. In order to write the final expression for the column weights in a uniform fashion we make use of the following identity:
\begin{align*}
\Phi_{\n_i|\nt_i}(1;t)=
\prod_{k=1}^{N-1} \binom{\nt_i^{k+1}}{\nt_i^{k}}_t.
\end{align*}
For a proof of this identity see Remark \ref{rmk:Phi1}. 

Next, we need to introduce two coefficients $\chi(\n,\nt)$ and $\chi'(\n,\nt)$:
\begin{align}\label{eq:chi1}
&\chi(\n,\nt)=
\sum_{k=1}^N
\sum_{j=i}^n
\Big{(}
\frac{1}{2}  (\nt_j^k-\nt_j^{k-1}) (\nt_j^k-\nt_j^{k-1}-1) + 
 \sum_{l>j} (\nt_j^k-\nt_j^{k-1})(\nt_l^{k}-\n_l^{k-1})
 \Big{)},\\
&\label{eq:chip1}
\chi'(\n,\nt)=
\sum_{k=1}^N
\sum_{i\leq j < l\leq n}
(\nt_j^k-\nt_j^{k-1})(\nt_l^{k-1}-\n_l^{k}),
\end{align}
and the normalization function:
\begin{align}\label{eq:NQ}
\MN^i_{\l}(q,t) =
\prod_{j=i+1}^n
(q^{j-i} t^{ \l'_i-\l'_j};t)_{\l'_j-\l'_{j+1}+1}.
\end{align}
Now we are ready to state the result of this subsection. The column weights can be expressed through $\Phi_{\n_j|\nt_j}(z;t)$ and $\Phi'_{\n_j|\nt_j}(z;t)$ as follows:
\begin{align}\label{eq:QQ2x}
&\MQ^{i}_{\n,\nt}(x;q,t)
=\frac{1}{\MN^i_{\l}(q,t) }
t^{\chi(\n,\nt)}
\prod_{j=i}^n
\Phi_{\n_j|\nt_j}(q^{j-i} 
t^{ \l'_i-\l'_j};t)
\times 
\prod_{k=1}^{N} 
x_k^{\sum_{j\geq i} (\nt_j^{k}-\nt_j^{k-1})}, \\
\label{eq:QQ2z}
&\MQ'^{i}_{\n,\nt}(x;q,t)
=\frac{1}{\MN^i_{\l}(q,t) }
t^{\chi'(\n,\nt)}
\prod_{j=i}^n
\Phi'_{\n_j|\nt_j}(q^{j-i} 
t^{ \l'_i-\l'_j};t)
\times 
\prod_{k=1}^{N} 
x_k^{\sum_{j\geq i} (\nt_j^{k}-\nt_j^{k-1})}.
\end{align}

In the rest of this subsection we show how to compute (\ref{eq:QQ2x}) and (\ref{eq:QQ2z}) starting from (\ref{eq:Qx0}) and (\ref{eq:Qz0}). Using the face weights (\ref{eq:Lx}) and (\ref{eq:Lz}) and taking into account the boundary conditions we can write the columns weights $\MQ^{i}_{\n,\nt}(x;q,t)$ and $\MQ'{}^{i}_{\n,\nt}(x;q,t)$ given in  (\ref{eq:Qx0}) and (\ref{eq:Qz0}) as 
\begin{align}\label{eq:Qx}
&\MQ^{i}_{\n,\nt}(x;q,t)
=
\sum_{s_{i+1},\dots,s_n}
q^{\sum_{j>i} (j-i) s_j}  
t^{ \sum_{k=1}^N \left( \frac{1}{2} |\st^k |(|\st^k|-1) + \sum_{i\leq l<j} \st_l^k \r^{k}_j\right) }
\prod_{k=1}^{N}
x_k^{|\st^k|}
\prod_{j=i}^{n}
\binom{\r_j^{k}+\st^{k}_j}{\r_j^{k}}_t, \\
&
\label{eq:Qz}
\MQ'{}^{i}_{\n,\nt}(x;q,t)=
\sum_{s_{i+1},\dots,s_n}
q^{\sum_{j>i} (j-i) s_j}
t^{ \sum_{k=1}^N  \sum_{i\leq l<j} \st_l^k (\r^{k}_j-\s^{k}_j)}
\prod_{k=1}^{N}
 x_k^{|\st^k|}
\prod_{j=i}^{n}
\binom{\r_j^{k+1}}{\st_j^{k}}_t,
\end{align}
where each summation over $s_{i+1},\dots,s_n$ runs from $0$ to $\infty$. Notice that the winding weight given by a power of $q$ corresponds to the $i$-th factor of $q^{\omega}$ with $\omega$ given in (\ref{eq:qomega}).
After rewriting the right hand sides of (\ref{eq:Qx})-(\ref{eq:Qz}) using the indices $\n$:
\begin{align*}
\s_j^{k}=\n_j^{k}-\n_j^{k-1},\qquad \st_j^{k}=\nt_j^{k}-\nt_j^{k-1},
\end{align*}
 and noticing that
the $n-i$ infinite summations are independent from each other and have the same form, the column weights become
\begin{align}
&\MQ^{i}_{\n,\nt}(x;q,t) 
=
t^{\chi(\n,\nt)}
\prod_{k=1}^{N-1} \binom{\nt_i^{k+1}}{\nt_i^{k}}_t
\times
\prod_{j>i}
\left(
\sum_{s}
(q^{j-i} 
t^{ \l'_i-\l'_j})^s
\prod_{k=1}^{N}
\binom{\nt_j^{k}-\n_j^{k-1}+s}{\nt_j^{k-1}-\n_j^{k-1}+s}_t
 x_k^{\nt_j^{k}-\nt_j^{k-1}}
 \right), \label{eq:Qx1}  \\
&
\MQ'{}^{i}_{\n,\nt}(x;q,t) 
=
t^{\chi'(\n,\nt)}
\prod_{k=1}^{N-1} 
\binom{\nt_i^{k+1}}{\nt_i^{k}}_t
\times
\prod_{j>i}
\left(
\sum_{s}
(q^{j-i} 
t^{ \l'_i-\l'_j})^s
\prod_{k=1}^{N}
\binom{\nt_j^{k}-\n_j^{k}+s}{\nt_j^{k-1}-\n_j^{k}+s}_t
 x_k^{\nt_j^{k}-\nt_j^{k-1}}
 \right). \label{eq:Qz1}
\end{align}
In (\ref{eq:Qx})-(\ref{eq:Qz}) the product over $j$ runs from $i$ to $n$, while in the above expressions (\ref{eq:Qx1}) and (\ref{eq:Qz1}) we wrote separately the factor with $j=i$. This factor equals $\prod_{k=1}^{N-1} 
\binom{\nt_i^{k+1}}{\nt_i^{k}}_t$; it has a simplified form due to the fact that in the column with label $i$ we have $\n_i^k=0$ (since $\s_i^k=0$). In the expressions (\ref{eq:Qx1}) and (\ref{eq:Qz1}) we also inserted the coefficients $\chi(\n,\nt)$ and $\chi'(\n,\nt)$, introduced in (\ref{eq:chi1})-(\ref{eq:chip1}). The appearance of $\chi(\n,\nt)$ is due to the exponent of $t$ in (\ref{eq:Qx}), more precisely we have:
\begin{align*}
&\frac{1}{2}\sum_{k=1}^N |\st^k |(|\st^k|-1) + \sum_{k=1}^N\sum_{i\leq l<j} \st_l^k \r^{k}_j
\\
=& \frac{1}{2}\sum_{k=1}^N |\nt^k-\nt^{k-1}| (|\nt^k-\nt^{k-1} |-1) 
 +  
\sum_{k=1}^N  \sum_{i\leq l<j} (\nt_l^k-\nt_l^{k-1})(\nt_j^{k-1}-\n_j^{k-1}+s_j) \\
 =& \chi(\n,\nt) +\sum_{k=1}^N \sum_{i\leq l<j} (\nt_l^k-\nt_l^{k-1})s_j 
=\chi(\n,\nt) +\sum_{i<j}(\l'_i-\l'_j)s_j,
\end{align*}
where in the last line the sum over $k$ telescopes and gives  $\nt_l^{N}= \cm_{l}(\l)=\l'_{l}-{\l'_{l+1}}$, and hence the sum over $l$ also telescopes. The coefficient $\chi'(\n,\nt)$ can be shown to appear analogously. In (\ref{eq:Qx1}) each factor of the product over $j>i$ equals the function $\Phi$ and in (\ref{eq:Qz1}) each factor of the product over $j>i$ equals the function $\Phi'$. This yields the two formulae (\ref{eq:QQ2x}) and (\ref{eq:QQ2z}).


\subsection{Lattice partition functions}\label{ssect:lattice}
Let now $\{\s\}$ be a set $\{\s\}=(\s_1,\dots,\s_n)$ with $\s_i=(\s_i^1,\dots,\s_i^N)$, $i=1,\dots, n$, where each $\s_i^k$ is a composition $\s_i^k=(\s_{i,1}^k,\dots,\s_{i,n}^k)$, $i=1,\dots, n$ and $k=1,\dots, N$. We also consider compositions $\s_{i,j}=(\s_{i,j}^1,\dots,\s_{i,j}^N)$, $i=1,\dots, n$ and $j=1,\dots, n$ and we set $\s_{i,j}^k=0$ for 
$i>j$ for all $k$.
On the lattice the indices of $\s$ are understood according to
\begin{align}
\label{sigma}
\s_{column,colour}^{row}= \s_{i,j}^k.
\end{align}
Let $\{\n\}$ be a set $\{\n\}=(\n_1,\dots,\n_n)$ with $\n_i=(\n_i^1,\dots,\n_i^N)$, $i=1,\dots, n$ where each $\n_i^k$ is a composition $\n_i^k=(\n_{i,1}^k,\dots,\n_{i,n}^k)$, $i=1,\dots, n$ and $k=1,\dots, N$. The numbers $\n_{i,j}^k$ are given by $\n_{i,j}^k=\s_{i,j}^1+\dots +\s_{i,j}^{k}$ and therefore $\n_{i,j}=(\n_{i,j}^1,\dots,\n_{i,j}^N)$ can be regarded as partitions.
In order to compute the partition function $\MZ_\l(x;q,t)$ with the positions of the paths at the lower edges given by the partition $\l$ we need to join together $n$ columns $\MQ^{i}_{\n_{i+1},\n_{i}}(x;q,t)$ and sum over all possible arrangements of the paths on the vertical edges, i.e. sum over all compatible $\n_{1},\dots,\n_{n}$:
\begin{align}\label{eq:Z}
&\MZ_\l(x;q,t):=\MN_{\l}(q,t) \sum_{\{\n\}}\prod_{i=1}^n \MQ^{i}_{\n_{i+1},\n_i}(x;q,t),\\
&
\label{eq:Zp}
\MZ'_\l(x;q,t):=\MN_{\l}(q,t)\sum_{\{\n\}}\prod_{i=1}^n \MQ'{}^{i}_{\n_{i+1},\n_i}(x;q,t).
\end{align}
The overall factor
\begin{align}\label{eq:NZ}
\MN_{\l}(q,t) = \prod_{i=1}^n \MN^i_{\l}(q,t)=
\prod_{1\leq i<j\leq n}
\prod_{\l'_{j+1}\leq \ell\leq \l'_j}(1-q^{j-i} t^{\l'_i-\ell})
\end{align}
is introduced in order to clear the denominators, which makes both $\MZ_\l(x;q,t)$ and $\MZ'_\l(x;q,t)$ polynomials in $(x;q,t)$. By inserting (\ref{eq:QQ2x}) in (\ref{eq:Z}) and (\ref{eq:QQ2z}) in (\ref{eq:Zp}) we can write the  partition functions as the following sums of products of the column weights:
\begin{align}\label{eq:Z1}
&\MZ_\l(x;q,t)=
\sum_{\{\n\}}
t^{\chi(\n)}
\prod_{j\geq i}
\Phi_{\n_{i+1,j}|\n_{i,j}}(q^{j-i} 
t^{ \l'_i-\l'_j};t)
\times 
\prod_{k=1}^{N} 
x_k^{\sum_{j\geq i} (\n_{i,j}^{k}-\n_{i,j}^{k-1})}, \\
\label{eq:Z1p}
&\MZ'_\l(x;q,t)=
\sum_{\{\n\}}
t^{\chi'(\n)}
\prod_{j\geq i}
\Phi'_{\n_{i+1,j}|\n_{i,j}}(q^{j-i} 
t^{ \l'_i-\l'_j};t)
\times 
\prod_{k=1}^{N} 
x_k^{\sum_{j\geq i} (\n_{i,j}^{k}-\n_{i,j}^{k-1})}.
\end{align}
The summation set $\{\n\}$ is given by $n(n+1)/2$ partitions $\n_{i,j}=(\n_{i,j}^1,\dots,\n_{i,j}^N)$  whose top parts are $\n_{i,j}^{N}=\l'_j-\l'_{j+1}$. The new exponents $\chi(\n)$ and $\chi'(\n)$ are given by
\begin{align}
\label{eq:chi}
&\chi(\n)=
\sum_{i=1}^n
\chi(\n_{i+1},\n_i)=\sum_{k=1}^N  \sum_{1\leq i\leq j\leq n}
\Big{(}
\frac{1}{2} (\n_{i,j}^k-\n_{i,j}^{k-1})(\n_{i,j}^k-\n_{i,j}^{k-1} -1) + 
 \sum_{l>j} (\n_{i,j}^k-\n_{i,j}^{k-1})(\n_{i,l}^{k}-\n_{i+1,l}^{k-1})
 \Big{)},\\
 \label{eq:chip}
&\chi'(\n)=
\sum_{i=1}^n\chi'(\n_{i+1},\n_i)=\sum_{k=1}^N
\sum_{1\leq i\leq j <l\leq n}
 (\n_{i,j}^k-\n_{i,j}^{k-1})(\n_{i,l}^{k-1}-\n_{i+1,l}^{k}),
\end{align}
where $\chi(\n_{i+1},\n_i)$ and $\chi'(\n_{i+1},\n_i)$ are given in (\ref{eq:chi1}) and (\ref{eq:chip1}).
Let the integer matrices $\n^{k}$ be chosen such that $\sum_{j\geq i} (\n_{i,j}^{k}-\n_{i,j}^{k-1})=\m_k$. We can write our partition functions by summing over all possible $\m$ such that $|\m|=|\l|$ and finally over $\m$-restricted sets $\{\n\}$:
\begin{align}\label{eq:Z2}
&\MZ_\l(x;q,t)=
\sum_{\m: |\m|=|\l|}
x_1^{\m_1}\dots x_N^{\m_N}
\sum_{\{\n\}}
t^{\chi(\n)}
\prod_{j\geq i}
\Phi_{\n_{i+1,j}|\n_{i,j}}(q^{j-i} 
t^{ \l'_i-\l'_j};t),\\
&\label{eq:Z2p}
\MZ'_\l(x;q,t)=
\sum_{\m: |\m|=|\l|}
x_1^{\m_1}\dots x_N^{\m_N}
\sum_{\{\n\}}
t^{\chi'(\n)}
\prod_{j\geq i}
\Phi'_{\n_{i+1,j}|\n_{i,j}}(q^{j-i} 
t^{ \l'_i-\l'_j};t).
\end{align}
The next crucial step follows from the fact that the coefficients
\begin{align*}
&\sum_{\{\n\}}
t^{\chi(\n)}
\prod_{j\geq i}
\Phi_{\n_{i+1,j}|\n_{i,j}}(q^{j-i} 
t^{ \l'_i-\l'_j};t),\\
&
\sum_{\{\n\}}
t^{\chi'(\n)}
\prod_{j\geq i}
\Phi'_{\n_{i+1,j}|\n_{i,j}}(q^{j-i} 
t^{ \l'_i-\l'_j};t),
\end{align*}
in which the summations run over $\m$-restricted sets $\{\n\}$, are both invariant under permutations of the parts of $\m$. This follows from the fact that $\MZ_\l$ and $\MZ'_\l$ are symmetric polynomials in $x_i$. This is explained in the construction in Section \ref{sec:proof}. 
Now let the summation over $\m: |\m|=|\l|$ be split into two sub-summations: one over partitions $\m$ satisfying $|\m|=|\l|$ and the second over permutations of $\m$. Since the above coefficients are invariant under permutations of $\m$ we can compute the second of these two sums and find the monomial symmetric function $m_\m(x)$:
\begin{align}\label{eq:Z3}
&\MZ_\l(x;q,t)=
\sum_{\m}
m_\m(x)
\sum_{\{\n\}}
t^{\chi(\n)}
\prod_{j\geq i}
\Phi_{\n_{i+1,j}|\n_{i,j}}(q^{j-i} 
t^{ \l'_i-\l'_j};t),\\
\label{eq:Z3}
&\MZ'_\l(x;q,t)=
\sum_{\m}
m_\m(x)
\sum_{\{\n\}}
t^{\chi'(\n)}
\prod_{j\geq i}
\Phi'_{\n_{i+1,j}|\n_{i,j}}(q^{j-i} 
t^{ \l'_i-\l'_j};t).
\end{align}
The summation set $\{\n\}$ can be described as the set running over partitions $\n_{i,j}$ satisfying the conditions:
\begin{align}
\label{eq:constr1}
\n_{i,j}^{N}
=\l'_j-\l'_{j+1},
&
\qquad
\forall\ 1\leq i \leq j \leq n,
\\
\label{eq:constr2}
\n_{i+1,i}^{k}
=
0,
&
\qquad
\forall\ 1 \leq i \leq n,\ \ 1 \leq k \leq N,
\\
\label{eq:constr3}
\sum_{1 \leq i \leq j \leq n} \n_{i,j}^{k}
=
\m_1+\dots+\m_k,
&
\qquad
\forall\ 1\leq k \leq N.
\end{align}
Another relevant summation set is the set $\{\n\}'$ which differs from $\{\n\}$ only up to $\l\leftrightarrow \l'$. This set contains partitions $\n_{i,j}$ with $1\leq i\leq j\leq \l'_1$ satisfying
\begin{align}
\label{eq:dconstr1}
\n_{i,j}^{N}
=\l_j-\l_{j+1},
&
\qquad
\forall\ 1\leq i \leq j \leq \l'_1,
\\
\label{eq:dconstr2}
\n_{i+1,i}^{k}
=
0,
&
\qquad
\forall\ 1 \leq i \leq \l'_1,\ \ 1 \leq k \leq N,
\\
\label{eq:dconstr3}
\sum_{1 \leq i \leq j \leq \l'} \n_{i,j}^{k}
=
\m_1+\dots+\m_k,
&
\qquad
\forall\ 1\leq k \leq N.
\end{align}

Let us recall the face weights $\ML^{\s,\st}_{\r,\rt}(x,z)$ in (\ref{eq:Lone}). 
In Section \ref{sec:proof}, Proposition \ref{prop:WZ} we demonstrate that the above lattice construction implemented with the face weights $\ML^{\s,\st}_{\r,\rt}(x,z)$ given in (\ref{eq:Lone}) leads to a partition function $\MZ_{\l}(x;q,t;z)$ for which we have
\begin{align}\label{eq:ZW}
\MZ_{\l}(x;q,t;z)= W_{\l}(x;q,t;z). 
\end{align}
The partition functions $\MZ_{\l}(x;q,t)$ and $\MZ'_{\l}(x;q,t)$ are special cases of $\MZ_{\l}(x;q,t;z)$: 
\begin{align}
\label{eq:Zx0}
&\MZ_{\l}(x;q,t)=\MZ_{\l}(x;q,t;0), \\
\label{eq:Zz0}
&\MZ'_{\l}(x;q,t)=\MZ_{\l}(0;q,t;x), 
\end{align}
since they are constructed with the local weights (\ref{eq:MLx}) and (\ref{eq:MLz}), respectively. Using (\ref{eq:ZW}), the two reductions (\ref{eq:Zx0}) and (\ref{eq:Zz0}) and the two relations between $W_\l$ and $H_\l$ in (\ref{eq:WredH1}) and (\ref{eq:WredH2}) allows us to obtain two formulae for the modified Macdonald polynomials $H_\l(x;q,t)$.
\begin{thm}\label{thm:H}
Let $\{\n\}$ be a set of partitions $\n_{i,j}$, $1\leq i\leq j\leq n=\l_1$ of length $N$ satisfying (\ref{eq:constr1})-(\ref{eq:constr3}) and $\{\n\}'$ be a set of partitions $\n_{i,j}$, $1\leq i\leq j\leq \l_1'$ of length $N$ satisfying (\ref{eq:dconstr1})-(\ref{eq:dconstr3}). We have
\begin{align}\label{eq:HZx}
&H_\l(x;q,t)=
\sum_{\m}
m_\m(x)
\sum_{\{\n\}}
t^{\chi(\n)}
\prod_{j\geq i}
\Phi_{\n_{i+1,j}|\n_{i,j}}(q^{j-i} 
t^{ \l'_i-\l'_j};t),\\
\label{eq:HZz}
&
H_\l(x;q,t)=
\sum_{\m}
m_\m(x)
\sum_{\{\n\}'}
q^{\chi'(\n)}
\prod_{j\geq i}
\Phi'_{\n_{i+1,j}|\n_{i,j}}(t^{j-i} 
q^{ \l_i-\l_j};q).
\end{align}
\end{thm}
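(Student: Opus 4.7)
The plan is to obtain the two formulae as nearly immediate consequences of the main identification $\MZ_\l(x;q,t;z) = W_\l(x;q,t;z)$ in \eqref{eq:ZW} (whose proof is deferred to Section \ref{sec:proof}, Proposition \ref{prop:WZ}) together with the reductions of Proposition \ref{prop:WH}, once the column-weight calculations of Section \ref{ssect:columns} and \ref{ssect:lattice} are in hand.

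For formula \eqref{eq:HZx}, I would start from the monomial expansion \eqref{eq:Z3} of $\MZ_\l(x;q,t)$ derived in Section \ref{ssect:lattice}, where the summation set $\{\n\}$ is described by the constraints \eqref{eq:constr1}--\eqref{eq:constr3}. Then I would simply chain
\begin{align*}
H_\l(x;q,t) = W_\l(x;q,t;0) = \MZ_\l(x;q,t;0) = \MZ_\l(x;q,t),
\end{align*}
using \eqref{eq:WredH1}, \eqref{eq:ZW}, and \eqref{eq:Zx0} in turn. This substitutes the explicit lattice expression \eqref{eq:Z3} for $H_\l(x;q,t)$.

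For the dual formula \eqref{eq:HZz}, I would apply the analogous chain in the opposite alphabet. Using \eqref{eq:Zz0} and \eqref{eq:ZW} followed by \eqref{eq:WredH2}, one has $\MZ'_\m(x;q,t) = \MZ_\m(0;q,t;x) = W_\m(0;q,t;x) = H_{\m'}(x;t,q)$ for any partition $\m$. Specializing $\m = \l'$ and swapping the role of the parameters $(q,t)$ gives $H_\l(x;q,t) = \MZ'_{\l'}(x;t,q)$. The lattice expression \eqref{eq:Z3} for $\MZ'_{\l'}(x;t,q)$, with $\l \to \l'$ and $(q,t) \to (t,q)$ throughout, produces precisely the right-hand side of \eqref{eq:HZz}: the conditions \eqref{eq:constr1}--\eqref{eq:constr3} (with $\l'$ in place of $\l$) become \eqref{eq:dconstr1}--\eqref{eq:dconstr3} since $(\l')'_i = \l_i$, the exponent $t^{\chi'(\n)}$ becomes $q^{\chi'(\n)}$, and the argument $q^{j-i} t^{\l'_i-\l'_j}$ of $\Phi'$ becomes $t^{j-i} q^{\l_i-\l_j}$ as required.

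The real content that enables this short argument lies upstream: essentially all the work is packaged into Proposition \ref{prop:WZ}, whose derivation requires the fused $\ML$-matrix construction of Section \ref{sec:proof}, and into the column evaluation \eqref{eq:QQ2x}--\eqref{eq:QQ2z} whose derivation hinges on the resummation of infinite winding contributions into the polynomials $\Phi_{\n|\nt}$ and $\Phi'_{\n|\nt}$. The only subtlety at the present stage is the transition from \eqref{eq:Z2}/\eqref{eq:Z2p} to \eqref{eq:Z3}, namely the conversion of the unordered compositional sum $\sum_{\m:|\m|=|\l|}$ into a monomial symmetric function expansion; this relies on the invariance of the inner coefficient under permutations of $\m$, which in turn follows from the symmetry of $\MZ_\l(x;q,t;z)$ in $x_1,\dots,x_N$ --- a consequence of the Yang--Baxter (RLL) relation for the weights \eqref{eq:Lone} established in Section \ref{sec:proof}. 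With these ingredients assumed, the theorem reduces to bookkeeping.
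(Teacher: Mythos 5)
Your proposal is correct and follows essentially the same route as the paper: the authors likewise obtain \eqref{eq:HZx} and \eqref{eq:HZz} by combining the identification \eqref{eq:ZW} (Proposition \ref{prop:WZ}) with the reductions \eqref{eq:Zx0}, \eqref{eq:Zz0} and Proposition \ref{prop:WH}, substituting the lattice expansions of Section \ref{ssect:lattice} and relying on the Yang--Baxter symmetry of $\MZ_\l$ in $x_1,\dots,x_N$ for the passage to monomial symmetric functions. Your explicit chain $\MZ'_{\l'}(x;t,q)=H_\l(x;q,t)$ for the dual formula matches the paper's (more tersely stated) argument.
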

The formula (\ref{eq:HmonP-intro}) which is stated in the introduction corresponds to (\ref{eq:HZx}). By (\ref{eq:HZx}) and (\ref{eq:HZz}) we find two formulae for the monomial coefficients 
\begin{align}\label{eq:Hcoef}
&\MP_{\l,\m}(q,t)=
\sum_{\{\n\}}
t^{\chi(\n)}
\prod_{j\geq i}
\Phi_{\n_{i+1,j}|\n_{i,j}}(q^{j-i} 
t^{ \l'_i-\l'_j};t),\\
\label{eq:Hcoefp}
&\MP_{\l,\m}(q,t)=
\sum_{\{\n\}'}
q^{\chi'(\n)}
\prod_{j\geq i}
\Phi'_{\n_{i+1,j}|\n_{i,j}}(t^{j-i} 
q^{ \l_i-\l_j};q).
\end{align}
The two constructions which lead to these formulae have very different face weights and their duality is not obvious. For example, from (\ref{eq:HinvH}) we can write 
\begin{align}\label{eq:HHp}
&\MP_{\l,\m}(q,t)=
q^{n(\l)}t^{n(\l')}
\MP_{\l',\m}(t^{-1},q^{-1}),
\end{align}
which, when used in (\ref{eq:Hcoefp}) and then matching it to (\ref{eq:Hcoef}), leads to the identity
\begin{align}\label{eq:HHp}
&\sum_{\{\n\}}
t^{\chi(\n)}
\prod_{j\geq i}
\Phi_{\n_{i+1,j}|\n_{i,j}}(q^{j-i} 
t^{ \l'_i-\l'_j};t)=
q^{n(\l)}t^{n(\l')}
\sum_{\{\n\}}
t^{-\chi'(\n)}
\prod_{j\geq i}
\Phi'_{\n_{i+1,j}|\n_{i,j}}(q^{i-j} 
t^{\l'_j-\l'_i};t^{-1}).
\end{align}
The summands, however, are not equal termwise, and the equality is not readily deduced.

\subsection{Example}
We consider an example of the combinatorial formula (\ref{eq:HZx}) in detail for $\l=(2,2,1)$ in two variables $(x_1,x_2)$. The monomial expansion reads
\begin{align*}
H_{(2,2,1)}(x_1,x_2)
&=
\left(q^2 t^4+q t^4+2 q t^3+q t^2+t^4+t^3+2 t^2+t\right) m_{(3,2)}\\
&+\left(q t^4+q t^3+t^4+t^3+t^2\right) m_{(4,1)}
+t^4 m_{(5,0)},
\end{align*}
where $m_\m=m_\m(x_1,x_2)$. 
Let us see how (\ref{eq:HZx}) recovers this expression.  We need to compute three coefficients  of the monomial functions
\begin{align*}
H_{(2,2,1)}(x_1,x_2)
=
&m_{(3,2)} \MP_{(2,2,1),(3,2)}(q,t)
+
m_{(4,1)} \MP_{(2,2,1),(4,1)}(q,t)
+
m_{(5,0)} \MP_{(2,2,1),(5,0)}(q,t).
\end{align*}
For each coefficient $\MP_{\l,\m}(q,t)$ we compute the summations sets $\{\n\}$ which depend on $\l=(2,2,1)$ and the three different partitions $\m$.
The summation set $\{\n\}$, described in (\ref{eq:constr1})-(\ref{eq:constr3}), has elements which are triples of length-two partitions: 
\begin{align*}
\n_{1,1}=(\n_{1,1}^1,\n_{1,1}^2),\qquad
\n_{1,2}=(\n_{1,2}^1,\n_{1,2}^2),\qquad
\n_{2,2}=(\n_{2,2}^1,\n_{2,2}^2).
\end{align*}
Using (\ref{eq:constr1}) with $\l'=(3,2)$ we compute the top terms $\n_{i,j}^2$ of these partitions
\begin{align}
\label{eq221:cond1}
\n_{1,1}^2=1,
\qquad
\n_{1,2}^2=2,
\qquad
\n_{2,2}^2=2.
\end{align}
where $\l'_3$ is set to $0$. Next, for each of the three coefficients we need to impose the restriction (\ref{eq:constr3}) which will give the remaining parts $\n_{i,j}^1$. 
\begin{enumerate}
\item Compute $\MP_{(2,2,1),(3,2)}(q,t)$. We have $\m=(3,2)$ and the condition (\ref{eq:constr3}) gives
\begin{align*}
\n_{1,1}^1+\n_{1,2}^1+\n_{2,2}^1=3.
\end{align*}
Given the bounds (\ref{eq221:cond1}) for the top parts we get five different elements in the set $\{\n\}$. Below we also give the corresponding lattice path configurations\footnote{In order to match the data given by the partitions $\n_{i,j}$ and the lattice paths recall (\ref{sigma}) and the relation between the indices $\s$ and $\n$.}
\begin{align*}
&
\MC_1=\left(
\begin{array}{c}
 \nu_{1,1} =(0,1) \\
 \nu_{1,2}=(1,2),~~ \nu_{2,2}=(2,2) \\
\end{array}
\right)
=
\begin{tikzpicture}[scale=0.6,baseline=(current bounding box.center)]
		\foreach\x in {0,...,2}{
		\draw (0,\x) -- (2,\x);
		\draw (\x,0) -- (\x,2);						}
 \begin{scope}[line width=1pt]
\draw[blue] (0.4,0) -- (0.4,1.6) -- (1.4, 1.6) -- (1.4, 2);
\draw[blue] (1.4,0) -- (1.4,0.6) -- (2, 0.6);
\draw[blue] (0.6,0) -- (0.6,1.4) -- (2, 1.4) ;
\draw[red] (1.6,0) -- (1.6,0.4) -- (2, 0.4) ;
\end{scope}
\end{tikzpicture}~,\\
&
\MC_2=
\left(
\begin{array}{c}
 \nu_{1,1} =(0,1) \\
 \nu_{1,2}=(2,2),~~
 \nu_{2,2}=(1,2) \\
\end{array}
\right)
=
\begin{tikzpicture}[scale=0.6,baseline=(current bounding box.center)]
		\foreach\x in {0,...,2}{
		\draw (0,\x) -- (2,\x);
		\draw (\x,0) -- (\x,2);						}
 \begin{scope}[line width=1pt]
\draw[blue] (0.4,0) -- (0.4,1.6) -- (2, 1.6) ;
\draw[blue] (0.6,0) -- (0.6,0.6) -- (1.4, 0.6) -- (1.4,1.4) -- (2,1.4) ;
\draw[red] (1.6,0) -- (1.6,0.4) -- (2, 0.4) ;
\end{scope}
\end{tikzpicture}~,\\
&
\MC_3= \left(
\begin{array}{c}
 \nu_{1,1} =(1,1) \\
 \nu_{1,2}=(0,2),~~
 \nu_{2,2}=(2,2) \\
\end{array}
\right)
=
\begin{tikzpicture}[scale=0.6,baseline=(current bounding box.center)]
		\foreach\x in {0,...,2}{
		\draw (0,\x) -- (2,\x);
		\draw (\x,0) -- (\x,2);						}
 \begin{scope}[line width=1pt]
\draw[blue] (0.4,0) -- (0.4,1.6) -- (1.4, 1.6) -- (1.4, 2);
\draw[blue] (1.4,0) -- (1.4,0.6) -- (2, 0.6);
\draw[blue] (0.6,0) -- (0.6,1.4) -- (1.5, 1.4) -- (1.5,2);
\draw[blue] (1.5,0) -- (1.5,0.5) -- (2, 0.5);
\draw[red] (1.6,0) -- (1.6,1.4) -- (2, 1.4) ;
\end{scope}
\end{tikzpicture}~,\\
&
\MC_4=
\left(
\begin{array}{c}
 \nu_{1,1} =(1,1) \\
 \nu_{1,2}=(1,2),~~ 
 \nu_{2,2}=(1,2) \\
\end{array}
\right)
=
\begin{tikzpicture}[scale=0.6,baseline=(current bounding box.center)]
		\foreach\x in {0,...,2}{
		\draw (0,\x) -- (2,\x);
		\draw (\x,0) -- (\x,2);						}
 \begin{scope}[line width=1pt]
\draw[blue] (0.4,0) -- (0.4,1.6) -- (2, 1.6) ;
\draw[blue] (0.6,0) -- (0.6,0.5) -- (2, 0.5) ;
\draw[red] (1.6,0) -- (1.6,1.4) -- (2, 1.4) ;
\end{scope}
\end{tikzpicture}~,\\
&
\MC_5=
\left(
\begin{array}{c}
 \nu_{1,1} =(1,1) \\
 \nu_{1,2}=(2,2), ~~\nu_{2,2} =(0,2) \\
\end{array}
\right)=
\begin{tikzpicture}[scale=0.6,baseline=(current bounding box.center)]
		\foreach\x in {0,...,2}{
		\draw (0,\x) -- (2,\x);
		\draw (\x,0) -- (\x,2);						}
 \begin{scope}[line width=1pt]
\draw[blue] (0.4,0) -- (0.4,0.6) -- (1.4, 0.6) -- (1.4, 1.6) -- (2, 1.6)  ;
\draw[blue] (0.5,0) -- (0.5,0.5) -- (1.5, 0.5) -- (1.5, 1.5) -- (2, 1.5);
\draw[red]  (1.6, 0) -- (1.6, 1.4) -- (2, 1.4);
\end{scope}
\end{tikzpicture}~.
\end{align*}
The coefficient is given by the sum of the five corresponding terms which we write in the above order:
\begin{align*}
\MP_{(2,2,1),(3,2)}(q,t)
&=
t^{\chi(\MC_1)}\times 
\Phi_{(0,0)|(0,1)}(1;t) ~ \Phi_{(0,0)|(2,2)}(1;t) ~\Phi_{(2,2)|(1,2)}(q t;t)
{\dg{~=t\times q t (1+t)}}\\
&+
t^{\chi(\MC_2)}\times  
\Phi_{(0,0)|(0,1)}(1;t) ~\Phi_{(0,0)|(1,2)}(1;t) ~\Phi_{(1,2)|(2,2)}(q t;t)
{\dg{~=t^2\times (1+t)}}\\
&+
t^{\chi(\MC_3)}\times  \Phi_{(0,0)|(1,1)}(1;t) ~\Phi_{(0,0)|(2,2)}(1;t)~ \Phi_{(2,2)|(0,2)}(q t;t)
{\dg{~=t^2\times q^2 t^2 }}\\
&+
t^{\chi(\MC_4)}\times  \Phi_{(0,0)|(1,1)}(1;t) ~\Phi_{(0,0)|(1,2)}(1;t)~ \Phi_{(1,2)|(1,2)}(q t;t)
{\dg{~=t\times (1+t)(1+q t^2) }}\\
&+
t^{\chi(\MC_5)}\times  \Phi_{(0,0)|(0,2)}(1;t) ~\Phi_{(0,0)|(1,1)}(1;t)~ \Phi_{(0,2)|(2,2)}(q t;t)
{\dg{~=t^4\times 1}},
\end{align*}
where in green we give the explicit values of each term. 
The powers of $t$ are computed with (\ref{eq:chi}) and the values of different polynomials $\Phi$ are
\begin{align*}
&\Phi _{(0,0)|(0,1)}(1)=\Phi _{(0,0)|(0,2)}(1)=\Phi _{(0,0)|(1,1)}(1)=1,\\
&
\Phi _{(0,0)|(2,2)}(1)=\Phi _{(0,2)|(2,2)}(q t)=\Phi _{(1,2)|(2,2)}(q t)=1,\\
&
\Phi_{(0,0)|(1,2)}(1)=1+t,\qquad 
   \Phi _{(1,2)|(1,2)}(q t)=1+q t^2,\\
   &
   \Phi _{(2,2)|(0,2)}(q t)=q^2 t^2,
   \qquad
   \Phi _{(2,2)|(1,2)}(q t)=q t (1+t).
\end{align*}
These polynomials can be computed by either of the three formulae (\ref{eq:FF}), (\ref{eq:Fpos}) or (\ref{eq:FFpf}). 
\item Compute $\MP_{(2,2,1),(4,1)}(q,t)$. We have $\m=(4,1)$ and the condition (\ref{eq:constr3}) gives
\begin{align*}
\n_{1,1}^1+\n_{1,2}^1+\n_{2,2}^1=4.
\end{align*}
Given the bounds (\ref{eq221:cond1}) for the top parts we get three different elements in the set $\{\n\}$
\begin{align*}
&
\MC_1=\left(
\begin{array}{c}
 \nu_{1,1} =(0,1) \\
 \nu_{1,2}=(2,2),~~ \nu_{2,2}=(2,2) \\
\end{array}
\right)=
\begin{tikzpicture}[scale=0.6,baseline=(current bounding box.center)]
		\foreach\x in {0,...,2}{
		\draw (0,\x) -- (2,\x);
		\draw (\x,0) -- (\x,2);						}
 \begin{scope}[line width=1pt]
\draw[blue] (0.4,0) -- (0.4,1.6) -- (2, 1.6) ;
\draw[blue] (0.6,0) -- (0.6,1.4) -- (1.4,1.4) -- (2,1.4) ;
\draw[red] (1.6,0) -- (1.6,0.4) -- (2, 0.4) ;
\end{scope}
\end{tikzpicture}~,\\
&
\MC_2=
\left(
\begin{array}{c}
 \nu_{1,1} =(1,1) \\
 \nu_{1,2}=(1,2),~~
 \nu_{2,2}=(2,2) \\
\end{array}
\right)=
\begin{tikzpicture}[scale=0.6,baseline=(current bounding box.center)]
		\foreach\x in {0,...,2}{
		\draw (0,\x) -- (2,\x);
		\draw (\x,0) -- (\x,2);						}
 \begin{scope}[line width=1pt]
\draw[blue] (0.4,0) -- (0.4,1.6) -- (1.4, 1.6) -- (1.4, 2);
\draw[blue] (1.4,0) -- (1.4,0.6) -- (2, 0.6);
\draw[blue] (0.6,0) -- (0.6,1.5) -- (2, 1.5) ;
\draw[red] (1.6,0) -- (1.6,1.4) -- (2, 1.4) ;
\end{scope}
\end{tikzpicture}~,\\
&
\MC_3= \left(
\begin{array}{c}
 \nu_{1,1} =(1,1) \\
 \nu_{1,2}=(2,2),~~
 \nu_{2,2}=(1,2)
\end{array}
\right)
=
\begin{tikzpicture}[scale=0.6,baseline=(current bounding box.center)]
		\foreach\x in {0,...,2}{
		\draw (0,\x) -- (2,\x);
		\draw (\x,0) -- (\x,2);						}
 \begin{scope}[line width=1pt]
\draw[blue] (0.4,0) -- (0.4,1.6) -- (1.4, 1.6) -- (2, 1.6)  ;
\draw[blue] (0.5,0) -- (0.5,0.5) -- (1.5, 0.5) -- (1.5, 1.5) -- (2, 1.5);
\draw[red]  (1.6, 0) -- (1.6, 1.4) -- (2, 1.4);
\end{scope}
\end{tikzpicture}~.
\end{align*}
The coefficient is given by the sum of the three corresponding terms which we write in the above order:
\begin{align*}
\MP_{(2,2,1),(4,1)}(q,t)
&=
t^{\chi(\MC_1)}\times 
\Phi_{(0,0)|(0,1)}(1;t) ~ \Phi_{(0,0)|(2,2)}(1;t) ~\Phi_{(2,2)|(2,2)}(q t;t)
{\dg{~=t^2 \times 1}}\\
&+
t^{\chi(\MC_2)}\times  
\Phi_{(0,0)|(1,1)}(1;t) ~\Phi_{(0,0)|(2,2)}(1;t) ~\Phi_{(2,2)|(1,2)}(q t;t)
{\dg{~=t^2\times q t(1+t)}}\\
&+
t^{\chi(\MC_3)}\times  \Phi_{(0,0)|(1,1)}(1;t) ~\Phi_{(0,0)|(1,2)}(1;t)~ \Phi_{(1,2)|(2,2)}(q t;t)
{\dg{~=t^3\times (1+t) }}.
\end{align*}
All polynomials $\Phi$ appearing above are known from the previous computation except from $\Phi_{(2,2)|(2,2)}(q t;t)=1$. 
\item Compute $\MP_{(2,2,1),(5,0)}(q,t)$. We have $\m=(5,0)$ and the condition (\ref{eq:constr3}) gives
\begin{align*}
\n_{1,1}^1+\n_{1,2}^1+\n_{2,2}^1=5.
\end{align*}
Given the bounds (\ref{eq221:cond1}) for the top parts we get one element in the set $\{\n\}$
\begin{align*}
\MC_1=\left(
\begin{array}{c}
 \nu_{1,1} =(1,1) \\
 \nu_{1,2}=(2,2),~~ \nu_{2,2}=(2,2) \\
\end{array}
\right)
=
\begin{tikzpicture}[scale=0.6,baseline=(current bounding box.center)]
		\foreach\x in {0,...,2}{
		\draw (0,\x) -- (2,\x);
		\draw (\x,0) -- (\x,2);						}
 \begin{scope}[line width=1pt]
\draw[blue] (0.4,0) -- (0.4,1.6)  -- (2, 1.6)  ;
\draw[blue] (0.5,0) -- (0.5,1.5) -- (2, 1.5);
\draw[red]  (1.6, 0) -- (1.6, 1.4) -- (2, 1.4);
\end{scope}
\end{tikzpicture}~.
\end{align*}
The coefficient is given by a single term:
\begin{align*}
\MP_{(2,2,1),(5,0)}(q,t)
&=
t^{\chi(\MC_1)}\times 
\Phi_{(0,0)|(1,1)}(1;t) ~ \Phi_{(0,0)|(2,2)}(1;t) ~\Phi_{(2,2)|(2,2)}(q t;t)
{\dg{~=t^4 \times 1}}.
\end{align*}
\end{enumerate}


\section{Proof of the construction of the modified Macdonald polynomials}\label{sec:proof}\label{sec:proof}

The purpose of this section is to prove one of the central relations of the paper  (\ref{eq:ZW}), which equates the combinatorial quantity $\MZ_{\l}(x;q,t;z)$ on the one hand with the family of polynomials $W_{\l}(x;q,t;z)$ on the other.

The proof goes through several stages. In the first step, we recall the matrix product construction of (ordinary) Macdonald polynomials, obtained in \cite{CantinidGW}. The construction of \cite{CantinidGW} makes use of a family of infinite dimensional matrices which we term {\it row operators}, which satisfy a set of exchange relations known as the {\it Faddeev--Zamolodchikov} algebra. By taking a trace over appropriate products of such operators, one obtains a combinatorial expression for a family of non-symmetric functions $f_{\l}$, which we refer to as {\it ASEP polynomials}. These polynomials are related to the celebrated {\it non-symmetric Macdonald polynomials} $E_{\l}$ \cite{Opdam,Cher95a,Cher95b}, and just like the latter, by symmetrizing appropriately over the family $f_{\l}$ one recovers the symmetric Macdonald polynomials. We will review these facts in Sections \ref{ssec:Hecke}--\ref{ssec:MPACdGW}. In the second stage, we reformulate the previous matrix product in terms of operators which we call {\it column operators}. 
In the third and final step, we apply the evaluation homomorphism (\ref{eq:ev}) to the previously obtained matrix product expressions. As we have already seen in (\ref{eq:evW}), this converts symmetric Macdonald polynomials into the polynomials $W_{\l}(x;q,t;z)$, and thus it leads us directly to the right hand side of (\ref{eq:ZW}). The evaluation homomorphism can be expressed via the principal specializations which at the level of the matrix product turn out to be equivalent to the well-known fusion technique of integrable lattice models \cite{KulishRS,KirillovR}. In Sections \ref{ssec:Fusion}--\ref{ssec:rec} we perform this fusion and calculate the matrix elements (\ref{eq:Lone}), 
leading ultimately to the combinatorial quantity $\MZ_{\l}(x;q,t;z)$, the desired left hand side of (\ref{eq:ZW}).

\subsection{Hecke algebra and its polynomial representation}\label{ssec:Hecke}

Let us begin by recalling the Hecke algebra of type $A_{N-1}$. It has generators $\{T_i\}_{1 \leq i \leq N-1}$ satisfying the relations
\begin{equation}
\begin{aligned}
\label{eq:hecke}
(T_i-t)(T_i+1)=0,& \qquad T_iT_{i+1}T_i=T_{i+1}T_iT_{i+1},
\\
T_iT_j = T_j T_i,& \qquad \forall\ i,j \ \text{such that}\ \ |i-j| > 1.
\end{aligned}
\end{equation}
A common realization of this algebra is on the ring of polynomials $\mathbb{C}[x_1,\dots,x_N]$, and is known as the {\it polynomial representation} of (\ref{eq:hecke}). Namely, if one defines
\begin{align}
\label{eq:T_i}
T_i
:=
t
-
\left(
\frac{t x_i - x_{i+1}}{x_i - x_{i+1}}
\right)
(1-s_i),
\qquad
1 \leq i \leq N-1,
\end{align}
where $s_i$ denotes the transposition operator which exchanges the variables $x_i$ and $x_{i+1}$, then one can verify that each of the relations (\ref{eq:hecke}) hold. Furthermore, $T_i$ acts injectively on $\mathbb{C}[x_1,\dots,x_N]$, in the sense that it preserves polynomiality.

The generators (\ref{eq:T_i}) are invertible, and have inverses which closely resemble the original operators:
\begin{align*}
T_i^{-1}
=
t^{-1}
\left[
1
-
\left(
\frac{t x_i - x_{i+1}}{x_i - x_{i+1}}
\right)
(1-s_i)
\right],
\qquad
1 \leq i \leq N-1.
\end{align*}
In \cite{CantinidGW} a different convention was used for the Hecke algebra relations. The two conventions can be matched by replacing $T_i\rightarrow t^{1/2} T_i$ everywhere above.

\subsection{Non-symmetric Macdonald polynomials}

Throughout what follows, let $\mathbb{C}_{q,t}[x_1,\dots,x_N] := \mathbb{C}[x_1,\dots,x_N] \otimes \mathbb{Q}(q,t)$. Let us extend the Hecke algebra (\ref{eq:hecke}) by a generator $\omega$ which acts cyclically on polynomials: 
\begin{align}
\label{eq:omega}
(\omega g)(x_1,\ldots,x_N) := g(qx_N,x_1,\ldots,x_{N-1}),
\end{align}
where $g \in \mathbb{C}_{q,t}[x_1,\dots,x_N]$ denotes an arbitrary polynomial. The resulting algebraic structure is the affine Hecke algebra of type $A_{N-1}$. It has an Abelian subalgebra generated by the Cherednik--Dunkl operators $\{Y_i\}_{1 \leq i \leq N}$, where
\begin{align}
\label{eq:Yi}
Y_i := T_i\cdots T_{N-1} \omega T_{1}^{-1} \cdots T_{i-1}^{-1},
\qquad
1 \leq i \leq N.
\end{align}
These operators mutually commute and can be jointly diagonalized. The non-symmetric Macdonald polynomials $E_{\l} \equiv E_{\l}(x_1,\dots,x_N;q,t)$, indexed by compositions $\l \in \mathbb{N}^N$, are the unique family of polynomials which satisfy the properties
\begin{align}
\label{eq:monic}
E_{\l} &= x^{\l} + \sum_{\nu \not= \l} c_{\l,\nu}(q,t) x^{\nu},
\quad
x^{\l} := \prod_{i=1}^{N} x_i^{\l_i},
\quad
c_{\l,\nu}(q,t) \in \mathbb{Q}(q,t),
\\
Y_i E_{\l} &= y_i(\l;q,t)E_{\l},
\quad
\forall\ 1 \leq i \leq N,
\label{eq:eigYi}
\end{align}
with eigenvalues given by
\begin{align}
\label{rhomu}
y_i(\l;q,t)= q^{\l_i} t^{\rho_i(\l)},
\end{align}
where $\r_i(\l)$ is defined as follows. For a composition $\l=(\l_1,\dots,\l_N)$ we define $\l^+$ to be the unique partition obtained by ordering its components in a non-increasing fashion and removing all parts $\l_i=0$. Let $w_+$ be the smallest word such that $\lambda = w_+\cdot\lambda^+$. The permutation $w_+^{-1}$ is obtained  by labeling each entry from $\lambda$ with a number from 1 to $N$, from the biggest entry to the smallest and from the left to the right. For instance $\lambda=(3,0,4,4,2) \Rightarrow w_+^{-1}=(3,5,1,2,4)$ and so $w_+=(3,4,1,5,2)$ and $\lambda^+=(4,4,3,2,0)$. The anti-dominant weight for this example is $\delta=(0,2,3,4,4)$. Then
\begin{align}
\rho(\lambda) := w_+\cdot\rho+\frac{1}{2}(N,N-1,\dots,1),
\end{align}
where $\rho=\tfrac12(N-1,N-3,\ldots,-(N-1))$. For the example above, $N=5$ and therefore $\rho=(2,1,0,-1,-2)$ and $\rho(\lambda)=1/2(5,0,7,4,-1)$.

The non-symmetric Macdonald polynomials comprise a basis for $\mathbb{C}_{q,t}[x_1,\dots,x_N]$, and one of their most important features is the following fact:
\begin{prop}
Let $\m$ be a partition and $\mathcal{R}_{\m} \subset \mathbb{C}_{q,t}[x_1,\dots,x_N]$ denote the space of linear combinations of the polynomials $E_{\l}$ whose composition index $\l$ is permutable to $\m$:
\begin{align*}
\mathcal{R}_{\m} 
=
{\rm Span}_{\mathbb{C}}\{E_{\l}\}_{\l : \l^{+} = \mu}.
\end{align*}
Then up to an overall multiplicative constant, $P_{\m}(x;q,t)$ is the unique polynomial in $\mathcal{R}_{\m}$ which also lives in $\LL_{N,\mathbb{F}}$.
\end{prop}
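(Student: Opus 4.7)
The plan is to establish existence and uniqueness separately, leveraging the Hecke-algebra structure on joint eigenspaces of the Cherednik--Dunkl operators. For existence, observe that the elementary symmetric polynomials $e_k(Y_1,\ldots,Y_N)$, $1 \le k \le N$, commute pairwise, preserve $\LL_{N,\mathbb{F}}$, and by \eqref{eq:eigYi}--\eqref{rhomu} act diagonally on the $E_\lambda$ basis with eigenvalue $e_k(y_1(\lambda;q,t),\ldots,y_N(\lambda;q,t))$. This eigenvalue depends only on the multiset $\{y_i(\lambda;q,t)\}_{i=1}^N$, and hence only on the partition $\lambda^+$. For generic $(q,t)$ the resulting $N$-tuples of eigenvalues separate distinct partitions, so the joint eigenspace of the $e_k(Y)$ labelled by $\mu$ is exactly $\mathcal{R}_\mu$. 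Since these operators preserve $\LL_{N,\mathbb{F}}$, they also act diagonally on the $P_\nu$ basis, and matching eigenvalues forces $P_\mu \in \mathcal{R}_\mu$.

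For uniqueness, formula \eqref{eq:T_i} shows that $f \in \mathbb{C}_{q,t}[x_1,\dots,x_N]$ is symmetric if and only if $T_i f = t f$ for every $1 \le i \le N-1$. Each $T_i$ commutes with every $e_k(Y_1,\ldots,Y_N)$, so the Hecke algebra $H_N(t)$ acts on $\mathcal{R}_\mu$, and the subspace of symmetric elements in $\mathcal{R}_\mu$ coincides with the trivial isotypic component (the joint $t$-eigenspace of all $T_i$, which is one of the two $1$-dimensional representations allowed by $(T_i - t)(T_i + 1) = 0$). Using the standard intertwiner formulas---which express $T_i E_\lambda$ as a combination of $E_\lambda$ and $E_{s_i \lambda}$ when $\lambda_i \ne \lambda_{i+1}$, and reduce to $T_i E_\lambda = t E_\lambda$ when $\lambda_i = \lambda_{i+1}$---one identifies $\mathcal{R}_\mu$ with the induced Hecke module $\mathrm{Ind}_{H(W_\mu)}^{H_N(t)}(\mathbf{1})$, where $W_\mu \subset S_N$ is the stabilizer of $\mu$. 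By Frobenius reciprocity the trivial representation of $H_N(t)$ occurs in this induced module with multiplicity exactly one, so the space of symmetric polynomials in $\mathcal{R}_\mu$ is one-dimensional. Combined with existence, $P_\mu$ spans it.

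The main obstacle is the Hecke-module identification of $\mathcal{R}_\mu$ together with the multiplicity-one count. A more hands-on alternative is to construct the partial Hecke symmetrizer $\mathcal{S}_\mu = \sum_{w \in S_N / W_\mu} T_w$ explicitly, verify that $\mathcal{S}_\mu E_\lambda$ is symmetric, nonzero, and still lies in $\mathcal{R}_\mu$ for any $\lambda$ with $\lambda^+ = \mu$ (so it must be proportional to $P_\mu$ by the existence step), and use the intertwiner formulas above to show that any symmetric $f \in \mathcal{R}_\mu$ is uniquely determined by its coefficient on a single chosen $E_\lambda$. The genericity of $(q,t)$ needed to separate eigenvalue tuples and to ensure semisimplicity of the Hecke action is a standing assumption in Cherednik's theory and presents no real difficulty.
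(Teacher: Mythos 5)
The paper does not actually prove this proposition: it is quoted as a standard fact from the theory of non-symmetric Macdonald polynomials (Cherednik, Opdam, Macdonald's affine Hecke algebra book), so there is no in-text argument to measure yours against. Judged on its own terms, your proof is correct and is essentially the standard affine Hecke algebra argument. The existence half is sound: the $e_k(Y_1,\dots,Y_N)$ generate the centre of the affine Hecke algebra, hence commute with every $T_i$ and preserve the joint $t$-eigenspace $\LL_{N,\mathbb{F}}$ of the $T_i$; their eigenvalues on $E_{\l}$ depend only on the multiset $\{y_i(\l;q,t)\}$, which over $\mathbb{F}=\mathbb{Q}(q,t)$ separates partitions, so the joint eigenspace is exactly $\mathcal{R}_{\m}$ and $P_{\m}$ lands in it. The one load-bearing assertion you take on faith here is that the restrictions of the $e_k(Y)$ to $\LL_{N,\mathbb{F}}$ are diagonalized by the $P_{\nu}$ with eigenvalues $e_k(y(\nu))$ — this is precisely how the Macdonald $q$-difference operators arise and is standard, but it deserves a citation since it is the whole content of the existence step. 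The uniqueness half via $\mathcal{R}_{\m}\cong\mathrm{Ind}_{H(W_{\m})}^{H_N(t)}(\mathbf{1})$ and multiplicity one of the trivial representation is also correct; the only point requiring care is surjectivity of the map $T_w\otimes 1\mapsto T_w E_{\m}$, which needs the coefficient of $E_{s_i\l}$ in $T_i E_{\l}$ (for $\l_i\neq\l_{i+1}$) to be nonzero — automatic over $\mathbb{Q}(q,t)$ but worth saying. Your proposed hands-on alternative with the partial symmetrizer $\mathcal{S}_{\m}$ is in fact the more common textbook route and sidesteps the representation-theoretic bookkeeping entirely; either version would serve as a complete proof.
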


\subsection{ASEP polynomials and the matrix product expression}

A further set of non-symmetric polynomials, which also comprise a basis of $\mathbb{C}_{q,t}[x_1,\dots,x_N]$, were studied in \cite{KasataniT}. We refer to them as {\it ASEP polynomials}, and denote them by $f_{\l} \equiv f_{\l}(x_1,\dots,x_N;q,t)$. They are the unique family of polynomials which satisfy
\begin{align}
\label{f_init}
f_{\delta}(x;q,t) = E_{\delta}(x;q,t),
\quad \forall\ \delta = (\delta_1 \leq \cdots \leq \delta_N),
\\
\label{f_exch}
f_{s_i \l}(x;q,t) = T^{-1}_i f_{\l}(x;q,t),
\quad \text{when}\ \ \l_i < \l_{i+1},
\end{align}
where $s_i \l = (\l_1,\dots,\l_{i+1},\l_i,\dots,\l_N)$. Clearly by repeated use of (\ref{f_exch}), one is able to construct $f_{\l}$ for any composition, starting from $f_{\l^{-}} = E_{\l^{-}}$. Furthermore, because of the Hecke algebra relations (\ref{eq:hecke}), $f_{\l}$ is independent of the order in which one performs the operations (\ref{f_exch}), making the definition unambiguous. The polynomials $f_{\l}(x;q,t)$  can also be defined as solutions to the (reduced) quantum Knizhnik-Zamolodchikov ($q$KZ) equations
\begin{align}
\label{eq:qKZ1}
&T_i f_{\dots,\l_i,\l_{i+1},\dots}= t f_{\dots,\l_{i},\l_{i+1},\dots}
\qquad
\l_i = \l_{i+1},\\
\label{eq:qKZ2}
& T_i f_{\dots,\l_i,\l_{i+1},\dots} = f_{\dots,\l_{i+1},\l_i,\dots}
\qquad
\l_i > \l_{i+1}, \\
\label{eq:qKZ3}
& \omega 
f_{\l_N,\l_1,\dots,\l_{N-1}} = q^{\l_N}  f_{\l_1,\dots,\l_N}.
\end{align}
The approach of \cite{CantinidGW} relies on this definition of $f_{\l}(x;q,t)$. It is straightforward to extend this definition to the cases of compositions $\l$.  The connection of $f_\l$ with Macdonald polynomials follows from the following proposition.
\begin{prop}
For a partition $\l$ we have
\begin{align}\label{eq:Psumf}
\sum_{\l: \l^+ =\m}
f_{\l}(x;q,t)
=
P_{\mu}(x;q,t).
\end{align}
\end{prop}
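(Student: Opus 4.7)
Denote $S_\mu := \sum_{\lambda : \lambda^+ = \mu} f_\lambda(x;q,t)$. The plan is to show (a) $S_\mu$ is symmetric in $(x_1,\dots,x_N)$, and (b) $S_\mu$ lies in the orbit subspace $\mathcal{R}_\mu$. By the uniqueness characterization of $P_\mu$ as the (up to scalar) symmetric element of $\mathcal{R}_\mu$ recalled in the preceding proposition, this forces $S_\mu = c_\mu P_\mu$ for some scalar $c_\mu \in \mathbb{Q}(q,t)$; a direct coefficient comparison then pins down $c_\mu = 1$.

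For the symmetry step, I would use the criterion that a polynomial $g$ is symmetric iff $T_i g = tg$ for every $i \in \{1,\dots,N-1\}$, and verify this for $g = S_\mu$ using the qKZ equations \eqref{eq:qKZ1}--\eqref{eq:qKZ2}. Partition the sum in $S_\mu$ according to the comparison of $\lambda_i$ and $\lambda_{i+1}$. Terms with $\lambda_i = \lambda_{i+1}$ contribute $t f_\lambda$ directly by \eqref{eq:qKZ1}. The remaining compositions organize into pairs $(\lambda, s_i \lambda)$ with $\lambda_i > \lambda_{i+1}$: here \eqref{eq:qKZ2} gives $T_i f_\lambda = f_{s_i\lambda}$, and applying the Hecke quadratic relation via $T_i f_{s_i\lambda} = T_i^2 f_\lambda = (t-1)f_{s_i\lambda} + t f_\lambda$ shows that each such pair contributes $t(f_\lambda + f_{s_i\lambda})$. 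Summing all contributions yields $T_i S_\mu = t S_\mu$, so $S_\mu$ is symmetric.

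For the inclusion $S_\mu \in \mathcal{R}_\mu$, I would iterate the defining recursion \eqref{f_exch}: every $f_\lambda$ with $\lambda^+ = \mu$ is built from $f_{\mu^-} = E_{\mu^-}$ by successive applications of $T_j^{-1}$ along an increasing chain of compositions. A standard Knop--Sahi type intertwiner identity ensures that when $\nu_j \neq \nu_{j+1}$, the vectors $T_j^{\pm 1} E_\nu$ lie in the two-dimensional span of $E_\nu$ and $E_{s_j \nu}$, both of which share the same sorted partition $\nu^+$. Hence the Hecke operators preserve $\mathcal{R}_\mu$, and inductively each $f_\lambda$, and therefore $S_\mu$, lies in $\mathcal{R}_\mu$.

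To fix $c_\mu = 1$ and conclude, I would compare the coefficient of a convenient basis element on both sides, for instance the coefficient of $E_{\mu^-}$ (or equivalently of the anti-dominant monomial $x^{\mu^-}$). On the $P_\mu$ side this coefficient is known from standard Macdonald theory, and on the $S_\mu$ side it can be read off by unwinding the Hecke chain of $T_j^{-1}$ operators that constructs each $f_\lambda$ from $f_{\mu^-} = E_{\mu^-}$ and summing the resulting rational coefficients across the orbit. This normalization check is the main technical obstacle: while symmetry and membership in $\mathcal{R}_\mu$ follow mechanically from the qKZ characterization, matching the overall scalar requires careful bookkeeping of Hecke-algebra coefficients along long products of $T_j^{-1}$ acting on the $E$-basis, and one may prefer instead to invoke a Hecke-symmetrizer formula for $P_\mu$ (such as $P_\mu \propto \sum_{w \in S_N / \mathrm{Stab}(\mu^-)} T_w E_{\mu^-}$ in a suitable normalization) and reorganize that sum into the desired $\sum_\lambda f_\lambda$ form.
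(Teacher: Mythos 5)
The paper does not actually prove this proposition: it is stated as a known fact, imported from \cite{KasataniT,CantinidGW}, with the surrounding text only remarking that it reduces the construction of $P_\mu$ to solving the reduced $q$KZ system. So there is no in-paper argument to compare against; judged on its own terms, your outline is the standard proof and its core step is correct and complete. In particular, the symmetry argument is airtight: with the realization \eqref{eq:T_i}, the condition $T_i g = tg$ is indeed equivalent to $s_i g = g$; decomposing the orbit $\{\lambda:\lambda^+=\mu\}$ into $s_i$-fixed points and pairs $\{\lambda,s_i\lambda\}$, the quadratic relation \eqref{eq:hecke} gives $T_i(f_\lambda+f_{s_i\lambda}) = f_{s_i\lambda}+(t-1)f_{s_i\lambda}+tf_\lambda = t(f_\lambda+f_{s_i\lambda})$, so $T_iS_\mu = tS_\mu$ for all $i$. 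The membership $S_\mu\in\mathcal{R}_\mu$ is also fine, granting the standard two-term expansion of $T_i^{\pm1}E_\nu$ over $\{E_\nu,E_{s_i\nu}\}$, which you correctly cite rather than reprove.

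The one genuine gap is the normalization $c_\mu=1$, which you explicitly flag but do not close; as written the argument only yields $S_\mu=c_\mu P_\mu$ for some scalar. The clean way to finish is to compare the coefficient of a single $E$-basis element rather than a monomial. By induction on the chain \eqref{f_exch}, using that $T_i^{-1}E_\nu$ equals $E_{s_i\nu}$ plus a multiple of $E_\nu$ (with unit coefficient on the new term in the relevant direction), the transition matrix from $\{f_\lambda\}_{\lambda^+=\mu}$ to $\{E_\lambda\}_{\lambda^+=\mu}$ is unitriangular with respect to the Bruhat order on the orbit; since the antidominant composition $\mu^-$ is extremal, the coefficient of $E_{\mu^-}$ in $S_\mu$ is exactly $1$. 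On the other side, the triangularity \eqref{eq:monic} of the $E$'s over monomials shows that $x^{\mu^-}$ occurs in no $E_\nu$ with $\nu^+=\mu$, $\nu\neq\mu^-$, so the coefficient of $E_{\mu^-}$ in $P_\mu$ equals $[x^{\mu^-}]P_\mu = [x^{\mu}]P_\mu = 1$ by symmetry and monicity of $P_\mu$ in $m_\mu$. Comparing gives $c_\mu=1$. (Equivalently one may quote Marshall's explicit expansion of $P_\mu$ over the $E_\lambda$, as is done in \cite{CantinidGW}.) Your alternative suggestion via a Hecke symmetrizer also works, but it requires exactly the same coefficient bookkeeping, so it does not avoid this step.
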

By this proposition, if one is able to construct the solution to the reduced $q$KZ equation (\ref{eq:qKZ1})-(\ref{eq:qKZ3}) then it also leads to a construction of Macdonald polynomials. The key idea of \cite{CantinidGW} is to use integrability tools of the quantum group $U_{t^{1/2}}(\widehat{sl_{n+1}})$ in order to construct partition functions which are equal to $f_{\l}(x;q,t)$ for each $\l$ and then sum over $\l$ as in (\ref{eq:Psumf}).


\subsection{Matrix product expression for ASEP polynomials}\label{ssec:MPACdGW}
Let us turn to the matrix product construction of \cite{CantinidGW}. We choose here a slightly different convention, in what follows we present the ``transposed'' version of the construction which appears in \cite{CantinidGW}. This means that our key objects like the $R$-matrix and the $L$-matrix are related to those in \cite{CantinidGW} by a transposition.
 
Suppose we have an algebra consisting of operators $A_i(x)$, $i=0,\dots,n$ and $S$ which satisfy the exchange relations
\begin{align}
\label{eq:ER1}
&A_i(x)A_i(y) = A_i(y)A_i(x),\\
\label{eq:ER2}
&
t A_j(x)A_i(y) - \frac{t x - y}{x-y} \left( A_j (x)A_i(y) - A_j (y)A_i(x)\right)  = A_i(x)A_j (y),\\
\label{eq:ER3}
&S A_i(q x) = q^i A_i(x)S,
\end{align}
for all $0 \leq i < j \leq n$. Then we can use these operators to construct polynomials $f_\m(x;q,t)$.  
\begin{prop}
The following equality holds
\begin{align}
\label{eq:trace}
\Omega_{\l^+}(q,t) f_{\l}(x_1,\dots,x_N) = \Tr\left[A_{\l_1}(x_1)\cdots A_{\l_N}(x_N) S\right],
\end{align}
provided the right hand side is not vanishing identically and $\Omega_{\l^+}(q,t)$ is some factor depending on a particular realization of the operators $A_i(x)$.   
\end{prop}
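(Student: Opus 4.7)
The plan is to verify that the right-hand side
\begin{align*}
F_{\l}(x):=\Tr\bigl[A_{\l_1}(x_1)\cdots A_{\l_N}(x_N)\, S\bigr]
\end{align*}
satisfies the reduced $q$KZ equations \eqref{eq:qKZ1}--\eqref{eq:qKZ3}. Since these equations characterize the ASEP polynomials $f_{\l}$ up to an overall scalar depending only on the orbit of $\l$ under permutations, this immediately identifies $F_{\l}$ with $\Omega_{\l^+}(q,t)\,f_{\l}$ for some proportionality factor. Each of the three $q$KZ relations will be derived from exactly one of the algebraic relations \eqref{eq:ER1}--\eqref{eq:ER3}.

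First, when $\l_i=\l_{i+1}$, commutativity \eqref{eq:ER1} allows one to swap $A_{\l_i}(x_i)$ and $A_{\l_{i+1}}(x_{i+1})$ inside the trace, so $F_{\l}$ is symmetric under $x_i\leftrightarrow x_{i+1}$; from the explicit form \eqref{eq:T_i} of $T_i$ this gives $T_iF_{\l}=tF_{\l}$, that is, \eqref{eq:qKZ1}. Next, when $\l_i=j>k=\l_{i+1}$, I will rearrange \eqref{eq:ER2} into the form
\begin{align*}
\frac{x_{i+1}(1-t)}{x_i-x_{i+1}}\,A_j(x_i)A_k(x_{i+1})+\frac{tx_i-x_{i+1}}{x_i-x_{i+1}}\,A_j(x_{i+1})A_k(x_i)=A_k(x_i)A_j(x_{i+1}),
\end{align*}
and observe that the expansion of $T_iF_{\l}$ via \eqref{eq:T_i} is precisely the trace of the left-hand side of this identity, while the right-hand side, once inserted into the trace, returns $F_{s_i\l}$; hence $T_iF_{\l}=F_{s_i\l}$, which is \eqref{eq:qKZ2}. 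For \eqref{eq:qKZ3}, the action of $\omega$ on $F_{\l_N,\l_1,\dots,\l_{N-1}}$ yields $\Tr\bigl[A_{\l_N}(qx_N)A_{\l_1}(x_1)\cdots A_{\l_{N-1}}(x_{N-1})S\bigr]$; cycling the factor $A_{\l_N}(qx_N)$ to the right of $S$ by trace cyclicity and applying \eqref{eq:ER3} in the form $SA_{\l_N}(qx_N)=q^{\l_N}A_{\l_N}(x_N)S$ recovers $q^{\l_N}F_{\l_1,\dots,\l_N}$, as required.

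Once the three $q$KZ relations are established, the proportionality $F_{\l}=\Omega_{\l^+}(q,t)\,f_{\l}$ follows from the uniqueness of $q$KZ solutions on each permutation orbit: the operators $T_i$ (whenever $\l_i>\l_{i+1}$) together with the cyclic shift $\omega$ act transitively on $\{\l:\l^+=\mu\}$, so a single scalar $\Omega_\mu(q,t)$ suffices to match $F$ with $f$ throughout the orbit. The main technical subtlety is the ``provided the right-hand side is not vanishing identically'' caveat: on a given orbit $\Omega_{\l^+}(q,t)$ may vanish for some realizations of the exchange algebra, in which case the matrix product trivially returns zero; whether a given realization actually produces the polynomials $f_{\l}$ (rather than zero) is the non-degeneracy condition that depends on the specific operator algebra being used, and is addressed separately for each concrete construction of $A_i(x)$ and $S$.
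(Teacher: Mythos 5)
Your proposal is correct and follows essentially the same route as the paper, which proves the proposition by establishing the equivalence of the reduced $q$KZ system \eqref{eq:qKZ1}--\eqref{eq:qKZ3} with the exchange relations \eqref{eq:ER1}--\eqref{eq:ER3} (delegating the computation to \cite{CantinidGW}); you have simply written out the three verifications explicitly, and each one checks out, including the rearrangement of \eqref{eq:ER2} and the use of trace cyclicity with \eqref{eq:ER3}.
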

This proposition is proved by showing the equivalence of (\ref{eq:qKZ1})--(\ref{eq:qKZ3}) and (\ref{eq:ER1})--(\ref{eq:ER3}), see  \cite{CantinidGW} for details.
Solutions to the relations (\ref{eq:ER1})-(\ref{eq:ER2}) can be recovered from the Yang--Baxter algebra corresponding to the quantum group $U_{t^{1/2}}(\widehat{sl_{n+1}})$ \cite{Dr87,Jimbo86a}, or rather a twisted version of it \cite{Resh90}. For models based on $U_{t^{1/2}}(\widehat{sl_{n+1}})$, the fundamental $R$-matrix acting on the tensor product space $V(x)\otimes V(y)$ of two $n+1$ dimensional spaces can be expressed in the form
\begin{align}\label{eq:RcheckE}
\check{R}^{(n)} (x,y)
=
\sum_{i=0}^{n}
E^{(ii)}
\otimes
E^{(ii)}
+
\frac{x - y}{t x - y}
\sum_{0 \leq i < j \leq n}
\Big(
t E^{(ji)}
\otimes
E^{(ij)}
+
E^{(ij)}
\otimes
E^{(ji)}
\Big)\nonumber
\\
\frac{t-1}{t x - y}
\sum_{0 \leq i < j \leq n}
\Big(
x
E^{(ii)}
\otimes
E^{(jj)}
+
y
E^{(jj)}
\otimes
E^{(ii)}
\Big),
\end{align}
where $E^{(ab)}$ with $0\leq a,b\leq n$ denotes the elementary $(n+1) \times (n+1)$ matrix with a single non-zero entry 1 at position $(a,b)$. The intertwining equation, or the Yang--Baxter algebra, for such models is given by
\begin{align}
\left [L(x)\otimes L(y)\right] \cdot \check{R}(x,y)
= 
\check{R}(x,y)\cdot \left [L(y)\otimes L(x)\right] ,
\label{eq:YBAdef}
\end{align}
where we have suppressed the superscript $(n)$, and $L(x)=L^{(n)}(x)$ is an $(n+1) \times (n+1)$ operator-valued matrix in $V(x)$. This algebra is well-studied and many solutions for $L(x)$ are known. The $L$-matrix used in \cite{CantinidGW} has operator-valued entries given by $n$ copies of the $t$-boson algebras. The $t$-boson algebra $\mathcal{A}$ is generated by three elements $\{a,a^{\dag},k\}$. We require $n$ copies of such algebras $\mathcal{A}^{\otimes n}$, and denote $\mathcal{A}_i=1\otimes \dots \otimes  \mathcal{A}\otimes \dots\otimes 1$ with $\mathcal{A}$ at the $i$-th position. Thus the set of  generators that we need to use is $\{a_i,a^{\dag}_i,k_i\}_{i=1}^n$. The algebra elements satisfy the following relations:
\begin{align}\label{eq:qboson}
&a_i k_i=t k_i a_i,\qquad 
a^{\dag}_i k_i=t^{-1} k_i a^{\dag}_i, \nonumber \\
&
a_i a_i^{\dag}=1-t k_i,\qquad 
a_i^{\dag}a_i =1- k_i,
\end{align}
and any element with the index $i$ commutes with any element with the index $j$ for $i\neq j$. For each copy of the $t$-boson algebra we assume the Fock representation $\mathcal{F}=\text{Span}\{\ket{m}\}_{m=0}^{\infty}$
\begin{align}\label{eq:Fock}
a^{\dag}\ket{m}=\ket{m+1},\qquad 
a\ket{m}=(1-t^m) \ket{m-1},\qquad 
k\ket{m}=t^m \ket{m},\qquad
\end{align}
and the dual Fock representation $\mathcal{F}^*=\text{Span}\{\bra{m}\}_{m=0}^{\infty}$ is given by
\begin{align}\label{eq:Fockdual}
\bra{m}a^{\dag}=\bra{m-1}(1-t^m)
\qquad
 \bra{m}a=\bra{m+1}, \qquad 
\bra{m}k=t^m\bra{m}.
\end{align}
The algebra $\mathcal{A}_i$ acts on the space $1\otimes \dots \otimes  \mathcal{F}\otimes \dots\otimes 1$ with $\mathcal{F}$ standing at the $i$-th position. By $\ket{I_1,\dots,I_n}$ with $I_i\in\mathbb{N}$ we will denote vectors in $\MF^{\otimes n}$ and similarly for the dual vectors.
If $I=(I_1,\dots,I_n)$, then we write $\ket{I}=\ket{I_1,\dots,I_n}$ and similarly for the dual vectors.

The $L$ matrix has the form \cite{CantinidGW}
\begin{align}
L^{(n)}_{ji}(x)
=
\left\{
\begin{array}{ll}
x \prod_{m=i+1}^{n}
k_m,
&
i=j
\\
\\
x a_j a^\dag_i
\prod_{m=i+1}^{n} k_m,
&
i>j
\\
\\
0,
&
i<j
\end{array}
\right.
\label{eq:Losc1}
\end{align}
for all $1 \leq i,j \leq n$, and
\begin{align}
L^{(n)}_{j0}
=
a_j,\
1 \leq j \leq n,
\quad\quad
L^{(n)}_{0i}(x)
=
x a^\dag_i \prod_{m=i+1}^{n}
k_m,\
1 \leq i \leq n,
\quad\quad
L^{(n)}_{00}
=
1.
\label{eq:Losc2}
\end{align}
Let $I=(I_1,\dots,I_n)$ and $K=(K_1,\dots,K_n)$ be two compositions. Using the Fock representation we can write the elements of the $L$-matrix explicitly by sandwiching them between a state $\bra{I}$ and a state $\ket{K}$
\begin{align}\label{eq:Lelements}
L_{I,K}^{j,i}(x)= 
\bra{I}
L^{(n)}_{j,i}(x) \ket{K}.
\end{align}
We set $I_{a,b}=\sum_{j=a}^b I_j$.  
The matrix elements of $L_{I,K}^{j,i}(x)$ for $I+e_{j}\neq K+e_{i}$ are equal to $0$ and otherwise are given by
\begin{align}\label{eq:LmatCdGW0}
L_{I,K}^{j,i}(x)
=
\left\{
\begin{array}{ll}
x t^{I_{i+1,n}},
&
i = j ,
\\ \\
x (1-t^{I_{i}}) t^{I_{i+1,n}},
&
i > j ,
\\ \\
0,
&
i < j ,
\end{array}
\right.
\end{align}
for all $1\leq i,j \leq n$, and 
\begin{align}
&
L_{I,K}^{0,0}(x)= 1,		\label{eq:LmatCdGW1} \\
&L_{I,K}^{0,i}(x)
= x (1-t^{I_{i}})t^{I_{i+1,n}}, \label{eq:LmatCdGW2}\\
&L_{I,K}^{j,0}(x)
= 1. 	\label{eq:LmatCdGW3}
\end{align}

Introduce $\mathbb{A}^{(n)}(x)$ an $(n+1)$-dimensional operator valued vector given by
\begin{align}
\mathbb{A}^{(n)}(x) = (A_0(x),\ldots, A_n(x)).
\end{align}
The exchange relations (\ref{eq:ER1})-(\ref{eq:ER3}) are equivalent to the Zamolodchikov--Faddeev (ZF) algebra \cite{ZZ1979,Fad1980}:
\begin{align}
\left [\mathbb{A}(x)\otimes \mathbb{A}(y)\right] \cdot \check{R}(x,y )
&= 
\left [\mathbb{A}(y)\otimes \mathbb{A}(x)\right] ,
\label{eq:ZFdef}\\
\label{eq:SZF}
S \mathbb{A}(qx) 
&=q^{\sum_i iE^{(ii)}}\mathbb{A}(x) S,
\end{align}
where we suppress $(n)$ and $\mathbb{A}(x)=\mathbb{A}^{(n)}(x)$  and $S=S^{(n)}$. 
Solutions of (\ref{eq:ZFdef}) can be constructed by rank-reducing the Yang--Baxter algebra (\ref{eq:YBAdef}) in the following way. Assume a solution of the modified $RLL$ relation (\ref{eq:YBAdef})
\begin{align}
\left [\tilde{L}(x)\otimes \tilde{L}(y)\right] \cdot \check{R}^{(n)}(x,y)
= 
\check{R}^{(n-1)}(x,y) \cdot\left [\tilde{L}(y)\otimes \tilde{L}(x)\right] ,
\label{eq:YBAdef_lowrank}
\end{align}
in terms of an $n\times (n+1)$ operator-valued matrix $\tilde{L}(x)=\tilde{L}^{(n)}(x)$, and an operator $s=s^{(n)}$ that satisfies
\begin{align}
s\tilde{L}(qx) =q^{\sum_i iE^{(ii)}}\tilde{L}(x) sq^{-\sum_i iE^{(ii)}},
\end{align}
which in components reads
\begin{align}
\label{eq:twist-comp}
s\tilde{L}_{ij}(qx) =q^{j-i}\tilde{L}_{ij}(x) s.
\end{align}
Then a solution to (\ref{eq:ER1}) and (\ref{eq:ER2}) can be constructed as follows:
\begin{align}
\label{eq:nestedMPA}
\mathbb{A}^{(n)}(x) &=\tilde{L}^{(1)}(x) \cdots \tilde{L}^{(n-1)}(x)  \cdot \tilde{L}^{(n)}(x),
\\
S^{(n)} &= s^{(1)}\cdots s^{(n-1)} \cdot s^{(n)},
\label{eq:nestedTwist}
\end{align}
provided that the operator entries of $\tilde{L}^{(a)}(x)$ commute with those of $\tilde{L}^{(b)}(y)$, for all $a \not=b$. The usual way to ensure this commutativity is to demand that the operator valued entries of $\tilde{L}^{(a)}$ act on some vector space $F_a$ while $\tilde{L}^{(b)}$ act on a different vector space $F_b$. One can show that solutions to (\ref{eq:YBAdef_lowrank}) can be obtained from the Yang--Baxter algebra (\ref{eq:YBAdef}) by trivialising the representation of a single $t$-boson algebra. This reduces the rank of $L^{(n)}(x)$ by one and thus gives rise to $\tilde{L}^{(n)}(x)$ (see  \cite{CantinidGW} for more details). The operators $s^{(j)}$ can be explicitly realized as 
\begin{align}
\label{eq:stwist}
s^{(j)}=\prod_{i=1}^{j-1} k_i^{i u},
\end{align}
 where $u$ is defined by $t^u=q$. 
 
Due to the nested structure of (\ref{eq:nestedMPA}) we need to use $n(n-1)/2$ copies of $t$-boson algebras. Let us denote by $\mathcal{A}_{a,b}$, with $1\leq a<b\leq n$, the algebra of $t$-boson operators with the index $b$ entering the matrix $\tilde{L}^{(n-a+1)}(x)$ in (\ref{eq:nestedMPA}) and $s^{(n-a+1)}$ in (\ref{eq:nestedTwist}). Thus the trace in (\ref{eq:trace}) runs over the tensor product of $\mathcal{F}_{a,b}$, for all $1\leq a<b\leq n$. In this realization of the operators $A_i(x)$ the normalization factor appearing in (\ref{eq:trace}) is given by 
\begin{align}\label{eq:norm}
\Omega_{\l^+}(q,t)
=
\frac{1}{
\prod_{1 \leq i<j \leq n}
\left(
1-q^{j-i} t^{\lambda'_i-\lambda'_j} 
\right)}.
\end{align}

The rank reduction method above can be reformulated in terms of the matrices $L^{(n)}(x)$. This new reformulation appeared as a special case in another work on matrix product formulae for symmetric polynomials \cite{GdGW}. In this approach we replace every reduced matrix $\tilde{L}^{(a)}(x)$ with $L^{(n)}(x)$. In the formula (\ref{eq:trace}) we have $n(n-1)/2$ $t$-boson algebras which are evaluated by a trace over their Fock spaces. However, with all matrices $L^{(n)}(x)$ being of rank $n$ we now have $n^2$ copies of $t$-boson algebras $\mathcal{A}_{a,b}$, $1\leq a,b\leq n$. Hence in the new formula we need to evaluate additional $n(n-1)/2+n$ algebras $\mathcal{A}_{a,b}$. We divide the set of $n^2$ algebras into three groups. The first group consists of $n(n-1)/2$ algebras $\mathcal{A}_{a,b}$ which have indices $1\leq a<b\leq n$; the operators of these algebras are taken in the Fock representation and traced as in (\ref{eq:trace}). The second group consists of $n(n-1)/2$ copies $\mathcal{A}_{a,b}$ with $1\leq b<a\leq n$; the operators of these algebras are redundant and need to be sandwiched between the vacuum states of their Fock representations. In the third group we have $n$ copies $\mathcal{A}_{a,a}$ with $a=1,\dots, n$; the corresponding operators need to be sandwiched between the vacuum states and special states defined by the composition $\l$. The algebras $\mathcal{A}_{a,b}$ act on $n^2$ Fock spaces which we can split into three parts, $\MF^{\otimes n^2}=\MF^-\otimes \MF^0\otimes \MF^+$, according to the formulae
\begin{align}
\label{eq:Focksplit}
\MF^-=\bigotimes_{1\leq b< a\leq n} \mathcal{F}_{a,b},\qquad
\MF^0=\bigotimes_{1\leq a \leq n} \mathcal{F}_{a,a}, \qquad
\MF^+=\bigotimes_{1\leq a< b \leq n} \mathcal{F}_{a,b}.
\end{align}
Define also more refined tensor products
\begin{align}
\label{eq:Focksplit2}
\mathcal{F}_a=\bigotimes_{1\leq b \leq n}\mathcal{F}_{a,b},\qquad 
\MF_a^-=\bigotimes_{1\leq b< a} \mathcal{F}_{a,b},\qquad
\MF_a^+=\bigotimes_{a< b \leq n} \mathcal{F}_{a,b}.
\end{align}
The corresponding dual Fock spaces are defined analogously.  The bra and ket vectors belonging to these spaces will acquire the corresponding superscripts.  A special role is played by the dual vector associated with a composition $\l$ with $\text{max}(\l)=n$ 
\begin{align}
\label{eq:lambdavector}
\bra{{\sf m}(\l)}=\bigotimes_{a=1}^n  \bra{{\sf m}_a(\l)}.
\end{align}

Now we can define the operators $A_l(x)$ and $S^{(n)}$:
\begin{align}
\label{eq:nestedMPAfull}
A_l^{(n)}(x) &=\sum_{\{i_2,\dots,i_{n}\}}L^{(n)}_{0,i_{n}}(x) \dots L^{(n)}_{i_{3},i_{2}}(x)  L^{(n)}_{i_{2},l}(x),
\\
S^{(n)} &= s_n^{(n)}\cdots s_2^{(n)} \cdot s_1^{(n)},
\label{eq:nestedTwist2}
\end{align}
where the sum runs over all possible values of the indices $i_a$ and the operators $s_a^{(n)}$ acts nontrivially only on $\MF_a^{+}$ and are given by 
\begin{align}
s_a^{(n)}=\prod_{a<j\leq n}k_j^{(j-a)u}.
\end{align}

Since all matrices above are of  rank $n$ we will omit the superscript $(n)$ in the following. 
\begin{prop}
For a partition $\l$ in an $n\times N$ box the following equality holds:
\begin{align}
\label{eq:trace2}
\Omega'_{\l^+}(q,t) f_{\l}(x_1,\dots,x_N) =
\bra{0}^{\MF^-}\otimes \bra{{\sf m}(\l)}^{\MF^0}\Tr_{\MF^+}\left[A_{\l_1}(x_1)\cdots A_{\l_N}(x_N) S\right]\ket{0}^{\MF^{0}}\otimes\ket{0}^{\MF^{-}},
\end{align}
where the normalisation is given by 
\begin{align}
\label{eq:normprime}
\Omega'_{\l^+}(q,t)= \frac{(t;t)_{{\sf m}(\l)}}{
\prod_{1 \leq i<j \leq n}
\left(
1-q^{j-i} t^{\lambda'_i-\lambda'_j} 
\right)}.
\end{align}
\end{prop}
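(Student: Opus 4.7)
The plan is to establish the equivalence of (\ref{eq:trace2}) with the original nested matrix product (\ref{eq:trace}) by showing that the vacuum projections on $\MF^-$ and the specific-state evaluations on $\MF^0$ reduce the full-rank $L^{(n)}(x)$ matrices to the rank-reduced $\tilde{L}^{(a)}(x)$ matrices of the original construction, generating the extra normalization factor $(t;t)_{{\sf m}(\l)}$ along the way.

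First I would analyse the vacuum projections on $\MF^-$. For each algebra $\mathcal{A}_{a,b}$ with $b<a$, the identities $a\ket{0}=0$ and $\bra{0}a^{\dag}=0$ following from (\ref{eq:Fock})--(\ref{eq:Fockdual}) imply that within the $L^{(n)}$ matrix at position $a$ (counting right-to-left in (\ref{eq:nestedMPAfull})), sandwiching the $b$-th $t$-boson algebra between vacuum states kills every matrix element involving $a_b$ or $a_b^{\dag}$. Inspection of (\ref{eq:Losc1})--(\ref{eq:Losc2}) shows that this eliminates all off-diagonal entries in row $b$ and column $b$, while the diagonal $L^{(n)}_{b,b}(x) = x \prod_{m=b+1}^n k_m$ is unaffected. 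The effective index set of the matrix at position $a$ is thus restricted to $\{0,a,a+1,\dots,n\}$.

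Next I would combine these projections across the full nested product. Intersecting the restrictions at neighbouring junctions forces the summation index $i_{a+1}$ in (\ref{eq:nestedMPAfull}) to lie in $\{0,a+1,\dots,n\}$, so that the matrix at position $a$ behaves effectively as a $(n-a+1)\times(n-a+2)$ operator. This matches the dimensions of $\tilde{L}^{(n-a+1)}(x)$ in the nested construction (\ref{eq:nestedMPA}), and the remaining $t$-boson operators are precisely those of $\mathcal{A}_{a,b}$ with $b>a$, i.e.\ those populating $\MF^+$, which are summed via the trace. The twist operator $S^{(n)}$ in (\ref{eq:nestedTwist2}) enters compatibly since each $s_a^{(n)}$ from (\ref{eq:stwist}) acts on the disjoint subspace $\MF_a^+$ and so respects the factorization of Fock sectors.

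Then I would evaluate the $\MF^0$ sector, where each $\mathcal{F}_{a,a}$ is sandwiched between $\bra{{\sf m}_a(\l)}$ on the left and $\ket{0}$ on the right. The $a$-th $t$-boson within the matrix at position $a$ enters through $a_a^{\dag}$ in $L^{(n)}_{0,a}(x)$ and $L^{(n)}_{j,a}(x)$ with $j<a$. Tracking these creation operators across the horizontal row $A_{\l_1}(x_1)\cdots A_{\l_N}(x_N)$ shows they produce exactly ${\sf m}_a(\l) = \l'_a - \l'_{a+1}$ excitations per column, and the Fock norm $(t;t)_m$ of the state $\ket{m}$ (readily computed from (\ref{eq:Fock})--(\ref{eq:Fockdual})) then yields an overall factor $\prod_{a=1}^n (t;t)_{{\sf m}_a(\l)} = (t;t)_{{\sf m}(\l)}$, which precisely converts $\Omega_{\l^+}(q,t)$ in (\ref{eq:norm}) into $\Omega'_{\l^+}(q,t)$ in (\ref{eq:normprime}). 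The main obstacle is the rigorous index bookkeeping of how the intermediate summations $\{i_2,\dots,i_n\}$ in (\ref{eq:nestedMPAfull}) reproduce the nested product of $\tilde{L}^{(1)}(x)\cdots\tilde{L}^{(n)}(x)$ after all projections, together with the verification that the $\MF^0$ evaluation contributes only the scalar $(t;t)_{{\sf m}(\l)}$ without additional $x$-dependent factors; once these identifications are in place, the claim follows directly from the previously established formula (\ref{eq:trace}).
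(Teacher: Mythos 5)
Your overall strategy is the same as the paper's: reduce the full-rank trace (\ref{eq:trace2}) to the nested, rank-reduced matrix product (\ref{eq:trace}), with the pairing $\langle {\sf m}(\l)|{\sf m}(\l)\rangle=\prod_a(t;t)_{{\sf m}_a(\l)}$ in $\MF^0$ accounting for the passage from $\Omega_{\l^+}$ to $\Omega'_{\l^+}$; your treatment of the $\MF^0$ sector is essentially the argument the paper gives.

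The gap is in your first step. You assert that sandwiching $\mathcal{F}_{a,b}$ (with $b<a$) between vacua ``kills every matrix element involving $a_b$ or $a_b^{\dag}$,'' citing $a\ket{0}=0$ and $\bra{0}a^{\dag}=0$. But the vacuum projection is taken around the full product $L_{i^1_{a+1},i^1_a}(x_1)\cdots L_{i^N_{a+1},i^N_a}(x_N)$ of all $N$ factors sharing the space $\mathcal{F}_{a,b}$, not around each factor separately, and a balanced word survives: $\bra{0}a_b\,a_b^{\dag}\ket{0}=1-t\neq 0$. Concretely, a term in which some row has $i^l_{a+1}=b$ (contributing $a_b$) and a later row has $i^{l'}_a=b$ (contributing $a_b^{\dag}$) is not annihilated by the projection on $\mathcal{F}_{a,b}$ alone; in lattice terms a colour-$b$ path may enter column $a$ through its left edge and leave through its right edge without touching the top or bottom, which single-column vacuum boundary conditions do not forbid. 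The exclusion of colour $b$ from columns $a>b$ is a global fact, obtained by downward induction on the position: at position $n$ the left index is pinned to $i_{n+1}=0$, so only $a_b^{\dag}$'s can act on $\mathcal{F}_{n,b}$ and the vacuum-to-vacuum pairing forces $i^l_n\neq b$ for all $l$; this feeds the same argument at position $n-1$, and so on down to position $b+1$. Your ``intersecting the restrictions at neighbouring junctions'' paragraph is where this induction must live, but as written it takes the false local reduction as its input rather than deriving the restriction from the chain of projections. Once the induction is in place (and one checks that it leaves only $a_a^{\dag}$'s acting on $\mathcal{F}_{a,a}$, so the $\MF^0$ pairing contributes exactly the scalar $(t;t)_{{\sf m}_a(\l)}$ and nothing $x$-dependent), the identification with (\ref{eq:trace}) and hence the proposition follows as you describe.
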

The new normalisation factor can be written as $\Omega'_{\l^+}(q,t)=(t;t)_{{\sf m}(\l)}\Omega_{\l^+}(q,t)$ with $\Omega_{\l^+}(q,t)$ given in (\ref{eq:norm}). The occurrence of $\Omega_{\l^+}(q,t)$ follows the same logic as in \cite{CantinidGW} while the extra term $(t;t)_{{\sf m}(\l)}$ appears due to the presence of the evaluation in the space $\MF^0$. More precisely, the operator inside the trace in (\ref{eq:trace2}) must be such that its action on $\ket{0}^{\MF^{0}}$ produces the state $\ket{{\sf m}(\l)}^{\MF^{0}}$, otherwise the right hand side of (\ref{eq:trace2}) vanishes. Therefore we need to take into account the extra factor:
\begin{align*}
\bra{{\sf m}(\l)}^{\MF^0}\ket{{\sf m}(\l)}^{\MF^{0}}=\prod_{i=1}^n \langle{{\sf m}_i(\l)}|{{\sf m}_i(\l)}\rangle
=\prod_{i=1}^n (t;t)_{{\sf m}_i(\l)},
\end{align*}
where $\langle{{\sf m}_i(\l)}|{{\sf m}_i(\l)}\rangle$ is computed according to (\ref{eq:Fock})-(\ref{eq:Fockdual}).
For further discussion of the correspondence of (\ref{eq:trace}) and (\ref{eq:trace2}) we refer to \cite{GdGW}.  Below we use the second approach (\ref{eq:trace2}).


\subsection{Column operators}\label{ssec:columnsCdGW}
From (\ref{eq:nestedMPAfull}) we see that in (\ref{eq:trace2}) the operator part on the right hand side consists of a product of $n\times N$ matrices $L(x)$ together with the operator $S$. This product can be computed in different orders. According to the order given by the expression (\ref{eq:trace2}) one first computes the product over $L(x_i)$ with fixed $i$ which results in the row operator $A_l(x_i)$ as in (\ref{eq:nestedMPAfull}). After that the row operators are multiplied together. We would like to rewrite this product in a different order. Recall that $L$-matrices act on $n$ copies of Fock spaces $L(x)\in \text{End}(V(x)\otimes \mathcal{F}_i)$. 
Let us rewrite the operator part of (\ref{eq:trace2}) using (\ref{eq:nestedMPAfull}) and split the product of $L$-matrices into column operators instead 
\begin{align}\label{eq:AtoQ}
A_{\l_1}(x_1)\cdots A_{\l_N}(x_N) S=
\sum_{\{i\}}
&
\left(\prod_{a=1}^n L_{i_{n-a+2}^1,i_{n-a+1}^1}(x_1)\right)_{1,\dots,n}\cdots \left(\prod_{a=1}^n L_{i_{n-a+2}^N,i_{n-a+1}^N}(x_N)\right)_{1,\dots,n} S
\nonumber\\
=
\sum_{\{i\}}
&
\left(\prod_{l=1}^N L_{i_{n+1}^l,i_{n}^l}(x_l)\right)_{n}\cdots \left(\prod_{l=1}^N L_{i_{2}^l,i_{1}^l}(x_l)\right)_1 S
\nonumber\\
=
\sum_{\{i\}}
&
\prod_{a=1}^n\left(\prod_{l=1}^N L_{i_{a+1}^l,i_{a}^l}(x_l)\right)_{a} s_{a}^{(n)},
\end{align}
where $i_{1}^{l}=\l_l$, $i_{n+1}^{l}=0$ for all $l$, all other indices $i_{a}^{l}$ are summed over and the subscripts of products of $L$-matrices specify on which collection of Fock spaces $\MF_a$ the elements of the corresponding $L$-matrices act. In the first line a single product acts in all $n$ Fock spaces $\MF_1,\dots,\MF_n$, while in the second line the products are reorganized in such a way that a single string of $L$-matrices acts on one copy of $\MF_a$. Note that the trace over $\MF^-$ and the evaluation in $\MF^0$ in formula (\ref{eq:trace2}) can be similarly factorized over spaces $\MF_a$. Therefore we can write the trace formula as a product of $n$ matrices. Fix a composition $\l$ and introduce the column operators $Q^a(x_1,\dots,x_N)$:
\begin{align}
\label{eq:columnMac}
Q_{i,i'}^a(x_1,\dots,x_N)
&:=
\bra{0}^{\MF_{a}^-}\bra{{\sf m}_a(\l)}^{\MF_{a,a}}\Tr_{\MF^+_{a}} M_{i,i'}(x_1,\dots,x_N)\ket{0}^{\MF_{a,a}}\ket{0}^{\MF^-_{a}},\\
\label{eq:Monod}
M_{i,i'}(x_1,\dots,x_N)
&:=L_{i^1,i'{}^1}(x_1) \cdots  L_{i^N,i'{}^N}(x_N).
\end{align}
Note that  in (\ref{eq:Monod}) the entries of each $L$-matrix act in the same Fock space $\MF_a$, and $Q_{i,i'}^a(x_1,\dots,x_N)$ depends only on the $a$-th component of the multiplicity vector ${\sf m}(\l)$.
With these definitions we can rewrite (\ref{eq:AtoQ}) in terms of column operators $Q^a$. It follows that $f_\l(x;q,t)$ in (\ref{eq:trace2}), and also $P_\l(x;q,t)$ by (\ref{eq:Psumf}), can be written as sums of products of column operators $Q^a$. 
\begin{prop}
For a partition  $\l$ in an $n\times N$ box the following equalities hold
\begin{align}
\label{eq:fprodQ}
\Omega'_{\l^+}(q,t) f_{\l}(x_1,\dots,x_N) 
&=
\sum_{\{i_2,\dots,i_{n}\}}
\prod_{a=1}^n Q_{i_{a+1},i_a}^{a}(x_1,\dots,x_N), \\
\label{eq:PsumQf}
P_{\mu}(x_1,\dots,x_N;q,t)
&=
\frac{1}{\Omega'_{\m}(q,t)}
\sum_{i_1: i_1^+ = \mu}
\sum_{\{i_2,\dots,i_{n}\}}
\prod_{a=1}^n Q_{i_{a+1},i_a}^{a}(x_1,\dots,x_N),
\end{align}
where $Q_{i_{a+1},i_{a}}^a(x_1,\dots,x_N)$ is defined in (\ref{eq:columnMac}), $i_1=\l$ and $i_{n+1}=\emptyset$. 
\end{prop}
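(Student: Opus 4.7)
The plan is to derive \eqref{eq:fprodQ} directly from the trace formula \eqref{eq:trace2} by inserting the nested matrix product expression \eqref{eq:nestedMPAfull} and reorganizing the resulting string of $L$--matrices column-wise, exactly as already performed in \eqref{eq:AtoQ}. After that, \eqref{eq:PsumQf} is obtained by summing over compositions of $\mu$ and invoking \eqref{eq:Psumf}.

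In detail, I would first substitute \eqref{eq:nestedMPAfull} into the right hand side of \eqref{eq:trace2}, producing a product of $n \cdot N$ matrices $L(x_l)$ (with summations over the internal indices $i_a^l$) together with the twist $S = s_n^{(n)}\cdots s_1^{(n)}$. The key observation, established already in \eqref{eq:AtoQ}, is that the entries of $L^{(n)}_{ij}(x_l)$ appearing in the factor $A_{\lambda_l}(x_l)$ at position $a$ act on the Fock space $\mathcal{F}_a = \bigotimes_{b} \mathcal{F}_{a,b}$, and entries associated with distinct values of $a$ commute with one another. Consequently the string can be rearranged so that all $L$--operators acting on a given $\mathcal{F}_a$ appear consecutively, one factor for each of the $N$ columns, and these are immediately followed by the twist $s_a^{(n)}$ (which also acts only on $\mathcal{F}_a$).

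Next I would exploit the splitting \eqref{eq:Focksplit}--\eqref{eq:Focksplit2}: each of $\mathcal{F}^-$, $\mathcal{F}^0$, $\mathcal{F}^+$ factorizes as a tensor product over $a$ of $\mathcal{F}_a^-$, $\mathcal{F}_{a,a}$, $\mathcal{F}_a^+$. Similarly, the bra--ket $\bra{0}^{\mathcal{F}^-}\otimes \bra{\sf m(\lambda)}^{\mathcal{F}^0}\,\cdot\, \ket{0}^{\mathcal{F}^0}\otimes\ket{0}^{\mathcal{F}^-}$, together with $\mathrm{Tr}_{\mathcal{F}^+}$, factorizes as a product over $a$ of $\bra{0}^{\mathcal{F}_a^-}\bra{{\sf m}_a(\lambda)}^{\mathcal{F}_{a,a}}\,\mathrm{Tr}_{\mathcal{F}_a^+}\,\cdot\,\ket{{0}}^{\mathcal{F}_{a,a}}\ket{0}^{\mathcal{F}_a^-}$. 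Since the operator assembled on $\mathcal{F}_a$ is exactly the monodromy $M_{i_{a+1},i_a}(x_1,\dots,x_N)$ of \eqref{eq:Monod}, the matrix element produced on that space is precisely $Q^a_{i_{a+1},i_a}(x_1,\dots,x_N)$ as defined in \eqref{eq:columnMac}. Multiplying these $n$ independent factors and summing over the internal indices $i_2,\dots,i_n$ (with the fixed boundary values $i_1 = \lambda$ and $i_{n+1} = \emptyset$ read off from \eqref{eq:nestedMPAfull} and the trivial leftmost matrix element) yields \eqref{eq:fprodQ}.

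Finally, for \eqref{eq:PsumQf} I would simply divide \eqref{eq:fprodQ} by $\Omega'_{\lambda^+}(q,t)$, note that $\Omega'_{\lambda^+}(q,t)$ depends only on $\lambda^+ = \mu$, and sum both sides over all compositions $i_1 = \lambda$ with $\lambda^+ = \mu$. The left hand side then becomes $\sum_{\lambda : \lambda^+=\mu} f_\lambda(x;q,t)$, which equals $P_\mu(x;q,t)$ by \eqref{eq:Psumf}. The main technical obstacle is to check carefully the claim that the twist operator $S$ decomposes as $\prod_a s_a^{(n)}$ with $s_a^{(n)}$ acting on $\mathcal{F}_a^+$ only, so that it can be absorbed cleanly into the $a$-th column factor without mixing the spaces. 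This is precisely what the explicit formula $s_a^{(n)} = \prod_{a<j\leq n} k_j^{(j-a)u}$ guarantees, together with the intertwining property \eqref{eq:twist-comp} which ensures that the $q$-shift in \eqref{eq:qKZ3} is correctly reproduced column by column. Everything else is a straightforward bookkeeping exercise in the tensor product of Fock spaces.
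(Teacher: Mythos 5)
Your proposal is correct and follows essentially the same route as the paper: the column-wise reorganization you describe is exactly the rearrangement carried out in \eqref{eq:AtoQ}, followed by the factorization of the trace and boundary vectors over the spaces $\MF_a$ and the summation \eqref{eq:Psumf} over $\l$ with $\l^+=\m$. The point you flag about the twist decomposing as $S=\prod_a s_a^{(n)}$ with each factor acting only on $\MF_a^+$ is indeed the one detail the paper relies on implicitly, and your justification of it is sound.
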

%


\subsection{Fusion}\label{ssec:Fusion}
In this section we perform the fusion of the lattice partition function on the right hand side of (\ref{eq:PsumQf}) and link this to the evaluation homomorphism thus establishing the desired matrix product construction for polynomials $W_\m(x;q,t;z)$. 

Consider two operations:
\begin{enumerate} 
\item For $i=1,\dots,N$ replace each $x_i$ with the alphabet $(x_i^{(1)},\dots,x_i^{(\MJ_i)})$, for some integer $\MJ_i$. 
\item Within each alphabet perform the specialization $x_i^{(j)}\rightarrow  t^{j-1}x_i$, for $j=1\dots,\MJ_i$. 
\end{enumerate} 
We define the operation $\e_{(x_i,\MJ_i)}$ accordingly:
\begin{align*}
\e_{(x_i,\MJ_i)}g(x_1,\dots,x_N)=
g(x_1,\dots,x_{i-1},x_i, t x_i,\dots, t^{\MJ_i-1}x_i,x_{i+1},\dots,x_N),
\end{align*}
where $g(x_1,\dots,x_N)$ is a symmetric function defined for any $N\in \mathbb{N}$.
If $\MJ=(\MJ_1,\dots,\MJ_N)$ is a set of positive integers and $x=(x_1,\dots,x_N)$ is an alphabet then we will write $\e_{(x,\MJ)}$ instead of $\e_{(x_N,\MJ_N)}\dots \e_{(x_1,\MJ_1)}$:
\begin{align}\label{eq:epsilon}
\e_{(x,\MJ)}g(x_1,\dots,x_N)=
g(x_1,t x_1,\dots, t^{\MJ_1-1} x_1,\dots,x_N,t x_N,\dots, t^{\MJ_N-1} x_N).
\end{align}
When acting with $\e_{(x,\MJ)}$, with $x=(x_1,\dots,x_N)$ and $\MJ=(\MJ_1,\dots,\MJ_N)$, on the right hand side of (\ref{eq:PsumQf}) we will encounter matrix elements of $M_{i,i'}$ of the form 
\begin{align}\label{eq:fusedM0}
M_{i,i'}(x_1,t x_1 ,\dots,t^{(\MJ_1-1)} x_{1},\dots,x_N,t x_N ,\dots,t^{(\MJ_N-1)} x_{N} ).
\end{align}
The compositions $i$ and $i'$ are of size $|\MJ|=\MJ_1+\dots + \MJ_N$. It is convenient to split these compositions into blocks of lengths $\MJ_l$, $l=1,\dots,N$,  as follows: $i^l=(i_1^l,\dots,i_{\MJ_l}^{l})$, so that $i=\cup_{l=1}^N i^l$ and similarly for $i'$. This allows us to factorize the matrix element $M_{i,i'}$ into blocks $M_{i^l,i'{}^l}$.  
Let $\l=(\l_1,\dots,\l_n)$ and $\m=(\m_1,\dots,\m_n)$ be two non-negative compositions. Following \cite{KulishRS,KirillovR} we define the fused matrix elements $\MM_{\l,\m}(x,\MJ)$ by 
\begin{align}\label{eq:fusedM}
\MM_{\l,\m}(x,\MJ):=
\sum_{\substack{i:{\sf m}(i)=\l\\ i':{\sf m}(i')=\m}}
\prod_{l=1}^N t^{-\MI(i^l_{\MJ_l},\dots,i_1^l)}
M_{i^l,i'{}^l}(x_l,t x_l,\dots,t^{(\MJ_l-1)} x_{l}),
\end{align}
where we introduced the {\it inversion numbers}:
\begin{align}
\MI(i_1,\dots,i_{\MJ}):
=
\#\{(j,k) | 1\leq j<k \leq {\MJ}, i_j < i_k \}.		\label{eq:invnum}
\end{align} 			
We define the coefficient $C_{\MJ}(\l)$ by summing the inversion numbers with a fixed multiplicity vector
\begin{align}\label{eq:invnorm}
C_{\MJ}(\l)
:=
\sum_{i: {\sf m}(i) = \l}
t^{-\MI(i)}
&=
\prod_{m=1}^{n} t^{-\l_m(\l_0+\cdots+\l_{m-1})}
\binom{\l_0 + \cdots + \l_m }{ \l_m}_t \\
&= t^{-\sum_{0\leq i<\ell\leq n} \l_i \l_\ell} \frac{(t,t)_{\MJ}}{(t,t)_{\l_0}(t,t)_{\l_1}\dots(t,t)_{\l_n}}, \nonumber
\end{align}
and $\l_0 := {\MJ}-\sum_{m=1}^{n} \l_m$. The products of matrices $M_{i,i'}$ with the specialization (\ref{eq:fusedM0}) can be expressed as products of matrices $\MM_{\l,\m}$. For $N=1$ we define the $\MJ$-{\it fused vertex}
\begin{align}\label{eq:fusedL}
\ML_{\l,\m}(x,\MJ):=
\frac{1}{C_\MJ(\l)}
\sum_{\substack{i:{\sf m}(i)=\l\\ i':{\sf m}(i')=\m}}
 t^{-\MI(i_{\MJ},\dots,i_1)}
M_{i,i'}(x,t x ,\dots, t^{(\MJ-1)}x).
\end{align}
This matrix is called the fused $L$-matrix. We can write products of matrices $M_{i,i'}$ in terms of products of matrices $\ML_{\l,\m}(x,\MJ)$. 

\begin{prop}
Let $\l=(\l_1\dots,\l_n)$ and $\n=(\n_1\dots,\n_n)$ be two non-negative compositions, then we have
\begin{align}\label{eq:fusedMprod}
\sum_{\m}\ML_{\l,\m}(x,\MJ)\ML_{\m,\n}(x,\MJ)
=&
\frac{1}{C_{\MJ}(\l)}
\sum_{\m}\sum_{\substack{i:{\sf m}(i)=\l \\ i':{\sf m}(i')=\m \\ i'':{\sf m}(i'')=\n}}
t^{-\MI(i_{\MJ},\dots,i_1)}\\
\times&
M_{i,i'}(x,t x ,\dots, t^{(\MJ-1)}x)
M_{i',i''}(x,t x ,\dots, t^{(\MJ-1)}x).
\nonumber
\end{align}
\end{prop}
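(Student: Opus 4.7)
The plan is to recognize (\ref{eq:fusedMprod}) as a fusion identity: both sides will be expressed as matrix elements of the operator product $M(x,tx,\dots,t^{(\MJ-1)}x)\cdot M(x,tx,\dots,t^{(\MJ-1)}x)$ between appropriate states in the auxiliary tensor space $V^{\otimes\MJ}$, and then identified using the fusion property of $M$ at a geometric progression of spectral parameters. Concretely, I would introduce the dual pair of states
\begin{align*}
\langle\phi_\l|:=\frac{1}{C_\MJ(\l)}\sum_{i:{\sf m}(i)=\l}t^{-\MI(i_\MJ,\dots,i_1)}\langle i|,\qquad |\psi_\m\rangle:=\sum_{i':{\sf m}(i')=\m}|i'\rangle,
\end{align*}
so that (\ref{eq:fusedL}) becomes $\ML_{\l,\m}(x,\MJ)=\langle\phi_\l|M(x,tx,\dots,t^{(\MJ-1)}x)|\psi_\m\rangle$. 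The definition of $C_\MJ(\l)$ in (\ref{eq:invnorm}) immediately gives the orthogonality $\langle\phi_\l|\psi_\m\rangle=\delta_{\l,\m}$.

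Substituting into the LHS of (\ref{eq:fusedMprod}) and using bilinearity yields $\sum_\m\ML_{\l,\m}\ML_{\m,\n}=\langle\phi_\l|M\cdot\Pi\cdot M|\psi_\n\rangle$, where $\Pi:=\sum_\m|\psi_\m\rangle\langle\phi_\m|$. By the orthogonality just established, $\Pi$ is an idempotent whose range is the classical symmetric subspace $\mathrm{Sym}\subset V^{\otimes\MJ}$ spanned by the $|\psi_\m\rangle$. For the RHS, summing the outer sum over $\m$ frees $i'$ from the multiplicity constraint, so that $\sum_{i'}M_{i,i'}(\cdots)M_{i',i''}(\cdots)$ is just the matrix element $\langle i|M\cdot M|i''\rangle$; hence the RHS equals $\langle\phi_\l|M\cdot M|\psi_\n\rangle$ directly, with no appeal to fusion.

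The identity (\ref{eq:fusedMprod}) therefore reduces to the statement $\langle\phi_\l|M\cdot\Pi\cdot M|\psi_\n\rangle=\langle\phi_\l|M\cdot M|\psi_\n\rangle$, i.e.\ to showing that $M|\psi_\n\rangle\in\mathrm{Sym}$ (so that inserting $\Pi$ makes no difference). This is the key fusion property: at the geometric progression $(t^a x,t^{a+1}x)$ the $\check R^{(n)}$-matrix (\ref{eq:RcheckE}) degenerates, up to scalar, to a projector onto the symmetric subspace of two adjacent auxiliary copies of $V$, and the RLL relation (\ref{eq:YBAdef}) then propagates this symmetry across all $\MJ$ sites of the monodromy $M$. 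Equivalently, for each adjacent pair, $\check R(t^a x, t^{a+1}x)\,L_1(t^a x)L_2(t^{a+1}x)$ coincides (up to scalar) with $L_1(t^a x)L_2(t^{a+1}x)\,\check R(t^a x,t^{a+1}x)$, and the projector property lets one symmetrize the pair without cost.

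The main obstacle is thus establishing this projector/fusion statement at the level of (\ref{eq:RcheckE}): verifying that $\check R^{(n)}(x,tx)$ is proportional to the symmetrizer on $V\otimes V$, and then inductively propagating the resulting symmetrizer through the product $L_1(x)L_2(tx)\cdots L_\MJ(t^{(\MJ-1)}x)$ via the braid relation. This is the classical Kulish--Reshetikhin--Sklyanin fusion construction \cite{KulishRS,KirillovR}, and once it is in place the present proposition follows by the formal bra/ket calculation above. Everything else in the proof is bookkeeping of the normalization $C_\MJ(\l)$ and reorganization of the summations, as already outlined.
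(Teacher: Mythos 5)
Your reduction of both sides to $\langle\phi_\l|M\,\Pi\,M|\psi_\n\rangle$ versus $\langle\phi_\l|M\,M|\psi_\n\rangle$ is correct and is essentially a bra/ket repackaging of what the paper does, but the lemma you then reduce to --- $M|\psi_\n\rangle\in\mathrm{Sym}:=\mathrm{span}\{|\psi_\m\rangle\}$ --- is false for the parameter ordering $(x,tx,\dots,t^{\MJ-1}x)$ used here, and so is your claimed commutation $\check{R}\,L_1(t^ax)L_2(t^{a+1}x)=L_1(t^ax)L_2(t^{a+1}x)\,\check{R}$. The Yang--Baxter relation (\ref{eq:RcheckLL}) at $y=tx$ is an \emph{intertwining}, $\check{R}(t)\,M(x,tx)=M(tx,x)\,\check{R}(t)$, with the spectral parameters in opposite orders on the two sides; for the increasing order it yields stability of $\ker\check{R}(t)$ under $M(x,tx)$, hence stability of the row space of $\check{R}(t)$ under \emph{right} multiplication by $M$ --- and that row space is spanned by the weighted covectors $\langle\phi_\m|$, not by the kets $|\psi_\m\rangle$. (Nor is $\check{R}(t)$ the symmetrizer: for $n=1$, $\MJ=2$ its one-particle block is $\frac{1}{1+t}\left(\begin{smallmatrix}t&1\\ t&1\end{smallmatrix}\right)$, a non-self-adjoint idempotent whose image is $\mathrm{Sym}$ but whose kernel is spanned by $|01\rangle-t|10\rangle$; in fact it coincides with your $\Pi$ on that block.) A direct check kills the ket version: for $n=1$, $\MJ=2$ one finds $\langle 01|M(x,tx)|\psi_{(1)}\rangle=tx+x(1-k)$ while $\langle 10|M(x,tx)|\psi_{(1)}\rangle=tx(1-tk)+x$, so $M|\psi_{(1)}\rangle\notin\mathrm{Sym}$; by contrast $\langle\phi_{(1)}|M(x,tx)$ is proportional to $\langle\phi_{(1)}|$ on that sector, as the dual statement predicts.

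The repair is to prove the covector statement $\langle\phi_\l|M\,\Pi=\langle\phi_\l|M$, equivalently that $\langle\phi_\l|M|i'\rangle$ equals $t^{-\MI(i'_{\MJ},\dots,i'_1)}$ times a function of ${\sf m}(i')$ alone. This is precisely the paper's Proposition \ref{prop:texch}, obtained from the left-eigenvector identities (\ref{eq:t-exch1})--(\ref{eq:t-exch2}) for $\check{R}(t)$ and propagated through the product of $L$-matrices; once it is in place, $\langle\phi_\l|M$ lies in $\mathrm{span}\{\langle\phi_\m|\}$, each $\langle\phi_\m|$ is fixed by $\Pi$ on the right by the orthogonality you established, and the rest of your argument goes through verbatim. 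So the architecture of your proof is sound, but the key fusion lemma must be stated on the bra side; as written, the step it rests on would fail.
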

\begin{proof}
The essential ingredient of the proof is a relation which shows that $\ML_{\l,\m}(x,\MJ)$ in (\ref{eq:fusedL}) can be rewritten as a single sum over $i$. The sum over $i'$ can be calculated and leads to an overall factor. By  (\ref{eq:Monod}) the matrix elements of $M$ are given by products of matrix elements of $L$. For any two permutations on $\MJ$ letters $s$ and $s'$ we have
\begin{align*}
&t^{-\MI(s'(\ell_{\MJ},\dots,\ell_1))}
\sum_{j: {\sf m}(j) = \l}
t^{-\MI(j_{\MJ},\dots, j_1)}
L^{j_1,s_1(\ell)}_{I,I^1}(x)
L^{j_2,s_2(\ell)}_{I^1,I^2}(t x)
\dots
L^{j_{\MJ},s_{\MJ}(\ell)}_{I^{{\MJ}-1},I^{\MJ}}(t^{{\MJ}-1} x)\\
&=
t^{-\MI(s(\ell_{\MJ},\dots,\ell_1))}
\sum_{j: {\sf m}(j) = \l}
t^{-\MI(j_{\MJ},\dots, j_1)}
L^{j_1,s'_1(\ell)}_{I,I^1}(x)
L^{j_2,s'_2(\ell)}_{I^1,I^2}(t x)
\dots
L^{j_{\MJ},s'_{\MJ}(\ell)}_{I^{{\MJ}-1},I^{\MJ}}(t^{{\MJ}-1}x).
\end{align*}
The proof of this statement is given in Proposition \ref{prop:texch}. We can rewrite this equation in terms of matrices $M_{i,i'}$:
\begin{align}\label{eq:texchangeM}
&t^{-\MI(s'(\ell_{\MJ},\dots,\ell_1))}
\sum_{j: {\sf m}(j) = \l}
t^{-\MI(j_{\ell_J},\dots, j_1)}
M_{j,s(\ell)}(x,t x ,\dots, t^{(\MJ-1)}x)
\\
&=
t^{-\MI(s(\ell_{\MJ},\dots,\ell_1))}
\sum_{j: {\sf m}(j) = \l}
t^{-\MI(j_{\ell_J},\dots, j_1)}
M_{j,s'(\ell)}(x,t x ,\dots, t^{(\MJ-1)}x).\nonumber
\end{align}
We can take $s$ to be the identity permutation and $s'$ to be a permutation which orders the set $(\ell_{\MJ},\dots,\ell_1)$ such that $\MI(s'(\ell))=1$. Inserting this into (\ref{eq:fusedL}) and using (\ref{eq:invnorm}) we get
\begin{align}
\ML_{\l,\m}(x,\MJ)
=&
\frac{1}{C_{\MJ}(\l)}
\sum_{i:{\sf m}(i)=\l }
t^{-\MI(i_{\MJ},\dots,i_1)}
M_{i,\tilde{i}}(x,t x ,\dots, t^{(\MJ-1)}x)
\sum_{i':{\sf m}(i')=\m}t^{-\MI(i'_{\MJ},\dots,i'_1)}
 \nonumber\\
=&
\frac{C_{\MJ}(\m)}{C_{\MJ}(\l)}
\sum_{i:{\sf m}(i)=\l }
t^{-\MI(i_{\MJ},\dots,i_1)}
M_{i,\tilde{i}}(x,t x ,\dots, t^{(\MJ-1)}x)
, \label{eq:fusedM2}
\end{align}
where the composition $\tilde{i}$ is ordered in a non-increasing fashion, so that $\MI(\tilde{i})=1$, and ${\sf m}(\tilde{i})=\m$. Applying (\ref{eq:texchangeM}) to the right hand side of (\ref{eq:fusedMprod}) and taking into account (\ref{eq:fusedM2}) we find that the sums separate and can be written as a product of two matrices $\ML$
\begin{align*}
&\sum_{\m}\sum_{i:{\sf m}(i)=\l}
\frac{C_{\MJ}(\m)}{C_{\MJ}(\l)}
t^{-\MI(i_{\MJ},\dots,i_1)}M_{i,\tilde{i}}(x,t x ,\dots, t^{(\MJ-1)}x)\\
&
\frac{1}{C_{\MJ}(\m)}
\sum_{\substack{i':{\sf m}(i')=\m \\ i'':{\sf m}(i'')=\n}}
t^{-\MI(i'_{\MJ},\dots,i'_1)}
M_{i',i''{}}(x,t x ,\dots, t^{(\MJ-1)}x)
=
\sum_{\m}
\ML_{\l,\m}(x,\MJ)
\ML_{\m,\n}(x,\MJ).
\end{align*}
\end{proof}

We can find an explicit form of the matrix elements of $\ML_{\l,\m}(x,\MJ)$. In terms of the elements of the $L$-matrix (\ref{eq:LmatCdGW0})-(\ref{eq:LmatCdGW3}), we have
\begin{align}\label{eq:Lfuseddef}
\ML^{\l,\m}_{\l',\m'}(x,\MJ)=
\frac{1}{C_{\MJ}(\l)}
\sum_{\substack{j: {\sf m}(j) =\l\\
\ell:{\sf m}(\ell)=\m}}
t^{-\MI(j_{\MJ},\dots,j_1)}
\prod_{k=1}^{\MJ}L^{j_k,\ell_k}_{\l',\m'}(t^{k-1} x).
\end{align}
Introduce the matrix $L^{\l,\m}_{\l',\m'}(x,z)$:
\begin{align}
\label{eq:Lone1}
&L^{\l,\m}_{\l',\m'}(x,z):=
\sum_{\k=0}^{\m}
 t^{\frac{1}{2}(\k.\k-|\k|)+\sum_{j<l}\m_j(\m'_l+\k_l)-\sum_{j<l}(\m_j-\k_j,\l_l)}\binom{\m'+\k}{\k}_t\binom{\l'}{\m-\k}_t
  x^{|\k|}z^{|\m-\k|}.
\end{align}
These are precisely the matrix elements (\ref{eq:Lone}) that we used for the construction in Section \ref{sec:modM}\footnote{In Section \ref{sec:modM} we denoted this matrix $\ML^{\l,\m}_{\l',\m'}(x,z)$.}. 
Again we assume the condition $\m'+\m=\l'+\l$ to hold, and otherwise $L^{\l,\m}_{\l',\m'}(x,z):=0$. 
\begin{prop}
The following identification of the matrix elements holds:
\begin{align}\label{eq:LLfused}
L^{\l,\m}_{\l',\m'}(x,-t^{\MJ} x)=
\ML^{\l,\m}_{\l',\m'}(x,\MJ).
\end{align}
\end{prop}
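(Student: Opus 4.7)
The plan is to derive the formula $\ML^{\l,\m}_{\l',\m'}(x,\MJ)$ by a direct computation, using the texchange symmetry (Proposition \ref{prop:texch}, already invoked in the derivation of \eqref{eq:fusedM2}) to collapse the definition \eqref{eq:Lfuseddef} to a single choice of the composition $j$, and then evaluating the resulting product of $L$-matrix elements columnwise in the fused vertex. The factor $(-t^{\MJ} x)^{|\m-\k|}$ appearing upon the substitution $z=-t^{\MJ}x$ in \eqref{eq:Lone1} will emerge from a telescoping cancellation that is characteristic of fusion: the $\MJ$-dependent Pochhammer-type truncation in fusion is equivalent to the analytic continuation $z = -t^{\MJ} x$.

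First, I would use the identity \eqref{eq:texchangeM} to rewrite
\begin{align*}
\ML^{\l,\m}_{\l',\m'}(x,\MJ)
=
\frac{C_{\MJ}(\m)}{C_{\MJ}(\l)}
\sum_{j:{\sf m}(j)=\l}
t^{-\MI(j_{\MJ},\dots,j_1)}
\prod_{k=1}^{\MJ}
L^{j_k,\tilde\ell_k}_{\l',\m'}(t^{k-1}x)\Big|_{\tilde\ell\ \text{fixed}},
\end{align*}
where $\tilde\ell$ is the unique non-increasing composition with multiplicity ${\sf m}(\tilde\ell)=\m$; this is the same device that produced \eqref{eq:fusedM2}. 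Choosing also $j$ to be non-increasing and absorbing the resulting $C_{\MJ}(\l)$ from \eqref{eq:invnorm} into the normalization reduces the evaluation to a single monotone arrangement of the vertical indices. In this arrangement, the columns of the fused vertex are grouped by color: all $\l_n$ indices equal to $n$ first, then $\l_{n-1}$ indices equal to $n-1$, and so on, and similarly for the top edge.

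Second, inside each such color-block I would track the intermediate horizontal states $\m'{}^{(k)}$ and decompose the sum over $\tilde\ell$ (which is non-canonical once the block structure is fixed) by introducing, for each color $i$, the integer
\begin{align*}
\k_i := \#\{k : j_k = \ell_k = i\},
\end{align*}
i.e., the number of vertical edges of color $i$ which pass straight through the block. The complementary count $\m_i-\k_i$ corresponds to paths that turn horizontally, and an analogous count on the horizontal side (which is where the parameter $\m_i'+\k_i$ versus $\l_i'$ appears) provides the two binomials in \eqref{eq:Lone1}. Because the spectral parameters are geometrically scaled ($x,tx,\dots,t^{\MJ-1}x$), the resulting summations over the positions inside each block collapse to $t$-binomial coefficients via the identity
\begin{align*}
\sum_{0\leq k_1<\cdots<k_p\leq \MJ-1} t^{k_1+\cdots+k_p}
= t^{\binom{p}{2}}\binom{\MJ}{p}_t,
\end{align*}
and its generalizations. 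This produces the binomial factors $\binom{\m'+\k}{\k}_t\binom{\l'}{\m-\k}_t$ and accounts for the quadratic piece $\tfrac{1}{2}(\k\cdot\k-|\k|)$ of the $t$-exponent in \eqref{eq:Lone1}; the cross terms $\sum_{l<j}\m_l(\m'_j+\k_j)$ and $-\sum_{l<j}(\m_l-\k_l)\l_j$ come from the $t^{I_{i+1,n}}$ factors of \eqref{eq:LmatCdGW0}--\eqref{eq:LmatCdGW3}, which count how many paths of higher color lie to the right of a given vertex at each row.

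Third, I would match the spectral prefactors. Each horizontal turn in row $k$ contributes a factor $(t^{k-1}x)$ from \eqref{eq:LmatCdGW2}, and summing over the $\MJ-|\k_{\text{used}}|$ available rows yields, after the previous step, a factor of the form
\begin{align*}
\prod_i x^{|\k|}\cdot x^{|\m-\k|}t^{(\MJ-1)|\m-\k|-\cdots},
\end{align*}
which, when combined with the sign-free binomial $\binom{\l'}{\m-\k}_t$, reproduces $x^{|\k|}z^{|\m-\k|}$ at $z=-t^{\MJ}x$ up to the expected $(-1)^{|\m-\k|}$ absorbed into the fused binomial. The conservation $\m+\m'=\l+\l'$ is automatic from the vertex-by-vertex conservation of \eqref{eq:LmatCdGW0}--\eqref{eq:LmatCdGW3} and ensures $L^{\l,\m}_{\l',\m'}(x,-t^{\MJ}x)=0$ in the non-conserving cases, agreeing with $\ML^{\l,\m}_{\l',\m'}(x,\MJ)=0$.

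The main obstacle will be the bookkeeping in the second step: matching the combinatorial content of the ordered product of $L$-matrix elements to the compact symmetric expression \eqref{eq:Lone1}. In particular, producing the precise $t$-exponent $\tfrac12(\k\cdot\k-|\k|)+\sum_{l<j}\m_l(\m'_j+\k_j)-\sum_{l<j}(\m_l-\k_l,\l_j)$ requires carefully grouping contributions from both the geometric spectral scaling and the $t^{I_{i+1,n}}$ suppression factor, and verifying that the $(-t^{\MJ})^{|\m-\k|}$ sign/power produced by the fusion truncation is correctly identified with $z^{|\m-\k|}$ at $z=-t^{\MJ}x$. The $t$-binomial summation identities needed are standard (essentially the $q$-Vandermonde), so once the indexing is set up, the final identification reduces to a routine (if lengthy) algebraic check.
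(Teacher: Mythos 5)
Your overall strategy---collapse the fused vertex \eqref{eq:Lfuseddef} using the exchange property, group the rows of the fused block by colour, and evaluate the blocks with $t$-binomial identities---is close in spirit to the computation the paper carries out in Section \ref{ssec:rec}. One organisational difference: the paper does not attempt the all-colours-at-once matching you sketch. It observes that \eqref{eq:Lone1} is characterised by the single-colour recurrence \eqref{eq:Lxzrec} together with the initial condition $L^{\l,\emptyset}_{\l',\m'}(x,z)=1$, and then proves that $\ML^{\l,\m}_{\l',\m'}(x,\MJ)$ satisfies the same recurrence \eqref{eq:Lrec}. This reduces the bookkeeping you flag as your ``main obstacle'' to tracking one colour $n$ per step; the cross terms $\sum_{l<j}$ in the $t$-exponent then emerge from the iteration rather than having to be assembled by hand.

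There is, however, a genuine gap in your first step: the claim that, after fixing $\tilde\ell$ monotone as in \eqref{eq:fusedM2}, one may ``also'' choose $j$ non-increasing and reduce to a single arrangement of the vertical indices. This cannot be done. The exchange relation \eqref{eq:prop3} is established only for the outgoing indices $\ell$ (it rests on \eqref{eq:t-exch1}--\eqref{eq:t-exch2} applied on one side of the RLL relation); no analogous relation for the incoming indices $j$ is available or used, and the sum over $j$ with ${\sf m}(j)=\l$ must be retained in full, as in \eqref{eq:Lord1}. More importantly, if both $j$ and $\ell$ were frozen to monotone sequences, every intermediate bosonic state $I^s$ would be fixed by conservation and the fused vertex would collapse to a single monomial---there would be nothing left to generate the sum over $\k$ in \eqref{eq:Lone1}. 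In the paper that sum arises precisely from the surviving freedom in $j$, namely the distribution of the colour-$n$ entries of $j$ between the $0$-block and the $n$-block (the sum over $a$, then over $k$, in \eqref{eq:Lreccoef}--\eqref{eq:Lreccoef1}). Your statistic $\k_i=\#\{k : j_k=\ell_k=i\}$ is the right object, but it only becomes a summation variable because $j$ is not fixed. Repairing this means keeping the $j$-sum, at which point your second step essentially becomes the paper's block computation, and the $q$-Vandermonde manipulations you anticipate do indeed produce the factor $(-t^{\MJ}x)^{|\m-\k|}$, i.e.\ $z^{|\m-\k|}$ at $z=-t^{\MJ}x$.
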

\begin{proof}
In this proof we define indices $\tilde{\m}=(\m_1,\dots,\m_{n-1})$ and $\tilde{\l}=(\l_1,\dots,\l_{n-1})$ and similarly $\tilde{\m}'=(\m_1',\dots,\m_{n-1}')$ and $\tilde{\l}'=(\l_1',\dots,\l_{n-1}')$. From the definition (\ref{eq:Lone1}) we find that the matrix elements $L_{\l',\m'}^{\l,\m}(x,z)$ satisfy the recurrence relation 
\begin{align}
\label{eq:Lxzrec}
&L^{\l,\m}_{\l',\m'}(x,z)=
b^{\l_n,\m_n}_{\l'_n,\m'_n}(x,z;|\m|)
L^{\tilde{\l},\tilde{\m}}_{\tilde{\l}{}',\tilde{\m}{}'}(t^{\l_n} x ,z),\\
&
\label{eq:biz}
b^{\l_n,\m_n}_{\l'_n,\m'_n}(x,z;w)=
\sum_{k=0}^{\m_n}
  t^{\frac{1}{2}(k^2-k)+(w-\m_n)(\m'_n+k-\l_n)}\binom{\m'_n+k}{k}_t\binom{\l'_n}{\m_n-k}_t
  x^{k} z^{\m_n-k}.
\end{align}
This recurrence relation together with the initial condition $L^{\l,\emptyset}_{\l',\m'}(x,z)=1$ completely determines $L^{\l,\m}_{\l',\m'}(x,z)$. The initial condition $\ML^{\l,\emptyset}_{\l',\m'}(x,\MJ)=1$ holds by (\ref{eq:Lfuseddef}), (\ref{eq:LmatCdGW1}), (\ref{eq:LmatCdGW3}) and (\ref{eq:invnorm}).
The rest of the proof consists of showing that (\ref{eq:Lxzrec}) is also satisfied by $\ML^{\l,\m}_{\l',\m'}(x,\MJ)$, i.e.
\begin{align}
\label{eq:Lrec}
&\ML^{\l,\m}_{\l',\m'}(x,\MJ)=
b^{\l_n,\m_n}_{\l'_n,\m'_n}(x,-t^{\MJ} x;|\m|)
\ML^{\tilde{\l},\tilde{\m}}_{\tilde{\l}{}',\tilde{\m}{}'}(t^{\l_n} x,\MJ-\l_n).
\end{align}
We prove (\ref{eq:Lrec}) constructively in Section \ref{ssec:rec}. 
\end{proof}


Consider the matrix product formula (\ref{eq:PsumQf}) and express $Q^a_{i,i'}$ through $M_{i,i'}$ using (\ref{eq:columnMac}). On the right hand side of (\ref{eq:PsumQf}) we have
\begin{align}\label{eq:rhsP}
\Omega'_{\m}(q,t){}^{-1}
\sum_{i_1: i_1^+ = \mu}
\sum_{\{i_2,\dots,i_{n}\}}
\prod_{a=1}^n 
\bra{0}^{\MF_{a}^-}\bra{{\sf m}_a(\l)}^{\MF_{a,a}}\Tr_{\MF^+_{a}} 
M_{i_{a+1},i_{a}}(x_1,\dots,x_N)
\ket{0}^{\MF_{a,a}}\ket{0}^{\MF^-_{a}}.
\end{align}
Now we can apply fusion to the sum of products of matrices $M_{i_{a+1},i_{a}}$ 
\begin{align}\label{eq:sumprodM}
\sum_{i_1: i_1^+ = \mu}
\sum_{\{i_2,\dots,i_{n}\}}
\prod_{a=1}^n 
M_{i_{a+1},i_{a}}(x_1,\dots,x_N),
\end{align}
and rewrite (\ref{eq:rhsP}) in terms of the fused matrices $\ML$. 
This requires an additional superscript for the index $i$ similarly as in (\ref{eq:fusedM}), i.e. $i_a^l=(i_{a,1}^l,\dots,i_{a,\MJ_l}^{l})$, so that $i_a=\cup_{l=1}^N i_a^l$. For convenience we also introduce a trivial summation $i_{n+1}:{\sf m}(i_{n+1})=0^n$. We fuse (\ref{eq:sumprodM}) by replacing it with the following expression:
\begin{align}\label{eq:fusion0}
&\sum_{i_{n+1}: i_{n+1}^+ = \emptyset}
\sum_{i_1: i_1^+ = \mu}
\sum_{\{i_2,\dots,i_{n}\}}
\prod_{l=1}^N
t^{-\MI(0^{\MJ_l})}
\prod_{a=1}^n 
M_{i^l_{a+1},i_{a}^l}(x_l,t x_l,\dots,t^{(\MJ_l-1)} x_{l}).
\end{align}
The summations over $\{i_2,\dots,i_{n}\}$ can be written similarly as in the right hand side of (\ref{eq:fusedMprod}). We need to introduce an additional set of summation indices $\s_{i}^{k}=(\s_{i,1}^{k},\dots,\s_{i,n}^{k})$, $\s_{i,j}^{k}\in \mathbb{N}$, $i=1,\dots,n+1$ and $k=1,\dots,N+1$, and we set $\s_{n+1,j}^{k}=0$, for all $j$ and $k$. With these summation indices we write (\ref{eq:fusion0}) as
\begin{align}\label{eq:fusion1}
&
\sum_{\{\s\}_{\m}}
\prod_{l=1}^N 
\sum_{i^l_{n+1}: {\sf m}(i^l_{n+1}) = \s_{n+1}^{l}}
\sum_{i^l_{n}: {\sf m}(i^l_{n}) = \s_{n}^{l}}
\dots
\sum_{i^l_{1}: {\sf m}(i^l_{1}) = \s_{1}^{l}}
t^{-\MI(0^{\MJ_l})}
\prod_{a=1}^n 
M_{i^l_{a+1},i_{a}^l}(x_l,t x_l,\dots,t^{(\MJ_l-1)} x_{l}),
\end{align}
where the summation $\{\s\}_{\m}$ runs over all $\s$ such that $\s_{1}^1{^+}+\dots+\s_{1}^N{}^+=\mu$. In the product over $l$ we can treat each term separately. Let us take the term with $l=1$, use (\ref{eq:fusedM2}) and apply repeatedly (\ref{eq:fusedMprod}), in which the second $\ML$-matrix on the left hand side is substituted with its definitions (\ref{eq:fusedL}). We find
\begin{align}\label{eq:fusion2}
&
\sum_{\{\s\}_{\m}}
\sum_{i^1_{n+1}: {\sf m}(i^1_{n+1}) = \s_{n+1}^{1}}
\sum_{i^1_{n}: {\sf m}(i^1_{n}{}) = \s_{n}^{1}}
\dots
\sum_{i^1_{1}: {\sf m}(i^1_{1}{}) = \s_{1}^{1}}
t^{-\MI(0^{\MJ_1})}
\prod_{a=1}^n 
M_{i^1_{a+1},i_{a}^1}(x_1,t x_1,\dots,t^{(\MJ_1-1)} x_{1})
\nonumber \\
=
&
\sum_{\{\s\}_{\m}}
\ML_{\s_{n+1}^1,\s_{n}^1}(x_1,\MJ_1)
\sum_{i^1_{n}: {\sf m}(i^1_{n}{}) = \s_{n}^{1}}
\frac{1}{C_{\MJ_1}(\s_n^1)}
\nonumber\\
\times &
\sum_{i^1_{n-1}: {\sf m}(i^1_{n-1}{}) = \s_{n-1}^{1}}
\dots
\sum_{i^1_{1}: {\sf m}(i^1_{1}{}) = \s_{1}^{1}}
t^{-\MI(i^1_{n,\MJ_1},\dots,i^1_{n,1})}
\prod_{a=1}^{n-1} 
M_{i^1_{a+1},i_{a}^1}(x_1,t x_1,\dots,t^{(\MJ_1-1)} x_{1})
\nonumber\\
=
&
\sum_{\{\s\}_{\m}}
\ML_{\s_{n+1}^1,\s_{n}^1}(x_1,\MJ_1)
\ML_{\s_{n}^1,\s_{n-1}^1}(x_1,\MJ_1)
\dots
\ML_{\s_{2}^1,\s_{1}^1}(x_1,\MJ_1).
\end{align}
The calculation in (\ref{eq:fusion2})  can be repeated for all factors in the product over $l$ in (\ref{eq:fusion1}). Thus (\ref{eq:fusion0}) becomes
\begin{align}\label{eq:fusion3}
&
\sum_{\{\s\}_{\m}}
\prod_{l=1}^N 
\ML_{\s_{n+1}^l,\s_{n}^l}(x_l,\MJ_l)
\dots
\prod_{l=1}^N 
\ML_{\s_{2}^l,\s_{1}^l}(x_l,\MJ_l).
\end{align}
To perform fusion of the expression (\ref{eq:rhsP}) we replace the products of sums of matrices $M$ (\ref{eq:sumprodM}) with that in
(\ref{eq:fusion0}). As we just showed (\ref{eq:fusion0}) is equal to (\ref{eq:fusion3}), thus  fusion of the expression (\ref{eq:rhsP}) is given by
\begin{align}\label{eq:rhsP2}
\Omega'_{\m}(q,t){}^{-1}
\sum_{\{\s\}_{\m}}
\prod_{a=1}^n 
\bra{0}^{\MF_{a}^-}\bra{{\sf m}_a(\l)}^{\MF_{a,a}}\Tr_{\MF^+_{a}} 
\prod_{l=1}^N 
\ML_{\s_{a+1}^l,\s_{a}^l}(x_l,\MJ_l)
\ket{0}^{\MF_{a,a}}\ket{0}^{\MF^-_{a}}.
\end{align}
Finally, we introduce the fused column operators $\MQ^a_{\s,\r}(x)=\MQ^a_{\s,\r}(x_1,\dots,x_N)$:
\begin{align}\label{eq:fusedQ}
\MQ^a_{\s,\r}(x_1,\dots,x_N)=
\bra{0}^{\MF_{a}^-}\bra{{\sf m}_a(\l)}^{\MF_{a,a}}\Tr_{\MF^+_{a}} 
\prod_{l=1}^N 
\ML_{\s^l,\r^l}(x_l,\MJ_l)
\ket{0}^{\MF_{a,a}}\ket{0}^{\MF^-_{a}},
\end{align}
where $\s^{l}=(\s^{l}_1,\dots,\s^{l}_n)$ and $\r^{l}=(\r^{l}_1,\dots,\r^{l}_n)$. Fusion of the right hand side of (\ref{eq:PsumQf}) is now complete and we arrive at the following result:
\begin{align}\label{eq:fusedP}
\e_{x,\MJ}P_{\mu}(x_1,\dots,x_N;q,t)=
\Omega'_{\m}(q,t){}^{-1}
\sum_{\{\s\}_{\m}}
\prod_{a=1}^n 
\MQ_{i_{a+1},i_a}^{a}(x_1,\dots,x_N).
\end{align}

In order to formulate the main result of this section we define the partition function $\MZ_\m(x;q,t;z)$.
\begin{defn}
\begin{align}\label{eq:Zxz}
\MZ_\m(x;q,t;z):=\MN_{\m}(q,t) \sum_{\{\s\}_\m}\prod_{i=1}^n \MQ^{i}_{\s_{i+1},\s_i}(x;t),
\end{align}
where the overall factor $\MN_{\m}(q,t)$ reads
\begin{align}\label{eq:NZxz}
\MN_{\m}(q,t) =
\prod_{1\leq i<j\leq n}
\prod_{\m'_{j+1}\leq \ell\leq \m'_j}(1-q^{j-i} t^{\m'_i-\ell}).
\end{align}
\end{defn}
The normalisation factor is chosen such that $\MZ_\m$ has coefficients in $\mathbb{N}[q,t]$. This is demonstrated in Section \ref{sec:modM}.
\begin{prop}\label{prop:WZ}
We have the following identification:
\begin{align}\label{eq:WeqZ}
W_{\mu}(x;q,t;z)=
\MZ_\m(x;q,t;z).
\end{align}
\end{prop}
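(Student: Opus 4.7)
The plan is to apply the evaluation homomorphism $\e_{(x,\MJ)}$ of (\ref{eq:epsilon}) to the matrix product formula (\ref{eq:PsumQf}) for the ordinary Macdonald polynomial $P_\mu$, and then pass from the resulting principal specialization to a generic $z$-alphabet by analytic continuation. The direct computation $p_r(x_i, t x_i, \dots, t^{\MJ_i-1} x_i) = (x_i^r - (t^{\MJ_i} x_i)^r)/(1-t^r)$ identifies $\e_{(x,\MJ)}$ with the plethystic substitution $\pi^{X}_{Y,Z}$ of (\ref{eq:evhom}) at $y_i = x_i$ and $z_i = -t^{\MJ_i} x_i$. By (\ref{eq:evW}) the left-hand side of (\ref{eq:PsumQf}), after multiplication by $c_\mu(q,t)$, becomes
\[
c_\mu(q,t)\, \e_{(x,\MJ)} P_\mu(x;q,t) \;=\; W_\mu(x;q,t;z), \qquad z_i := -t^{\MJ_i} x_i.
\]
On the right-hand side, the fusion argument already developed through equation (\ref{eq:fusedP}) rewrites $\e_{(x,\MJ)} P_\mu$ as $\Omega'_\mu(q,t)^{-1} \sum_{\{\s\}_\mu}\prod_a \MQ^a_{\s_{a+1},\s_a}(x)$, where the fused column operators $\MQ^a$ of (\ref{eq:fusedQ}) are built from the fused vertex $\ML_{\s,\r}(x_l,\MJ_l)$. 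By (\ref{eq:LLfused}), each such vertex equals $\ML^{\s,\r}_{\s',\r'}(x_l,-t^{\MJ_l} x_l) = \ML^{\s,\r}_{\s',\r'}(x_l,z_l)$, the very face weight used to construct $\MZ_\mu$ in Section \ref{sec:modM}; in particular the entire right-hand side depends on $\MJ_l$ only through $z_l$.

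The core technical step is then to compute the fused column operators $\MQ^a_{\s,\r}(x)$ explicitly and identify them with the column partition functions $\MQ^i_{\s_{i+1},\s_i}$ of Section \ref{ssect:columns}. Unwinding the trace $\Tr_{\MF^+_a}$ twisted by $s^{(n)}_a = \prod_{a<j\leq n} k_j^{(j-a)u}$ (with $t^u=q$) and applying the action (\ref{eq:Fock})--(\ref{eq:Fockdual}) of the $t$-boson generators on Fock states, each $\MF_{a,b}$ with $b>a$ produces an infinite geometric sum in $q^{b-a}$, weighted by $t$-factors which together assemble into precisely one factor $\Phi_{\n_{i+1,j}|\n_{i,j}}(q^{j-i} t^{\mu'_i-\mu'_j};t)$ in the form (\ref{Phi-def}). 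The evaluation $\bra{\cm_a(\mu)}\!\cdots\!\ket{0}$ on $\MF^0$ fixes the total particle content $|\st_i^k| = \cm_i(\mu)$ of each column, while $\bra{0}\!\cdots\!\ket{0}$ on $\MF^-$ encodes the upper-triangular structure of the column boundary. Combined with the elementary product-formula identity $c_\mu(q,t)/\Omega'_\mu(q,t) = \MN_\mu(q,t)$ between the normalizations of (\ref{eq:bcc}), (\ref{eq:normprime}) and (\ref{eq:NZxz}) (provable by collecting the factors of $c_\mu$ column by column in the Young diagram of $\mu$), this yields $W_\mu(x;q,t;z) = \MZ_\mu(x;q,t;z)$ for every $z_i = -t^{\MJ_i} x_i$ with $\MJ_i \in \mathbb{Z}_{\geq 0}$.

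To conclude, both sides are polynomials in $(z_1,\dots,z_N)$ of uniformly bounded degree---the left by (\ref{W-def}) together with the polynomiality of $J_\mu$, the right by the polynomial form of $\Phi_{\n|\nt}(z;t)$ ensured by Theorem \ref{th:Fpos}. For generic $t$ the specialized values $-t^{\MJ_i} x_i$ are distinct and infinite in number, so the polynomial identity in $z_i$ extends from this discrete family to all $z_i$. The principal obstacle is the Fock-space trace computation of the second paragraph: the twist $s^{(n)}_a$ must distribute powers of $q$ across the winding channels in precisely the right way to reproduce the exponent $\chi(\n,\nt)$ of (\ref{eq:chi1}), and the diagonal $k$-operators in $\ML$ must interact with the Fock states to recover the $t^{\mu'_i-\mu'_j}$ shifts in the arguments of $\Phi$. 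The explicit calculation of the fused vertex $\ML_{\s,\r}(x,\MJ)$ itself, obtained independently via the recursion (\ref{eq:Lrec}), supplies the other main ingredient.
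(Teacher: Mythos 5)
Your proposal is correct and follows essentially the same route as the paper: apply $\e_{(x,\MJ)}$ to the matrix product \eqref{eq:PsumQf}, identify it with the plethysm \eqref{eq:evhom} at $z_i=-t^{\MJ_i}x_i$ so that the left side becomes $c_\mu(q,t)^{-1}W_\mu(x;q,t;z)$ by \eqref{eq:evW}, recognize the right side as $(\Omega'_{\mu}\MN_{\mu})^{-1}\MZ_\mu$ via the fused vertices, and verify that the prefactor equals $1$ as in \eqref{eq:prefactor}. The only point to flag is your justification of the final interpolation step: polynomiality of $\MZ_\mu$ in the spin variables $z_1,\dots,z_N$ is not what Theorem \ref{th:Fpos} supplies (there the variable $z$ plays the role of the winding fugacity $q^{j-i}t^{\lambda'_i-\lambda'_j}$, not of the spins), and it is cleaner to invoke the identity theorem using that the specialization points $-t^{\MJ_i}x_i$ accumulate at $z_i=0$ for $|t|<1$.
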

\begin{proof}
We will derive this formula from (\ref{eq:fusedP}). To see what happens with $P_{\mu}(x;q,t)$ under the action of $\e_{(x,\MJ)}$ we need to recall that $P_{\mu}(x;q,t)$ can be expanded in the basis of power sum functions $p_r(x_1,\dots,x_N)$ and perform the two operations on $p_r(x_1,\dots,x_N)$. In fact, it is enough to consider $p_r(x_i)$ due to the property (\ref{eq:psumadd}). We have
\begin{align*}
\e_{(x_i,\MJ_i)}p_r(x_i)
=
x_i^r \sum_{j=0}^{\MJ_i-1} t^{j r} =
\frac{x_i^r- t^{\MJ_i r}x_i^r}{1-t^r}.
\end{align*}
Introduce a new variable $z_i$ by the formula:
\begin{align}
\label{eq:z}
z_i=-t^{\MJ_i} x_i.
\end{align}
This relation should be thought of as analytic continuation of $\MJ$ to arbitrary complex values. 
We can express the operation $\e_{(x_i,\MJ_i)}$ as 
\begin{align*}
\e_{(x_i,\MJ_i)}p_r(x_i)
=
\frac{x_i^r-(-1)^r z_i^r}{1-t^r}.
\end{align*}
This is precisely the action of $\pi_{x,z}^x$ as defined in (\ref{eq:evhom}). Therefore, by (\ref{eq:evW}), on the left side of (\ref{eq:fusedP}), we obtain
\begin{align}\label{eq:proofMPAlhs}
c_\m(q,t)^{-1}W_\m(x;q,t;z). 
\end{align}
In terms of $\MZ_\m(x;q,t;z)$ the right hand side of (\ref{eq:fusedP}) reads
\begin{align}\label{eq:proofMPArhs}
\frac{1}{\Omega'_{\m}(q,t)\MN_{\m}(q,t)}
\MZ_\m(x;q,t;z).
\end{align}
Equating (\ref{eq:proofMPAlhs}) with (\ref{eq:proofMPArhs}) we get
\begin{align}\label{eq:proofMP0}
W_\m(x;q,t;z)=
\frac{c_\m(q,t)}{\Omega'_{\m}(q,t)\MN_{\m}(q,t)}
\MZ_\m(x;q,t;z).
\end{align}
With (\ref{eq:bcc}), (\ref{eq:normprime}) and (\ref{eq:NZxz}) we check that the factor in front of $\MZ_\m$ is equal to $1$:
\begin{align}\label{eq:prefactor}
\frac{c_\m(q,t)}{\Omega'_{\m}(q,t)\MN_{\m}(q,t)}
=
\frac{\prod_{1 \leq i<j \leq N}
\left(
1-q^{j-i} t^{\m'_i-\m'_j} 
\right)
\cdot
\prod_{i=1}^{N}
\prod_{j=1}^{\m_i}
(1-q^{\m_i-j} t^{\m_j'-i+1})
}{\prod_{i=1}^N\prod_{j=0}^{\m'_i-\m'_{i+1}}(1-t^{1+j})
\cdot
\prod_{1\leq i<j\leq N}
\prod_{\m'_{j+1}\leq \ell\leq \m'_j}(1-q^{j-i} t^{\m'_i-\ell})}
=1.
\end{align}
\end{proof}
%


\subsection{Calculation of the matrix $\ML(x,\MJ)$}\label{ssec:rec}
In this section we show that the matrix elements $\ML^{\l,\m}_{\l',\m'}(x,\MJ)$ satisfy the recurrence relation in (\ref{eq:Lrec}). 
In what follows we will work mainly with components of the $R$-matrix and the $L$-matrix. Let us prepare a few definitions and identities. Define the $R$-matrix $R(z)=R(z,1)$:
\begin{align}
\label{eq:Rcomponents}
R(z)=\sum_{0\leq i_a,i_b,j_a,j_b\leq n}
\Big[ R_{ab}(z) \Big]^{i_a j_a}_{i_b j_b}
E^{(i_a j_a)}\otimes E^{(i_b j_b)}.
\end{align}
The components of the $R$-matrix can be written in the form:
\begin{align}
\label{eq:fundRcompact}
\Big[ R_{ab}(z) \Big]^{i_a j_a}_{i_b j_b}
=
\frac{t^{\th(j_a<i_b)}z^{\th(j_a<i_a)} (1-t^{\th(j_a=i_b)}z^{\th(j_a=i_a)})}{1-t z},
\end{align}
where the indices obey the ``conservation property''  $i_a+i_b=j_a+j_b$, otherwise the matrix elements are equal to $0$.
The $R$-matrix $\check{R}(z)$ is obtained from $R(z)$ by acting with the permutation operator from the left, thus the  elements of $\check{R}(z)$ are given by (cf. (\ref{eq:RcheckE}))
\begin{align}
\label{eq:Rcheck}
\Big[\check{R}_{ab}(z) \Big]^{i_a j_a}_{i_b j_b}=
\Big[ R_{ab}(z) \Big]^{i_b j_a}_{i_a j_b}.
\end{align}
The RLL equation (\ref{eq:YBAdef}) in components reads
\begin{align}
\label{eq:RcheckLL}
\sum_{j_1,j_2 = 0}^{n}
\Big[ \check{R}_{12}(y/x) \Big]^{i_1 j_1}_{i_2 j_2}
L^{j_1,\ell_1}_{I,I'}(x)
L^{j_2,\ell_2}_{I',I''}(y)
=
\sum_{j_1,j_2 = 0}^{n}
L^{i_1,j_1}_{I,I'}(y)
L^{i_2,j_2}_{I',I''}(x )
\Big[ \check{R}_{12}(y/x) \Big]^{j_1 \ell_1}_{j_2 \ell_2}.
\end{align}
One can directly check that the matrix elements of $\check{R}$ satisfy the following identities:
\begin{align}
\label{eq:t-exch1}
& \sum_{s_1,s_2=0}^n t^{-\MI(s_1,s_2)}\Big[ \check{R}(t) \Big]^{s_1 i_1}_{s_2 i_2} = t^{-\MI(i_2,i_1)},\\
\label{eq:t-exch2}
&\Big[ \check{R}_{bc}(t) \Big]^{i_2 k_2}_{i_1 k_1}  = t^{\MI(k_2,k_1)-\MI(k_1,k_2)}  \Big[ \check{R}_{bc}(t) \Big]^{i_2 k_1}_{i_1 k_2}.
\end{align}

Let $j=(j_1,\dots,j_N)$, $\ell=(\ell_1,\dots,\ell_N)$, $I=(I_1,\dots,I_n)$, $J=(J_1,\dots,J_n)$, $K=(K_1,\dots,K_n)$ and $L=(L_1,\dots,L_n)$ be compositions of non-negative integers. Let in addition $I^j=(I^j_1,\dots,I^j_n)$ be a sequence of non-negative integer compositions, $j=0,\dots,N$, with $I^0=I$ and $I^N=K$. The $N$-fused vertex (\ref{eq:fusedL}) can be written in components as
\begin{align}\label{eq:fusedv}
\ML^{J,L}_{I,K}(x,N)=
\frac{1}{C_N(J)}
\times
\sum_{\substack{j: {\sf m}(j) = J \\ \ell: {\sf m}(\ell) = L}}
t^{-\MI(j_N,\dots, j_1)}
L^{j_1,\ell_1}_{I,I^1}(x)
L^{j_2,\ell_2}_{I^1,I^2}(t x)
\dots
L^{j_N,\ell_N}_{I^{N-1},K}(t^{N-1} x).
\end{align}
Let us introduce a hybrid $\ML$-matrix $\MTL$:
\begin{align}\label{eq:hybridL}
\MTL^{J,\ell}_{I,K}(x,N)=
\frac{1}{C_N(J)}
\times
\sum_{j: {\sf m}(j) = J}
t^{-\MI(j_N,\dots, j_1)}
L^{j_1,\ell_1}_{I,I^1}(x)
L^{j_2,\ell_2}_{I^1,I^2}(t x)
\dots
L^{j_N,\ell_N}_{I^{N-1},I^{K}}(t^{N-1}x ).
\end{align}
The matrices $\ML$ and $\MTL$ are related by
\begin{align}\label{eq:LLtilde}
\ML^{J,L}_{I,K}(x,N)=
\sum_{ \ell: {\sf m}(\ell) = L}
\MTL^{J,\ell}_{I,K}(x,N).
\end{align}
Let us show that this summation can be computed. 
Take the Yang--Baxter equation (\ref{eq:RcheckLL}) and specialize the spectral parameters $y=t x$
\begin{align*}
\sum_{j_1,j_2 = 0}^{n}
\Big[ \check{R}_{12}(t) \Big]^{i_1 j_1}_{i_2 j_2}
L^{j_1,\ell_1}_{I,I'}(x)
L^{j_2,\ell_2}_{I',I''}(t x)
=
\sum_{j_1,j_2 = 0}^{n}
L^{i_1,j_1}_{I,I'}(t x)
L^{i_2,j_2}_{I',I''}(x )
\Big[ \check{R}_{12}(t) \Big]^{j_1 \ell_1}_{j_2 \ell_2}.
\end{align*}
Multiplying both sides by $t^{-\MI(i_1,i_2)}$ and summing over $\{i_1,i_2\}$ we find
\begin{align*}
&\sum_{j_1,j_2 = 0}^{n}\sum_{i_1,i_2 = 0}^{n}t^{-\MI(i_1,i_2)}\Big[ \check{R}_{12}(t) \Big]^{i_1 j_1}_{i_2 j_2}
L^{j_1,\ell_1}_{I,I'}(x)
L^{j_2,\ell_2}_{I',I''}(t x)\\
=&
\sum_{i_1,i_2 = 0}^{n}\sum_{j_1,j_2 = 0}^{n}t^{-\MI(i_1,i_2)}
L^{i_1,j_1}_{I,I'}(t x)
L^{i_2,j_2}_{I',I''}(x )
\Big[ \check{R}_{12}(t) \Big]^{j_1 \ell_1}_{j_2 \ell_2}.
\end{align*}
The sum over $\{i_1,i_2\}$ on the left hand side can be computed using (\ref{eq:t-exch1}). As a result it produces a new factor of $t$
\begin{align*}
&\sum_{j_1,j_2 = 0}^{n}t^{-\MI(j_2,j_1)}
L^{j_1,\ell_1}_{I,I'}(x)
L^{j_2,\ell_2}_{I',I''}(t x)\\
=&
\sum_{i_1,i_2 = 0}^{n}\sum_{j_1,j_2 = 0}^{n}t^{-\MI(i_1,i_2)}
L^{i_1,j_1}_{I,I'}(t x)
L^{i_2,j_2}_{I',I''}(x )
\Big[ \check{R}_{12}(t) \Big]^{j_1 \ell_1}_{j_2 \ell_2}.
\end{align*}
On the left hand side we recognise the matrix $\MTL^{J,\ell}_{I,K}(x,N)$ with $N=2$ according to (\ref{eq:hybridL}). This leads to a new formula for $\MTL^{J,\ell}_{I,K}(x,2)$
\begin{align*}
\MTL^{J,(\ell_1,\ell_2)}_{I,I''}(x,2)
=
\sum_{i_1,i_2 = 0}^{n}\sum_{j_1,j_2 = 0}^{n}t^{-\MI(i_1,i_2)}
L^{i_1,j_1}_{I,I'}(t x)
L^{i_2,j_2}_{I',I''}(x )
\Big[ \check{R}_{12}(t) \Big]^{j_1 \ell_1}_{j_2 \ell_2}.
\end{align*}
The dependence on $\{\ell_1,\ell_2\}$ simplifies and from (\ref{eq:t-exch2}) we deduce that  
\begin{align}\label{eq:Lexch}
\MTL^{J,(\ell_1,\ell_2)}_{I,I''}(x,2)=
t^{\MI(\ell_1,\ell_2)-\MI(\ell_2,\ell_1)}
\MTL^{J,(\ell_2,\ell_1)}_{I,I''}(x,2).
\end{align}
This allows us to order the indices $\{\ell_1,\dots,\ell_N\}$ in $\MTL^{J,\ell}_{I,I''}(x,N)$ at the cost of a simple $t$-factor.
\begin{prop}\label{prop:texch}
Let $s$ and $s'$ be any two permutations on $N$ letters, then the $N$-vertex satisfies
\begin{align}
\label{eq:prop3}
t^{\MI(s(\ell_N,\dots,\ell_1))}
\MTL^{J,s(\ell)}_{I,K}(x,N)=
t^{\MI(s'(\ell_N,\dots,\ell_1))}
\MTL^{J,s'(\ell)}_{I,K}(x,N).
\end{align}
\end{prop}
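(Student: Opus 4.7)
The plan is to reduce the claim to the $N=2$ identity (\ref{eq:Lexch}) applied locally inside the product defining $\MTL^{J,\ell}_{I,K}(x,N)$. Because $\mathfrak{S}_N$ is generated by the adjacent transpositions $\sigma_k$, and because the asserted identity is manifestly transitive in the pair $(s,s')$, it suffices to prove the single-swap case
\[
t^{\MI(\sigma_k(\ell_N,\ldots,\ell_1))}\,\MTL^{J,\sigma_k(\ell)}_{I,K}(x,N)
=
t^{\MI(\ell_N,\ldots,\ell_1)}\,\MTL^{J,\ell}_{I,K}(x,N)
\]
for each $1\le k\le N-1$; the general statement then follows by chaining such identities along any reduced word representing $s^{-1}s'$.

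For the single-swap step, I would isolate the two $L$-matrix factors at positions $k$ and $k+1$ in (\ref{eq:hybridL}). These carry spectral parameters $t^{k-1}x$ and $t^{k}x$ with ratio exactly $t$, so the RLL equation (\ref{eq:RcheckLL}) specialized at $y/x=t$ applies verbatim at these two sites, the other $N-2$ factors being inert since they act on disjoint auxiliary spaces. Multiplying both sides by $t^{-\MI(i_k,i_{k+1})}$ and summing over $\{i_k,i_{k+1}\}$ produces, on the left, a pure $t$-power via identity (\ref{eq:t-exch1}) that recombines cleanly with the weight $t^{-\MI(j_N,\ldots,j_1)}$ present in the definition of $\MTL$; on the right, identity (\ref{eq:t-exch2}) strips $\check{R}_{12}(t)$ off the $\ell$-indices, producing the multiplicative factor $t^{\MI(\ell_k,\ell_{k+1})-\MI(\ell_{k+1},\ell_k)}$. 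This is exactly the $N=2$ computation that led to (\ref{eq:Lexch}), now performed locally inside the $N$-vertex.

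To close the argument, I would match this local factor against the change of the global exponent. Swapping $\ell_k\leftrightarrow\ell_{k+1}$ in the forward sequence corresponds to swapping two adjacent entries in the reversed sequence $(\ell_N,\ldots,\ell_1)$, which alters $\MI$ by exactly $\MI(\ell_k,\ell_{k+1})-\MI(\ell_{k+1},\ell_k)\in\{-1,0,1\}$ and so precisely compensates the factor produced by the RLL step, yielding the single-swap identity.

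The main technical point is the compatibility between the weight $t^{-\MI(j_N,\ldots,j_1)}$ inside the $\MTL$-definition and the RLL-induced relabelling of $j_k,j_{k+1}$. This reduces to the elementary observation that $\MI(j_N,\ldots,j_1)$ decomposes as the indicator $[j_{k+1}<j_k]$ plus terms that are invariant under the swap $j_k\leftrightarrow j_{k+1}$, because every pair $(j_l,j_k)$ with $l\notin\{k,k+1\}$ is paired with $(j_l,j_{k+1})$ whose combined contribution is symmetric in $j_k,j_{k+1}$. Once this decomposition is identified, the local RLL argument of the excerpt lifts without change to the $N$-fold product, and the proposition follows.
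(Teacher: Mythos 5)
Your proof is correct and follows essentially the same route as the paper: the paper's own argument reduces the statement to the case of transpositions and $N=2$, which is precisely the identity (\ref{eq:Lexch}) obtained from the RLL relation at $y=tx$ together with (\ref{eq:t-exch1}) and (\ref{eq:t-exch2}). Your write-up merely fills in the details the paper leaves implicit — the generation of $\mathfrak{S}_N$ by adjacent transpositions and the decomposition of $\MI(j_N,\dots,j_1)$ into a local indicator plus swap-invariant terms — and these details are all accurate.
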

\begin{proof}
The vanishing properties of the $R$-matrix allow us to reduce this statement to the one when $s$ and $s'$ are transpositions and $N=2$. The latter is precisely given by  (\ref{eq:Lexch}).
\end{proof}
Using this result the formula for the matrix $\ML$ can be simplified. Consider (\ref{eq:LLtilde}) and let $\ell^-$ denote the permutation of $\ell$ in which the order of the components is non-decreasing, then we compute
\begin{align}\label{eq:LLtilde2}
\ML^{J,L}_{I,K}(x,N)=
\sum_{ \ell: {\sf m}(\ell) = L}
\MTL^{J,\ell}_{I,K}(x,N)=
\MTL^{J,\ell^-}_{I,K}(x,N)
\sum_{ \ell: {\sf m}(\ell) = L}
 t^{-\MI(\ell_N,\dots,\ell_1)} =
 C_N(L)
 \MTL^{J,\ell^-}_{I,K}(x,N).
\end{align}
Another useful ordering of $\ell$ is $\ellt^-=(n,\dots,n,\ellt_1^-,\dots,\ellt^-_{N-L_n})$, where $\ellt_1^-,\dots,\ellt^-_{N-L_n}$ are non-decreasing, thus we can write
\begin{align}\label{eq:LLtilde3}
\ML^{J,L}_{I,K}(x,N)=
t^{L_n(N-L_{n})}
C_N(L)
 \MTL^{J,\ellt^-}_{I,K}(x,N).
\end{align}
When the indices $\ell$ in $\MTL$ are ordered according to $\ell^-$ or $\ellt^{-}$ we can write $\ML$ in terms of blocks of matrices $L$ whose indices $\ell_i$ are equal to a fixed value. The product of $L$ matrices of the form
\begin{align*}
\prod_{s=s'}^{s''}
L^{j_s,i}_{I^{s-1},I^s}(t^{s-1} x),
\end{align*}
where $s'$, $s''$ and $i$ are some integers, will be called the {\it $i$-block}. Let $\It, \Jt, \Kt$ and $\Lt$ be compositions of length $n$ whose last parts vanish $\It_n=\Jt_n=\Kt_n=\Lt_n=0$ and otherwise they coincide with $I, J, K$ and $L$, respectively. The matrix elements $\ML^{\Jt,\Lt}_{\It,\Kt}(x,N)$ coincide with the matrix elements $\ML^{(J_1,\dots,J_{n-1}),(L_1,\dots,L_{n-1})}_{(I_1,\dots,I_{n-1}),(K_1,\dots,K_{n-1})}(x,N)$.
\begin{prop}
The matrix elements of $\ML$ satisfy the recurrence relation:
\begin{align}\label{eq:LNrec}
\ML^{J,L}_{I,K}(x,N)=
c^{J,L}_{I,K}(x,N)
 \ML^{\Jt,\Lt}_{\It,\Kt}(t^{J_n} x,N-J_n),
\end{align}
where the coefficients $c^{J,L}_{I,K}(x,N)$ are given by
\begin{align}\label{eq:cb}
c^{J,L}_{I,K}(x,N)=b^{J_n,L_n}_{I_n,K_n}(x,-t^N x;|L|).
\end{align}
\end{prop}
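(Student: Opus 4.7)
My plan is to verify the recurrence \eqref{eq:LNrec} by explicit computation, starting from the reformulation \eqref{eq:LLtilde3} which writes $\ML^{J,L}_{I,K}(x,N)$ in terms of the hybrid matrix $\MTL^{J,\ellt^-}_{I,K}(x,N)$, with row labels ordered so that all $L_n$ copies of $n$ appear first. The product of $L$-matrices defining $\MTL$ then splits naturally into an ``$n$-block'' of length $L_n$ (the contiguous factors with column index $n$) followed by a ``residual block'' of length $N-L_n$ (whose column indices lie in $\{0,\ldots,n-1\}$). The goal is to evaluate the $n$-block in closed form, producing the coefficient $c^{J,L}_{I,K}(x,N)$, and to identify the residual block with the rank-reduced matrix $\ML^{\Jt,\Lt}_{\It,\Kt}(t^{J_n}x, N-J_n)$.

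First I would evaluate the $n$-block using the explicit formulas \eqref{eq:LmatCdGW0}--\eqref{eq:LmatCdGW3}. With column index $n$ these take a particularly simple form: $L^{n,n}_{I,K}(x) = x\delta_{I,K}$ propagates color-$n$ straight through, whereas $L^{j,n}_{I,K}(x) = x(1-t^{I_n})$ for $0\le j<n$ converts an incoming color-$n$ particle into color $j$ while adjusting the $n$-th vertical component. Consequently only the $n$-th component of the vertical state evolves within the $n$-block, and the intermediate states and the row indices $(j_1,\dots,j_{L_n})$ can be summed in closed form. Combined with the relevant part of the weight $t^{-\MI(j_N,\ldots,j_1)}$ from \eqref{eq:fusedv}, the spectral progression $x,tx,\dots,t^{L_n-1}x$, and the prefactor $t^{L_n(N-L_n)}C_N(L)/C_N(J)$ from \eqref{eq:LLtilde3}, this produces a sum over the ``straight-through'' count $k$ of Gaussian binomials which matches the definition of $b^{J_n,L_n}_{I_n,K_n}(x,-t^N x;|L|)$ in \eqref{eq:biz}.

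Second, I would identify the residual block with the rank-reduced fused matrix. The residual has all column indices (and hence, by the triangular vanishing of $L$, also row indices) strictly less than $n$, apart from the special $\ell=0$ contribution which is absorbed into the block coefficient. It is therefore effectively a rank-$(n-1)$ computation on the truncated compositions $\It,\Jt,\Kt,\Lt$. The spectral shift of $t^{J_n}$ (rather than the naive $t^{L_n}$) and the reduced fusion parameter $N-J_n$ (rather than $N-L_n$) both arise because only $J_n$ of the $L_n$ $n$-block factors pass a color-$n$ row index straight through; the remaining $L_n-J_n$ factors absorb horizontal color-$n$ particles into the vertical and their contributions are subsumed into the block coefficient rather than advancing the spectral progression of the residual.

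The principal obstacle will be the intricate bookkeeping of $t$-exponents. One must carefully track how the weight $t^{-\MI}$ in \eqref{eq:fusedv} distributes between the $n$-block and the residual, how the normalisations $C_N(L)/C_N(J)$ combine with the residual normalisations $C_{N-J_n}(\Lt)/C_{N-J_n}(\Jt)$ that are needed to recognise the residual as $\ML^{\Jt,\Lt}_{\It,\Kt}$, and how the successive factors $(1-t^{I_n^s})$ produced by the $L^{j,n}$ matrix elements combine with the spectral powers $t^{s-1}$ to reproduce precisely the quadratic exponent $\tfrac{1}{2}(k^2-k) + (|L|-L_n)(K_n+k-J_n)$ in \eqref{eq:biz}. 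Closing this match will likely require the $t$-exchange identities \eqref{eq:t-exch1}--\eqref{eq:t-exch2} and standard Gaussian binomial summation formulas to resum the products of $(1-t^{I_n^s})$ into a single $q$-binomial.
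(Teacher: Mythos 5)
Your overall architecture --- start from \eqref{eq:LLtilde3} with the $\ellt^-$ ordering, evaluate the $n$-block in closed form to produce the coefficient, and recognise the remainder as $\ML^{\Jt,\Lt}_{\It,\Kt}(t^{J_n}x,N-J_n)$ --- is exactly the route the paper takes. However, your second paragraph contains a concrete error that would derail the computation. You assert that exactly $J_n$ of the $L_n$ factors in the $n$-block carry row index $n$ and that the remaining $L_n-J_n$ do not. This is false: since $L^{n,0}_{I,K}=1\neq 0$, row indices equal to $n$ can sit in the $0$-block as well as in the $n$-block, and must do so whenever $J_n>L_n$ (which happens whenever $K_n>I_n$, by the conservation $I_n+J_n=K_n+L_n$). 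The number $k$ of row-$n$ indices landing in the $n$-block is a genuine summation variable, and the sum over the $\binom{L_0}{J_n-k}_t$ placements of the remaining $J_n-k$ of them in the $0$-block --- which contribute only through the inversion weight $t^{-\MI}$ --- is an essential ingredient: it is precisely this extra binomial, combined with the $n$-block evaluation \eqref{eq:block} and the $q$-Vandermonde resummation \eqref{eq:binom2}, that collapses the double sum into the single sum over $k$ defining $b^{J_n,L_n}_{I_n,K_n}(x,-t^Nx;|L|)$ in \eqref{eq:biz}. (Your first paragraph does anticipate ``a sum over the straight-through count $k$'', which contradicts the fixed count asserted later; only the former is correct.)

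Two further points need repair. First, ``only the $n$-th component of the vertical state evolves within the $n$-block'' is wrong: a factor $L^{j,n}$ with $j<n$ enforces $K=I+e_j-e_n$, so the $j$-th component grows as well; the paper re-attributes these $+e_j$ state changes (together with $L_0+L_n-J_n$ trivial weights $L^{j,0}=1$) to the residual, which is how the residual ends up with exactly $N-J_n$ factors rather than your $N-L_n$. Second, the residual factors $L^{j',i}$ with $i\geq 1$ each carry a weight containing $t^{I_{i+1,n}}$, hence an overall $t^{K_n}$ depending on the $n$-th vertical component, so passing from $\ML^{\Jt,\Lt}_{I,K}$ to the truncated $\ML^{\Jt,\Lt}_{\It,\Kt}$ costs a factor $t^{K_n\sum_{m=1}^{n-1}L_m}$ (equation \eqref{eq:IKtilde}) that must be absorbed into the coefficient; your plan does not account for it. (Minor: $L^{j,n}$ with $j<n$ converts an incoming colour-$j$ path into an outgoing colour-$n$ path, not the reverse.)
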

\begin{proof}
Consider the ordering $\ell^-$ in $\MTL$, then by (\ref{eq:LLtilde2}) and (\ref{eq:hybridL}) we have
\begin{align}\label{eq:Lord1}
\ML^{J,L}_{I,K}(x,N)=
&\frac{C_N(L)}{C_N(J)}
\sum_{j :{\sf m}(j) = J}
t^{-\MI(j_N,\dots,j_1)}
\times			\nonumber 	\\
&
\prod_{s=1}^{L_0}
L^{j_s,0}_{I^{s-1},I^s}(t^{s-1} x)
\times
\prod_{i=1}^{n}
\prod_{s=\sum_{m=0}^{i-1}L_m+1}^{\sum_{m=0}^i L_{m}}
L^{j_s,i}_{I^{s-1},I^s}(t^{s-1} x). 
\end{align}
On the other hand, if we consider the ordering $\ellt^-$ in $\MTL$, then by (\ref{eq:LLtilde3}) and (\ref{eq:hybridL}) we have
\begin{align}\label{eq:Lord2}
\ML^{J,L}_{I,K}(x,N)=
&\frac{t^{L_n(N-L_{n})}
C_N(L)}{C_N(J)}
\sum_{j :{\sf m}(j) = J}
t^{-\MI(j_N,\dots,j_1)}
\times			\nonumber		\\
&
\prod_{s=1}^{L_n}
L^{j_s,n}_{I^{s-1},I^s}(t^{s-1} x)
\prod_{s=L_n+1}^{L_0+L_n}
L^{j_s,0}_{I^{s-1},I^s}(t^{s-1} x)
\times
\prod_{i=1}^{n-1}
\prod_{s=\sum_{m=0}^{i-1} L_{m} +L_n+1}^{\sum_{m=0}^i L_{m} +L_n}
L^{j_s,i}_{I^{s-1},I^s}(t^{s-1} x). 
\end{align}
The idea of the proof is to express the right hand side of (\ref{eq:Lord2}) as a sum  of the matrix elements $\ML^{\Jt,\Lt}_{\It,\Kt}(t^{J_n} x,N-J_n)$ written in the form (\ref{eq:Lord1}). 
In the summand of (\ref{eq:Lord2}) we distinguish three components: the $0$-block, the $n$-block and $n-1$ different $i$-blocks with $0<i<n$. There are in total $J_n$ cases when the indices $j_s$ are such that $j_s=n$ and all of these cases must be present in the $0$-block and in the $n$-block due to the vanishing property in (\ref{eq:LmatCdGW0}).
We split the summation over $j$ into two summations as follows. Let $j'=(j'_{J_n+1},\dots,j'_{N})$ be the set obtained from $j=(j_1,\dots,j_N)$ by removing all instances $j_i=n$, $1\leq i\leq N$, and preserving the order of the remaining labels $j_i$. Let $a=(a_1,\dots,a_{L_0+L_n})$ be another set obtained from $j$  by setting $a_i=\theta(j_i=n)$. The set $a$ records the positions of $n$ in the $0$-block and in the $n$-block. For the two new sets we have ${\sf m}(j')=\Jt$ and ${\sf m}(a)=J-\Jt$. Clearly, the correspondence between $j$ and $j'\cup a$ is bijective. Under this division of the set $j$ the inversion numbers can be expressed as follows
\begin{align*}
\MI(j_N,\dots,j_1)=\MI(j'_{N},\dots,j'_{J_n+1})+\MI(a_{L_0+L_n},\dots,a_1)+J_n \sum_{m=1}^{n-1}L_m.
\end{align*}
Finally, we note that $L^{j_s,0}_{I^{s-1},I^s}(x)=1$ by (\ref{eq:LmatCdGW3}), therefore we can write (\ref{eq:Lord2}) as 
\begin{align}\label{eq:Lord3}
&\ML^{J,L}_{I,K}(x,N)=
\frac{t^{L_n(N-L_{n})-J_n \sum_{m=1}^{n-1}L_m}
C_N(L)}{C_N(J)}
\sum_{a :{\sf m}(a) = J-\Jt}
t^{-\MI(a_{L_0+L_n},\dots,a_1)}
\prod_{s=1}^{L_n}
L^{a_s,n}_{I^{s-1},I^s}(t^{s-1} x)
			\nonumber		\\
&
\sum_{j' :{\sf m}(j') = \Jt}
t^{-\MI(j'_{N},\dots,j'_{J_n+1})}
\prod_{s=J_n+1}^{L_0+L_n}
L^{j'_s,0}_{I^{s-1},I^s}(t^{s-1} x)
\times 
\prod_{i=1}^{n-1}
\prod_{s=\sum_{m=0}^{i-1} L_{m} +L_n+1}^{\sum_{m=0}^i L_{m}+L_n}
L^{j'_s,i}_{I^{s-1},I^s}(t^{s-1} x),
\end{align}
where we introduced a product of $L^{j'_s,0}_{I^{s-1},I^s}$ in the second line and removed the product  of $L^{j_s,0}_{I^{s-1},I^s}$ which appeared in (\ref{eq:Lord2}). In the second line in (\ref{eq:Lord3}) we can recognize $\ML^{\Jt,\Lt}_{I,K}(t^{J_n} x,N-J_n)$ written in the form (\ref{eq:Lord1}). In fact we can replace $\ML_{I,K}^{\Jt,\Lt}$ with $\ML_{\It,\Kt}^{\Jt,\Lt}$ times a factor of $t$. By (\ref{eq:LmatCdGW0}) and (\ref{eq:LmatCdGW2}) we find that $t^{I^{s-1}_n}=t^{K_n}$ is an overall factor of every term $L^{j'_s,i}_{I^{s-1},I^s}(t^{s-1} x)$ for $i>0$ in the second line in (\ref{eq:Lord3}), thus 
\begin{align}\label{eq:IKtilde}
\ML_{I,K}^{\Jt,\Lt}(t^{J_n} x,N-J_n)=
t^{K_n \sum_{m=1}^{n-1}L_m}
\ML^{\Jt,\Lt}_{\It,\Kt}(t^{J_n} x,N-J_n).
\end{align}
After inserting $\ML_{I,K}^{\Jt,\Lt}(t^{J_n} x,N-J_n)$ in (\ref{eq:Lord3}) we get the recurrence relation
\begin{align}\label{eq:Lrec0}
&\ML^{J,L}_{I,K}(x,N)=
c^{J,L}_{I,K}(x,N)
\ML^{\Jt,\Lt}_{\It,\Kt}(t^{J_n} x,N-J_n),
\end{align}
where the coefficients $c^{J,L}_{I,K}(x,N)$ are given by
\begin{align}\label{eq:Lreccoef}
c^{J,L}_{I,K}(x,N)
=&
t^{L_n(N-L_{n})+(K_n-J_n)\sum_{m=1}^{n-1}L_m}
\frac{C_{N-J_n}(\Jt)C_N(L)}{C_N(J)C_{N-J_n}(\Lt)}
			\nonumber \\
\times &
\sum_{a :{\sf m}(a) = J-\Jt}
t^{-\MI(a_{L_0+L_n},\dots,a_1)}
\prod_{s=1}^{L_n}
L^{a_s,n}_{I^{s-1},I^s}(t^{s-1} x).
\end{align}
Now we need to demonstrate the validity of (\ref{eq:cb}). We first focus on the summation over $a$. The dependence on $a_i$ for $i>L_n$ enters (\ref{eq:Lreccoef}) only through the inversion number $\MI(a_{L_0+L_n},\dots,a_1)$. Divide the set $a$ into two parts $(a_1,\dots,a_{L_n})$ and $(a_{L_{n}+1},\dots,a_{L_0+L_n})$. Let the first part contain $k$ instances of $a_i=n$, $i=1,\dots,L_n$, for some integer $0\leq k\leq L_n$, and  the second part contain $k'=J_n-k$ instances of $a_i=n$, $i=L_n+1,\dots,L_0+L_n$. We can now sum over $k$ and over two sets $(a_1,\dots,a_{L_n})$ and $(a_{L_{n}+1},\dots,a_{L_0+L_n})$. The inversion numbers split as follows:
\begin{align*}
\MI(a_{L_0+L_n},\dots,a_1)=\MI(a_{L_n},\dots,a_1)+\MI(a_{L_{n}+L_0},\dots,a_{L_n+1})+k(L_0-J_n+k).
\end{align*}
The sum over $(a_{L_{n}+1},\dots,a_{L_0+L_n})$ can be readily computed with (\ref{eq:invnorm}), and we find
\begin{align}\label{eq:Lreccoef1}
c^{J,L}_{I,K}(x,N)
= &
t^{L_n(N-L_{n})+(K_n-J_n)\sum_{m=1}^{n-1}L_m}
\frac{C_{N-J_n}(\Jt)C_N(L)}{C_N(J)C_{N-J_n}(\Lt)}
			\nonumber \\
\times &
\sum_{k=0}^{L_n}
t^{-J_n(L_0-J_n+k)}
\binom{L_0}{J_n-k}_t
\sum_{a' :{\sf m}_n(a')=k}
t^{-\MI(a'_{L_n},\dots,a'_1)}
\prod_{s=1}^{L_n}
L^{a'_s,n}_{I^{s-1},I^s}(t^{s-1} x),
\end{align}
where $a'=(a'_1,\dots,a'_{L_n})$. 
By (\ref{eq:Lord1}) the summation over $a'$ in (\ref{eq:Lreccoef1}) is proportional to $\ML^{J,(0,\dots,0,L_n)}_{I,K}(x)$, which can be computed using basic summation identities giving the following result:
\begin{align}\label{eq:block}
\ML^{J,(0,\dots,0,L_n)}_{I,K}(x,L_n)
&=
 t^{L_n(L_n-1)/2}\frac{(t;t)_{I_n}}{(t;t)_{K_n}}x^{L_n}=
 t^{L_n(L_n-1)/2}
(t;t)_{L_n-J_n}
\binom{I_n}{L_n-J_n}
x^{L_n}.
\end{align}
Using (\ref{eq:block}) we find that the summation over $a'$ in (\ref{eq:Lreccoef1}) can be simplified
\begin{align*}
\sum_{a' :{\sf m}_n(a')=k}
t^{-\MI(a'_{L_n},\dots,a'_1)}
\prod_{s=1}^{L_n}
L^{a'_s,n}_{I^{s-1},I^s}(t^{s-1} x)=
t^{L_n(L_n-1)/2-k(L_n-k)}
(t;t)_{L_n-k}
\binom{L_n}{k}_t
\binom{I_n}{L_n-k}_t
x^{L_n}
.
\end{align*}
Substituting this into (\ref{eq:Lreccoef1}), taking into account (\ref{eq:invnorm}) and simplifying we get
\begin{align}\label{eq:Lreccoef2}
c^{J,L}_{I,K}(x,N)
=&
t^{L_n(L_{n}-1)/2+I_{n}\sum_{i=1}^{n-1}{L_i}+L_n J_n}
\frac{(t;t)_{J_n}(t;t)_{L_0-(J_n-L_n)}}{(t;t)_{L_0}(t;t)_{L_n}}
x^{L_n}
			\nonumber \\
\times &
\sum_{k=J_n-L_0}^{J_n}
t^{-k(J_n+L_n-k)}
(t;t)_{L_n-k}
\binom{L_0}{J_n-k}_t
\binom{L_n}{k}_t
\binom{I_n}{L_n-k}_t.
\end{align}
We can rewrite the first two binomials in the sum in terms of the Pochhammer symbols, then we can cancel a number of factors and write the results as
\begin{align}\label{eq:Lreccoef3}
c^{J,L}_{I,K}(x,N)
=&
t^{L_n(L_{n}-1)/2+I_{n}\sum_{i=1}^{n-1}{L_i}+L_n J_n}
x^{L_n}
			\nonumber \\
\times &
\sum_{k=J_n-L_0}^{J_n}
t^{-k(J_n+L_n-k)}
\frac{(t;t)_{L_0-(J_n-L_n)}}{(t;t)_{L_0-(J_n-k)}}
\binom{J_n}{k}_t
\binom{I_n}{L_n-k}_t.
\end{align}
The ratio of the two Pochhammer symbols can be written as a single Pochhammer symbol which can be then expanded into a sum of binomial coefficients
\begin{align*}
\frac{(t;t)_{L_0-(J_n-L_n)}}{(t;t)_{L_0-(J_n-k)}}=
\prod_{l=0}^{L_n-k-1}(1-t^{L_0-(J_n-k)+1+l})=
\sum_{l=0}^{L_n-k}(-1)^l t^{l(l+1)/2+l(L_0-(J_n-k))}
\binom{L_n-k}{l}_t. 
\end{align*}
We insert this into (\ref{eq:Lreccoef3}) and recombine the binomial coefficients. The sum over $k$ can be computed using (\ref{eq:binom1}) and a slightly modified version of (\ref{eq:binom2}) 
\begin{align*}
&
\sum_{k=J_n-L_0}^{J_n}
t^{-k(J_n+L_n-k)}
\sum_{l=0}^{L_n-k}(-1)^l t^{l(l+1)/2+l(L_0-(J_n-k))}
\binom{J_n}{k}_t
\binom{L_n-k}{l}_t
\binom{I_n}{L_n-k}_t\\
=&
\sum_{l=0}^{L_n}(-1)^l t^{l(l+1)/2+l(L_0-J_n)}
\binom{I_n}{l}_t
\sum_{k=J_n-L_0}^{J_n}
t^{k(k+l-J_n-L_n)}
\binom{J_n}{k}_t
\binom{I_n-l}{L_n-l-k}_t
\\
=&
\sum_{l=0}^{L_n}
(-1)^l 
t^{l(l+1)/2+l L_0-J_n L_n}
\binom{I_n}{l}_t
\binom{I_n+J_n-l}{L_n-l}_t.
\end{align*}
Insert this back into (\ref{eq:Lreccoef3}) and change the summation order with $l\rightarrow L_n-l$
\begin{align}\label{eq:Lreccoef4}
c^{J,L}_{I,K}(x,N)
=&
t^{L_n(L_{n}-1)/2+I_{n}\sum_{i=1}^{n-1}{L_i}}
\sum_{l=0}^{L_n}
(-1)^l 
t^{l(l+1)/2+l L_0}
\binom{I_n}{l}_t
\binom{I_n+J_n-l}{L_n-l}_t
x^{L_n}
\nonumber \\
=&
\sum_{l=0}^{L_n}
(-1)^{L_n-l}
t^{(L_n-l)N}
t^{l(l-1)/2+(|L|-L_n)(K_n+l-J_n)}
\binom{K_n+l}{l}_t
\binom{I_n}{L_n-l}_t
x^{L_n},
\end{align}
where we also used the conservation property $I_n-L_n=K_n-J_n$.  Comparing this with (\ref{eq:biz}) we find
\begin{align*}
c^{J,L}_{I,K}(x,N)=b^{J_n,L_n}_{I_n,K_n}(x,-t^N x;|L|).
\end{align*}
\end{proof}


\section{Positivity of the basic hypergeometric series ${}_{N}\phi_{N-1}$}\label{sec:phi}
In this section we focus on the basic hypergeometric series ${}_{N} \phi _{N-1}$ which is given by the infinite series of products of ratios of $t$-deformed Pochhammer symbols
\begin{align}\label{eq:hgfintro}
\pFq[5]{N}{N-1}{t^{\m_1+1}\text{,}\dots\text{,}t^{\m_{N-1}+1}\text{,}t^{\m_{N}+1}}{t^{\l_1+1}\text{,}\dots\text{,}t^{\l_{N-1}+1}}{z}=
\sum_{s=0}^{\infty}
z^s
\prod_{j=1}^{N-1}\frac{(t^{\m_j};t)_s}{(t^{\l_j};t)_s}
\cdot
\frac{(t^{\m_{N}};t)_s}{(t;t)_s}.
\end{align}
We consider the situation when the parameters $\m_1,\dots,\m_{N}$ and $\l_1,\dots,\l_{N-1}$ are non-negative integers satisfying
\begin{align}
\label{eq:pos1}
\m_i &\geq \l_i, \qquad \text{for}~i=1,\dots,N-1,	\\
\label{eq:pos2}
\m_{i+1} &\geq \l_{i},	\qquad \text{for}~i=1,\dots,N-1,
\end{align}
 which we call the {\it positive regime}. It is clear that if one of these two properties is satisfied, and with $|z|,|t|<1$, then the infinite series in $s$ can be summed using the geometric series, which gives a factorized denominator and a numerator which is a polynomial in $z$ and $t$. However, when both conditions are imposed we find a positivity property which states that the numerator is a polynomial in $z$ and $t$ with non-negative integer coefficients. We prove this result by computing the infinite summation using properties of the $t$-binomial coefficients. Our computation provides a manifestly positive summation formula for the numerator of the function ${}_{N} \phi _{N-1}$ in the positive regime. 

In Section \ref{ssec:notation} we introduce basic notations and identities of Pochhammer symbols and binomial coefficients. In Section \ref{ssec:positivity} we state the main positivity theorem and then provide the details of the derivation of our formula in Sections \ref{ssec:proofpos1} and \ref{spec:proofFi}.

\subsection{Basic identities}\label{ssec:notation}
It is well known that the Gauss binomial coefficient (\ref{gauss}) is a polynomial in $t$ with coefficients in $\mathbb{N}$. 
For non-negative integers $a,b$ and $c$, we will often use the following simple identities involving Gauss binomial coefficients:
\begin{align}
\label{eq:binom0}
&\binom{a}{b}_t  =\binom{a}{a-b}_t  , \\
\label{eq:binom1}
&\binom{a}{b}_t \binom{b}{c}_t =\binom{a}{c}_t \binom{a-c}{b-c}_t, \\
\label{eq:binom2}
&\binom{a+b}{c}_t = \sum_{j=0}^{b}t^{(b-j)(c-j)}\binom{a}{c-j}_t\binom{b}{j}_t,
\end{align}
as well as the identity
\begin{align}
\label{eq:wt}
&(w t^{a};t)_{b}=\frac{(w;t)_{a+b}}{(w;t)_{a}},
\end{align}
for Pochhammer symbols. We will also use the binomial formula 
\begin{align}
\label{eq:infbin}
\sum_{s=0}^{\infty}w^{s} 
\binom{s+a}{s}_t
=\frac{1}{(w;t)_{a+1}}.
\end{align}
Due to (\ref{eq:binom0}), one has
\begin{align*}
 \sum_{j=0}^{a} c_{j} \binom{a}{j}_t = \sum_{j=0}^{a} c_{a-j} \binom{a}{j}_t.
\end{align*}
This will be referred to as {\it reversing the sum} over $j$ ($c_j$ being arbitrary coefficients).


\subsection{Positivity of the basic hypergeometric series}\label{ssec:positivity}
Let us recall again the basic hypergeometric series ${}_{N} \phi _{N-1}$
\begin{align}
\label{eq:HGF}
\pFq[5]{N}{N-1}{a_1\text{,}\dots\text{,}a_{N-1}\text{,}a_{N}}{b_1\text{,}\dots\text{,}b_{N-1}}{z}=
\sum_{s=0}^{\infty}
z^s
\prod_{j=1}^{N-1}\frac{(a_j;t)_s}{(b_j;t)_s}
\cdot
\frac{(a_{N};t)_s}{(t;t)_s}.
\end{align}
Clearly, this is a symmetric function in the parameters $a_j$ and $b_j$ separately. Let $\n=(\n_1,\dots,\n_N)$ and $\nt=(\nt_1,\dots,\nt_N)$ be s.t. $\n_{i+1}\geq \n_i$ and $\nt_{i+1}\geq \nt_i$, in which $\nt_N=\n_N$. By agreement we also define $\nt_i=\n_i=0$ for $i\leq 0$. Then we set $a_i=t^{\nt_i-\n_{i-1}+1}$ and $b_i=t^{\nt_i-\n_i+1}$ and use the short notation
\begin{align}
\label{eq:hgfspecial}
\phi_{\n|\nt}(z;t)=\pFq[5]{N}{N-1}{t^{\nt_1+1}\text{,}\dots\text{,}t^{\nt_{N-1}-\n_{N-2}+1}\text{,}t^{\nt_{N}-\n_{N-1}+1}}{t^{\nt_{1}-\n_{1}+1}\text{,}\dots\text{,}t^{\nt_{N-1}-\n_{N-1}+1}}{z}.
\end{align}
In this definition $\n_N$ does not appear, so we are free to set it as $\n_N:=\nt_N$ which will be important below. 
Notice one feature of this choice of parametrization. If we denote $\m_i=\nt_i-\n_{i-1}$ and $\l_i=\nt_i-\n_i$ then we find the form of the hypergeometric function as stated in (\ref{eq:hgfintro}) and the positivity  conditions (\ref{eq:pos1}) and (\ref{eq:pos2}) reflect the fact that $\n$ and $\nt$ are chosen to be partitions. 
Let us also recall the infinite binomial series
\begin{align}
\label{eq:FF}
&\Phi_{\n|\nt}(z;t):=
(z;t)_{\nt_N+1}
\sum_{s=0}^{\infty}
z^s
\prod_{j=0}^{N-1}
\binom{\nt_{j+1}-\n_{j}+ s}{\nt_{j}-\n_{j}+ s}_t.
\end{align}
Note that the first binomial factor corresponding to $j=0$ contains $\n_0=0$ and $\nt_0=0$, hence it will need to be treated separately sometimes.
Let us establish an important property of $\Phi_{\n|\nt}(z;t)$. Define the rotation operation $r$ which acts on integer vectors $\l=(\l_1,\dots,\l_N)$ as follows:
\begin{align}
\label{eq:rot}
r(\l)=
(\l_2-\l_1,\l_3-\l_1,\dots,\l_{N}-\l_1,\l_N).
\end{align}
In terms of components we have: $r(\l)_j=\l_{j+1}-\l_1$, $j=1,\dots,N-1$ and $r(\l)_N=\l_{N}$.
Let us denote by $r^k$ a $k$-fold rotation which is defined recursively by $r^{k}(\l)=r(r^{k-1}(\l))$. It is clear that due to cancellations at each step of the action of $r$ the action of $r^k$ will be
\begin{align}
\label{eq:rotk}
r^k(\l)=
(\l_{k+1}-\l_k,\l_{k+2}-\l_k,\dots,\l_{N}-\l_k,\l_{N}+\l_{1}-\l_k,\dots,\l_{N}+\l_{k-1}-\l_{k},\l_N),
\end{align}
where $k$ must be considered modulo $N$ and $\l_0=0$.
It is important that the last component $\l_N$ remains untouched by this action. 

\begin{prop}\label{prop:rot}
$\Phi_{\n|\nt}(z;t)$ satisfies the following property:
\begin{align}
\label{eq:Phirot}
\Phi_{\n|\nt}(z;t)=
z^{\n_k-\nt_k}\Phi_{r^k(\n)|r^k(\nt)}(z;t), 
\qquad
\text{for all}~k\geq 1.
\end{align}
\end{prop}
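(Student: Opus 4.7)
The plan is to first establish the case $k = 1$ directly, and then iterate to obtain arbitrary $k$. An initial observation is that the prefactor $(z;t)_{\nt_N+1}$ is invariant under rotation: by \eqref{eq:rotk} we have $r^k(\n)_N = \n_N = \nt_N = r^k(\nt)_N$, so both sides of \eqref{eq:Phirot} carry the same prefactor and it suffices to prove the identity for the infinite series in \eqref{eq:FF}.

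For $k=1$, I would expand the factors of $\Phi_{r(\n)|r(\nt)}(z;t)$ using $r(\l)_j = \l_{j+1} - \l_1$ for $1 \leq j < N$, $r(\l)_N = \l_N$, and the conventions $r(\n)_0 = r(\nt)_0 = 0$. A direct computation shows that for $1 \leq j \leq N-2$ the rotated factor equals $\binom{\nt_{j+2} - \n_{j+1} + s - (\nt_1 - \n_1)}{\nt_{j+1} - \n_{j+1} + s - (\nt_1 - \n_1)}_t$, while the boundary factors read $\binom{\nt_2 - \nt_1 + s}{s}_t$ at $j = 0$ and $\binom{\n_1 + s}{\n_1 - \nt_1 + s}_t$ at $j = N-1$. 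The crucial observation is that, because $\n_1 \leq \nt_1$, this last binomial coefficient vanishes whenever $s < \nt_1 - \n_1$, so the sum is automatically truncated to $s \geq \nt_1 - \n_1$.

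The main step is then to reindex $s \mapsto s + (\nt_1 - \n_1)$ and verify, term by term, that the resulting factors are a cyclic relabeling $j \mapsto j+1 \pmod{N}$ of the factors of $\Phi_{\n|\nt}(z;t)$. This identification is direct: the shifted $j = 0$ factor matches the original $j = 1$ factor, each $1 \leq j \leq N-2$ matches the original $j+1$, and the shifted $j = N-1$ factor becomes $\binom{\nt_1 + s}{s}_t$, matching the original $j = 0$. Together with the factor $z^{\nt_1 - \n_1}$ picked up by the reindexing, this gives $\Phi_{r(\n)|r(\nt)}(z;t) = z^{\nt_1 - \n_1}\, \Phi_{\n|\nt}(z;t)$, which is the $k = 1$ case of \eqref{eq:Phirot} rewritten using $\n_1 - \nt_1 \leq 0$.

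For arbitrary $k \geq 1$ I would iterate the $k=1$ identity: one step applied to $r^{j-1}(\n), r^{j-1}(\nt)$ extracts a factor $z^{r^{j-1}(\nt)_1 - r^{j-1}(\n)_1}$, and by \eqref{eq:rotk} this exponent equals $(\nt_j - \n_j) - (\nt_{j-1} - \n_{j-1})$. Summing over $j = 1, \dots, k$ the exponents telescope to $\nt_k - \n_k$, yielding $\Phi_{r^k(\n)|r^k(\nt)}(z;t) = z^{\nt_k - \n_k}\, \Phi_{\n|\nt}(z;t)$, which is \eqref{eq:Phirot}. The only substantive step is the truncation-and-reindex at $k = 1$; the rest is routine bookkeeping and induction.
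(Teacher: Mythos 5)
Your argument follows the same route as the paper's own proof: expand the rotated series, observe that a binomial factor truncates the sum, shift the summation index to pick up the power of $z$, match the factors cyclically, and induct on $k$. The factor-by-factor identifications you give are all correct. However, there is one case you do not cover, and it is genuinely needed. Your $k=1$ step assumes $\n_1 \leq \nt_1$ so that the boundary factor $\binom{\n_1+s}{\n_1-\nt_1+s}_t$ truncates the sum to $s \geq \nt_1-\n_1$ before you reindex. The proposition, though, is stated (and later used, in Corollary \ref{cor:rot}) for arbitrary increasing sequences with $\n_N=\nt_N$, where $\nt_1-\n_1$ may be negative; moreover your own induction passes through the rotated pairs $r^{j-1}(\n), r^{j-1}(\nt)$, whose first components differ by $(\nt_j-\n_j)-(\nt_{j-1}-\n_{j-1})$ via \eqref{eq:rotk}, and this can be negative even when $\n\subseteq\nt$ (e.g.\ $\n=(0,1,2)$, $\nt=(2,2,2)$ gives $r(\n)_1=1>0=r(\nt)_1$). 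So the base step must be proved without the ordering assumption.

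The fix is symmetric to the case you did treat, and is exactly what the paper does. When $\n_1>\nt_1$ there is no truncation, and after the shift $s\mapsto s+(\nt_1-\n_1)$ the sum runs over $s\geq \n_1-\nt_1>0$; to identify it with $\Phi_{\n|\nt}(z;t)$ one extends the range down to $s=0$, which is legitimate because the factor that becomes $\binom{\nt_2-\n_1+s}{\nt_1-\n_1+s}_t$ vanishes for $s<\n_1-\nt_1$ (negative lower index). Adding this one sentence closes the gap; everything else in your write-up, including the telescoping of the exponents in the induction, is sound.
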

\begin{proof}
It suffices to show (\ref{eq:Phirot}) for $k=1$, since the generic $k$ case follows by induction. By the definition (\ref{eq:FF}), we have
\begin{align*}
&\Phi_{r(\n)|r(\nt)}(z;t)=
(z;t)_{r(\nt)_N+1}
\sum_{s=0}^{\infty}
z^s
\binom{r(\nt)_{1}+ s}{ s}_t
\prod_{j=1}^{N-1}
\binom{r(\nt)_{j+1}-r(\n)_{j}+ s}{r(\nt)_{j}-r(\n)_{j}+ s}_t
=\\
&(z;t)_{\nt_N+1}
\sum_{s=0}^{\infty}
z^s
\binom{\nt_{2}-\nt_{1}+ s}{s}_t
\prod_{j=1}^{N-2}
\binom{\nt_{j+2}-\nt_{1}-\n_{j+1}+\n_{1}+ s}{\nt_{j+1}-\nt_{1}-\n_{j+1}+\n_{1}+ s}_t
\cdot
\binom{\nt_{N}-\n_{N}+\n_1+ s}{\nt_{N}-\nt_1-\n_{N}+\n_{1}+ s}_t.
\end{align*}
Now we use that $\nt_N=\n_N$ and shift the summation index $s=s'+\nt_1-\n_1$
\begin{align*}
&\Phi_{r(\n)|r(\nt)}(z;t)=
(z;t)_{\nt_N+1}
\sum_{s'=\n_1-\nt_1}^{\infty}
z^{s'+\nt_1-\n_1}
\binom{\nt_{2}-\n_1+ s'}{\nt_1-\n_1+s'}_t
\prod_{j=1}^{N-2}
\binom{\nt_{j+2}-\n_{j+1}+ s'}{\nt_{j+1}-\n_{j+1}+s'}_t
\cdot
\binom{\nt_1+ s'}{s'}_t.
\end{align*}
If $\n_1-\nt_1<0$ then we can replace the bottom summation terminal of $s'$ by $0$, due to the last binomial. If on the other hand $\nt_1-\n_1>0$, then we can still replace the bottom terminal of $s'$ by $0$, due to the first binomial factor. Rearranging the binomial factors we find that the expression is equal to $z^{\nt_1-\n_1}\Phi_{\n|\nt}(z;t)$. 
\end{proof}
\begin{cor}\label{cor:rot}
Let $\n$ and $\nt$ be two partitions of length $N$ with $\n_N=\nt_N$ and for some $k$ $\text{min}(\nt-\n)=\nt_k-\n_k<0$. Set $\t=r^k(\n)$ and $\tt=r^k(\nt)$ then $\text{min}_i(\tt_i-\t_i)=0$ and 
\begin{align}
\label{eq:PhiPhi}
\Phi_{\n|\nt}(z;t)=
z^{\n_k-\nt_k}\Phi_{\t|\tt}(z;t).
\end{align}
\end{cor}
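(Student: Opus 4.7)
The identity $\Phi_{\n|\nt}(z;t) = z^{\n_k-\nt_k}\Phi_{\t|\tt}(z;t)$ is an immediate specialization of Proposition \ref{prop:rot}, since by construction $\t = r^k(\n)$ and $\tt = r^k(\nt)$. So the substantive content of the corollary beyond the proposition is the assertion that $\min_i(\tt_i - \t_i) = 0$; everything else is a restatement.

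To establish this assertion, the plan is to compute $\tt_i - \t_i$ directly from the explicit formula (\ref{eq:rotk}). Introduce the cyclic index
\begin{align*}
j(i) = \begin{cases} i+k, & i + k \leq N, \\ i + k - N, & i + k > N, \end{cases}
\end{align*}
which defines a bijection of $\{1,\dots,N\}$. A case analysis according to whether we are in the ``unshifted'' block ($i \leq N-k$) or the ``wrap-around'' block ($i > N-k$) of (\ref{eq:rotk}) will produce, respectively, $\tt_i - \t_i = (\nt_{j(i)} - \n_{j(i)}) - (\nt_k - \n_k)$ and $\tt_i - \t_i = (\nt_{j(i)} - \n_{j(i)}) + (\nt_N - \n_N) - (\nt_k - \n_k)$. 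Invoking the standing hypothesis $\nt_N = \n_N$ collapses both cases to the single identity
\begin{align*}
\tt_i - \t_i \;=\; (\nt_{j(i)} - \n_{j(i)}) - (\nt_k - \n_k), \qquad 1 \leq i \leq N.
\end{align*}

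Taking the minimum over $i$ on the left then equals taking the minimum over $j$ on the right, which by the standing hypothesis is attained at $j=k$ and gives $0$. The only point requiring any mild care is the boundary index $i = N-k$, where $j(i) = N$ and either branch of the case analysis applies; the consistency is precisely the statement $\nt_N - \n_N = 0$, so no genuine obstacle is anticipated.
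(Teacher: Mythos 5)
Your proposal is correct and follows exactly the route of the paper, whose proof is the one-line remark that the corollary ``follows from (\ref{eq:rotk}) and (\ref{eq:Phirot})'': you invoke Proposition \ref{prop:rot} for the identity \eqref{eq:PhiPhi} and use the explicit formula \eqref{eq:rotk} to check $\min_i(\tt_i-\t_i)=0$. Your computation of $\tt_i-\t_i=(\nt_{j(i)}-\n_{j(i)})-(\nt_k-\n_k)$ via the cyclic bijection $j$, with both blocks collapsing thanks to $\nt_N=\n_N$, is exactly the detail the authors leave implicit, and it is sound.
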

\begin{proof}
The proof follows from (\ref{eq:rotk}) and (\ref{eq:Phirot}).
\end{proof}
By Corollary \ref{cor:rot}, we only need to consider $\Phi_{\n|\nt}(z;t)$ for which the partition $\n$ is contained in $\nt$; in other words, $\text{min}(\nt-\n)= 0$. 
The following theorem gives a finite expression for the function $\Phi_{\n|\nt}(z;t)$ (\ref{eq:FF}). This expression is given by an explicit polynomial in $t$ and $z$ with non-negative integer coefficients.  
\begin{thm}\label{th:Fpos}
The binomial series $\Phi_{\n|\nt}(z;t)$, given in (\ref{eq:FF}), can be expressed as
\begin{align}\label{eq:Fpos}
\Phi_{\n|\nt}(z;t)=
\sum_{p}
 t^{\eta(p,\nt)}
 \prod_{k=1}^{N-1}  
 z^{\s_{k}-p_{k}^{k}}  
 \binom{ \nt_{k+1}-p_{k+1}^{1,k}}
 {\nt_{k}-p_k^{1,k}}_t
\cdot
  \prod_{1\leq i\leq k\leq N}
\binom{p_{k+1}^{i}}{p_k^{i}}_t,
\end{align}
where $p_i^{j,k}:=\sum_{a=j}^k p_{i}^a$, the summation runs over the set $\{p_i^k\}_{1\leq k\leq i\leq N-1}$ satisfying 
\begin{align}
\label{eq:pset}
0\leq p_{k}^k\leq p_{k+1}^k\leq  \dots \leq  p_{N-1}^k\leq   p_{N}^k = \s_k,
\end{align} 
the exponent of $t$ is given by
\begin{align}
\label{eq:eta}
\eta(p,\nt)=\sum_{1\leq i\leq k \leq N-1}(p_{k+1}^i-p_k^i)(\nt_k-p_k^{i,k}),
\end{align} 
and the components of $\s$ are defined by $\s_i=\n_i-\n_{i-1}$, for $i=1,\dots,N-1$.
\end{thm}
The proof of this theorem is given in Subsection \ref{spec:proofFi}. 
\begin{cor}\label{cor:pos}
Let $\n$ and $\nt$ be two partitions with $\nt_i\geq \n_i$ for all $i$. The basic hypergeometric series $\phi_{\n|\nt}(z;t)$ can be expressed as
\begin{align}
\label{eq:hgfpos}
&\phi_{\n|\nt}(z;t)
=
\frac{1}{(z;t)_{\nt_N+1}}
\prod_{j=1}^{N-1}\binom{\nt_j-\n_{j-1}}{\nt_j-\n_j}_t^{-1}
\sum_{p}
 t^{\eta(p,\nt)}
 \prod_{k=1}^{N-1}  
 z^{\s_{k}-p_{k}^{k}}  
 \binom{ \nt_{k+1}-p_{k+1}^{1,k}}
 {\nt_{k}-p_k^{1,k}}_t
\cdot
  \prod_{1\leq i\leq k\leq N}
\binom{p_{k+1}^{i}}{p_k^{i}}_t,
\end{align}
where $\eta(p,\nt)$ is defined in (\ref{eq:eta}) and the summation runs over the same set as in Theorem \ref{th:Fpos}.
\end{cor}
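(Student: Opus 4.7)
The corollary is a purely algebraic consequence of Theorem \ref{th:Fpos}, requiring essentially no new work. My plan is to isolate $\phi_{\n|\nt}$ from the defining identity (\ref{Phi-def}) and then substitute in the finite positive expression for $\Phi_{\n|\nt}$ supplied by the theorem.

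First, I would solve (\ref{Phi-def}) for $\phi_{\n|\nt}(z;t)$, writing
\begin{align*}
\phi_{\n|\nt}(z;t)
=
\frac{1}{(z;t)_{\nt^N+1}}\,
\left[\prod_{k=1}^{N-1}\binom{\nt^{k+1}-\n^{k}}{\nt^{k}-\n^{k}}_t\right]^{-1}
\Phi_{\n|\nt}(z;t).
\end{align*}
The hypothesis $\nt_i \geq \n_i$ for all $i$ guarantees that every Gaussian binomial in this prefactor has nonnegative integer arguments and is a nonzero element of $\mathbb{N}[t]$, so the inversion makes sense in the field of fractions $\mathbb{Q}(t)$. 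A trivial reindexing of the product (using the symmetry $\binom{a}{b}_t = \binom{a}{a-b}_t$ and the substitution $k \mapsto j-1$) brings it into the form written in the denominator of (\ref{eq:hgfpos}).

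Second, I would substitute the polynomial expression (\ref{eq:Fpos}) from Theorem \ref{th:Fpos} for $\Phi_{\n|\nt}(z;t)$ directly into the numerator of the above display. This yields precisely the right-hand side of (\ref{eq:hgfpos}), with the same summation set over $\{p_i^k\}$ and the same exponent $\eta(p,\nt)$.

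There is no genuine obstacle: all the substantive positivity content — that the naive infinite binomial series defining $\Phi_{\n|\nt}$ collapses to a finite polynomial with nonnegative integer coefficients — is already captured by Theorem \ref{th:Fpos}, whose proof is the real technical endeavour in Subsection \ref{spec:proofFi}. The role of Corollary \ref{cor:pos} is only to repackage that theorem as a summation formula for the original hypergeometric series $\phi_{\n|\nt}(z;t)$, exhibiting its numerator (after clearing the Pochhammer denominator $(z;t)_{\nt^N+1}$ and the binomial prefactor) as an element of $\mathbb{N}[z,t]$.
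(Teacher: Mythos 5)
Your overall strategy is exactly the paper's: establish
\begin{align*}
\phi_{\n|\nt}(z;t)
=
\frac{1}{(z;t)_{\nt_N+1}}
\left[\prod_{k=1}^{N-1}\binom{\nt_{k+1}-\n_k}{\nt_k-\n_k}_t\right]^{-1}
\Phi_{\n|\nt}(z;t),
\end{align*}
and then substitute the finite positive form of $\Phi_{\n|\nt}$ from Theorem \ref{th:Fpos}. The paper proves this identity by an explicit Pochhammer manipulation (its equation (\ref{eq:qHGF-F})), which is the honest verification of the second equality implicit in (\ref{Phi-def}); you take that equality as given, which is defensible since the paper asserts it in the introduction, but be aware that this conversion of Pochhammer ratios into binomials is the only actual computation in the corollary's proof.

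The one genuine problem is your claim that ``a trivial reindexing'' together with $\binom{a}{b}_t=\binom{a}{a-b}_t$ converts the prefactor $\prod_{k=1}^{N-1}\binom{\nt_{k+1}-\n_k}{\nt_k-\n_k}_t$ into the product $\prod_{j=1}^{N-1}\binom{\nt_j-\n_{j-1}}{\nt_j-\n_j}_t$ printed in (\ref{eq:hgfpos}). These two products are not equal: already for $N=2$ with $\n=(0,2)$ and $\nt=(1,2)$ the first is $\binom{2}{1}_t=1+t$ while the second is $\binom{1}{1}_t=1$. The shift $k\mapsto j-1$ also moves the range of the product off $\{1,\dots,N-1\}$, so no combination of reindexing and the reflection symmetry identifies them. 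The prefactor that actually arises --- and the one appearing in the paper's own identity (\ref{eq:HFG_F}) --- is $\prod_{j=1}^{N-1}\binom{\nt_{j+1}-\n_j}{\nt_j-\n_j}_t^{-1}$; the form in the printed statement of the corollary appears to be a misprint. You should state the result with the correct prefactor rather than assert a reindexing that does not exist; with that correction your argument is complete and coincides with the paper's.
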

\begin{proof}
This can be shown via the binomial series $\Phi_{\n|\nt}$ and Theorem \ref{th:Fpos}. Set $w=t$ in (\ref{eq:wt}) and use it to rewrite (\ref{eq:hgfspecial}) as 
\begin{align}
\label{eq:qHGF-F}
\phi_{\n|\nt}(z;t)
&=
\sum_{s=0}^{\infty}
z^s
\frac{(t^{\nt_1+1};t)_s}{(t;t)_s}
\prod_{j=1}^{N-1}\frac{(t^{\nt_{j+1}-\n_{j}+1};t)_s}{(t^{\nt_j-\n_{j}+1};t)_s}	 \nonumber \\
&=
\sum_{s=0}^{\infty}
z^s
\frac{(t;t)_{\nt_1+s}}{(t;t)_s (t;t)_{\nt_{1}}}	
\prod_{j=1}^{N-1}
\frac{(t;t)_{\nt_{j+1}-\n_j+s}}{(t;t)_{\nt_{j+1}-\n_j}}
\frac{(t;t)_{\nt_j-\n_j}}{(t;t)_{\nt_j-\n_j+s}}	 \nonumber \\
&=
\sum_{s=0}^{\infty}
z^s
\frac{(t;t)_{\nt_1+s}}{(t;t)_s (t;t)_{\nt_{1}}}	
\prod_{j=1}^{N-1}
\frac{(t;t)_{\nt_{j+1}-\n_j+s}}{(t;t)_{\nt_j-\n_j+s}(t;t)_{\nt_{j+1}-\nt_j}}
\frac{(t;t)_{\nt_{j+1}-\nt_j}(t;t)_{\nt_j-\n_j}}{(t;t)_{\nt_{j+1}-\n_j}}	 \nonumber \\
&=
\prod_{j=1}^{N-1}\binom{\nt_{j+1}-\n_j}{\nt_j-\n_j}_t^{-1}
\cdot
\sum_{s=0}^{\infty}
z^s
 \binom{\nt_1+s}{s}_t
\prod_{j=1}^{N-1}
\binom{\nt_{j+1}-\n_j+s}{\nt_j-\n_j+s}_t .
\end{align}
From (\ref{eq:qHGF-F}) the  basic hypergeometric series $\phi_{\n|\nt}(z;t)$ is equal, up to normalization, to $\Phi_{\n|\nt}(z;t)$:
\begin{align}
\label{eq:HFG_F}
\phi_{\n|\nt}(z;t)
=
\frac{1}{(z;t)_{\nt_N+1}}
\prod_{j=1}^{N-1}\binom{\nt_{j+1}-\n_j}{\nt_j-\n_j}_t^{-1} 
\Phi_{\n|\nt}(z;t).
\end{align}
Therefore the statement of the ?orollary relies on the fact that  $\Phi_{\n|\nt}(z;t)$ can be written explicitly as a polynomial in $z$ and $t$ with non-negative integer coefficients, as given in (\ref{eq:Fpos}).
\end{proof}


\subsection{The binomial series as a finite sum}\label{ssec:proofpos1}
Let  us define the unnormalized infinite binomial series
\begin{align}
\label{eq:FFp1}
\Phit_{\n|\nt}(z;t)=\frac{1}{(z;t)_{\nt_N+1}}\Phi_{\n|\nt}(z;t)=
\sum_{s=0}^{\infty}
z^s
\prod_{j=0}^{N-1}
\binom{\nt_{j+1}-\n_{j}+ s}{\nt_{j}-\n_{j}+ s}_t,
\end{align}
The infinite sum over $s$ can be computed resulting in a finite expression for $\Phit_{\n|\nt}(z;t)$. Let $\l=(\l_1,\dots,\l_{N-1})$ and $\s=(\s_1,\dots,\s_{N-1})$ be two compositions of integers. We write $\l\subseteq \s$ if $0\leq \l_i\leq \s_i$ for every $i$. Introduce also a partial sum notation $\l_{i,j}=\sum_{a=i}^j  \l_a$. 
\begin{thm}\label{th:Ft}
$\Phi_{\n|\nt}(z;t)$  can be written in a finite form as
\begin{align}
\label{eq:FFpf}
\Phi_{\n|\nt}(z;t)=
\sum_{\l\subseteq \s}
\prod_{j=1}^{N-1}
(z  t^{\n_{j-1}-\l_{1,j-1}})^{\l_{j}}
(zt^{\n_{j-1}-\l_{1,j-1}};t)_{\s_j-\l_{j}}
\binom{\s_j}{\l_j}_t
\binom{\nt_{j+1}-\n_j+\l_{1,j}}{\nt_{j}-\n_j+\l_{1,j}}_t.
\end{align}
\end{thm}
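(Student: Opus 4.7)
The plan is to compute the infinite series defining $\Phi_{\n|\nt}(z;t)$ by iteratively transforming its summand via $q$-binomial identities until a single geometric-type sum remains, which is then evaluated via \eqref{eq:infbin}. By Corollary \ref{cor:rot} it suffices to treat the case $\n\subseteq\nt$ (i.e.\ $u_j:=\nt_j-\n_j\geq 0$ for all $j$). The key observation is that at each step the $s$-dependence in the product $\prod_{j=0}^{N-1}\binom{\nt_{j+1}-\n_j+s}{\nt_j-\n_j+s}_t$ can be isolated into a single canonical factor of the form $\binom{\star+s}{s}_t$, so that iterating the transformation $N-1$ times reduces the whole sum to one that \eqref{eq:infbin} can handle.

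Starting from the $j=0$ factor $\binom{\nt_1+s}{s}_t$, I would apply \eqref{eq:binom2} with the split $a=u_1+s$, $b=\s_1$, $c=s$, introducing a summation variable $\l_1\in[0,\s_1]$ together with the Gaussian binomial $\binom{\s_1}{\l_1}_t$. After exchanging sums and shifting $s\mapsto s+\l_1$, the emergent $t$-exponent combines cleanly with $z^s$ to give $z^{\l_1}(zt^{\s_1-\l_1})^s$, and the residual binomial is $\binom{u_1+\l_1+s}{s}_t$. Next I apply \eqref{eq:binom1} with $a=\nt_2-\n_1+s+\l_1$, $b=u_1+\l_1+s$, $c=s$ to absorb this residual into the shifted $j=1$ factor $\binom{\nt_2-\n_1+s+\l_1}{u_1+\l_1+s}_t$, yielding the key transformation
\[
\binom{\nt_2-\n_1+s+\l_1}{u_1+\l_1+s}_t \binom{u_1+\l_1+s}{s}_t = \binom{\nt_2-\n_1+s+\l_1}{s}_t \binom{\nt_2-\n_1+\l_1}{u_1+\l_1}_t.
\]
The right-hand side cleanly separates the $s$-dependence into a new canonical form $\binom{\nt_2-\n_1+s+\l_1}{s}_t$ and the $s$-independent ``fixed'' binomial $\binom{\nt_2-\n_1+\l_1}{\nt_1-\n_1+\l_1}_t$, which is precisely the $j=1$ binomial in \eqref{eq:FFpf}.

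The iteration now proceeds uniformly. At step $k$, applying \eqref{eq:binom2} to the current canonical factor $\binom{\nt_k-\n_{k-1}+s+\l_{1,k-1}}{s}_t$ with the split $a=u_k+s+\l_{1,k-1}$, $b=\s_k$, $c=s$ (using $u_k+\s_k=\nt_k-\n_{k-1}$) introduces $\l_k\in[0,\s_k]$ together with the monomial $z^{\l_k}t^{\l_k(\n_{k-1}-\l_{1,k-1})}$, and updates the geometric weight to $(zt^{\n_k-\l_{1,k}})^s$; then \eqref{eq:binom1} absorbs the residual into the shifted $j=k$ factor, producing the new canonical $\binom{\nt_{k+1}-\n_k+s+\l_{1,k}}{s}_t$ and the fixed binomial $\binom{\nt_{k+1}-\n_k+\l_{1,k}}{\nt_k-\n_k+\l_{1,k}}_t$ appearing in \eqref{eq:FFpf}. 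After $N-1$ iterations only the factor $\binom{\s_N+\l_{1,N-1}+s}{s}_t$ depends on $s$ (using $\nt_N=\n_N$ so that $\nt_N-\n_{N-1}=\s_N$), with accumulated weight $(zt^{\n_{N-1}-\l_{1,N-1}})^s$. The sum over $s$ then gives
\[
\sum_{s\geq 0}(zt^{\n_{N-1}-\l_{1,N-1}})^s\binom{\s_N+\l_{1,N-1}+s}{s}_t=\frac{1}{(zt^{\n_{N-1}-\l_{1,N-1}};t)_{\s_N+\l_{1,N-1}+1}}
\]
by \eqref{eq:infbin}. Multiplying by the prefactor $(z;t)_{\nt_N+1}$ and decomposing it as $(z;t)_{\nt_N+1}=(z;t)_{\n_{N-1}-\l_{1,N-1}}\cdot(zt^{\n_{N-1}-\l_{1,N-1}};t)_{\s_N+\l_{1,N-1}+1}$ cancels the denominator; the remaining $(z;t)_{\n_{N-1}-\l_{1,N-1}}$ then telescopes as $\prod_{j=1}^{N-1}(zt^{\n_{j-1}-\l_{1,j-1}};t)_{\s_j-\l_j}$, exactly reproducing the Pochhammer factors in \eqref{eq:FFpf}.

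The main technical obstacle is verifying that the splits chosen in each application of \eqref{eq:binom2} produce precisely the $t$-exponent $\l_k(\n_{k-1}-\l_{1,k-1})$ and the correct shift $(zt^{\n_k-\l_{1,k}})^s$ at every stage, with no leftover cross terms. This rests on the algebraic identities $\nt_k-\n_{k-1}=u_k+\s_k$ and $\s_k=\n_k-\n_{k-1}$ linking consecutive partition shifts, which ensure that \eqref{eq:binom1} can always consolidate the residual $s$-dependence back into a single canonical factor $\binom{\star+s}{s}_t$, making the iterative step uniform across all $k$.
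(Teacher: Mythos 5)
Your proposal is correct and follows essentially the same route as the paper: the two-step manipulation you describe (apply \eqref{eq:binom2} to the canonical factor $\binom{\star+s}{s}_t$, shift $s$, then consolidate with \eqref{eq:binom1}) composes exactly to the single combined binomial identity the paper uses to derive its recurrence \eqref{eq:Frec}, and the final evaluation via \eqref{eq:infbin} together with the telescoping of $(z;t)_{\n_{N-1}-\l_{1,N-1}}$ into the product of Pochhammer factors matches the paper's computation. The preliminary reduction to $\n\subseteq\nt$ via Corollary \ref{cor:rot} is harmless but not actually needed, since the iteration goes through for general increasing sequences with $\n_N=\nt_N$.
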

\begin{proof}
We use binomial identities in order to rewrite (\ref{eq:FFp1}) such that it has only one binomial depending on the summation index $s$. After this we use (\ref{eq:infbin}). Let us show this in detail. In (\ref{eq:FFp1}) we can remove the $s$-dependence from all binomials but one. In order to do this we use the following formula
\begin{align*}
\binom{a}{b}_t \binom{b+m}{c}_t = \sum_{l=0}^m t^{(m-l)(c-l)}\binom{m}{l}_t
\binom{a-c+l}{b-c+l}_t \binom{a}{c-l}_t.
\end{align*}
Consider the first two binomials in (\ref{eq:FFp1}), they can be written in the above form as follows:
\begin{align*}
&\binom{\nt_{2}-\n_{1}+ s}{\nt_{1}-\n_{1}+ s}_t
\binom{\nt_{1}-\n_1+ s+\n_1}{ s}_t
= \\
& \sum_{l=0}^{\n_1} t^{(\n_{1}-l)(s-l)}
\binom{\n_{1}}{l}_t
\binom{\nt_{2}-\n_{1}+l}{\nt_{2}-\nt_{1}}_t 
\binom{\nt_{2}-\n_{1}+ s}{s-l}_t.
\end{align*}
Using this in $\Phit_{\n|\nt}(z;t)$ we write 
\begin{align*}
\Phit_{\n|\nt}(z;t)
&=
\sum_{s=0}^{\infty}
z^s
\binom{\nt_{2}-\n_{1}+ s}{\nt_{1}-\n_{1}+ s}_t
\binom{\nt_{1}-\n_1+ s+\n_1}{ s}_t
\prod_{j=2}^{N-1}
\binom{\nt_{j+1}-\n_{j}+ s}{\nt_{j}-\n_{j}+ s}_t
\\
&=
 \sum_{l=0}^{\n_1} t^{-l(\n_{1}-l)}
\binom{\n_{1}}{l}_t
\binom{\nt_{2}-\n_{1}+l}{\nt_{1}-\n_{1}+l}_t 
\cdot
\sum_{s=0}^{\infty}
z^s
t^{s(\n_{1}-l)}
\binom{\nt_{2}-\n_{1}+ s}{s-l}_t
\prod_{j=2}^{N-1}
\binom{\nt_{j+1}-\n_{j}+ s}{\nt_{j}-\n_{j}+ s}_t.
\end{align*}
In the summation over $s$ above all terms with $s<l$ vanish. Therefore we can shift $s$ by the value of $l$, this will also conveniently cancel the power of $t$ next to the summation over $l$: 
\begin{align*}
\Phit_{\n|\nt}(z;t)
=
\sum_{l=0}^{\n_1} 
z^l
\binom{\n_{1}}{l}_t
\binom{\nt_{2}-\n_{1}+l}{\nt_{1}-\n_{1}+l}_t 
\cdot
\sum_{s=0}^{\infty}
z^s
t^{s(\n_{1}-l)}
\binom{\nt_{2}-\n_{1}+l+s}{s}_t
\prod_{j=2}^{N-1}
\binom{\nt_{j+1}-\n_{j}+l+ s}{\nt_{j}-\n_{j}+l+ s}_t.
\end{align*}
Let us define two new partitions $\n^{1}=(\n^{1}_1,\dots,\n^1_{N-1})$ and $\nt^{1}=(\nt^{1}_1,\dots,\nt^1_{N-1})$, by $\n^{1}_j=\n_{j+1}-\n_1$ and $\nt^{1}_j=\nt_{j+1}-\n_1+l$. After this we notice that the sum over $s$ is equal to $\Phit_{\n^1|\nt^1}(z t^{\n_1-l};t)$, and the above equation becomes a recurrence relation 
\begin{align}\label{eq:Frec}
\Phit_{\n|\nt}(z;t)
=
\sum_{l=0}^{\n_1} 
z^l
\binom{\n_{1}}{l}_t
\binom{\nt_{2}-\n_{1}+l}{\nt_{1}-\n_{1}+l}_t 
\cdot
\Phit_{\n^1|\nt^1}(z t^{\n_1-l};t).
\end{align}
This recurrence relation can be iterated. Using compositions $\s$ and $\l$ and the partial sum notation $\l_{i,j}$ we can write the result of this iteration as
\begin{align*}
\Phit_{\n|\nt}(z;t)
=&
\sum_{\l\subseteq \s}
\prod_{j=1}^{N-1}
(z  t^{\n_{j-1}-\l_{1,j-1}})^{\l_{j}}
\binom{\s_j}{\l_j}_t
\binom{\nt_{j+1}-\n_j+\l_{1,j}}{\nt_{j}-\n_j+\l_{1,j}}_t \\
&
\sum_{s=0}^{\infty}z^{s}
t^{s(\n_{N-1}-\l_{1,N-1})}
\binom{\nt_N-\n_{N-1}+\l_{1,N-1}+s}{s}_t.
\end{align*}
With the binomial formula (\ref{eq:infbin}) we compute the infinite summation over $s$ and then simplify it using (\ref{eq:wt}):
\begin{align*}
\sum_{s=0}^{\infty}z^{s}
t^{s(\n_{N-1}-\l_{1,N-1})}
\binom{\nt_N-\n_{N-1}+\l_{1,N-1}+s}{s}_t
&=
\frac{1}{(z t^{\n_{N-1}-\l_{1,N-1}};t)_{\nt_N-\n_{N-1}+\l_{1,N-1}+1}} \\
&= 
\frac{(z;t)_{\n_{N-1}-\l_{1,N-1}}}{(z;t)_{\nt_N+1}}.
\end{align*}
Now the infinite binomial series $\Phit_{\n|\nt}(z;t)$ takes the form 
\begin{align}
\label{eq:FFp2}
\Phit_{\n|\nt}(z;t)=
&
\frac{1}{(z;t)_{\nt_N+1}} 
\sum_{\l\subseteq \s}
(z;t)_{\n_{N-1}-\l_{1,N-1}}
\prod_{j=1}^{N-1}
(z  t^{\n_{j-1}-\l_{1,j-1}})^{\l_{j}}
\binom{\s_j}{\l_j}_t
\binom{\nt_{j+1}-\n_j+\l_{1,j}}{\nt_{j}-\n_j+\l_{1,j}}_t.
\end{align}
From this we get the expression for $\Phi_{\n|\nt}(z;t)$ using the relation (\ref{eq:FFp1})
\begin{align*}
\Phi_{\n|\nt}(z;t)=
\sum_{\l\subseteq \s}
(z;t)_{\n_{N-1}-\l_{1,N-1}}
\prod_{j=1}^{N-1}
(z  t^{\n_{j-1}-\l_{1,j-1}})^{\l_{j}}
\binom{\s_j}{\l_j}_t
\binom{\nt_{j+1}-\n_j+\l_{1,j}}{\nt_{j}-\n_j+\l_{1,j}}_t.
\end{align*}
The $t$-Pochhammer  symbol can be factorized by a repetitive use of (\ref{eq:wt}), thus we find
\begin{align}\label{eq:FFp3}
\Phi_{\n|\nt}(z;t)=
\sum_{\l\subseteq \s}
\prod_{j=1}^{N-1}
(z  t^{\n_{j-1}-\l_{1,j-1}})^{\l_{j}}
(zt^{\n_{j-1}-\l_{1,j-1}};t)_{\s_j-\l_{j}}
\binom{\s_j}{\l_j}_t
\binom{\nt_{j+1}-\n_j+\l_{1,j}}{\nt_{j}-\n_j+\l_{1,j}}_t,
\end{align}
which proves the theorem.
\end{proof}

\begin{rmk}\label{rmk:Phi1}
At $z=1$ the binomial series $\Phi$ simplifies:
\begin{align}\label{eq:Phiz1}
\Phi_{\n|\nt}(1;t)=
\prod_{j=1}^{N-1}
\binom{\nt_{j+1}}{\nt_{j}}_t.
\end{align}
\end{rmk}
\begin{proof}
This formula follows from (\ref{eq:FFp3}) where one needs to set $z=1$  and observe that the Pochhammer symbols are non-zero only for $\l_j=\s_j=\n_j-\n_{j-1}$. This collapses the summation. 
\end{proof}


\subsection{The $q$-binomial series as a positive polynomial}\label{spec:proofFi}
Our positive formula relies on the following lemma.
\begin{lem}\label{lem:g}
Let $m$ be a non-negative integer, $v$ a complex parameter and $a=(a_1,\dots,a_n)$ and $b=(b_1,\dots,b_n)$ be two sets of non-negative integers, and set $c_i:=a_i+m- b_i$, for $i=1,\dots,n$. The function $g_{m}(a,b;v)$ defined by
\begin{align}
\label{eq:defg}
&g_{m}(a,b;v)
:=\sum_{k=0}^m v^k (v;t)_{m-k}\binom{m}{k}_t \binom{k+a_1}{b_1}_t\dots\binom{k+a_n}{b_n}_t,
\end{align}
can be written in the following form
\begin{align}
\label{eq:posg}
&g_{m}(a,b;v)
=\sum_{p}  \prod_{i=1}^n v^{p_{i-1}-p_{i}}  t^{(p_{i-1}-p_{i}) (c_i-p_i)}  
\binom{p_{i-1}}{p_i}_t \binom{a_i+m-p_{i-1}}{c_i-p_{i}}_t,
\end{align}
where $p=(p_1,\dots,p_n)$,  such that $m=p_0\geq p_1\geq \dots \geq p_{n} $.
Therefore $g_{m}(a,b;v)$ is a polynomial in $v$ and $t$ with non-negative integer coefficients.
\end{lem}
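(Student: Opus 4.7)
The plan is to prove the lemma by induction on $n$, stripping one factor $\binom{k+a_{i}}{b_{i}}_{t}$ per inductive step. The base case $n=0$ amounts to the classical identity $\sum_{k=0}^{m}v^{k}(v;t)_{m-k}\binom{m}{k}_{t}=1$, which I would verify by expanding the Pochhammer symbol via the finite $t$-binomial theorem $(v;t)_{m-k}=\sum_{j}(-v)^{j}t^{\binom{j}{2}}\binom{m-k}{j}_{t}$, interchanging the order of summation, applying (\ref{eq:binom1}) to the product $\binom{m}{k}_{t}\binom{m-k}{j}_{t}$, and invoking the collapse $(1;t)_{r}=\d_{r,0}$.

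For the inductive step from $n-1$ to $n$, I apply identity (\ref{eq:binom2}) to $\binom{k+a_{1}}{b_{1}}_{t}$ with the split $a=k$, $b=a_{1}$, $c=b_{1}$, introducing an auxiliary summation index $r$:
\begin{align*}
\binom{k+a_{1}}{b_{1}}_{t}=\sum_{r}t^{r(a_{1}-b_{1}+r)}\binom{k}{r}_{t}\binom{a_{1}}{b_{1}-r}_{t}.
\end{align*}
After interchanging the $k$ and $r$ sums, I use (\ref{eq:binom1}) to rewrite $\binom{m}{k}_{t}\binom{k}{r}_{t}=\binom{m}{r}_{t}\binom{m-r}{k-r}_{t}$ and shift $k\mapsto k'=k-r$. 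The inner sum over $k'$ becomes precisely $g_{m-r}$ evaluated at the shifted parameters $a'_{i}=a_{i+1}+r$, $b'_{i}=b_{i+1}$ for $i=1,\dots,n-1$; crucially, the associated constants $c'_{i}=a'_{i}+(m-r)-b'_{i}=c_{i+1}$ are independent of $r$, so the inductive hypothesis applies directly and introduces a chain $m-r=q_{0}\geq q_{1}\geq\dots\geq q_{n-1}$.

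The final step is to reparametrize $p_{1}:=m-r$ and $p_{j}:=q_{j-1}$ for $j=2,\dots,n$, which converts the combined constraints into $m=p_{0}\geq p_{1}\geq\dots\geq p_{n}$. The stripped prefactor $v^{r}t^{r(a_{1}-b_{1}+r)}\binom{m}{r}_{t}\binom{a_{1}}{b_{1}-r}_{t}$ then reorganizes into the $i=1$ summand of the target using $c_{1}=a_{1}+m-b_{1}$ and the symmetry $\binom{a}{b}_{t}=\binom{a}{a-b}_{t}$ from (\ref{eq:binom0}), while the inductive contribution supplies the $i=2,\dots,n$ summands after relabeling $j=i+1$. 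The main technical obstacle will be the careful matching of the powers of $v$, the powers of $t$, and the two binomial coefficients in the stripped factor to the $i=1$ target summand under this reparametrization; once this bookkeeping is done, positivity $g_{m}\in\mathbb{N}[v,t]$ is immediate from the positivity of Gaussian binomial coefficients.
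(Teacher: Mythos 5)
Your proposal is correct and follows essentially the same route as the paper: both strip the factor $\binom{k+a_1}{b_1}_t$ via (\ref{eq:binom2}), recognize the inner sum as $g_{m-r}$ with shifted arguments $a_i+r$, and obtain the recursion in $n$ whose iteration yields (\ref{eq:posg}). The only differences are presentational — you perform the index reversal $s_1\mapsto b_1-s_1$ inside the initial application of (\ref{eq:binom2}) and exploit the invariance $c_i'=c_{i+1}$ to run a clean induction, whereas the paper iterates the recursion explicitly and reindexes by partial sums at the end.
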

Let us point out first that in (\ref{eq:posg}) the summation set over $p$ is more restricted. The condition on $p$, which means that it is a set of partitions inside the $n\times m$ box, is imposed by the binomials $\binom{p_{i-1}}{p_i}_t$ in (\ref{eq:posg}). The binomials $\binom{a_i+m-p_{i-1}}{c_i-p_i}_t$ put two more restrictions:
\begin{align*}
&p_i\leq c_i,\\
&p_{i-1}-p_i\leq b_i.
\end{align*}
The intersection of all these conditions gives the minimal summation set in  (\ref{eq:posg}). 
\begin{proof}
First we recall the following well known formula:
\begin{align*}
&
\sum_{k=0}^m v^k (v;t)_{m-k}\binom{m}{k}_t=1.
\end{align*}
Therefore when $n=0$
\begin{align}
\label{eq:gn0}
&g_{m}(\{\},\{\};v)
=\sum_{k=0}^m v^k (v;t)_{m-k}\binom{m}{k}_t=1.
\end{align}
Starting with (\ref{eq:defg})
\begin{align*}
\sum_{k=0}^m v^k (v;t)_{m-k}\binom{m}{k}_t \prod_{i=1}^n \binom{k+a_i}{b_i}_t=
\sum_{k=0}^m v^k (v;t)_{m-k}\binom{m}{k}_t \binom{k+a_1}{b_1}_t
\prod_{i=2}^n \binom{k+a_i}{b_i}_t,
\end{align*}
we perform the following steps. Use (\ref{eq:binom2}) to separate $a_1$ and $k$ in the corresponding binomial coefficient
\begin{align}\label{eq:lemmaproof}
\sum_{s_1=0}^{a_1} t^{(a_1-s_1)(b_1-s_1)} \binom{a_1}{s_1}_t
\sum_{k=0}^m v^k (v;t)_{m-k}\binom{m}{k}_t \binom{k}{b_1-s_1}_t 
\prod_{i=2}^n \binom{k+a_i}{b_i}_t,
\end{align}
Then use (\ref{eq:binom1}) and (\ref{eq:binom0}) in the first two binomials in the sum over $k$
\begin{align*}
\sum_{s_1=0}^{a_1} t^{(a_1-s_1)(b_1-s_1)} \binom{a_1}{s_1}_t
\sum_{k=0}^m v^k (v;t)_{m-k}
\binom{m}{b_1-s_1}_t 
\binom{m-b_1+s_1}{m-k}_t
\prod_{i=2}^n \binom{k+a_i}{b_i}_t.
\end{align*}
The value of $s_1$ cannot exceed $b_1$, therefore we can set the limit of the summation over $s_1$ to $b_1$ after which we reverse the order of this sum by switching $s_1\rightarrow b_1-s_1$
\begin{align*}
\sum_{s_1=0}^{b_1} t^{s_1(a_1-b_1+s_1)} 
\binom{a_1}{b_1-s_1}_t 
\binom{m}{s_1}_t 
\sum_{k=0}^m v^k (v;t)_{m-k}
\binom{m-s_1}{m-k}_t
\prod_{i=2}^n \binom{k+a_i}{b_i}_t.
\end{align*}
The non-zero terms of the sum over $k$ start from $k=s_1$ due to the first binomial factor in this sum. Use (\ref{eq:binom0}) in this binomial and replace $k\rightarrow k+s_1$
\begin{align*}
\sum_{s_1=0}^{b_1} t^{s_1(a_1-b_1+s_1)} 
\binom{a_1}{b_1-s_1}_t 
\binom{m}{s_1}_t 
v^{s_1}
\sum_{k=0}^{m-s_1} v^{k} (v;t)_{m-s_1-k}
\binom{m-s_1}{k}_t
\prod_{i=2}^n \binom{k+s_1+a_i}{b_i}_t.
\end{align*}
The summation over $k$ has the same form as the one we started with:
\begin{align*}
\sum_{k=0}^{m'} v^{k} (v;t)_{m'-k}
\binom{m'}{k}_t
\prod_{i=1}^{n'} \binom{k+a_i'}{b_i'}_t,
\end{align*}
where $m'=m-s_1$, $a_i'=s_1+a_{i+1}$, $b_i'=b_{i+1}$, $n'=n-1$ and therefore can be expressed in terms of $g_{m-s_1}$ with fewer arguments. We find that (\ref{eq:defg}) can be written in the following recursive form:
\begin{align*}
&g_{m}(\{a_1,\dots,a_n\},\{b_1,\dots,b_n\};v)=
\sum_{s_1=0}^{\text{min}(m,b_1)} t^{s_1(a_1-b_1+s_1)} 
\binom{a_1}{b_1-s_1}_t 
\binom{m}{s_1}_t 
v^{s_1} \\
&\times 
g_{m-s_1}(\{a_2+s_1,\dots,a_n+s_1\},\{b_2,\dots,b_n\};v).
\end{align*}
Notice that the summation bound of $\text{min}(m,b_1)$ can be replaced by either of the two numbers.  Iterating this recurrence relation until we exhaust all $a$ and $b$ arguments, and recalling (\ref{eq:gn0}) we find:
\begin{align}\label{eq:posgs}
&g_{m}(a,b;v)
=\left(\prod_{i=1}^n \sum_{s_i=0}^{b_i}\right) \prod_{i=1}^n v^{s_i}  t^{s_i (a_i-b_i+s_{1,i})}  
\binom{m-s_{1,i-1}}{s_i}_t \binom{a_i+s_{1,i-1}}{b_i-s_i}_t,
\end{align}
where we used the partial sum indices $s_{i,j}=\sum_{k=i}^js_k$.
Changing the summation variables to $p_{i}=s_{1,i}$ and using (\ref{eq:binom0}) we arrive at
\begin{align}
\label{eq:posgp}
&g_{m}(a,b;v)
=\sum_{p}  \prod_{i=1}^n v^{p_i-p_{i-1}}  t^{(p_i-p_{i-1}) (a_i-b_i+p_i)}  
\binom{m-p_{i-1}}{m-p_i}_t \binom{a_i+p_{i-1}}{b_i-p_i+p_{i-1}}_t,
\end{align}
where we let the sum run over $p_i=0,\dots,m$, $i=1,\dots,n$ and $p_0=0$. Reversing the summation by replacing $p_i$ with $m-p_i$ we find 
\begin{align}
\label{eq:posgp2}
&g_{m}(a,b;v)
=\sum_{p}  \prod_{i=1}^n v^{p_{i-1}-p_i}  t^{(p_{i-1}-p_i) (a_i-b_i+m-p_i)}  
\binom{p_{i-1}}{p_i}_t \binom{a_i+m-p_{i-1}}{b_i-p_{i-1}+p_i}_t.
\end{align}
After such replacement the  summation indices $p$ must satisfy $m=p_0\geq p_1\geq \dots \geq p_{n} $.
Using (\ref{eq:binom0}) in the last binomial and the definition of $c_i$ leads to (\ref{eq:posg}). 
\end{proof}


\subsection{Proof of Theorem \ref{th:Fpos}}
Recall that $\s$ is a composition of length $N-1$ with components $\s_i=\n_i-\n_{i-1}$, for $i=1,\dots,N-1$. Let $\s^{(N-1-j)}$ be the composition obtained from $\s$ with the last $j$ parts removed. Recall the formula (\ref{eq:FFp3}), changing the second binomial according to (\ref{eq:binom0}) and isolating the very last summation we get
\begin{align*}
&\Phi_{\n|\nt}(z;t)=
\sum_{\l\subseteq \s^{(N-2)}}
\prod_{j=1}^{N-2}
(z  t^{\n_{j-1}-\l_{1,j-1}})^{\l_{j}}
(zt^{\n_{j-1}-\l_{1,j-1}};t)_{\s_j-\l_{j}}
\binom{\s_j}{\l_j}_t
\binom{\nt_{j+1}-\n_j+\l_{1,j}}{\nt_{j+1}-\nt_j}_t  \\
&
\sum_{\l_{N-1}=0}^{\s_{N-1}}
(z  t^{\n_{N-2}-\l_{1,N-2}})^{\l_{N-1}}
(zt^{\n_{N-2}-\l_{1,N-2}};t)_{\s_{N-1}-\l_{N-1}}
\binom{\s_{N-1}}{\l_{N-1}}_t
\binom{\nt_{N}-\n_{N-1}+\l_{1,N-1}}{\nt_{N}-\nt_{N-1}}_t.
\end{align*}
For convenience we introduce 
\begin{align*}
\a_1^1=\nt_{N}-\n_{N-1}, \qquad 
\b_1^1= \nt_{N}-\nt_{N-1}.
\end{align*}
According to the definition of the function $g$ in (\ref{eq:defg}) the sum over $\l_{N-1}$ in the above expression is equal to
\begin{align*}
&g_{\s_{N-1}}(\{\a_1^1+\l_{1,N-2}\},\{\b_1^1\};z  t^{\n_{N-2}-\l_{1,N-2}}).
\end{align*}
This function can be rewritten in the positive form by Lemma \ref{lem:g}. For convenience we choose to rewrite $g$ in the form (\ref{eq:posgp2}). The result depends on $\l_{2,N}$ only through a binomial coefficient. Inserting the positive expression for $g$ into the above formula and performing a few simplifications we get:
\begin{align*}
\Phi_{\n|\nt}(z;t)=
&\sum_{\l\subseteq \s^{(N-2)}}
\prod_{j=1}^{N-2}
(z  t^{\n_{j-1}-\l_{1,j-1}})^{\l_{j}}
(zt^{\n_{j-1}-\l_{1,j-1}};t)_{\s_j-\l_{j}}
\binom{\s_j}{\l_j}_t
\binom{\nt_{j+1}-\n_j+\l_{1,j}}{\nt_{j+1}-\nt_j}_t
 \\
&
\sum_{p_1^1=0}^{\s_{N-1}}
(z t^{\n_{N-2}})^{p_1^1-p_0^1}
t^{(p_1^1-p_0^1)(\a_1^1-\b_1^1+\s_{N-1}-p_1^1)}
\binom{p_0^1}{p_1^1}_t
\binom{\a_1^1+\s_{N-1}-p_0^1+\l_{1,N-2}}{\b_1^1-p_0^1+p_1^1}_t.
\end{align*}
The choice of the indexing of the summation index $p$ as well as the use of $\a_1^1$ and $\b_1^1$ are convenient for further derivation. 
In the above summation over $\l\subseteq \s^{(N-2)}$ we can isolate the sum over $\l_{N-2}$ and show that it is of the form (\ref{eq:defg}), i.e. 
\begin{align*}
&g_{\s_{N-2}}(\{\a_1^2+\l_{1,N-3},\a_2^2+\l_{1,N-3}\},
\{\b_1^2,\b_2^2\};z t^{\n_{N-3}-\l_{1,N-3}}),  \\
& \a_1^2 =\a_1^1+\s_{N-1}-p_0^1, \qquad 
\a_2^2=\nt_{N-1}-\n_{N-2} ,\\
&
\b_1^2 =\b_1^1 -p_0^1+p_1^1,\qquad
\b_2^2=\nt_{N-1}-\nt_{N-2}.
\end{align*}
This allows us to rewrite the expression for $\Phi_{\n|\nt}$ using (\ref{eq:posgp2}) with the only dependence on $\l_{1,N-3}$ appearing through a binomial coefficient while the unwanted power of $t$ depending on $\l_{N-3}$ cancels. After this we find
\begin{align*}
\Phi_{\n|\nt}(z;t)=
&\sum_{\l\subseteq \s^{(N-3)}}
\prod_{j=1}^{N-3}
(z  t^{\n_{j-1}-\l_{1,j-1}})^{\l_{j}}
(zt^{\n_{j-1}-\l_{1,j-1}};t)_{\s_j-\l_{j}}
\binom{\s_j}{\l_j}_t
\binom{\nt_{j+1}-\n_j+\l_{1,j}}{\nt_{j+1}-\nt_j}_t
\\
&
\sum_{p_1^1=0}^{\s_{N-1}}
(z t^{\n_{N-2}})^{p_1^1-p_0^1}
t^{(p_1^1-p_0^1)(\a_1^1-\b_1^1+\s_{N-1}-p_1^1)}
\binom{p_0^1}{p_1^1}_t \\
&
\sum_{p_1^2=0}^{\s_{N-2}}
(z t^{\n_{N-3}})^{p_1^2-p_0^2}
t^{(p_1^2-p_0^2)(\a_1^2-\b_1^2+\s_{N-2}-p_1^2)}
\binom{p_0^2}{p_1^2}_t 
\binom{\a_1^2+\s_{N-2}-p_{0}^2+\l_{1,N-3}}{\b_{1}^2-p_0^2+p_1^2}_t\\
&
\sum_{p_2^2=0}^{p_1^2}
(z t^{\n_{N-3}})^{p_2^2-p_1^2}
t^{(p_2^2-p_1^2)(\a_2^2-\b_2^2+\s_{N-2}-p_2^2)}
\binom{p_1^2}{p_2^2}_t 
\binom{\a_2^2+\s_{N-2}-p_{1}^2+\l_{1,N-3}}{\b_{2}^2-p_1^2+p_2^2}_t.
\end{align*}
We can repeat this procedure of isolating the last summation in $\l$ and rewriting it using Lemma \ref{lem:g}. At each step the unwanted powers of $t$ cancel and the dependence on $\l$ with the highest index is precisely of the form of the polynomial $g$ and hence we can rewrite it in the positive form. In doing so the number of arguments $a_i$ and $b_i$ of the polynomial $g$ in (\ref{eq:defg}) at each step increases by one and these arguments can be defined recursively.  The polynomial $g_{\s_{N-k}}$ at the $k$-th step of this recurrence will be expressed using (\ref{eq:posgp2}) as a sum over $p^{k}=(p^{k}_1,\dots,p^{k}_k)$ and have the following arguments
\begin{align*}
&g_{\s_{N-k}}(\{\a_1^{k}+\l_{1,N-k-1},\dots,\a_k^{k}+\l_{1,N-k-1}\},\{\b_1^{k},\dots,\b_k^{k}\}; z t^{\n_{N-k-1}-\l_{1,N-k-1}}),
\end{align*}
where $\a_j^k$ and $\b_j^k$ are defined by
\begin{align}
\a_i^k=\a_{i}^{k-1}+\s_{N-k+1}-p_{i-1}^{k-1},\qquad 
\b_i^k=\b_{i}^{k-1}-p_{i-1}^{k-1}+p_i^{k-1}, 
\qquad \text{for}~i=1,\dots,k-1,		\label{eq:abrec}
\end{align}
and 
\begin{align}
\a_k^{k}=\nt_{N-k+1}-\n_{N-k},\qquad  
\b_k^{k}=\nt_{N-k+1}-\nt_{N-k} . 		\label{eq:abreck} 
\end{align}
These recurrence relations can be easily solved
\begin{align}
&\a_i^{k}=\nt_{N-i+1}-\n_{N-k} -p_{i-1}^{i,k-1}, 		\label{eq:arecsol} \\
&\b_i^k=\nt_{N-i+1}-\nt_{N-i}- p_{i-1}^{i,k-1}+p_{i}^{i,k-1},\label{eq:brecsol}
\end{align}
where we used another partial sum notation $p_{i}^{j,k}=\sum_{a=j}^k p_i^a$.
Each time we replace the polynomial $g_{\n_{N-k}}$ with its positive expansion (\ref{eq:posgp2}) we get
\begin{align}
&g_{\s_{N-k}}(\{\a_1^{k}+\l_{1,N-k-1},\dots,\a_k^{k}+\l_{1,N-k-1}\},\{\b_1^{k},\dots,\b_k^{k}\}; z t^{\n_{N-k-1}-\l_{1,N-k-1}})= \nonumber 
\\
&\sum_{p^{k}} 
 \prod_{i=1}^k
 (z t^{\n_{N-k-1}})^{p_{i-1}^{k}-p_{i}^{k}}  
 t^{(p_{i-1}^{k}-p_{i}^{k}) (\a_i^k-\b_i^k+\s_{N-k}-p_{i}^{k})}  
\binom{p_{i-1}^{k}}{p_i^{k}}_t 
\binom{\a_i^k+\s_{N-k}- p_{i-1}^{k}+\l_{1,N-k-1}}{\b_i^k-p_{i-1}^{k}+p_i^{k}}_t,
\end{align}
and the sum runs over $p^k=(p^k_1,\dots,p^k_k)$. The binomials depending on $\l_{1,N-k-1}$ contribute to the $g$-polynomial at the next level of the recursion while all the rest gets accumulated. After $N-1$ iterations we get
\begin{align}
\Phi_{\n|\nt}(z;t)=
\sum_{p^{N-1}} \dots \sum_{p^{1}} 
 \prod_{k=1}^{N-1}  
 \binom{\a_k^{N}}{\b_k^{N}}_t
  \prod_{i=1}^k
& (z t^{\n_{N-k-1}})^{p_{i-1}^{k}-p_{i}^{k}}  
 t^{(p_{i-1}^{k}-p_{i}^{k}) (\a_i^k-\b_i^k+\s_{N-k}-p_{i}^{k})}  
\binom{p_{i-1}^{k}}{p_i^{k}}_t,
\end{align} 
substituting $\a_i^k$ and $\b_i^k$ from (\ref{eq:arecsol}) and (\ref{eq:brecsol}) and using (\ref{eq:binom0}) we get
\begin{align}
\Phi_{\n|\nt}(z;t)=
\sum_{p}
 t^{\sum_{1\leq i\leq k\leq N-1} (p_{i-1}^{k}-p_{i}^{k}) (\nt_{N-i}-p_{i}^{i,k})}
 \prod_{k=1}^{N-1}  
 z^{p_{0}^{k}-p_{k}^{k}}  
 \binom{ \nt_{N-k+1}-p_{k-1}^{k,N-1}}
 {\nt_{N-k}-p_k^{k,N-1}}_t
\cdot  \prod_{1\leq i\leq k\leq N-1}
\binom{p_{i-1}^{k}}{p_i^{k}}_t.
\end{align} 
The summation runs over the set $\{p_i^k\}_{1\leq i\leq k\leq N-1}$ satisfying
\begin{align*}
0\leq p_{k}^k \leq p_{k-1}^k \leq \dots \leq p_{1}^k \leq p_{0}^k = \s_{N-k}.
\end{align*} 
In the last step we need to introduce a new index $s_{N-i}^{N-j}=p_i^j$. After rearranging the factors in the products we get
\begin{align}
\Phi_{\n|\nt}(z;t)=
\sum_{s}
 t^{\eta(s,\nt)}
 \prod_{k=1}^{N-1}  
 z^{\s_{k}-s_{k}^{k}}  
 \binom{ \nt_{k+1}-s_{k+1}^{1,k}}
 {\nt_{k}-s_k^{1,k}}_t
\cdot  \prod_{1\leq i\leq k\leq N-1}
\binom{s_{k+1}^{i}}{s_k^{i}}_t,
\end{align} 
where $\eta(s,\nt)=\sum_{1\leq i\leq k \leq N-1}(s_{k+1}^i-s_k^i)(\nt_k-s_k^{i,k})$. The summation set is now given by $\{s_k^i\}_{1\leq i\leq k\leq N-1}$ subject to
\begin{align*}
0\leq s_{k}^k\leq s_{k+1}^k\leq  \dots \leq  s_{N-1}^k\leq   s_{N}^k = \s_k.
\end{align*} 
This proves Theorem \ref{th:Fpos}.

\begin{ex} Fix $\n=(1,3,4,5)$ and $\nt=(2,3,5,5)$, the polynomial $\Phi _{\nu|\tilde{\nu }}(z;t)$ reads
\begin{align*}
\Phi _{\nu|\tilde{\nu }}(u,t)=t^8 z^3+2 (1+t) \left(1+t+t^2\right) t^4 z^2+\left(1+t+t^2\right) \left(1+3 t+t^2\right) t z+(1+t).
\end{align*} 
In order to see the contributions from different summands we represent the summation set over $s_k^i$, $1\leq i\leq k\leq 3$, by triangular integer arrays with rows labeled by $i$ and positions within rows by $k-i+1$. Then we have in total eight non-vanishing elements in the summation set $s$:  
\begin{align*}
\Phi _{\nu|\tilde{\nu }}(z;t)
&=
t^8 z^3 \left[
\begin{array}{c}
 (0,1,1) \\
 (0,2) \\
 (1) \\
\end{array}
\right]+(1+t) t^6 z^2 \left[
\begin{array}{c}
 (1,1,1) \\
 (0,2) \\
 (1) \\
\end{array}
\right]+(1+t)^2 t^4 z^2 \left[
\begin{array}{c}
 (0,1,1) \\
 (1,2) \\
 (1) \\
\end{array}
\right]\\
&+
t^2 z \left[
\begin{array}{c}
 (0,1,1) \\
 (2,2) \\
 (1) \\
\end{array}
\right]+
(1+t)^3 t^2 z \left[
\begin{array}{c}
 (1,1,1) \\
 (1,2) \\
 (1) \\
\end{array}
\right]+(1+t) \left(1+t+t^2\right) t z \left[
\begin{array}{c}
 (0,0,1) \\
 (2,2) \\
 (1) \\
\end{array}
\right]\\
&+(1+t) \left(1+t+t^2\right) t^4 z^2 \left[
\begin{array}{c}
 (0,0,1) \\
 (1,2) \\
 (1) \\
\end{array}
\right]+(1+t) \left[
\begin{array}{c}
 (1,1,1) \\
 (2,2) \\
 (1) \\
\end{array}
\right].
\end{align*} 
\end{ex}


\section*{Acknowledgments}

We gratefully acknowledge support from the Australian Research Council Centre of Excellence for Mathematical and Statistical Frontiers (ACEMS), and MW acknowledges support by an Australian Research Council DECRA. We thank the program {\it Non-equilibrium systems and special functions} held at the MATRIX institute in Creswick, where part of this work was completed. 
We would like to thank Alexei Borodin, Jan de Gier, Atsuo Kuniba, Vladimir Mangazeev, Ole Warnaar and Paul Zinn-Justin for useful discussions.


\end{document}